\numberwithin{equation}{section}
\newtheorem{thm}{Theorem}[section]
\newtheorem{prop}[thm]{Proposition}
\newtheorem{lem}[thm]{Lemma}
\newtheorem{cor}[thm]{Corollary}
{\bf}{\it}
\newtheorem{fthm}{Theorem}{\bf}{\it}
{\bf}{\it}
\newtheorem{fcor}[fthm]{Corollary}{\bf}{\it}
{\bf}{\it}
{\bf}{\it}
\theoremstyle{definition}
\theoremstyle{remark}
\newtheorem{ex}[thm]{Example}
\newtheorem{rem}[thm]{Remark}
{\bf}{\it}
\newtheorem{definition and corollary}[thm]{Definition and Corollary}
\newcommand{\al}{\alpha}
\newcommand{\C}{{\mathbb C}}
\newcommand{\Par}{{\mathcal P}}
\newcommand{\Hom}{\mathrm{Hom}}
\newcommand{\ch}{\mathrm{ch}}
\newcommand{\gch}{\mathrm{gch}}
\newcommand{\gdim}{\mathrm{gdim}}
\newcommand{\la}{\lambda}
\newcommand{\Sym}{\mathfrak{S}}
\newcommand{\g}{\mathfrak{g}}
\newcommand{\gb}{\mathfrak{b}}
\newcommand{\h}{\mathfrak{h}}
\newcommand{\gp}{\mathfrak{p}}
\renewcommand{\P}{\mathbb{P}}
\newcommand{\Q}{\mathbb{Q}}
\newcommand{\bW}{\mathbb{W}}
\newcommand{\bX}{\mathbb{X}}
\newcommand{\Z}{\mathbb{Z}}
\title{Kostka polynomials of $G(\ell,1,m)$\footnote{MSC2010: 20F55,05E05}%\\
%Polyn\^omes de Kostka de $G(\ell,1,m)$
}
\author{Syu \textsc{Kato}\footnote{Department of Mathematics, Kyoto University, Oiwake Kita-Shirakawa Sakyo Kyoto 606-8502 JAPAN \tt{E-mail:syuchan@math.kyoto-u.ac.jp}}}
\begin{document}
\maketitle

\begin{abstract}
For each integers $\ell > 1$ and $n \ge m \ge 1$, we prove an equivalence between the category of polynomial modules over a paraholic subalgebra $\mathfrak p$ of an affine Lie algebra of $\mathfrak{gl}(n\ell)$ and the module category of the smash product algebra $A$ of the complex reflection group $G(\ell,1,m)$ with $\mathbb C [X_1,\ldots,X_m]$. Then, we transfer the collection of $\mathfrak p$-modules considered in [Feigin-Makedonskyi-Khoroshkhin, arXiv:2311.12673] to $A$. By applying the Lusztig-Shoji algorithm [Shoji, Invent. Math. {\bf 74} (1983)], or rather its homological variant [K. Ann. Sci. ENS {\bf 48}(5) (2015)], we conclude that the multiplicity counts of these modules yield the Kostka polynomials attached to the limit symbols in the sense of [Shoji, ASPM {\bf 40} (2004)]. This result particularly settles a conjecture of Shoji [{\it loc. cit.} \S 3.13] and answers a question in [Shoji, Sci. China Math. {\bf 61} (2018)].
\end{abstract}
\begin{comment}
\begin{abstract}
Pour chaque entier $\ell > 1$ et $n \ge m \ge 1$, nous prouvons une \'equivalence entre la cat\'egorie des modules polynomiaux sur une sous-alg\`ebre paraholique $\mathfrak p$ d'une alg\`ebre de Lie affine de $\mathfrak{gl}(n\ell)$ et la cat\'egorie des modules de 
l'alg\`ebre du smash-produit $A$ du groupe de r\'eflexion complexe$G(\ell,1,m)$ avec $\mathbb C [X_1,\ldots,X_m]$. Ensuite, nous transf\'erons la collection de $\mathfrak p$-modules consid\'er\'e dans [Feigin-Makedonskyi-Khoroshkhin, arXiv:2311.12673] \`a $A$. En appliquant l'algorithme de Lusztig-Shoji [Shoji, Invent. Math. {\bf 74} (1983)] (ou plutôt sa variante homologique [K. Ann. Sci. ENS {\bf 48}(5) (2015)]), nous concluons que les comptes de multiplicit\'e de ces modules donnent les polyn\^omes de Kostka attach\'es aux symboles limite au sens de [Shoji, ASPM {\bf 40} (2004)]. Cela résout en particulier une conjecture de Shoji [{\it loc. cit.} \S 3.13] et  r\'epond \`a une question dans [Shoji, Sci. China Math. {\bf 61} (2018)].
\end{abstract}
\end{comment}
\section*{Introduction}
Kostka polynomials constitute a fundamental family of polynomials in representation theory. They appear in the description of characters of finite groups of Lie types (\cite{Gre55,Lus90b}), spherical functions of $p$-adic groups (\cite{Mac95,Lus81a}), the cohomologies of line bundles on the cotangent bundle of a flag manifold (\cite{Gup87}), and in the representation theory of the affinization of $\mathfrak{sl}$ (\cite{NY97}), to name a few.

Kostka polynomials associated with symmetric groups essentially refer to a unique family. However, this uniqueness does not hold if we replace the symmetric groups with other real or complex reflection groups. One can find such features in the context of generalized Springer correspondence (\cite{Lus84}), that is (further generalized and) systematically pursued by Shoji \cite{Sho02}. Shoji's generalized Kostka polynomials associated with the complex reflection group $G(\ell,1,m)$ contains a distinguished case, called the case of {\it limit symbols} \cite{Sho04}; some authors call them the Kostka-Shoji polynomials \cite{FI18,OS22}. This case conjecturally offers a honest generalization of the coinvariant realization of Kostka polynomials known for the case $\ell = 1$ (\cite{HS77,DP81,Tan82}), that is evidently unique. This conjecture has been proved for $\ell=2$ in \cite{Kat17}, but the rest of the cases are wide open.

The goal of this paper is to prove the above conjecture of Shoji completely by introducing a new realization of Kostka polynomials attached to the limit symbols. To state our results more precisely, we fix $m,\ell \in \Z_{>0}$ and the set $\Par_{m,\ell}$ of $\ell$-partitions of $m$.

\begin{fthm}[$\doteq$ Theorem \ref{thm:DU}]\label{fthm:maclim}
There exists a graded Lie algebra $\gp$ and its graded module category $\gp\mathchar`-\mathsf{gmod}_m$ with the following properties:
\begin{enumerate}
\item The isomorphism classes of irreducible modules in $\gp\mathchar`-\mathsf{gmod}_m$ are indexed by $\Par_{m,\ell}$ up to grading shifts;
\item For each $\vec{\la} \in \Par_{m,\ell}$, there exist modules $W_{\vec{\la}}$ and $W^{\flat}_{\vec{\la}}$ in  $\gp\mathchar`-\mathsf{gmod}_m$ that satisfy the orthogonality relations
$$\mathrm{Ext}^\bullet_{\gp\mathchar`-\mathsf{mod}_m} ( W^\flat_{\vec{\la}}, W_{\vec{\mu}^{\vee}}^{\star} ) = 0 \hskip 5mm \vec{\la}\neq \vec{\mu},$$
where $\star$ is an involution of $\gp\mathchar`-\mathsf{gmod}_m$ and $\vee$ is an involution on $\Par_{m,\ell}$;
\item Each $W_{\vec{\la}}$ has a simple head and a simple socle as graded $\gp$-modules.
\end{enumerate}
\end{fthm}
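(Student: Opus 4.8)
The plan is to realize $\gp$ as a \emph{paraholic} (parahoric-type parabolic) subalgebra of the affine Lie algebra attached to $\g=\mathfrak{gl}_{n\ell}$ for an auxiliary integer $n\ge m$, to introduce $\gp\text{-}\mathsf{gmod}_m$ as its category of polynomial modules, and to deduce (1)--(3) from the equivalence of Theorem~\ref{thm:DU} together with the work of Feigin-Makedonskyi-Khoroshkhin (arXiv:2311.12673). Concretely, fix the composition $(n,\dots,n)$ of $n\ell$ into $\ell$ equal blocks, with associated maximal parabolic $\mathfrak q=\mathfrak l\oplus\gu\subset\g$, Levi $\mathfrak l\cong\mathfrak{gl}_n^{\oplus\ell}$ and nilradical $\gu$; inside the current algebra $\g\otimes\C[t]$ set $\gp:=\mathfrak l\oplus(\gu\otimes\C[t])\oplus(\g\otimes t\C[t])$, graded by the loop (energy) degree, and let $\gp\text{-}\mathsf{gmod}_m$ consist of the finitely generated graded $\gp$-modules that are polynomial of total $\mathfrak l$-degree $m$. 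Since $n\ge m$, no partition is truncated in this range, so $\gp\text{-}\mathsf{gmod}_m$ and the objects below are, up to equivalence, independent of $n$; for $n=m$ one recovers the algebra named in the statement. For assertion (1): under the equivalence of Theorem~\ref{thm:DU}, $\gp\text{-}\mathsf{gmod}_m$ corresponds to the category of modules over $A=\C[X_1,\dots,X_m]\rtimes G(\ell,1,m)$ on which the $X_i$ act locally nilpotently, the loop grading being matched to the polynomial grading; its graded simple objects then correspond to the simple $A$-modules supported at the origin, i.e.\ to the irreducibles of $G(\ell,1,m)$, which are indexed by $\Par_{m,\ell}$. (Equivalently, one may classify the simples of the paraholic highest-weight category directly: the blocks are indexed by dominant $\mathfrak l$-weights, which in degree $m$ are exactly the $\ell$-partitions of $m$ once $n\ge m$.)

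For assertion (2): let $W_{\vec\la}$ be the paraholic (global) Weyl module generated in its lowest degree by an extremal weight vector of $\mathfrak l$-highest weight $\vec\la$, and $W^\flat_{\vec\la}$ the companion second family of Weyl-type modules; these are precisely the $\gp$-modules studied by Feigin-Makedonskyi-Khoroshkhin. Take $\star$ to be the graded contragredient duality on $\gp\text{-}\mathsf{gmod}_m$, which is well defined because the category has finite length in each degree and is preserved by a grading-preserving involution of $\gp$ (a paraholic Chevalley involution), and let $\vee$ be the resulting combinatorial involution on $\Par_{m,\ell}$ --- I expect $(\la^{(1)},\dots,\la^{(\ell)})\mapsto((\la^{(\ell)})^{\mathrm t},\dots,(\la^{(1)})^{\mathrm t})$, possibly composed with a cyclic shift. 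The vanishing $\Ext^\bullet_{\gp\text{-}\mathsf{mod}_m}(W^\flat_{\vec\la},W^\star_{\vec\mu^\vee})=0$ for $\vec\la\neq\vec\mu$ is then the Ext-orthogonality of Feigin-Makedonskyi-Khoroshkhin --- the statement that these two families form a mutually dual exceptional system, a form of BGG reciprocity --- transported through the exact equivalence and rewritten via $\star$; what remains is to match their labelling and grading-shift conventions with $\Par_{m,\ell}$ and with $\vee$.

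For assertion (3): since $W_{\vec\la}$ is cyclic, generated in its lowest degree by a vector of weight $\vec\la$, its graded head is the single irreducible $L_{\vec\la}$ sitting in that degree (the cyclicity is inherited from the generators of the Feigin-Makedonskyi-Khoroshkhin modules). For the socle I dualize: $(W^\flat_{\vec\la^\vee})^\star$ has simple head $L_{\vec\la}$, hence simple socle dually, and by the $\Hom$-orthogonality contained in (2) any nonzero morphism $W_{\vec\la}\to(W^\flat_{\vec\mu^\vee})^\star$ forces $\vec\mu=\vec\la$ (compare composition factors). Composing such a morphism with the graded-character identity $\gch W_{\vec\la}=\gch W^\flat_{\vec\la^\vee}$ (again from Feigin-Makedonskyi-Khoroshkhin) shows that it embeds $W_{\vec\la}$ onto a submodule with socle $L_{\vec\la}$, so the socle of $W_{\vec\la}$ is simple; thus $W_{\vec\la}$ is rigid with head and socle both $L_{\vec\la}$ up to grading shift.

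The main obstacle is Theorem~\ref{thm:DU} itself and the faithful transport of structure through it. One must set up the paraholic category with the right finiteness, grading and duality, prove that the Schur-Weyl-type functor (a truncated current-algebra refinement of Ariki-Koike duality) is an equivalence onto the locally nilpotent part of $A\text{-}\mathsf{mod}$ that intertwines the grading and the contragredient duality, and then verify that the modules of Feigin-Makedonskyi-Khoroshkhin are literally the $W_{\vec\la}$ and $W^\flat_{\vec\la}$ and that their homological orthogonality --- originally established in the affine Demazure picture --- survives restriction to $\gp\text{-}\mathsf{gmod}_m$ unchanged. Pinning down the precise combinatorial involution $\vee$ compatibly with those conventions, and confirming that the stable range $n\ge m$ does what is claimed, are the remaining delicate points; everything else is then formal.
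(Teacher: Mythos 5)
Your setup (the paraholic current subalgebra with Levi $\mathfrak{gl}(n)^{\oplus\ell}$, the stable range $n\ge m$, importing the two families and their Ext-orthogonality from Feigin--Khoroshkhin--Makedonskyi and matching conventions via a rotation and a Chevalley-type twist) is essentially the paper's route for parts (1) and (2), although the paper classifies the simples directly ($\gp_{>0}$ kills any simple, then polynomial representation theory of $\mathfrak{gl}(n)^{\oplus\ell}$) rather than through the Schur--Weyl equivalence, and its involution $\vee$ is $(\la^{(1)},\la^{(\ell)},\ldots,\la^{(2)})$ with \emph{no} componentwise transpose. The genuine gap is in part (3). First, the modules $W_{\vec{\la}}$ of the statement are not the global Weyl modules: they are the finite-dimensional level-one affine Demazure modules $D_{\vec{\la}}$ of Sanderson's theorem, restricted to $\gp$ (the free-over-$R_{\vec{\la}}$ modules $\mathbb D_{\vec{\la}},\mathbb U_{\vec{\la}}$ enter only through the projective filtrations and the orthogonality); a global Weyl module has no simple socle, so your identification already breaks (3). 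Second, your socle argument rests on the identity $\gch\, W_{\vec{\la}}=\gch\, W^{\flat}_{\vec{\la}^{\vee}}$, which is false for $\ell>2$: it is exactly the special $\ell=2$ coincidence $W_{\vec{\la}}\cong W^{\flat}_{\vec{\la}}$ of Remark \ref{rem:inc}, and the paper's own example $m=n=1$, $\ell=3$ gives $\gch\,\Phi(D_{((\emptyset)(1)(\emptyset))})=x_2+qx_1$ while the corresponding $U$-type module has character $x_2$. Third, the conclusion you draw --- that $W_{\vec{\la}}$ is rigid with head and socle both equal to the simple labelled by $\vec{\la}$ --- contradicts the actual structure: by Corollary \ref{cor:simple} and Proposition \ref{prop:socdeg} the socle of $W_{\vec{\la}}$ is the simple labelled by $((1^m)(\emptyset)^{\ell-1})$ sitting in degree $\mathsf{a}(\vec{\la})$, not $L_{\vec{\la}}$.

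What the paper actually uses for (3) is the Demazure realization: $D_{\vec{\la}}$ is generated by an extremal weight vector (Levi-antidominant weight $\vec{\la}^{\mathtt{tot}}$), which gives the simple head $V_{\vec{\la}}$ --- note that cyclicity by a mere weight vector, as you invoke it, is not enough, since a cyclic semisimple module over $\mathfrak{gl}(n)^{\oplus\ell}$ need not be simple; one needs that the generator is a Levi-lowest-weight vector. For the socle one uses that $D_{\vec{\la}}$ sits inside a level-one integrable highest weight module of $\widetilde{\mathfrak{gl}}(n\ell)$ whose unique level-one dominant weight in $\vec{\la}^{\mathtt{tot}}+\Z^{n\ell}_+$ is $(1^m0^{n\ell-m})$; the corresponding highest-weight line lies in every nonzero $\gp$-submodule, giving the simple socle $V_{((1^m)(\emptyset)^{\ell-1})}$. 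So to repair your proposal you would replace the duality-plus-character argument entirely by this Demazure-theoretic input (Theorem \ref{thm:San} and Corollary \ref{cor:simple}), and decouple (3) from the orthogonality in (2), which in the paper pairs the $\mathbb U^{\theta}$-family against the $D^{\star}$-family after the nontrivial $\theta$- and $\phi$-twists of Proposition \ref{prop:diag} rather than expressing any self-duality of the $W$-family.
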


Mathematical components of Theorem \ref{fthm:maclim} are essentially contained in \cite{FKM,Fl21,FKM19,FMO23,FKMO,FKM23}. In this paper, we construct an appropriate category and extract its properties from these sources. We define
$$A := \C G(\ell,1,m) \ltimes \C [X_1,\ldots,X_m].$$
We equip $A$ with the grading $\deg X_i = 1$ and consider the category $A\mathchar`-\mathsf{gmod}$ of finitely generated graded $A$-modules.

\begin{fthm}[$\doteq$ Theorem \ref{thm:cateq}]\label{fthm:cateq}
There exists an equivalence of categories
$$\mathsf{SW} : \gp\mathchar`-\mathsf{gmod}_m \stackrel{\cong}{\longrightarrow} A\mathchar`-\mathsf{gmod}.$$
\end{fthm}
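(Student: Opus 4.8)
The plan is to realize the equivalence $\mathsf{SW}$ through a ``Schur–Weyl''-type functor, which is what the notation suggests. First I would pin down the two sides concretely. On the $A$-side, $A = \C G(\ell,1,m)\ltimes\C[X_1,\ldots,X_m]$ is a finitely generated algebra over its commutative subring $\C[X_1,\ldots,X_m]$, graded with $\deg X_i = 1$ and $\deg$ of group elements $0$; the degree-zero part is the group algebra $\C G(\ell,1,m)$, whose irreducibles are indexed by $\Par_{m,\ell}$ (the Specht/Young modules for $G(\ell,1,m)$). On the $\gp$-side, by Theorem~\ref{fthm:maclim} the category $\gp\mathchar`-\mathsf{gmod}_m$ has irreducibles indexed by $\Par_{m,\ell}$ up to shift, and I would identify its degree-zero (``bottom'') part with $\C G(\ell,1,m)\mathchar`-\mathsf{mod}$ as well. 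The functor $\mathsf{SW}$ should then be constructed as $M \mapsto \Hom_{\gp}(P, M)$ (or the tensor analogue $P^{\vee}\otimes_{\gp}(-)$) for a suitable graded projective generator–cogenerator $P \in \gp\mathchar`-\mathsf{gmod}_m$ whose graded endomorphism algebra is identified with $A$; concretely one takes $P$ to be the ``big projective'' obtained as a direct sum over $\vec\la\in\Par_{m,\ell}$ of the standard/global-Weyl-type objects, built from the modules $W_{\vec\la}$ of Theorem~\ref{fthm:maclim}(2)–(3).

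The key steps, in order, are: (1) exhibit a distinguished graded module $P$ in $\gp\mathchar`-\mathsf{gmod}_m$ and compute $\End_{\gp}^{\mathrm{gr}}(P)$, showing it is isomorphic to $A$ as a graded algebra — here the polynomial part $\C[X_1,\ldots,X_m]$ arises from the grading/loop parameters of $\gp$ acting by ``raising'' endomorphisms, and the group algebra part from the degree-zero endomorphisms, exactly as in the known $\ell=1$ and $\ell=2$ pictures (\cite{Tan82,Kat17}); (2) show $P$ is a projective generator of $\gp\mathchar`-\mathsf{gmod}_m$, so that $\mathsf{SW}=\Hom_{\gp}(P,-)$ is exact and faithful, using that $\gp\mathchar`-\mathsf{gmod}_m$ has enough projectives with the $W_{\vec\la}$'s (or their projective covers) as building blocks and that $P$ surjects onto every irreducible by part (1) of Theorem~\ref{fthm:maclim}; (3) verify $\mathsf{SW}$ is full and that it is essentially surjective, i.e. every finitely generated graded $A$-module is hit — this follows formally once one knows $P$ is a compact projective generator with $\End^{\mathrm{gr}}_{\gp}(P)\cong A$, via the graded version of the Morita/Gabriel–Popescu recognition theorem, provided one also checks that $\gp\mathchar`-\mathsf{gmod}_m$ is the category of \emph{finitely generated} graded modules over that endomorphism ring (a finiteness/Noetherianity bookkeeping step). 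Along the way one should match the grading conventions so that $\deg X_i = 1$ on the $A$-side corresponds to the homological/loop grading on the $\gp$-side.

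The main obstacle I expect is step (1): identifying $\End^{\mathrm{gr}}_{\gp}(P)$ \emph{on the nose} with the smash product $A$, rather than with some deformation or some algebra merely Morita-equivalent to it. This requires a careful choice of $P$ — presumably $P = \bigoplus_{\vec\la} P_{\vec\la}$ where each $P_{\vec\la}$ is a maximal (projective) object in $\gp\mathchar`-\mathsf{gmod}_m$ with head the irreducible labelled by $\vec\la$ — together with an explicit computation of homomorphism spaces between the $W_{\vec\la}$'s using the orthogonality relations of Theorem~\ref{fthm:maclim}(2) and the simple head/socle property of part (3). The orthogonality $\Ext^\bullet_{\gp}(W^\flat_{\vec\la}, W^{\star}_{\vec\mu^\vee}) = 0$ for $\vec\la\neq\vec\mu$ is exactly the input that forces the endomorphism algebra to decompose the way $A$ does (block-triangular with the polynomial ring on the diagonal), so the real work is organizing these Ext-vanishings, together with the structure results imported from \cite{FKM,FKM19,FMO23,FKMO,FKM23}, into a presentation of $\End^{\mathrm{gr}}_{\gp}(P)$ by the generators $X_1,\ldots,X_m$ and $G(\ell,1,m)$ with the smash-product relations. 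Once the endomorphism-ring identification is in hand, exactness, fullness and essential surjectivity are comparatively formal consequences of $P$ being a compact projective generator.
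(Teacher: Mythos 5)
Your overall skeleton (find a projective generator whose graded endomorphism ring is $A$, then invoke graded Morita theory) does match the formal structure of the paper's argument, but the proposal is missing the one idea that makes the whole thing work, and the route you sketch for the endomorphism-ring identification would not go through. The paper does not build $P$ abstractly from projective covers or from the $W_{\vec\la}$'s; it takes the completely explicit $(\gp,A)$-bimodule $\bW_1^{\otimes m}$ with $\bW_1 = V\otimes\C[z]$, on which the $A$-action is \emph{manifest} (the wreath group acts by permuting tensor factors and by the $\Gamma^m$-action, and $X_i$ acts by multiplication by $z$ in the $i$-th factor), so that $A\subseteq \End_{\gp}(\bW_1^{\otimes m})$ is visible from the start and equality follows from Schur--Weyl duality for $(\mathfrak{gl}(n)^{\oplus\ell},W)$ (Theorem \ref{thm:cSW}), generation in degree zero (Proposition \ref{prop:gen}) and a graded dimension count. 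Note also that your candidate $P=\bigoplus_{\vec\la}\P_{\vec\la}$ cannot have endomorphism ring $A$ on the nose: $A$ is not basic, so one needs the multiplicity spaces, i.e.\ an object of the form $\bigoplus_{\vec\la}\P_{\vec\la}\boxtimes L_{\vec\la}$, and there is no natural $W$-action on an abstractly chosen direct sum of projective covers --- this is precisely what $\bW_1^{\otimes m}$ supplies for free. Likewise, the $W_{\vec\la}$'s of Theorem A are images of finite-dimensional Demazure-type modules, not projectives, and the orthogonality relations $\mathrm{ext}^\bullet(W^\flat_{\vec\la},W^\star_{\vec\mu^\vee})=0$ are used later for the Lusztig--Shoji/Kostka identification; they play no role in establishing the equivalence and do not yield a presentation of $\End_{\gp}(P)$ by the smash-product relations.

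The second, and harder, missing step is that once the candidate bimodule is fixed one must prove it is projective in $\gp\mathchar`-\mathsf{gmod}_m$. In the paper this is Proposition \ref{prop:Wproj}, and it is not formal: it requires embedding each projective cover $\P_{\vec\la}$ into $\bW_1^{\otimes m}$ (Lemma \ref{lem:Pd}, Proposition \ref{prop:Pd}), which in turn rests on the Demazure/induction functor $\mathscr D$ (Theorem \ref{thm:sDem}, Corollary \ref{cor:DP}) and on the filtration results for $\P^\sharp_{\vec\la}$ imported from the Feigin--Khoroshkin--Makedonskyi circle of papers (Theorem \ref{thm:DU}(2)). Your proposal treats projectivity of $P$ as a bookkeeping consequence of its construction, but for the object that actually carries an $A$-action this is the substantive content of the proof; without it (or some replacement) the Morita argument cannot be launched.
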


We transport the modules in Theorem \ref{fthm:maclim} 2) into $A\mathchar`-\mathsf{gmod}$ via the equivalence in Theorem \ref{fthm:cateq}. By applying the homological interpretation of the Lustzig-Shoji algorithm (\cite{Kat15}), we have:

\begin{fthm}[$\doteq$ Theorem \ref{thm:id} + Corollary \ref{cor:Kpos}]\label{fcor:koslim}
The graded multiplicities of simple modules in
$$\{\mathsf{SW} ( W_{\vec{\la}} )\}_{\vec{\la} \in \Par_{m,\ell}} \hskip 5mm \text{and} \hskip 5mm \{\mathsf{SW} ( W^{\flat}_{\vec{\la}} )\}_{\vec{\la} \in \Par_{m,\ell}}$$
are precisely the Kostka polynomials $K^-$ and $K^+$ attached to the limit symbols $(\cite{Sho04})$, respectively. In particular, for all $\vec{\mu},\vec{\la} \in \mathcal P_{m,\ell}$, we have
$$K^-_{{\vec{\mu}},\vec{\la}} (q), K^+_{{\vec{\mu}},\vec{\la}}(q) \in \Z_{\ge 0}[q].$$
\end{fthm}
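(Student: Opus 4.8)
The plan is to set up the homological version of the Lusztig--Shoji algorithm on $A\mathchar`-\mathsf{gmod}$ and match its output with the defining recursion for the Kostka polynomials $K^{\pm}$ attached to limit symbols. First I would transport the structure from Theorem \ref{fthm:maclim} along the equivalence $\mathsf{SW}$ of Theorem \ref{fthm:cateq}: the simple objects $L_{\vec\la} := \mathsf{SW}(\text{head of }W_{\vec\la})$ are indexed by $\Par_{m,\ell}$ up to shift, and the modules $M_{\vec\la}:=\mathsf{SW}(W_{\vec\la})$, $M^\flat_{\vec\la}:=\mathsf{SW}(W^\flat_{\vec\la})$ inherit the orthogonality relation
$$\mathrm{Ext}^\bullet_{A}\bigl(M^\flat_{\vec\la},\,(M_{\vec\mu^\vee})^{\star}\bigr)=0\qquad(\vec\la\neq\vec\mu),$$
together with the simple head/socle property (3). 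The point of \cite{Kat15} is that such a package — a highest-weight-type category with an exact ``standard'' family $\{M_{\vec\la}\}$, a costandard-type family, an involutive duality, and an Ext-orthogonality across the two families — forces the transition matrices $[M_{\vec\la}:L_{\vec\mu}]_q$ and $[M^\flat_{\vec\la}:L_{\vec\mu}]_q$ (graded multiplicities) to be the unique solution of the Lusztig--Shoji recursion with respect to the natural order on $\Par_{m,\ell}$ and the inner product recording $\mathrm{gdim}\,\mathrm{Hom}_A$-pairings of projectives.

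Next I would identify the combinatorial input of that recursion with Shoji's data for limit symbols. Concretely: (i) the partial order induced on $\Par_{m,\ell}$ by the filtration structure of the $M_{\vec\la}$ must be shown to coincide (or be compatible) with the dominance order used in \cite{Sho04,Sho02} for limit symbols; (ii) the ``diagonal'' or initial terms of the recursion — here controlled by the simple-head/simple-socle statement (3), which pins down the leading coefficient $[M_{\vec\la}:L_{\vec\la}]_q$ — must match the normalization $K^{\pm}_{\vec\la,\vec\la}=1$ (up to the correct power of $q$); and (iii) the bilinear form, which on the representation-theory side is $\Omega_{\vec\la,\vec\mu}(q)=\sum_i q^i\dim\mathrm{Hom}_A(P_{\vec\la},P_{\vec\mu})_i$ read off from the projective covers, must equal the matrix $\Lambda$ (the ``$\omega$''-matrix of the exterior/symmetric-power Fock space) that appears on Shoji's side for $G(\ell,1,m)$. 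Step (iii) is where I would lean on the explicit description of $A=\C G(\ell,1,m)\ltimes\C[X_1,\dots,X_m]$: its graded characters of projectives are computable from the $G(\ell,1,m)$-module structure of the polynomial ring, and this should reproduce exactly the generating function Shoji uses to define limit-symbol Kostka polynomials. Once the order, the normalization, and the form all agree, the uniqueness clause of the Lusztig--Shoji algorithm identifies the two families of graded multiplicities with $K^-$ and $K^+$ respectively; since graded multiplicities of modules are manifestly in $\Z_{\ge0}[q]$, positivity (Corollary \ref{cor:Kpos}) is immediate.

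The main obstacle I anticipate is step (iii) together with the precise matching of conventions in (i)–(ii): the Lusztig--Shoji algorithm is only as good as the pair (poset, bilinear form) one feeds it, and limit symbols involve a specific, slightly nonstandard order and a specific $\Z[q]$-valued form on $\Par_{m,\ell}$ (built from the $\ell$-quotient/$\ell$-core combinatorics and the ``limit'' degeneration of the $b$-invariants). Verifying that the homological form $\Omega_{\vec\la,\vec\mu}(q)$ coming from $A\mathchar`-\mathsf{gmod}$ literally equals Shoji's form — rather than a transpose, a $q\mapsto q^{-1}$ variant, or a version twisted by a shift — is a genuine bookkeeping problem, and it is the crux of reducing ``a'' Lusztig--Shoji computation to ``the'' limit-symbol one. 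A secondary technical point is checking that $\mathsf{SW}$ respects all the relevant gradings and the duality $\star$ so that the Ext-orthogonality survives transport with the correct variance; granting Theorem \ref{fthm:cateq} this is routine but must be stated carefully. Modulo these identifications, the proof is a direct invocation of \cite{Kat15} applied to the category $A\mathchar`-\mathsf{gmod}$ equipped with the transported modules of Theorem \ref{fthm:maclim}.
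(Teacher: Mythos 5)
Your overall frame --- transport the modules through $\mathsf{SW}$, feed the Ext-orthogonality into the homological Lusztig--Shoji criterion of \cite{Kat15}, and get positivity for free from the fact that graded multiplicities of finite-dimensional modules lie in $\Z_{\ge 0}[q]$ --- is the same as the paper's (Lemma \ref{lem:ext}, Theorem \ref{thm:homLSalg}, Corollary \ref{cor:Kpos}). But there is a genuine gap at the place you label step (i), and you have in fact misidentified what needs to be proved there. Theorem \ref{thm:homLSalg} only applies to matrices that already satisfy the normalization of Theorem \ref{thm:LSalg}, namely $K^-_{\vec{\la},\vec{\mu}}=\delta_{\vec{\la},\vec{\mu}}$ whenever $\mathsf{a}(\vec{\la})\ge\mathsf{a}(\vec{\mu})$ (and the analogous condition for $K^+$ with $\vee$-twisted labels). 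This is a vanishing statement with respect to the $\mathsf{a}$-function preorder, and it does \emph{not} follow from the triangularity that the modules $W_{\vec{\la}}$, $W^{\flat}_{\vec{\la}}$ carry by construction: that triangularity is with respect to the order $\lhd$, which the paper explicitly points out \emph{differs} from the preorder entering the definition of $K^{\pm}$. So your proposal that the filtration order ``must be shown to coincide (or be compatible) with'' Shoji's order is not what happens; the two orders genuinely disagree, and bridging them is the main new content of Theorem \ref{thm:id}, which your outline neither supplies nor flags.

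Concretely, the paper proves the required $\mathsf{a}$-triangularity as follows: Proposition \ref{prop:socdeg} computes that the socle of $W_{\vec{\la}}$ sits in degree exactly $\mathsf{a}(\vec{\la})$ (a Demazure-module/grading-twist computation under $\Phi$), so $\mathsf{q}^{\mathsf{a}(\vec{\la})}(W_{\vec{\la}})^{\vee}$ is a cyclic quotient of $\C[X]$; Corollary \ref{cor:min}, using \cite{ATY}, shows that $\mathsf{a}(\vec{\la})$ is the minimal degree in which $L_{\vec{\la}^{\vee}}$ occurs in $\C[X]$ and that it occurs there with multiplicity one, which yields $[W_{\vec{\mu}}:V_{\vec{\la}}]_q=\delta_{\vec{\la},\vec{\mu}}$ for $\mathsf{a}(\vec{\la})\ge\mathsf{a}(\vec{\mu})$; and the corresponding estimate for the $W^{\flat}$-family is reduced to this one by a support comparison of graded characters with nonsymmetric Macdonald polynomials (Proposition \ref{prop:wt-est}, Corollaries \ref{cor:wt-est} and \ref{cor:gch}). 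By contrast, your step (iii) (matching the bilinear form $\Omega$ with Shoji's) is not the crux: $\Omega$ is defined from the projective covers in $A\mathchar`-\mathsf{gmod}$ and its identification with Shoji's data is already handled by \cite{Kat15} together with \cite{Sho02,Sho04}, the point being that the algorithm's output depends only on $\Omega$ and the $\mathsf{a}$-values. Without an argument for the $\mathsf{a}$-triangularity of the two multiplicity matrices, the invocation of the uniqueness in the Lusztig--Shoji algorithm does not go through.
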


For $\ell > 2$, the only previously known proof of the positivity of $K^-$ (\cite{FI18,Hu18,Sho18}) is based on a geometric realization \cite{FI18}, and the positivity of $K^+$ is new. Our construction provides an algebraic proof of the positivity of both Kostka polynomials attached to the limit symbols.

\begin{fcor}[$\doteq$ Corollary \ref{cor:Shoji} + Corollary \ref{cor:minchar}]\label{fcor:Shoji}
For each $\vec{\la} \in \Par_{m,\ell}$, the graded $A$-module $\mathsf{SW} ( W_{\vec{\la}} )^{\star}$ is the smallest quotient of $\C[X_1,\ldots,X_m]$ as graded $A$-modules that contains the socle of $\mathsf{SW} ( W_{\vec{\la}} )^{\star}$.
\end{fcor}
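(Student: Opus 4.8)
I would prove the statement in two steps, matching its announced decomposition into Corollary \ref{cor:Shoji} (realizing the module as a quotient of the polynomial ring) and Corollary \ref{cor:minchar} (the minimality). Write $M := \mathsf{SW}(W_{\vec\la})^{\star}$, $L := \mathrm{soc}(M)$, and $P := \C[X_1,\ldots,X_m]$ with $\deg X_i = 1$. For the first step the key remark is that $P$ is the projective cover in $A\mathchar`-\mathsf{gmod}$ of the simple module $\mathbf 1$, the trivial $G(\ell,1,m)$-representation placed in degree $0$: since $A = \C G(\ell,1,m) \ltimes P$ is $\Z_{\ge 0}$-graded with semisimple degree-zero part $\C G(\ell,1,m)$ and $P \cong A \otimes_{\C G(\ell,1,m)} \mathbf 1$ as a left $A$-module, $P$ is projective and indecomposable (its graded endomorphism ring being the graded-local ring $\C[X_1,\ldots,X_m]^{G(\ell,1,m)}$), with simple head $\mathbf 1$ concentrated in degree $0$. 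Hence a graded $A$-module is a quotient of $P$ exactly when it is generated in degree $0$ with one-dimensional, $G(\ell,1,m)$-trivial degree-zero component. By Theorem \ref{fthm:maclim}(3) the module $W_{\vec\la}$ has simple head and simple socle, so --- since $\mathsf{SW}$ is an equivalence and $\star$ an involution of $\gp\mathchar`-\mathsf{gmod}_m$ --- the same holds for $M$; it then remains only to identify $\mathrm{head}(M)$ with $\mathbf 1$ in degree $0$, which one reads off by transporting the explicit description of $W_{\vec\la}$ from \cite{FKM,FMO23,FKMO,FKM23} through $\mathsf{SW}$ and $\star$ (one extreme weight space, one-dimensional and of trivial type after transport), after which boundedness below forces cyclicity and yields the surjection $P \twoheadrightarrow M$. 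This is Corollary \ref{cor:Shoji}.

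For the second step, let $Q = P/I$ be any graded quotient of $P$ with $L \hookrightarrow Q$; I must produce a surjection $Q \twoheadrightarrow M$, equivalently show $I \subseteq K := \ker(P \twoheadrightarrow M)$. Dualizing via the contravariant, exact, inclusion-reversing graded duality $N \mapsto N^{\vee}$ on graded $A$-modules with finite-dimensional homogeneous pieces --- which carries quotients of $P$ to graded $\C[X_1,\ldots,X_m]$-submodules of $P^{\vee}$ and interchanges head and socle --- this amounts to showing that $M^{\vee}$ is the smallest graded submodule of $P^{\vee}$ admitting $L^{\vee}$ as a quotient. Since $M$ has simple socle, $M^{\vee}$ has simple head $L^{\vee}$ and is therefore a cyclic $\C[X_1,\ldots,X_m]$-module generated by a copy of $L^{\vee}$; the real content is that, among all such cyclic submodules of $P^{\vee}$, $M^{\vee}$ is the minimal one. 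I would deduce this from the homological form of the Lusztig--Shoji algorithm \cite{Kat15}: transporting the orthogonality of Theorem \ref{fthm:maclim}(2) through $\mathsf{SW}$ turns $\{\mathsf{SW}(W^{\flat}_{\vec\mu})\}$ and $\{\mathsf{SW}(W_{\vec\mu})^{\star}\}$ into $\mathrm{Ext}^{\bullet}_A$-orthogonal families off the diagonal, which --- in view of the identification of graded decomposition numbers in Theorem \ref{fcor:koslim} --- pins down $\gch(M)$ as the minimal, order-triangular solution of the associated Lusztig--Shoji system. Using projectivity of $P$ (so that $\mathrm{Ext}^{\ge 1}_A(Q,-)$ is controlled by the homogeneous module $I$) together with the known weight supports of the $\mathsf{SW}(W^{\flat}_{\vec\mu})$, one checks that any quotient $Q$ of $P$ containing $L$ is itself $\mathrm{Ext}^{\bullet}_A$-orthogonal to $\{\mathsf{SW}(W^{\flat}_{\vec\mu})\}_{\vec\mu\neq\vec\la}$, whence $\gch(Q) \ge \gch(M)$ coefficientwise; finally one upgrades this to an honest surjection using that $Q$ and $M$ are both quotients of $P$ with head $\mathbf 1$ in degree $0$ (so $\mathrm{Hom}_A(Q,M)\neq 0$ as soon as $I\subseteq K$), while $I\not\subseteq K$ would make $P/(I\cap K)$ a quotient of $P$ of strictly larger character still surjecting onto $M$ and still containing a copy of $L$ (because $(I+K)/K$ is then a nonzero submodule of $M$, hence contains $\mathrm{soc}(M)=L$), contradicting the minimality just obtained. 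This is Corollary \ref{cor:minchar}.

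The step I expect to be the genuine obstacle is precisely the claim that containing the simple socle $L$ already forces the $\mathrm{Ext}$-orthogonality against the $\mathsf{SW}(W^{\flat}_{\vec\mu})$ with $\vec\mu\neq\vec\la$ --- equivalently, the minimality of $M^{\vee}$ among the cyclic submodules of $P^{\vee}$ singled out above. This is exactly the point that has kept Shoji's conjecture open for $\ell > 2$: the purely combinatorial Lusztig--Shoji algorithm does not see it, and one seems to need the explicit affine-Lie-algebraic structure of the $W_{\vec\la}$ from \cite{FKM,FMO23,FKMO,FKM23}, which becomes available to us only through the equivalences of Theorems \ref{fthm:maclim} and \ref{fthm:cateq}.
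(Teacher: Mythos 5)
Your first step (realizing $\mathsf{SW}(W_{\vec{\la}})^{\star}$ as a cyclic quotient of $\C[X_1,\ldots,X_m]$ via the simple head/socle structure) is in line with what the paper does in Proposition \ref{prop:socdeg} and Corollary \ref{cor:min}, although the paper also has to pin down the exact degree $\mathsf{a}(\vec{\la})$ of the socle, which you leave to an unspecified ``transport of the explicit description.'' The genuine gap is in your second step, and you name it yourself: the claim that any graded quotient $Q$ of $\C[X]$ containing the socle is automatically $\mathrm{ext}^{\bullet}$-orthogonal to the family $\{\mathsf{SW}(W^{\flat}_{\vec{\mu}})\}_{\vec{\mu}\neq\vec{\la}}$ is not proved, and there is no reason to expect it: the off-diagonal Ext-vanishing is a property that essentially characterizes the modules $W_{\vec{\la}}$ among all quotients with the given socle type ($\C[X]$ itself is such a quotient and certainly admits nonzero homomorphisms from many $\mathsf{SW}(W^{\flat}_{\vec{\mu}})$-duals), so deducing minimality from the Lusztig--Shoji orthogonality is circular. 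The concluding bootstrap from a coefficientwise character inequality $\gch(Q)\ge\gch(M)$ to an actual surjection $Q\twoheadrightarrow M$ is also not carried out correctly: considering $P/(I\cap K)$ produces a quotient of larger character, which does not contradict the inequality you derived.

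The paper closes exactly this gap by a completely different, non-homological argument (Corollary \ref{cor:minchar}). One restricts to the subgroup $W_0=\prod_{k,j}\bigl(\Sym_{\la_j^{(k)}}\ltimes\Gamma^{|\la_j^{(k)}|}\bigr)\subset W$ and exhibits, inside the minimal-degree copy of $L_{\vec{\la}}^{\vee}$ in $\C[X]$, an explicit one-dimensional $W_0$-subrepresentation $L$ spanned by $\prod_{k,j}\bigl(\prod_i X_i^{k-1}\bigr)\prod_{i<i'}(X_i^{\ell}-X_{i'}^{\ell})$. Stanley's theorem (\cite[Theorem 3.1]{Sta77}) gives that $\mathrm{Hom}_{W_0}(L,\C[X])$ is a free $\C[X]^{W_0}$-module of rank one; dualizing via the pairing of $\C[X]$ with $\C[\partial_1,\ldots,\partial_m]$, any occurrence of this $W_0$-type in the dual of a quotient can be pushed down to its minimal-degree realization by applying $W_0$-invariant constant-coefficient differential operators. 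Hence the dual of any graded quotient of $\C[X]$ containing $L_{\vec{\la}^{\vee}}$ contains a copy of $W_{\vec{\la}}$ (suitably shifted) inside $\C[\partial_1,\ldots,\partial_m]$, and dualizing back yields the required surjection onto $\mathsf{q}^{\mathsf{a}(\vec{\la})}(W_{\vec{\la}})^{\vee}$. The homological Lusztig--Shoji input (Lemma \ref{lem:ext}, Theorem \ref{thm:homLSalg}) is used in the paper only to identify the graded multiplicities with the Kostka polynomials $K^{\pm}$, not to prove minimality; some substitute for the invariant-theoretic freeness argument is what your proposal is missing.
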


Corollary \ref{fcor:Shoji} settles a conjecture of Shoji \cite[\S 3.13]{Sho04}. In addition, it characterizes a family of polynomials and their duals, in a manner reminiscent to the case when $\ell = 1$ (\cite{HS77,DP81,Tan82}). Thus, we might say that
\begin{center}
the two families $K^{\pm}$ are {\it the} Kostka polynomials of $G(\ell,1,m)$,
\end{center}
setting aside numerous other cases (see e.g. \cite{Kat15,Hai03}).

Regarding this, we note a non-trivial aspect in our proof of Corollary \ref{fcor:koslim}: the partial order coming from the definition of $K^{\pm}$ {\it differs from} the partial order that naturally defines $\{ W_{\vec{\la}} \}_{\vec{\la}}$. Nevertheless, analysis of $W_{\vec{\la}}$ (implicitly) introduces a finer order that intertwines these two different partial orders.

Our construction leads to an answer to the question posed in \cite[Remark 2.16]{Sho18} (Remark \ref{rem:multigr}). In addition, we have an isomorphism $W_{\vec{\la}} \cong W_{\vec{\la}}^{\flat}$ ($\vec{\la} \in \Par_{m,2}$) offered by \cite{Kat17} that is non-trivial from our construction (Remark \ref{rem:inc}).

The organization of this paper is as follows: In section one, we recall the necessary results in this paper and rearrange them into forms suitable for our purpose. In particular, we explain Theorem \ref{fthm:maclim}. In section two, we prove Theorem \ref{fthm:cateq}. In section three, we analyze the structure of the modules $W_{\vec{\la}}$ and $W^{\flat}_{\vec{\la}}$ to prove Corollaries \ref{fcor:koslim} and \ref{fcor:Shoji}. We close this paper by exhibiting example calculations in section four.

\section{Preliminaries}
We work over $\C$. Throughout this paper, we fix positive integers $\ell > 1$ and $n \ge m \ge 1$. Let $\{\varepsilon_i\}_{i=1}^{n\ell}$ denote the standard basis of $\Z^{n\ell}$.
We define
$$\Z^{n\ell}_+ := \left( \sum_{1 \le i < j \le n \ell} \Z_{\ge 0} (\varepsilon_i - \varepsilon_j) \right) \setminus \{0\} \hskip 3mm \text{and} \hskip 3mm \Z^{n\ell}_{\ge 0} =: ( \Z_{\ge 0})^{n\ell}.$$
A graded vector space refers to a $\Z$-graded vector space unless stated otherwise. For a graded vector space $V = \bigoplus_{i \in \Z} V_i$, we define its graded dimension as
$$\gdim \, V := \sum_{i \in \Z} q^i \dim \, V_i.$$
For a graded vector space $M$, its restricted dual is defined to be
\begin{equation}
M^{\vee} := \bigoplus_{i \in \Z} (M_{-i})^* \hskip 5mm \text{where} \hskip 5mm M = \bigoplus_{i\in \Z} M_i.\label{eqn:restdual}
\end{equation}

For $a \in \Q$, we set
$$\lceil a \rceil := \min \{r \in \Z \mid r \ge a\} \hskip 5mm \text{and} \hskip 5mm \lfloor a \rfloor := \max \{r \in \Z \mid r \le a\}.$$
Let $\C [X] := \C[X_1,\ldots,X_{m}]$ and $\C [x^{\pm 1}] := \C[x_1^{\pm 1},\ldots,x_{n\ell}^{\pm 1}]$ be the polynomial ring in $m$-variables and the Laurent polynomial ring in $n\ell$-variables, respectively. For a (graded) module $M$ of a (graded) algebra $B$, the head $\mathsf{hd} \, M$ of $M$ is the largest semisimple (graded) $B$-quotient of $M$, and the socle $\mathsf{soc}\, M$ of $M$ is the largest semisimple (graded) $B$-submodule of $M$.

\subsection{Lie algebras}
Let $\Gamma := ( \Z / \ell \Z )$ and consider a $\Gamma$-module $V$ defined as:
$$V = \C^n \otimes \C \Gamma \cong \C^n \oplus ( \C^n \otimes \chi ) \oplus ( \C^n \otimes \chi^2 ) \oplus \cdots \oplus (\C^n \otimes \chi^{\ell -1}),$$
where $\chi : \Gamma \to \C^{\times}$ is the character that sends $a \in \Z$ to $e^{\frac{2a\pi\sqrt{-1}}{\ell}}$. We have
$$\mathfrak{gl}(n,\C)^{\oplus \ell} \equiv \mathfrak{gl}(n)^{\oplus \ell} = \mathrm{End} (\C^n)^{\oplus \ell} = \mathrm{End}_\Gamma (V) \subset \mathrm{End} (V) = \mathfrak{gl}(n \ell,\C) \equiv \mathfrak{gl}(n \ell).$$
The $\Gamma$-action on $V$ induces a $\Gamma$-action on $V \boxtimes V^* \cong \mathfrak{gl}(n \ell)$ that we denote by $\rho_{\Gamma}$. We set
$$\g^{\sharp} := \mathfrak{gl}(n\ell) \otimes \C [z]$$
and regard it as a graded Lie algebra by setting
$$\deg \, ( \xi \otimes z^m ) = m \hskip 5mm \text{for} \hskip 5mm \xi \in \mathfrak{gl}(n\ell), m \in \Z.$$
We consider the $\Gamma$-action on $\g^{\sharp}$ defined by
$$a.(\xi \otimes z^m ) := ( \rho_{\Gamma} (a)(\xi) \otimes e^{\frac{2am\pi\sqrt{-1}}{\ell}} z^m ) \hskip 5mm \xi \in \mathfrak{gl}(n\ell), a,m \in \Z.$$
Let $E_{ij} \in \mathfrak{gl}(n\ell)$ ($1\le i,j\le n\ell$) denotes the standard matrix unit. Then, $\{E_{ij} \otimes z^k\}_{1\le i,j \le n \ell, k \ge 0}$ forms a basis of $\g^\sharp$. Let $\gp$ denote the $\Gamma$-fixed part of $\g^{\sharp}$. We set $\gb_0 := \bigoplus_{i \le j} \C E_{ij},$ $\gb^{\sharp} := (\gb_0 + \g^{\sharp}z),$ and $\gb := (\gb^{\sharp} \cap \gp)$.

We define
$$\gp_0^\sharp := \bigoplus_{\lceil \frac{i}{n} \rceil - \lceil \frac{j}{n} \rceil \le 0} \C E_{ij} \subset \mathfrak{gl} (n\ell), \hskip 4mm \text{and} \hskip 4mm \gp^\sharp := \gp_0^\sharp + \gb^\sharp.$$
We have $(\gp_0^\sharp \cap \gp ) \cong \mathfrak{gl}(n)^{\oplus \ell}$ by inspection. 

\begin{lem}\label{lem:piso}
We have an isomorphism $\gp \cong \gp^{\sharp}$ as Lie algebras.	
\end{lem}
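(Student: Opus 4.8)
The plan is to write down an explicit linear isomorphism between the two spaces and then check it respects brackets. Write $\zeta := e^{2\pi\sqrt{-1}/\ell}$ and, for $1 \le i \le n\ell$, set $s_i := \lceil i/n\rceil - 1 \in \{0,1,\dots,\ell-1\}$; then $E_{ij}$ spans a weight space for $\rho_\Gamma(1) = \mathrm{Ad}(\mathrm{diag}(\zeta^{s_1},\dots,\zeta^{s_{n\ell}}))$ with $\rho_\Gamma(1)(E_{ij}) = \zeta^{s_i - s_j}E_{ij}$, so $a.(E_{ij}\otimes z^k) = \zeta^{a(s_i - s_j + k)}(E_{ij}\otimes z^k)$. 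Taking $\Gamma$-invariants yields the description
$$\gp = \bigoplus_{\substack{1 \le i,j \le n\ell,\ k \ge 0 \\ s_i - s_j + k \,\in\, \ell\Z}} \C\,(E_{ij}\otimes z^k).$$
On the other side, $i \le j$ implies $s_i \le s_j$, so $\gb_0 \subseteq \gp_0^\sharp$, whence $\gp^\sharp = \gp_0^\sharp \oplus \bigoplus_{k \ge 1}(\mathfrak{gl}(n\ell)\otimes z^k)$; that is, $\gp^\sharp$ has basis $\{E_{ij}\otimes z^k : k \ge 1\} \cup \{E_{ij} : s_i \le s_j\}$. (Both $\gp$ and $\gp^\sharp$ are Lie subalgebras of $\g^\sharp$: the first is a fixed-point subalgebra, and $\gp^\sharp = \gp_0^\sharp + \g^\sharp z$ with $\gp_0^\sharp$ a parabolic subalgebra of $\mathfrak{gl}(n\ell)$ and $\g^\sharp z$ an ideal of $\g^\sharp$.)

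I would then define $\Phi\colon \gp^\sharp \to \g^\sharp$ on the above basis by
$$\Phi(E_{ij}\otimes z^k) := E_{ij}\otimes z^{\,\ell k + s_j - s_i}$$
and check three things. First, $\Phi$ is well defined, i.e.\ the exponent is $\ge 0$: for $k \ge 1$ it is at least $\ell - (\ell-1) = 1$, and for $k = 0$ only the vectors $E_{ij}$ with $s_i \le s_j$ occur, for which the exponent is $s_j - s_i \ge 0$. Second, the image lies in $\gp$, since $s_i - s_j + (\ell k + s_j - s_i) = \ell k \in \ell\Z$. Third, $\Phi$ is a bijection onto $\gp$: given $E_{ij}\otimes z^{k'} \in \gp$ (so $k' \ge 0$ and $k' - s_j + s_i \in \ell\Z$), set $k := (k' - s_j + s_i)/\ell \in \Z$; if $s_i \le s_j$ then $k' \ge 0$ and $k' \equiv s_j - s_i$ force $k \ge 0$, with $k = 0$ only when $k' = s_j - s_i$, in which case $E_{ij} \in \gp_0^\sharp$; if $s_i > s_j$ then $k' \ge 0 > s_j - s_i$ forces $k \ge 1$. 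In every case $E_{ij}\otimes z^k \in \gp^\sharp$ and $\Phi(E_{ij}\otimes z^k) = E_{ij}\otimes z^{k'}$, so $\Phi$ is surjective; injectivity is clear because $(i,j,k) \mapsto (i,j,\ell k + s_j - s_i)$ is injective.

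Finally I would verify that $\Phi$ is a homomorphism by working on basis vectors through $[E_{ij}\otimes z^a,\, E_{pq}\otimes z^b] = \delta_{jp}\,E_{iq}\otimes z^{a+b} - \delta_{qi}\,E_{pj}\otimes z^{a+b}$. The point is that the Kronecker deltas align the exponents: when $j = p$ we have $s_j = s_p$, hence $(\ell a + s_j - s_i) + (\ell b + s_q - s_p) = \ell(a+b) + s_q - s_i$, which is exactly the exponent of $\Phi(E_{iq}\otimes z^{a+b})$; likewise $q = i$ forces $s_q = s_i$ in the second term. Thus $[\Phi(x),\Phi(y)] = \Phi([x,y])$ for all basis vectors $x,y$, and $\Phi$ is the desired isomorphism $\gp^\sharp \xrightarrow{\ \sim\ } \gp$ of Lie algebras.

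I do not expect a genuine obstacle; the only delicate point is the bookkeeping separating $z$-degree $0$ (where $\gp^\sharp$ is only the parabolic $\gp_0^\sharp$) from $z$-degree $\ge 1$ (where it is all of $\mathfrak{gl}(n\ell)$). It is worth recording that $\Phi$ is \emph{not} an isomorphism of graded Lie algebras for the grading $\deg(\xi\otimes z^m) = m$ — the graded dimensions already disagree in degree $0$ — but it does intertwine that grading on $\gp$ with the regrading $\deg'(E_{ij}\otimes z^k) := \ell k + s_j - s_i$ of $\gp^\sharp$; conceptually $\Phi$ is conjugation by the fractional loop $\mathrm{diag}(z^{-s_i/\ell})$ followed by $z \mapsto z^\ell$, the familiar way one matches parahoric subalgebras attached to different points, with no twist appearing because $\rho_\Gamma(1)$ is inner.
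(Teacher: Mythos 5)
Your proof is correct and follows essentially the same route as the paper: you write down exactly the map $E_{ij}\otimes z^{k}\mapsto E_{ij}\otimes z^{\ell k-\lceil i/n\rceil+\lceil j/n\rceil}$ used in the paper's proof and verify well-definedness, bijectivity, and bracket compatibility. The only difference is that you spell out the details the paper leaves to ``inspection,'' together with a harmless side remark on the (non-)compatibility with the $z$-grading.
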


\begin{proof}
We have $E_{ij} \otimes z^{k} \in \gp$ if and only if
$$\lceil \frac{i}{n} \rceil - \lceil \frac{j}{n} \rceil + k \equiv 0 \mod \ell.$$
Thus, the map
\begin{equation}
\gp^{\sharp} \ni E_{ij} \otimes z^{k} \mapsto E_{ij} \otimes z^{\ell k - \lceil \frac{i}{n} \rceil + \lceil \frac{j}{n} \rceil} \in \gp\label{eqn:twist}
\end{equation}
is a well-defined injective map since
$$- \ell < \lceil \frac{i}{n} \rceil - \lceil \frac{j}{n} \rceil \le 0 \le \lceil \frac{j}{n} \rceil - \lceil \frac{i}{n} \rceil \hskip 5mm 1 \le i < j \le n \ell.$$
It is surjective and preserves the Lie bracket by inspection.
\end{proof}

Let $\Phi$ be the isomorphism in (\ref{eqn:twist}). By allowing negative powers of $z$, we have an ambient Lie algebra $\g \supset \gp$ obtained by sending $\g^{\sharp}$ by the same rule as in (\ref{eqn:twist}). Note that $\g$ is a Lie subalgebra of $\mathfrak{gl}(n\ell) \otimes \C [z,z^{-1}]$ instead of $\g^{\sharp}$.

Let $\Sym_{n\ell}$ be the symmetric group acting on $\C [x^{\pm 1}] $ by the permutation on indices of the variables $x_{\bullet}$. Let $\Sym$ denote the $\ell$-fold product of $\Sym_n$ that permutes the first $n$-indices, the second $n$-indices, $\ldots$ in $\{1,2,\ldots,n\ell\}$. We have $\Sym \subset \Sym_{n\ell}$. Let $u_0 \in \Sym$ be the longest element, that is the product of $\ell$-copies of longest elements of $\Sym_n$.

\begin{rem}\label{rem:grading}
The Lie algebra $\g$ is a multi-graded Lie algebra by introducing another degree $\mathtt{deg}$ valued in $\Z^{\ell}$ with its basis $\{\mathtt{e}_i\}_{i=1}^{\ell}$ as:
\begin{align*}
\mathtt{deg} \, E_{ij} \otimes z^{k\ell} & := k \mathtt{e}_1 + \cdots + k\mathtt{e}_{\ell} \hskip 5mm \text{if}\hskip 5mm \lceil \frac{i}{n} \rceil \equiv \lceil \frac{j}{n} \rceil, \hskip 2mm k \in \Z_{\ge 0}\\
\mathtt{deg} \, E_{ij} \otimes z^{- r_2 + r_1} & := \mathtt{e}_{r_1-1}+ \mathtt{e}_{r_1-2} + \cdots + \mathtt{e}_{r_2} \hskip 5mm \text{if}\hskip 5mm r_1 = \lceil \frac{j}{n} \rceil > r_2 = \lceil \frac{i}{n} \rceil\\
\mathtt{deg} \, E_{ij} \otimes z^{\ell - r_2 + r_1} & := \mathtt{e}_{r_1-1} + \cdots + \mathtt{e}_1 + \mathtt{e}_n + \cdots + \mathtt{e}_{r_2} \hskip 5mm \text{if}\hskip 5mm r_1 = \lceil \frac{j}{n} \rceil < r_2 = \lceil \frac{i}{n} \rceil.
\end{align*}
Note that $\mathtt{e}_i \mapsto 1$ ($1 \le i \le \ell$) recovers our grading.
\end{rem}

\subsection{Partitions}
We follow conventions in Macdonald \cite[Chap.~I]{Mac95} with the understanding that the length of partitions are $\le n$. For a partition $\la = ( \la_1,\la_2,\ldots,\la_n)$, we set $|\la| := \sum_{j=1}^n \la_n$ and call it the size of $\la$. Let $\Par_m$ denote the set of partitions of size $m$ (we have $m \le n$ by convention). The conjugate partition $\la'$ of $\la \in \Par_m$ is defined as $(\la')_i :=\{ j \ge 1 \mid \la_j \ge i \}$ for $1 \le i \le n$. We set
$$\mathsf{u} ( \la ) := \sum_{j = 1}^n \frac{\la_j(\la_j - 1)}{2} = \sum_{j = 1}^n (j-1)(\la')_j \hskip 10mm \la \in \Par_m,$$
where $\mathsf{n} ( \la ) = \mathsf{u} (\la')$ in the convention of \cite{Mac95}. For a partition $\la$ of $r$, let us denote $L_{\la}$ the irreducible $\Sym_{r}$-module under the convention
$$L_{(1^r)} = \mathsf{triv}, \hskip 5mm \text{and} \hskip 5mm L_{(r)} = \mathsf{sgn}.$$
This convention is not standard and the adjustment to the convention $L_{(r)} = \mathsf{triv}$ is given by tensoring $\mathsf{sgn}$, that results on taking the conjugate of a partition.

A collection of $\ell$-tuple of partitions $\vec{\la} = ( \la^{(1)},\la^{(2)},\ldots,\la^{(\ell)})$ is called a $\ell$-partition. The size $|\vec{\la}|$ of $\vec{\la}$ is understood to be $\sum_{i = 1}^{\ell} |\la^{(i)}|$. Let $\Par_{m,\ell}$ denote the set of $\ell$-partitions of size $m$. For a $\vec{\la} \in \Par_{m,\ell}$, we define the (ordered) sequence $\vec{\la}^{\mathtt{tot}} = \{\la_i^{\mathtt{tot}}\}_{i=1}^{n\ell} \in \Z_{\ge 0}^{n\ell}$ by listing the parts of each $\lambda^{(k)}$ in reverse order (from smallest to largest index), concatenated together:
$$\la^{(1)}_n,\la^{(1)}_{n-1},\ldots,\la^{(1)}_1,\la^{(2)}_n,\ldots,\la^{(2)}_1,\ldots,\la^{(\ell)}_n,\ldots,\la^{(\ell)}_1.$$
We define $\|\vec{\la}\| \in \Par_m$ as the permutation of $\{\la_i^{\mathtt{tot}}\}_i$ with $(\ell-1)n$ tuples of $0$ are omitted. For $\vec{\la} = ( \la^{(1)},\la^{(2)},\ldots,\la^{(\ell)}) \in \Par_{m,\ell}$, we define its componentwise conjugate as $\vec{\la}' := ( (\la^{(1)})',(\la^{(2)})',\ldots,(\la^{(\ell)})') \in \Par_{m,\ell}$.

The dominance order on $\Par_m$ is defined as follows:
$$\la \le \mu \hskip 5mm \text{if and only if}\hskip 5mm \sum_{j=1}^k \la_k \le \sum_{j=1}^k \mu_k \hskip 3mm \text{for each} \hskip 3mm 1 \le k \le n.$$

For $\vec{\la}, \vec{\mu} \in \Par_{m,\ell}$, we define a partial order $\lhd$ as:
$$\vec{\la} \lhd \vec{\mu} \hskip 5mm \text{if and only if}\hskip 5mm \|\vec{\la}\| < \|\vec{\mu}\| \hskip 5mm \text{or} \hskip 5mm \|\vec{\la}\| = \|\vec{\mu}\| \hskip 3mm \text{and} \hskip 3mm \vec{\la}^{\mathtt{tot}} - \vec{\mu}^{\mathtt{tot}} \in \Z^{n\ell}_+.$$

We define the $a$-function on $\Par_{m,\ell}$ as:
\begin{equation}
\mathsf{a} ( \vec{\la}) := \ell \sum_{j=1}^{\ell} \mathsf{u} ( \la^{(j)} ) + \sum_{j=1}^{\ell} (j-1) |\la^{(j)}|.\label{eqn:alim}
\end{equation}
We remark that the $\mathsf{a}$-value of $\vec{\la} \in \Par_{m,\ell}$ in \cite[(1.3)]{Sho04} is $\mathsf{a} ( \vec{\la}')$ in our notation.
\begin{ex}
The smallest element in $\Par_{m,\ell}$ is $((1^m)(\emptyset)^{\ell-1})$, and the largest element in $\Par_{m,\ell}$ is $((\emptyset)^{\ell-1}(m))$ with respect to $\lhd$. We have
$$\mathsf{a} ((1^m)(\emptyset)^{\ell-1}) =0, \hskip 5mm \text{and} \hskip 5mm \mathsf{a} ((\emptyset)^{\ell-1}(m)) = \ell \frac{m(m+1)}{2} - m.$$
\end{ex}

For $\vec{\la} \in \Par_{m,\ell}$, we set
\begin{align*}
(\vec{\la})^{\vee} & := ( (\la^{(1)}),(\la^{(\ell)}),(\la^{(\ell-1)}),\ldots,(\la^{(2)})) \in \Par_{m,\ell}, \\
(\vec{\la})^{*} & := ( (\la^{(\ell)}),(\la^{(\ell-1)}),\ldots,(\la^{(2)}),(\la^{(1)})) \in \Par_{m,\ell}, \hskip 5mm \text{and}\\
(\vec{\la})^{\theta} & := ( (\la^{(\ell)}),(\la^{(1)}),(\la^{(2)}),\ldots,(\la^{(\ell-1)})) \in \Par_{m,\ell}.
\end{align*}
\begin{lem}\label{lem:Pconj}
For each $\vec{\la} \in \Par_{m,\ell}$, we have
$$(\vec{\la})^{\vee} = ( \vec{\la}^* )^{\theta}, \hskip 5mm ( \vec{\la}^{\vee} )^{\vee} = \vec{\la}, \hskip 5mm \text{and} \hskip 5mm ( \vec{\la}^{*} )^{*} = \vec{\la}.$$
\end{lem}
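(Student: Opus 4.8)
The plan is to observe that each of the three operations $\vee$, $*$, and $\theta$ merely permutes the $\ell$ components of $\vec{\la}$ without altering any individual partition, so that to each symbol $\sigma \in \{\vee, *, \theta\}$ there is attached a permutation $w_\sigma \in \Sym_\ell$ with $(\sigma\vec{\la})^{(i)} = \la^{(w_\sigma(i))}$ for all $1 \le i \le \ell$ and all $\vec{\la} \in \Par_{m,\ell}$. Reading the definitions directly off the statement, I would record
\begin{align*}
w_*(k) &= \ell + 1 - k \quad (1 \le k \le \ell); \\
w_\theta(1) &= \ell, \qquad w_\theta(k) = k - 1 \quad (2 \le k \le \ell); \\
w_\vee(1) &= 1, \qquad w_\vee(k) = \ell + 2 - k \quad (2 \le k \le \ell).
\end{align*}

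The one bookkeeping point to fix is the order of composition: if one applies $\sigma$ to $\vec{\la}$ and then $\sigma'$ to the result, the $i$-th component becomes $(\sigma\vec{\la})^{(w_{\sigma'}(i))} = \la^{(w_\sigma(w_{\sigma'}(i)))}$, so the composite operation is governed by $w_\sigma \circ w_{\sigma'}$. Under this convention the three claimed identities translate into $w_\vee = w_* \circ w_\theta$, $w_\vee \circ w_\vee = \mathrm{id}$, and $w_* \circ w_* = \mathrm{id}$, respectively. Each is then a one-line check on indices: $w_*$ is the order-reversing involution $k \mapsto \ell + 1 - k$; $w_\vee$ fixes $1$ and swaps $k$ with $\ell + 2 - k$ for $2 \le k \le \ell$, hence is again an involution; and for $2 \le k \le \ell$ one computes $w_*(w_\theta(k)) = w_*(k - 1) = \ell + 2 - k = w_\vee(k)$, while $w_*(w_\theta(1)) = w_*(\ell) = 1 = w_\vee(1)$, which yields $w_\vee = w_* \circ w_\theta$.

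I do not expect any genuine obstacle here: this is a purely combinatorial identity about permuting $\ell$-tuples, and the only mild trap is getting the direction of composition right, which is exactly why it is convenient to encode the three operations by the permutations $w_\sigma$ from the outset. One could alternatively just expand both sides of each identity componentwise and match entries, but the permutation formulation makes the underlying symmetry transparent, and the permutations $w_\sigma$ will reappear when these operations are matched with symmetries of $\gp\mathchar`-\mathsf{gmod}_m$.
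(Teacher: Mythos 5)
Your proposal is correct: the permutations $w_*$, $w_\theta$, $w_\vee$ are read off accurately from the definitions, the composition convention $w_\sigma \circ w_{\sigma'}$ is handled in the right order, and the three index checks verify exactly the claimed identities. The paper dismisses this lemma as ``straight-forward,'' and your componentwise verification is precisely the intended argument, just written out explicitly.
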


\begin{proof}
Straight-forward.
\end{proof}

\begin{lem}\label{lem:order}
Let $\vec{\la}, \vec{\mu} \in \Par_{m,\ell}$ be such that $\|\vec{\la} \| =\| \vec{\mu}\|$. We fix $\la \in \Sym . \vec{\la}^{\mathtt{tot}}$, where $\Sym \subset \Sym_{n\ell}$ acts on $\Z^{n\ell}$ by the permutation of indices. If we have $\vec{\mu}^{\mathsf{tot}} \in ( \la + \Z^{n\ell}_{+})$, then we have $\vec{\mu} \unlhd \vec{\la}$.	
\end{lem}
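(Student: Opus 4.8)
The plan is to unwind the definition of $\lhd$ and reduce everything to a statement purely about the sequences $\vec{\la}^{\mathtt{tot}}$ and $\vec{\mu}^{\mathtt{tot}}$ in $\Z_{\ge 0}^{n\ell}$, using the dominance order on $\Par_m$ as the decisive invariant. Since $\|\vec{\la}\| = \|\vec{\mu}\|$ is assumed, the order $\lhd$ between $\vec{\la}$ and $\vec{\mu}$ is governed entirely by whether $\vec{\mu}^{\mathtt{tot}} - \vec{\la}^{\mathtt{tot}} \in \Z^{n\ell}_+$; so the goal becomes: from $\vec{\mu}^{\mathtt{tot}} \in \la + \Z^{n\ell}_+$ with $\la$ a permutation of $\vec{\la}^{\mathtt{tot}}$ by an element of $\Sym$, deduce $\vec{\mu}^{\mathtt{tot}} - \vec{\la}^{\mathtt{tot}} \in \Z^{n\ell}_{\ge 0}$-combination of simple root differences, i.e. $\vec{\mu} \unlhd \vec{\la}$.

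First I would record the combinatorial meaning of ``$v \in w + \Z^{n\ell}_+$'' for $v, w \in \Z^{n\ell}$: it is equivalent to saying $v \neq w$ and all partial sums of $w$ dominate those of $v$ after sorting into weakly decreasing order — more precisely, it is the statement that $\mathrm{sort}(w)$ dominates $\mathrm{sort}(v)$ in the usual dominance order, together with $\sum_i w_i = \sum_i v_i$ (the root lattice condition) and $w\neq v$. The key elementary fact here is that adding a nonnegative combination of $\varepsilon_i - \varepsilon_j$ ($i<j$) to a vector can only move it down in dominance after sorting, and conversely any two integer vectors with the same coordinate sum related by dominance-after-sorting differ by such a combination; this is the standard ``transfer'' lemma for the dominance order (see Macdonald I). Next I would observe that sorting is insensitive to the $\Sym$-action in the following sense: $\mathrm{sort}(\la) = \mathrm{sort}(\vec{\la}^{\mathtt{tot}})$ since $\la \in \Sym.\vec{\la}^{\mathtt{tot}}$ and permutations do not change the sorted sequence. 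Hence $\vec{\mu}^{\mathtt{tot}} \in \la + \Z^{n\ell}_+$ gives $\mathrm{sort}(\vec{\la}^{\mathtt{tot}})$ dominates $\mathrm{sort}(\vec{\mu}^{\mathtt{tot}})$ and both have coordinate sum $m$.

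Then I would translate this back. The sorted sequence $\mathrm{sort}(\vec{\la}^{\mathtt{tot}})$ is exactly $\|\vec{\la}\|$ padded with $(\ell-1)n$ zeros, by the definition of $\|\cdot\|$; likewise for $\vec{\mu}$. So the dominance relation between the sorted sequences is equivalent to $\|\vec{\mu}\| \le \|\vec{\la}\|$ in $\Par_m$. But we are \emph{given} $\|\vec{\la}\| = \|\vec{\mu}\|$, so this dominance is an equality of partitions and imposes no new constraint — which is consistent, and means the real content must come from comparing $\vec{\mu}^{\mathtt{tot}}$ with $\vec{\la}^{\mathtt{tot}}$ directly rather than their sorts. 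Here is the crux: we must upgrade ``$\vec{\mu}^{\mathtt{tot}} \in \la + \Z^{n\ell}_+$ for \emph{some} $\Sym$-conjugate $\la$ of $\vec{\la}^{\mathtt{tot}}$'' to ``$\vec{\mu}^{\mathtt{tot}} \in \vec{\la}^{\mathtt{tot}} + \Z^{n\ell}_+$.'' This should follow because $\vec{\la}^{\mathtt{tot}}$ is the \emph{$\Sym$-antidominant} representative in its orbit: each block $\la^{(k)}_n, \ldots, \la^{(k)}_1$ is weakly increasing, so $\vec{\la}^{\mathtt{tot}}$ is the unique element of $\Sym.\vec{\la}^{\mathtt{tot}}$ that is minimal with respect to $\Z^{n\ell}_+$ within its $\Sym$-orbit — equivalently $\vec{\la}^{\mathtt{tot}} \in \la + (\text{$\Sym$-part of }\Z^{n\ell}_+)$ for every $\la \in \Sym.\vec{\la}^{\mathtt{tot}}$, since $\Sym$ is generated by the simple reflections swapping adjacent equal-block indices, and each such swap, applied to bring a larger entry before a smaller one, subtracts a positive root. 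Chaining: $\vec{\mu}^{\mathtt{tot}} \in \la + \Z^{n\ell}_+ \subseteq \la + \Z^{n\ell}_+ + \Z^{n\ell}_{\ge 0}\text{-root-cone} \ni \vec{\la}^{\mathtt{tot}}$, and since $\Z^{n\ell}_+ \cup\{0\}$ is closed under addition, we get $\vec{\mu}^{\mathtt{tot}} \in \vec{\la}^{\mathtt{tot}} + (\Z^{n\ell}_+\cup\{0\})$; combined with $\|\vec{\la}\| = \|\vec{\mu}\|$, the definition of $\lhd$ yields $\vec{\mu} \unlhd \vec{\la}$. The main obstacle I anticipate is precisely this last bookkeeping step — showing the antidominant representative $\vec{\la}^{\mathtt{tot}}$ sits below every orbit member in the $\Z^{n\ell}_+$-order while keeping track of the fact that $\Sym$ only permutes \emph{within} blocks (so one genuinely uses that each block of $\vec{\la}^{\mathtt{tot}}$ is sorted increasingly), and handling the degenerate case $\vec{\mu}^{\mathtt{tot}} = \la$ which forces $\vec{\mu}^{\mathtt{tot}}$ a permutation of $\vec{\la}^{\mathtt{tot}}$ and hence, via the sorted-sequence identity and $\|\vec\la\|=\|\vec\mu\|$ already in hand, still gives $\vec\mu\unlhd\vec\la$ (indeed then one should check $\vec\mu=\vec\la$, or at worst $\vec{\mu}^{\mathtt{tot}}-\vec{\la}^{\mathtt{tot}}\in\Z^{n\ell}_+$).
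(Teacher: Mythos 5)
Your overall strategy is the one the paper uses: compare $\vec{\mu}^{\mathtt{tot}}$ with $\vec{\la}^{\mathtt{tot}}$ directly by exploiting that $\vec{\la}^{\mathtt{tot}}$ is the block-wise weakly increasing representative of its $\Sym$-orbit, so that $\la$ and $\vec{\la}^{\mathtt{tot}}$ differ by an element of the cone $\Z^{n\ell}_+\cup\{0\}$ which can be added to $\vec{\mu}^{\mathtt{tot}}-\la\in\Z^{n\ell}_+$. However, you state the pivotal orbit comparison with the direction reversed, and as written the argument fails there. Since each block of $\vec{\la}^{\mathtt{tot}}$ is weakly \emph{increasing} while $\Z^{n\ell}_+$ is spanned by $\varepsilon_i-\varepsilon_j$ with $i<j$, sorting $\la=\sigma\vec{\la}^{\mathtt{tot}}$ back into block-increasing order by adjacent swaps \emph{subtracts} nonnegative multiples of positive roots, so the true statement is $\la-\vec{\la}^{\mathtt{tot}}\in\Z^{n\ell}_+\cup\{0\}$, i.e.\ $\la\in\vec{\la}^{\mathtt{tot}}+(\Z^{n\ell}_+\cup\{0\})$, which is exactly what the paper claims. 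You assert the opposite, ``$\vec{\la}^{\mathtt{tot}}\in\la+(\text{$\Sym$-part of }\Z^{n\ell}_+)$,'' which is false whenever $\la\neq\vec{\la}^{\mathtt{tot}}$: for a single block with partition $(2,1)$ the relevant segment of $\vec{\la}^{\mathtt{tot}}$ is $(1,2)$, the swapped $\la$ is $(2,1)$, and $(1,2)-(2,1)=-(\varepsilon_i-\varepsilon_j)\notin\Z^{n\ell}_+$. Moreover the chain you then draw, ``$\vec{\mu}^{\mathtt{tot}}\in\la+\Z^{n\ell}_+\subseteq\la+\Z^{n\ell}_+ +\mathrm{cone}\ni\vec{\la}^{\mathtt{tot}}$, hence $\vec{\mu}^{\mathtt{tot}}\in\vec{\la}^{\mathtt{tot}}+\mathrm{cone}$,'' is a non sequitur: two elements of a common translated cone need not be comparable. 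With the corrected direction the proof is one line, as in the paper: $\vec{\mu}^{\mathtt{tot}}-\vec{\la}^{\mathtt{tot}}=(\vec{\mu}^{\mathtt{tot}}-\la)+(\la-\vec{\la}^{\mathtt{tot}})\in\Z^{n\ell}_+ +(\Z^{n\ell}_+\cup\{0\})\subseteq\Z^{n\ell}_+$, and together with $\|\vec{\la}\|=\|\vec{\mu}\|$ this gives $\vec{\mu}\unlhd\vec{\la}$ by the definition of $\lhd$. (Note also that the degenerate case $\vec{\mu}^{\mathtt{tot}}=\la$ you worry about cannot occur, since $0\notin\Z^{n\ell}_+$ by definition.)

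A secondary, inessential inaccuracy: your opening ``transfer'' claim, that $v\in w+\Z^{n\ell}_+$ is equivalent to $\mathrm{sort}(w)$ dominating $\mathrm{sort}(v)$ with equal sums and $v\neq w$, is not correct for unsorted vectors (take $w=(3,1)$ and $v=w+(\varepsilon_1-\varepsilon_2)=(4,0)$; then $\mathrm{sort}(w)$ does not dominate $\mathrm{sort}(v)$). The two comparisons agree only when both vectors are sorted compatibly. You yourself observe that this part yields no information here because $\|\vec{\la}\|=\|\vec{\mu}\|$ is assumed, so it does not affect your main line; the only substantive defect is the reversed inequality above.
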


\begin{proof}
For each $\sigma \in \Sym$, we have $\sigma \vec{\lambda}^{\mathsf{tot}} - \vec{\lambda}^{\mathsf{tot}} \in ( \Z^{n\ell}_{+} \cup \{0\} )$ by inspection. Thus, we have
$$\vec{\mu}^{\mathsf{tot}} - \vec{\la}^{\mathsf{tot}} = ( \vec{\mu}^{\mathsf{tot}} - \la) + ( \la - \vec{\mu}^{\mathsf{tot}}) \in \Z^{n\ell}_{+} + ( \Z^{n\ell}_{+} \cup \{0\}) \subset \Z^{n\ell}_{+}$$
as required.
\end{proof}

\subsection{Representations of Lie algebras}
We know that the action of $\g^\sharp z \subset \g^{\sharp}$ raises degree of an element of a $\Z$-graded modules of $\g^{\sharp}$. Since $\g^{\sharp}$ is concentrated in non-negative $\Z$-degree and a finitely generated graded $\g^{\sharp}$-module has bounded $\Z$-grading from the below, we find that an irreducible graded $\g^{\sharp}$-module is an irreducible $\mathfrak{gl}(n\ell)$-module regarded as a graded $\g^{\sharp}$-module through the projection
$$\g^{\sharp} \stackrel{z=0}{\longrightarrow} \mathfrak{gl}(n\ell).$$
The same is true for $\gp^\sharp$ by replacing $\mathfrak{gl}(n\ell)$ with $\mathfrak{gl}(n)^{\oplus \ell}$. Two Lie algebras $\mathfrak{gl}(n\ell)$ and $\mathfrak{gl}(n)^{\oplus \ell}$ share a maximal torus $\h := \bigoplus_{i=1}^{n\ell} \C E_{ii}$. We define
\begin{align*}
\bX^-_\ell :=\{ \{\la_i\}_i \in \Z^{n\ell} \mid \la_i \le \la_j \hskip 3mm \text{if} \hskip 3mm i < j \hskip 2mm \text{and} \hskip 2mm \lceil \frac{i}{n} \rceil = \lceil \frac{j}{n} \rceil\}.
\end{align*}
The set $\bX^-_{\ell}$ is identified with the set of anti-dominant integral weights of $\mathfrak{gl}(n)^{\oplus \ell}$. For $\la \in \bX^-_\ell$, we set $V_\la$ to be the irreducible finite-dimensional $\mathfrak{gl}(n)^{\oplus \ell}$-module generated from a $\h$-eigenvector $v_{\la}$ with its $\h$-weight $\la$ (i.e. $E_{ii}$ acts by $\la_i$ for $1 \le i \le n \ell$) by the $( \gb_0 \cap \gp )$-action. We embed $\Par_{m,\ell}$ into $\bX_{\ell}^-$ through $\vec{\la} \mapsto \vec{\la}^{\mathtt{tot}}$. Note that $\vec{\la}^{\mathtt{tot}} \in \Z^{n\ell}$ is the lowest $\h$-weight of $V_{\vec{\la}}$ (as $\mathfrak{gl}(n)^{\oplus \ell}$-module) by convention.

By the above discussion, we might regard $V_{\la}$ ($\la \in \bX^-_{\ell}$) as a simple graded $\gp$-module concentrated in degree $0$. We refer a finitely generated graded $\gp$-module, a finitely generated graded $\gp^{\sharp}$-module, or a finitely generated graded $\g^{\sharp}$-module $M$ as a polynomial module if $\h$ acts diagonally with their weights in $\Z_{\ge 0}^{n\ell}$.

We define $\gp\mathchar`-\mathsf{mod}_m$ (resp. $\gp^{\sharp}\mathchar`-\mathsf{mod}_m$ and $\g^{\sharp}\mathchar`-\mathsf{mod}_m$) to be the category of graded polynomial $\gp$-modules (resp. graded polynomial $\gp^{\sharp}$-modules and graded polynomial $\g^{\sharp}$-modules) such that $\mathrm{Id} \in \mathfrak{gl}(n\ell)$ acts by $m$. These categories admit autoequivalences $\mathsf{q}^{\pm 1}$ defined by
\begin{equation}
( \mathsf{q}^{\pm 1} M )_i = M_{i\pm 1} \hskip 5mm M = \bigoplus_{i \in \Z} M_i \in \g\mathchar`-\mathsf{mod}_m.\label{eqn:gshift}
\end{equation}

\begin{lem}
The isomorphism classes of simple objects in $\gp\mathchar`-\mathsf{gmod}_m$ is in bijection with $\Par_{m,\ell}$ up to grading shifts.
\end{lem}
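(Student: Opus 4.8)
The plan is to classify the simple objects in $\gp\mathchar`-\mathsf{gmod}_m$ via a standard "highest/lowest weight" argument for positively graded algebras, reducing the question to the representation theory of $\mathfrak{gl}(n)^{\oplus\ell}$. First I would invoke the observation already made in the text: since $\gp^\sharp$ (equivalently $\gp$, via Lemma \ref{lem:piso}) is concentrated in non-negative $\Z$-degree with degree-zero part $\gp_0^\sharp\cap\gp\cong\mathfrak{gl}(n)^{\oplus\ell}$, any finitely generated graded $\gp$-module has $\Z$-grading bounded below, and consequently any irreducible graded $\gp$-module is an irreducible $\mathfrak{gl}(n)^{\oplus\ell}$-module inflated along the projection $\gp^\sharp\xrightarrow{z=0}\mathfrak{gl}(n)^{\oplus\ell}$, placed in a single degree. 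Combined with the grading-shift autoequivalences $\mathsf{q}^{\pm1}$ of \eqref{eqn:gshift}, this shows the simples in $\gp\mathchar`-\mathsf{gmod}_m$ are, up to shift, exactly the simple objects of the category of polynomial $\mathfrak{gl}(n)^{\oplus\ell}$-modules on which $\mathrm{Id}\in\mathfrak{gl}(n\ell)$ acts by $m$.

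Next I would identify that latter set with $\Par_{m,\ell}$. A polynomial $\mathfrak{gl}(n)^{\oplus\ell}$-module has $\h$ acting diagonally with weights in $\Z_{\ge0}^{n\ell}$, so its simple constituents are the $V_\la$ for $\la\in\bX_\ell^-$ with all $\la_i\ge0$; the constraint that $\mathrm{Id}$ act by $m$ says $\sum_i\la_i=m$. The key combinatorial point is then that the map $\vec{\la}\mapsto\vec{\la}^{\mathtt{tot}}$ described in the Partitions subsection is a bijection between $\Par_{m,\ell}$ and the set of such $\la$: indeed $\vec{\la}^{\mathtt{tot}}$ lists, block by block, the parts of each $\la^{(k)}$ in weakly increasing order within each length-$n$ block, which is precisely the antidominance condition defining $\bX_\ell^-$, non-negativity is automatic, and the total equals $|\vec{\la}|=m$; conversely any such weight recovers a unique $\ell$-partition by reading each block. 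Under this identification $V_{\vec{\la}^{\mathtt{tot}}}$ is the simple whose lowest $\h$-weight is $\vec{\la}^{\mathtt{tot}}$, as noted in the text. Putting the two reductions together yields the asserted bijection of isomorphism classes of simples in $\gp\mathchar`-\mathsf{gmod}_m$ with $\Par_{m,\ell}$ up to grading shifts.

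The main obstacle, to the extent there is one, is the first reduction: one must be careful that "finitely generated graded polynomial module" really forces the grading to be bounded below and that the radical filtration is compatible with the grading so that a graded-simple module is concentrated in one degree. This is routine for a connected positively graded situation, but it does use that $\gp$ is generated in degrees $0$ and positive, that the degree-zero subalgebra is $\mathfrak{gl}(n)^{\oplus\ell}$ (reductive, so its finite-dimensional polynomial representation theory is semisimple and well understood), and that the positively graded part $\gp\cap\g^\sharp z$ acts locally nilpotently on a finitely generated graded module — all of which were recorded just above. I would also remark that one should check there is no identification collapse: distinct $\vec{\la}$ give non-isomorphic $V_{\vec{\la}^{\mathtt{tot}}}$ because they have distinct $\h$-character, so even after allowing grading shifts (which do not change the $\h$-weights, only the $\Z$-degree) the classes remain distinct. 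With these points in place the proof is essentially a bookkeeping argument, so I expect the write-up to be short.
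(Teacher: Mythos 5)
Your argument is correct and follows essentially the same route as the paper: since $\gp$ is non-negatively graded and a finitely generated graded module has grading bounded below, $\gp_{>0}$ kills any simple object, so simples are inflated simple polynomial $\mathfrak{gl}(n)^{\oplus\ell}$-modules with $\mathrm{Id}$ acting by $m$, which are indexed by $\Par_{m,\ell}$ via $\vec{\la}\mapsto\vec{\la}^{\mathtt{tot}}$. The paper's own proof is exactly this reduction (graded Nakayama plus the polynomial representation theory of $\mathfrak{gl}(n)^{\oplus\ell}$, citing Macdonald), so no further comparison is needed.
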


\begin{proof}
The application of the strictly positive degree part $\gp_{>0}$ of $\gp$ defines a proper submodule of a (non-zero) module in $\gp\mathchar`-\mathsf{gmod}_m$. Hence, $\gp_{>0}$ must act on a simple module in $\gp\mathchar`-\mathsf{gmod}_m$ by zero. Thus, the assertion follows from the (polynomial) representation theory of $\mathfrak{gl}(n)^{\oplus \ell}$ (see e.g. \cite[p.~162 (8.2)]{Mac95}).
\end{proof}

For a finitely generated graded $\gp$-module $M$, we define its graded character by
$$\gch \, M := \sum_{i \in \Z, \mu \in \Z^{n\ell}} q^i x^{\mu} \dim \mathrm{Hom}_{\h} ( \C_{\mu}, M_i ) \in \C (\!(q)\!)[x^{\pm 1}],$$
where $x^{\mu} := \prod_{j=1}^{n\ell}x_j^{\mu_j}$, and $\C_{\mu}$ is the one-dimensional $\h$-module on which $E_{jj}$ acts by $\mu_j$ ($1 \le j \le n\ell$). For $\vec{\mu} \in \Par_{m,\ell}$, we set $x^{\vec{\mu}} := x^{\vec{\mu}^{\mathtt{tot}}}$. We define the character $\ch \, M$ of $M$ by setting $\ch \, M := \left. ( \gch \, M ) \right|_{q=1}$ whenever it makes sense.
 
\begin{lem}\label{lem:wt-est}
Let $\vec{\la}, \vec{\mu} \in \Par_{m,\ell}$. If the monomial $x^{\vec{\mu}}$ appears in $\ch \, V_{\vec{\la}}$, then we have $\vec{\la} \unrhd \vec{\mu}$. \hfill $\Box$
\end{lem}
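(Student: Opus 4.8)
The plan is to reduce the statement to the classical weight estimate for irreducible polynomial modules of $\mathfrak{gl}(n)$ on each of the $\ell$ factors, and then to upgrade the resulting collection of dominance inequalities on the individual components to the order $\unlhd$ on $\Par_{m,\ell}$. First I would recall that $V_{\vec{\la}}$ is, by construction, the outer tensor product $V_{\la^{(1)}}\boxtimes\cdots\boxtimes V_{\la^{(\ell)}}$ of the irreducible $\mathfrak{gl}(n)$-modules indexed by the $\la^{(k)}$, since $(\gp_0^\sharp\cap\gp)\cong\mathfrak{gl}(n)^{\oplus\ell}$ acts factorwise. Hence $\ch\,V_{\vec{\la}}$ factors as a product $\prod_{k=1}^\ell \ch\,V_{\la^{(k)}}$ in the variables attached to the $k$-th block $\{(k-1)n+1,\ldots,kn\}$. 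If $x^{\vec{\mu}}$ occurs, then for each $k$ the monomial $x^{\vec\mu\,|_k}$ (the restriction of $\vec\mu^{\mathtt{tot}}$ to the $k$-th block) occurs in $\ch\,V_{\la^{(k)}}$.

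Next I would invoke the standard fact about polynomial $\mathfrak{gl}(n)$-modules: the weights appearing in $V_{\la^{(k)}}$ are exactly those in the convex hull of the Weyl-group orbit of $\la^{(k)}$, so the multiset of exponents of $x^{\vec\mu\,|_k}$, once sorted into a partition, is $\le \la^{(k)}$ in the dominance order of $\Par$. In particular $|\mu^{(k)}| = |\la^{(k)}|$ for each $k$, so $|\vec\mu|=|\vec\la|=m$ is automatic, and after sorting each block we obtain $\mu^{(k)}\le\la^{(k)}$. (Here one must be careful with our nonstandard convention $L_{(r)}=\mathsf{sgn}$ and the reverse-order listing in $\vec{\la}^{\mathtt{tot}}$, but these affect only the labeling, not the set of weights.) This already shows that the sorted block partitions of $\vec\mu$ are dominated by those of $\vec\la$; in particular $\|\vec\mu\|\le\|\vec\la\|$ as partitions of $m$, since sorting together the parts of dominated partitions preserves dominance.

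To conclude $\vec\mu\unlhd\vec\la$ I would split into cases according to the definition of $\lhd$. If $\|\vec\mu\|<\|\vec\la\|$ we are done. Otherwise $\|\vec\mu\|=\|\vec\la\|$, which forces $\mu^{(k)}\le\la^{(k)}$ together with equality of the combined sorted parts; a short combinatorial argument shows this forces $\mu^{(k)}=\la^{(k)}$ as multisets for every $k$ (dominated partitions with equal sorted concatenation must coincide block by block), so $\vec\mu^{\mathtt{tot}}$ and $\vec\la^{\mathtt{tot}}$ are permutations of one another within each block. Then $\vec\mu^{\mathtt{tot}}\in(\la+\Z^{n\ell}_+\cup\{0\})$ for a suitable block-wise permutation $\la$ of $\vec\mu^{\mathtt{tot}}$ (indeed $\la=\vec\la^{\mathtt{tot}}$), because within each block the weights of $V_{\la^{(k)}}$ that sort to $\la^{(k)}$ differ from the lowest weight $\la^{(k)}$ listed in decreasing order by a non-negative sum of simple roots $\varepsilon_i-\varepsilon_j$; and then Lemma~\ref{lem:order} delivers $\vec\mu\unlhd\vec\la$.

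The main obstacle is the last step: matching the sign conventions and the reverse-order listing so that the elementary $\mathfrak{gl}(n)$-weight inequality ``weights lie in the down-set of $\la$ under $\varepsilon_i-\varepsilon_j$'' gets translated into the precise membership $\vec\mu^{\mathtt{tot}}\in(\la+\Z^{n\ell}_+)$ needed to apply Lemma~\ref{lem:order}, and in confirming the purely combinatorial claim that componentwise-dominated $\ell$-partitions with a common $\|\cdot\|$ must agree componentwise. Both are routine but need care; everything else is the classical theory of polynomial representations of $\mathfrak{gl}(n)$.
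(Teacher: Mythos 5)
Your plan is correct and takes essentially the same route as the paper: the paper's one-line proof cites Macdonald (p.~101 (6.5)) for exactly your blockwise statements $\mu^{(k)}\le\la^{(k)}$ and $|\mu^{(k)}|=|\la^{(k)}|$, and then deduces $\vec{\mu}\unlhd\vec{\la}$ by the same combinatorial upgrade you outline. Your equality-case detour (forcing $\mu^{(k)}=\la^{(k)}$ and then invoking Lemma~\ref{lem:order}) is correct but more than needed, since blockwise dominance with equal block sizes already gives $\vec{\mu}^{\mathtt{tot}}-\vec{\la}^{\mathtt{tot}}\in\Z^{n\ell}_+\cup\{0\}$, which is precisely the second clause in the definition of $\lhd$.
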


\begin{proof}
The condition implies $\la^{(i)} \ge \mu^{(i)}$ and $|\la^{(i)}| = |\mu^{(i)}|$ for each $1\le i \le \ell$ by \cite[p.~101 (6.5)]{Mac95}. This implies $\vec{\la}\unrhd \vec{\mu}$.
\end{proof}

For $M \in \gp\mathchar`-\mathsf{gmod}_m$, its restricted dual $M^{\vee}$ is naturally a graded $\gp$-module.

\begin{lem}
There exists a Lie algebra automorphism $\theta$ of $\gp$ defined by:
\begin{eqnarray*}
\theta ( E_{ij} \otimes z^k ) = E_{i'j'} \otimes z^k, \hskip 25mm \text{where}\hskip 10mm\\
i' = \begin{cases} i + n & \text{if } i \leq n(\ell - 1) \\ i - n(\ell - 1) & \text{if } i > n(\ell - 1) \end{cases}, \quad j' = \begin{cases} j + n & \text{if } j \leq n(\ell - 1) \\ j - n(\ell - 1) & \text{if } j > n(\ell - 1) \end{cases}.
\end{eqnarray*}
%$$E_{ij} \otimes z^{k} \mapsto \begin{cases} E_{i + n, j + n} \otimes z^{k} & (i,j \le n(\ell - 1))\\
%E_{i + n, j - (n-1)\ell} \otimes z^{k} & (i \le n(\ell - 1) < j)\\
%E_{i - (n-1)\ell, j+n} \otimes z^{k} & (j \le n(\ell - 1) < i)\\
%E_{i - (n-1)\ell, j - (n-1)\ell} \otimes z^{k} & (n(\ell - 1) < i,j)
%\end{cases}.$$
In addition, we have $\theta^{\ell} = \mathrm{id}$.
\end{lem}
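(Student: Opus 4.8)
The plan is to realize $\theta$ as the restriction to $\gp$ of a conjugation automorphism of the ambient Lie algebra $\g^{\sharp}=\mathfrak{gl}(n\ell)\otimes\C[z]$. Let $\pi\in\Sym_{n\ell}$ be the permutation $\pi(i)=i'$, with $i'$ as in the statement; concretely $\pi$ is ``translation by $n$ modulo $n\ell$'', i.e.\ $\pi(i)\equiv i+n\pmod{n\ell}$ with $\pi(i)\in\{1,\ldots,n\ell\}$. Writing $P_{\pi}\in GL(n\ell,\C)$ for the corresponding permutation matrix, I would set $\tau:=\mathrm{Ad}(P_{\pi})\otimes\mathrm{id}_{\C[z]}$, the $\C[z]$-linear extension to $\g^{\sharp}$ of $\xi\mapsto P_{\pi}\xi P_{\pi}^{-1}$. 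Because $\mathrm{Ad}(P_{\pi})$ is an automorphism of the associative algebra $\mathfrak{gl}(n\ell)=\mathrm{End}(V)$ with $\mathrm{Ad}(P_{\pi})(E_{ij})=E_{\pi(i)\pi(j)}=E_{i'j'}$, the map $\tau$ is automatically a Lie algebra automorphism of $\g^{\sharp}$, and it sends $E_{ij}\otimes z^{k}$ to $E_{i'j'}\otimes z^{k}$. Thus the real content is to verify that $\tau$ preserves the subalgebra $\gp$ and that $\tau^{\ell}=\mathrm{id}$; then $\theta:=\tau|_{\gp}$ is the desired automorphism with the asserted formula.

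For the stability of $\gp$, I would use the description from the proof of Lemma~\ref{lem:piso}: $\gp$ has basis $\{E_{ij}\otimes z^{k}\ :\ k\ge 0,\ \lceil i/n\rceil-\lceil j/n\rceil+k\equiv 0\pmod{\ell}\}$. The combinatorial heart of the matter is the block-shift identity
$$\Big\lceil\tfrac{\pi(i)}{n}\Big\rceil\equiv\Big\lceil\tfrac{i}{n}\Big\rceil+1\pmod{\ell}\qquad(1\le i\le n\ell),$$
which is immediate because $\pi$ carries the $r$-th length-$n$ block onto the $(r+1)$-st for $r<\ell$ and the $\ell$-th block onto the first. It follows that $\lceil\pi(i)/n\rceil-\lceil\pi(j)/n\rceil\equiv\lceil i/n\rceil-\lceil j/n\rceil\pmod{\ell}$, so $\tau$ preserves the defining congruence and hence maps the above basis of $\gp$ into itself; since $\pi$ is a bijection and $\tau$ fixes the exponent $k$, this is a bijection of that basis, whence $\gp$ is $\tau$-stable and $\theta$ is a Lie algebra automorphism. (A more structural way to see the stability: $\tau$ commutes with the $\Gamma$-action on $\g^{\sharp}$ whose fixed-point subalgebra is $\gp$ --- on $\mathfrak{gl}(n\ell)$ that action is conjugation by a block-scalar diagonal matrix $D$ with $P_{\pi}DP_{\pi}^{-1}$ a scalar multiple of $D$, and the root-of-unity twist on $z$ commutes with $\tau$ for free; as $\Gamma$ is abelian this suffices.)

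Finally, $\pi$ has order exactly $\ell$ in $\Sym_{n\ell}$, since $\pi^{t}$ is ``translation by $tn$ modulo $n\ell$'' and $tn\equiv0\pmod{n\ell}$ exactly when $\ell\mid t$; hence $P_{\pi}^{\ell}=P_{\pi^{\ell}}=\mathrm{Id}$, so $\tau^{\ell}=\mathrm{id}_{\g^{\sharp}}$ and a fortiori $\theta^{\ell}=\mathrm{id}$. I do not anticipate any genuine obstacle: once the one-line ceiling identity above is recorded, every remaining step is formal, and that identity is the only place where a little care with the indexing is needed.
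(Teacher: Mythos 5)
Your proposal is correct and follows essentially the same route as the paper, which likewise treats $\theta$ as an automorphism of the ambient $\g^{\sharp}$ that preserves $\gp$ (there left "by inspection") and then deduces $\theta^{\ell}=\mathrm{id}$. The only cosmetic difference is that you get the order-$\ell$ statement from the explicit order of the block-shift permutation (via $\mathrm{Ad}(P_{\pi})\otimes\mathrm{id}$), whereas the paper invokes the rotation by $n$ of the affine Dynkin diagram of type $\mathsf{A}^{(1)}_{n\ell-1}$.
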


\begin{proof}
We find that $\theta$ induces a Lie algebra automorphism of $\g^{\sharp}$ that preserves $\gp$ by inspection. Note that our $\theta$ corresponds to the rotation by index $n$ in the affine Dynkin diagram of type $\mathsf{A}^{(1)}_{n\ell - 1}$. Thus, we have $\theta^{\ell} = \mathrm{id}$.
\end{proof}

For $M \in \gp\mathchar`-\mathsf{gmod}_m$, the vector space $M$ admits  the structure of a graded $\gp$-module twisted by $\theta^{-1}$. We denote this module by $M^{\theta}$.

\begin{lem}\label{lem:teff}
For each $k \in \Z$ and $\vec{\la} \in \Par_{m,\ell}$, we have $( \mathsf{q}^k V_{\vec{\la}})^{\theta} \cong \mathsf{q}^{k} V_{\vec{\la}^{\theta}}$.
\end{lem}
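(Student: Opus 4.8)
The plan is to reduce everything to the degree-zero part and then to a bookkeeping of indices modulo $\ell$. Since $\theta$ fixes the variable $z$, it preserves the $\Z$-grading of $\gp$; in particular $\theta^{-1}(\gp_{>0}) = \gp_{>0}$, so the $\theta^{-1}$-twist commutes with the grading shifts $\mathsf{q}^{\pm 1}$. Hence $(\mathsf{q}^k V_{\vec{\la}})^{\theta} \cong \mathsf{q}^k (V_{\vec{\la}})^{\theta}$, and it suffices to prove $(V_{\vec{\la}})^{\theta} \cong V_{\vec{\la}^{\theta}}$ as graded $\gp$-modules, both being concentrated in degree $0$.

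Recall (as established above for simple objects) that $\gp_{>0}$ acts by zero on $V_{\vec{\la}}$, so $V_{\vec{\la}}$ is inflated along $\gp \twoheadrightarrow \gp/\gp_{>0} \cong \gp_0 := (\gp_0^\sharp \cap \gp) \cong \mathfrak{gl}(n)^{\oplus \ell}$ from the simple module with lowest $\h$-weight $\vec{\la}^{\mathtt{tot}}$. Because $\theta$ preserves the grading and carries $\gp_{>0}$ to itself, $(V_{\vec{\la}})^{\theta}$ is again concentrated in degree $0$ with $\gp_{>0}$ acting by zero, hence is the inflation of the restriction $V_{\vec{\la}}|_{\gp_0}$ twisted by $\theta^{-1}|_{\gp_0}$. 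So the whole statement comes down to identifying this twisted $\mathfrak{gl}(n)^{\oplus\ell}$-module, and for that I would simply track the distinguished lowest weight vector.

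The automorphism $\theta$ is induced by the cyclic rotation $\sigma$ of $\{1,\dots,n\ell\}$ by $n$; this $\sigma$ permutes the $\ell$ consecutive blocks of size $n$ cyclically while preserving the order within each block, so $\theta^{-1}$ preserves both $\h = \bigoplus_i \C E_{ii}$ and $\gb_0 \cap \gp$. Consequently the generator $v_{\vec{\la}}$ of $V_{\vec{\la}}$ is still an $\h$-eigenvector in $(V_{\vec{\la}})^{\theta}$ that generates it under the $(\gb_0 \cap \gp)$-action, but now carries $\h$-weight $\sigma \cdot \vec{\la}^{\mathtt{tot}}$, where $\sigma$ acts on $\Z^{n\ell}$ by permuting coordinates. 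Unwinding the definition of $\vec{\la}^{\mathtt{tot}}$ block by block gives $\sigma \cdot \vec{\la}^{\mathtt{tot}} = (\vec{\la}^{\theta})^{\mathtt{tot}} \in \bX^-_\ell$. Since $(V_{\vec{\la}})^{\theta}$ is simple and concentrated in degree $0$, the classification of simple graded $\gp$-modules recalled above forces $(V_{\vec{\la}})^{\theta} \cong V_{\vec{\la}^{\theta}}$; feeding this back through the first paragraph finishes the proof.

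The content is light; the one place to slip is the consistent choice of directions — whether $\theta$ shifts a block to the next one or the previous one, whether $(-)^{\theta}$ on modules twists by $\theta$ or by $\theta^{-1}$, and matching all of this against the convention $\vec{\la}^{\theta} = (\la^{(\ell)},\la^{(1)},\dots,\la^{(\ell-1)})$. Once these are pinned down, the equality $\sigma \cdot \vec{\la}^{\mathtt{tot}} = (\vec{\la}^{\theta})^{\mathtt{tot}}$ and the invariance of $\gb_0 \cap \gp$ under $\theta^{-1}$ are both immediate by inspection, and no representation theory beyond what has already been recalled is needed.
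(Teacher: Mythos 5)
Your argument is correct and is essentially the paper's proof written out in full: the paper likewise reduces to the observation that $\theta$ restricts to an automorphism of $\mathfrak{gl}(n)^{\oplus\ell}$ with $\theta(E_{ii}) = E_{i+n,i+n}$ (indices mod $n\ell$) and then identifies the lowest-weight label, which is exactly your weight computation $\sigma\cdot\vec{\la}^{\mathtt{tot}} = (\vec{\la}^{\theta})^{\mathtt{tot}}$. Your index/direction bookkeeping is consistent with the paper's conventions ($M^{\theta}$ twisted by $\theta^{-1}$, $\vec{\la}^{\theta} = (\la^{(\ell)},\la^{(1)},\ldots,\la^{(\ell-1)})$), so no gap remains.
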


\begin{proof}
It suffice to notice that $\theta$ induces an automorphism of $\mathfrak{gl}(n)^{\oplus \ell}$ such that $\theta ( E_{ii} ) = E_{i+n,i+n}$ for each $1 \le i \le n \ell$, where the indices are understood to be modulo $n\ell$.
\end{proof}

\begin{lem}\label{lem:sigma}
Let $\sigma \in \Sym_{n\ell}$ be the permutation defined by
$$\sigma ( i ) =  i + n ( \ell - 2 \lceil \frac{i}{n} \rceil + 1) \hskip 10mm  (1 \le i \le n\ell).$$
The following assignment defines a Lie algebra automorphism $\phi$ of $\gp$:
$$E_{ij} \otimes z^{k} \mapsto - E_{\sigma ( j ) \sigma ( i )} \otimes z^{k} \hskip 10mm 1 \le i,j\le n\ell, k \ge 0.$$
\end{lem}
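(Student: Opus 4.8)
The plan is to check directly that the assignment $E_{ij}\otimes z^k \mapsto -E_{\sigma(j)\sigma(i)}\otimes z^k$ respects the Lie bracket on $\g^\sharp$ and then verify that it preserves the $\Gamma$-fixed part $\gp \cong \gp^\sharp$. First I would observe that $\sigma$ is an involution: since $\lceil i/n\rceil \in \{1,\dots,\ell\}$, writing $r = \lceil i/n\rceil$ we have $\sigma(i) = i + n(\ell - 2r + 1)$, and $\lceil \sigma(i)/n\rceil = \ell - r + 1$, so applying $\sigma$ again sends $r \mapsto \ell-r+1 \mapsto r$ and returns $i$. In particular $\sigma \in \Sym_{n\ell}$ is well-defined (it permutes $\{1,\dots,n\ell\}$) and $\sigma^2 = \mathrm{id}$. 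The map $E_{ij}\mapsto -E_{\sigma(j)\sigma(i)}$ is the composition of the transpose-inverse antiautomorphism $X \mapsto -X^{\mathsf t}$ of $\mathfrak{gl}(n\ell)$ (a Lie algebra automorphism) with the conjugation by the permutation matrix of $\sigma$; since both are Lie algebra automorphisms of $\mathfrak{gl}(n\ell)$, so is $\phi_0 : E_{ij}\mapsto -E_{\sigma(j)\sigma(i)}$, and extending it by $z^k \mapsto z^k$ gives a graded Lie algebra automorphism $\phi$ of $\g^\sharp = \mathfrak{gl}(n\ell)\otimes\C[z]$.

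It then remains to see that $\phi$ preserves $\gp$. The cleanest route is to work with the model $\gp^\sharp$ and the description of its basis: by the proof of Lemma \ref{lem:piso}, $E_{ij}\otimes z^k$ lies in $\gp$ (after the twist $\Phi$) precisely when $\lceil i/n\rceil - \lceil j/n\rceil + k \equiv 0 \bmod \ell$. Applying $\phi_0$ replaces the pair $(\lceil i/n\rceil, \lceil j/n\rceil)$ by $(\lceil \sigma(j)/n\rceil, \lceil \sigma(i)/n\rceil) = (\ell - \lceil j/n\rceil + 1, \ell - \lceil i/n\rceil + 1)$, whose difference is $\lceil \sigma(j)/n\rceil - \lceil \sigma(i)/n\rceil = \lceil i/n\rceil - \lceil j/n\rceil$ — unchanged. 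Hence the congruence defining membership in $\gp$ is preserved, so $\phi$ restricts to a Lie algebra automorphism of $\gp$, as claimed. (Equivalently, one checks that $\phi_0$ commutes with the $\Gamma$-action $\rho_\Gamma$ up to the grading twist built into the definition of the $\Gamma$-action on $\g^\sharp$.)

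I do not anticipate a serious obstacle: the only points requiring care are bookkeeping ones — confirming that $\sigma$ is a genuine involutive permutation of $\{1,\dots,n\ell\}$, and tracking that the ceiling-function data transforms as $r \mapsto \ell - r + 1$ under $\sigma$ so that the defining congruence for $\gp$ is invariant. The sign in front of $E_{\sigma(j)\sigma(i)}$ is exactly what is needed to make transpose-inverse a Lie algebra (not just vector-space) map, so no extra condition arises from it. Thus the verification is the routine "by inspection" indicated, and the structural content is that $\phi$ is the composition of the Chevalley-type involution with conjugation by the "flip" permutation $\sigma$ of the $\ell$ blocks of size $n$.
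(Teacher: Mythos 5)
Your proposal is correct and follows essentially the same route as the paper: the paper likewise observes that $\phi$ is an automorphism of $\g^{\sharp}$ by inspection (your factorization into $X \mapsto -X^{\mathsf t}$ composed with conjugation by the permutation matrix of $\sigma$ merely makes this explicit) and that it preserves $\gp$ because $\lceil \frac{i}{n}\rceil - \lceil \frac{j}{n}\rceil \equiv \lceil \frac{\sigma(j)}{n}\rceil - \lceil \frac{\sigma(i)}{n}\rceil \pmod{\ell}$, which is exactly your ceiling-difference computation. The only cosmetic slip is the parenthetical ``after the twist $\Phi$'': the congruence $\lceil \frac{i}{n}\rceil - \lceil \frac{j}{n}\rceil + k \equiv 0 \pmod{\ell}$ characterizes membership of $E_{ij}\otimes z^{k}$ in $\gp$ itself as a subalgebra of $\g^{\sharp}$ (the twist $\Phi$ is the isomorphism from $\gp^{\sharp}$ onto $\gp$), but this does not affect the argument.
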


\begin{proof}
The assignment $\phi$ induces a Lie algebra automorphism of $\g^{\sharp}$ by inspection. It preserves $\gp$ since we have
$$\lceil \frac{i}{n} \rceil - \lceil \frac{j}{n} \rceil \equiv \lceil \frac{\sigma(j)}{n} \rceil - \lceil \frac{\sigma (i)}{n} \rceil \mod \ell$$
for each $1 \le i,j \le n \ell$.
\end{proof}

\begin{cor}
We have $\theta^{-1} \circ \phi = \phi \circ \theta$.
\end{cor}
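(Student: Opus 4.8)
The plan is to verify the identity directly on the distinguished basis $\{E_{ij}\otimes z^k\}$ of $\gp$ (those with $\lceil \frac{i}{n}\rceil-\lceil\frac{j}{n}\rceil+k\equiv 0\bmod\ell$) and reduce it to a single commutation relation between two permutations of $\{1,\dots,n\ell\}$. First I would record that $\theta$, $\theta^{-1}$ and $\phi$ all act on such a basis element merely by relabelling the pair of matrix indices, leaving the power of $z$ (and, for $\phi$, a global sign $-1$) untouched: writing $\tau\in\Sym_{n\ell}$ for the cyclic shift $\tau(i)=i+n$ taken modulo $n\ell$, we have $\theta(E_{ij}\otimes z^k)=E_{\tau(i)\tau(j)}\otimes z^k$ and, by linearity, $\theta^{-1}(E_{ij}\otimes z^k)=E_{\tau^{-1}(i)\tau^{-1}(j)}\otimes z^k$, while $\phi(E_{ij}\otimes z^k)=-E_{\sigma(j)\sigma(i)}\otimes z^k$ with $\sigma$ as in Lemma \ref{lem:sigma}.

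Evaluating the two composites on $E_{ij}\otimes z^k$ then gives $\phi\circ\theta(E_{ij}\otimes z^k)=-E_{\sigma\tau(j),\,\sigma\tau(i)}\otimes z^k$ and $\theta^{-1}\circ\phi(E_{ij}\otimes z^k)=-E_{\tau^{-1}\sigma(j),\,\tau^{-1}\sigma(i)}\otimes z^k$. Since the signs and the $z^k$-parts match on the nose, the corollary is equivalent to the equality $\sigma\circ\tau=\tau^{-1}\circ\sigma$ in $\Sym_{n\ell}$.

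To prove this permutation identity I would use the coordinates $i=n(r-1)+s$ with $r=\lceil\frac{i}{n}\rceil$ the \emph{block} index (read in $\Z/\ell$) and $s\in\{1,\dots,n\}$ the \emph{position} inside the block. A direct check gives $\sigma(i)=i+n(\ell-2r+1)=n(\ell-r)+s$, so $\sigma$ acts trivially on $s$ and by $r\mapsto \ell+1-r\equiv 1-r$ on blocks; likewise $\tau$ fixes $s$ and acts by $r\mapsto r+1$ on blocks (the wrap-around $r=\ell\mapsto 1$ being uniform once block indices live in $\Z/\ell$). Hence, on the position coordinate both $\sigma\tau$ and $\tau^{-1}\sigma$ are the identity, and on the block coordinate $\sigma\tau\colon r\mapsto r+1\mapsto 1-(r+1)=-r$ while $\tau^{-1}\sigma\colon r\mapsto 1-r\mapsto (1-r)-1=-r$; so the two permutations coincide. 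This same computation re-confirms that $\phi$ and $\theta$ preserve $\gp$, hence so do the composites, and the claim follows.

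The argument is entirely routine bookkeeping; the only point asking for a little care is the behaviour of $\tau$ on the "last" block, but phrasing the block index in $\Z/\ell$ removes any case split, so I do not anticipate a genuine obstacle.
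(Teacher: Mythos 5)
Your proof is correct and is essentially the paper's argument: the paper disposes of this corollary with "by inspection," and your computation — reducing both composites to the permutation identity $\sigma\circ\tau=\tau^{-1}\circ\sigma$ and checking it in block/position coordinates modulo $\ell$ — is exactly the verification that inspection entails. No gaps; the sign, the $z^k$-part, and the wrap-around of the last block are all handled correctly.
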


\begin{proof}
This is by inspection.
\end{proof}

For $M \in \gp\mathchar`-\mathsf{gmod}_m$, the vector space $M^{\vee}$ admits the structure of a graded $\gp$-module, but it is not necessarily finitely generated nor a polynomial module. We set $M^{\star}$ as the same vector space $M^{\vee}$ with the $\gp$-action twisted by $\theta^{-1} \circ \phi$.

\begin{cor}\label{cor:tpcomm}
Let $M \in \gp\mathchar`-\mathsf{gmod}_m$ be such that $\dim \, M < \infty$. We have $M^{\star} \in \gp\mathchar`-\mathsf{gmod}_m$. In addition, we have $( M^{\star} )^{\star} \cong M$.
\end{cor}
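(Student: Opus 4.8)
The plan is to check directly that the two operations building $M^{\star}$ out of $M$ — passing to the restricted dual $M^{\vee}$ and twisting the action by the automorphism $\theta^{-1}\circ\phi$ — have a transparent effect on the $\h$-weights, the $\Z$-grading, and the action of $\mathrm{Id}$, and then to reduce the double-dual statement to the identity $(\theta^{-1}\circ\phi)^{2} = \mathrm{id}$.

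First I would record that, since $\dim M < \infty$, the restricted dual $M^{\vee}$ is simply the linear dual with the $\Z$-grading reversed and the contragredient $\gp$-action $(\xi\cdot f)(v) = -f(\xi\cdot v)$; it is finite-dimensional (hence a finitely generated graded $\gp$-module), $\h$ acts diagonally on it with weight set $\{-\mu : \mu \text{ a weight of } M\}$, and $\mathrm{Id}$ acts by $-m$. Next I would compute the effect of the twist by $\theta^{-1}\circ\phi$. From the definition of $\phi$ we have $\phi(E_{ii}) = -E_{\sigma(i)\sigma(i)}$, and $\theta^{-1}$ rotates indices by $-n$ (cf. the proof of Lemma \ref{lem:teff}), so on $M^{\star}$ the torus element $E_{ii}$ acts as $-E_{\sigma(i)-n,\,\sigma(i)-n}$ acts on $M^{\vee}$; combined with the sign in the contragredient action, a vector of $M^{\vee}$-weight $-\mu$ acquires in $M^{\star}$ the weight $\nu$ with $\nu_{i} = \mu_{\sigma(i)-n}$ (indices modulo $n\ell$), which is a genuine permutation of $\mu$. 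Hence $\h$ still acts diagonally on $M^{\star}$, its weights lie in $\Z_{\ge 0}^{n\ell}$ because those of $M$ do, and $\sum_{i}\nu_{i} = \sum_{i}\mu_{i}$ shows that $\mathrm{Id}$ acts on $M^{\star}$ by $m$. Since $\theta$ and $\phi$ preserve the $\Z$-grading of $\gp$, the module $M^{\star}$ is graded; it is finite-dimensional, hence finitely generated. Therefore $M^{\star} \in \gp\mathchar`-\mathsf{gmod}_{m}$.

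For the second assertion I would use the elementary compatibility $(N\ \text{twisted by}\ \alpha)^{\vee} \cong N^{\vee}\ \text{twisted by}\ \alpha$, valid for any Lie algebra automorphism $\alpha$ of $\gp$ (on both sides $\xi$ acts on $f$ by $v\mapsto -f(\alpha(\xi)v)$). Applying it with $N = M^{\vee}$ and $\alpha = \theta^{-1}\circ\phi$ gives $(M^{\star})^{\star} \cong (M^{\vee})^{\vee}$ twisted by $(\theta^{-1}\circ\phi)^{2}$. For finite-dimensional $M$ the evaluation map $M \to (M^{\vee})^{\vee}$ is an isomorphism of graded $\gp$-modules, so it remains to prove $(\theta^{-1}\circ\phi)^{2} = \mathrm{id}$. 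Using the preceding corollary ($\theta^{-1}\circ\phi = \phi\circ\theta$, equivalently $\phi\circ\theta^{-1} = \theta\circ\phi$) one gets $(\theta^{-1}\circ\phi)^{2} = \theta^{-1}\circ(\phi\circ\theta^{-1})\circ\phi = \theta^{-1}\circ\theta\circ\phi\circ\phi = \phi^{2}$, and $\phi^{2} = \mathrm{id}$ because the permutation $\sigma$ reverses the $\ell$ consecutive blocks of size $n$ in $\{1,\dots,n\ell\}$, so $\sigma^{2} = \mathrm{id}$, while the two minus signs in $\phi$ cancel. This gives $(M^{\star})^{\star}\cong M$.

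The only delicate point is the bookkeeping of conventions — the direction of the twists $\theta^{-1}$ and $\phi$, and the sign in the contragredient action — since it is exactly the cancellation of the two weight-negations (one coming from $M\mapsto M^{\vee}$, the other from $\phi$) that keeps $M^{\star}$ a polynomial module. I do not expect a genuine obstacle beyond this, as $\phi^{2} = \mathrm{id}$ follows at once from $\sigma^{2} = \mathrm{id}$.
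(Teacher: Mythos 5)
Your proof is correct and takes essentially the same route as the paper: the sign in $\phi$ cancels the weight negation coming from the restricted dual while $\theta$ only permutes indices, so $M^{\star}$ remains a polynomial module on which $\mathrm{Id}$ acts by $m$, and the double-dual claim reduces to $(\theta^{-1}\circ\phi)^{2}=\mathrm{id}$. The only difference is that you verify $(\theta^{-1}\circ\phi)^{2}=\mathrm{id}$ explicitly (via $\theta^{-1}\circ\phi=\phi\circ\theta$ and $\sigma^{2}=\mathrm{id}$), a fact the paper merely asserts.
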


\begin{proof}
The set of $\h$-weights on $M^{\vee}$ with respect to the honest dual action belongs to $- \Z_{\ge 0}^{n\ell}$, and the sum of the eigenvalues of $E_{ii}$ ($1\le i \le n \ell$) is $-m$. This is flipped by $\phi$, and preserved by $\theta$. Thus, the first assertion holds. The second assertion is a consequence of $(\theta^{-1}\circ\phi)^2 = \mathrm{id}$.
\end{proof}

\begin{lem}\label{lem:gsimple}
For each $k \in \Z$ and $\vec{\la} \in \Par_{m,\ell}$, we have
$$(\mathsf{q}^k V_{\vec{\la}})^{\star} \cong \mathsf{q}^{-k} ( V_{\vec{\la}} )^{\star}\cong \mathsf{q}^{-k} V_{\vec{\la}^{\vee}}.$$
\end{lem}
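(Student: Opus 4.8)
The statement asserts three things for each $k \in \Z$ and $\vec{\la} \in \Par_{m,\ell}$: that $(\mathsf{q}^k V_{\vec{\la}})^{\star} \cong \mathsf{q}^{-k}(V_{\vec{\la}})^{\star}$, that $(V_{\vec{\la}})^{\star} \cong V_{\vec{\la}^{\vee}}$, and hence the composite isomorphism. The plan is to handle these in sequence, reducing everything to the weight combinatorics already set up in Lemmas \ref{lem:teff}, \ref{lem:sigma}, and Corollary \ref{cor:tpcomm}.

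\emph{Step 1: the grading shift.} First I would observe that passing to the restricted dual $M \mapsto M^{\vee}$ reverses the $\Z$-grading by the definition \eqref{eqn:restdual}, so $(\mathsf{q}^k M)^{\vee} \cong \mathsf{q}^{-k}(M^{\vee})$; since the twist by $\theta^{-1}\circ\phi$ is grading-preserving (both $\theta$ and $\phi$ send $E_{ij}\otimes z^k$ to a scalar times $E_{\bullet\bullet}\otimes z^k$, preserving $\deg$), applying the twist does not interfere, and we get $(\mathsf{q}^k V_{\vec\la})^\star \cong \mathsf{q}^{-k}(V_{\vec\la})^\star$. This reduces the problem to the case $k = 0$.

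\emph{Step 2: identifying $(V_{\vec{\la}})^{\star}$.} By Corollary \ref{cor:tpcomm}, $(V_{\vec{\la}})^{\star}$ is again a finite-dimensional simple object of $\gp\mathchar`-\mathsf{gmod}_m$ concentrated in degree $0$ (being simple, it is killed by $\gp_{>0}$ and hence a simple $\mathfrak{gl}(n)^{\oplus\ell}$-module), so by the classification it equals $V_{\vec{\mu}}$ for a unique $\vec{\mu} \in \Par_{m,\ell}$, and it remains only to compute $\vec\mu$. I would do this on the level of $\h$-weights. The honest dual $V_{\vec\la}^\vee$ carries the $\h$-weights $\{-\nu : \nu \text{ a weight of } V_{\vec\la}\}$; in particular its lowest weight (in the sense making it the generator under $(\gb_0\cap\gp)$) corresponds to $-(\text{highest weight of } V_{\vec\la})$. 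Twisting by $\phi$ flips this via the permutation $\sigma$ of Lemma \ref{lem:sigma} together with the sign, which turns the $-$highest-weight back into a genuine antidominant weight; twisting by $\theta^{-1}$ then applies the index rotation by $n$ of Lemma \ref{lem:teff}. I would track the effect of $\sigma$ on the block structure: $\sigma$ sends the $r$-th block of $n$ indices to the $(\ell+1-r)$-th block (reading $\lceil i/n\rceil \mapsto \ell - \lceil i/n\rceil + 1$), so composing with the $\theta^{-1}$-rotation by $n$ one block, the net permutation of the $\ell$ blocks is $(1)(2\;\ell)(3\;\ell-1)\cdots$, i.e. it fixes block $1$ and reverses blocks $2,\dots,\ell$ — exactly the permutation defining $\vec\la \mapsto \vec\la^\vee$. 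Meanwhile the sign in $\phi$ together with the transposition $E_{ij}\mapsto E_{ji}$ within each block converts "dualize the $\mathfrak{gl}(n)$-representation" into "same representation" (the dual of $V_\mu$ for $\mathfrak{gl}(n)$ is $V_{\mu}$ after the lowest-weight relabeling dictated by our convention), so no componentwise conjugate appears. Carefully matching lowest $\h$-weights on both sides then gives $\vec\mu = \vec\la^\vee$.

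\emph{Step 3: assembling.} Combining Steps 1 and 2, $(\mathsf{q}^k V_{\vec\la})^\star \cong \mathsf{q}^{-k}(V_{\vec\la})^\star \cong \mathsf{q}^{-k} V_{\vec\la^\vee}$, as claimed.

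\emph{Main obstacle.} The only delicate point is the bookkeeping in Step 2: one must pin down the simple module $(V_{\vec\la})^\star$ not merely up to abstract isomorphism but with the correct indexing $\ell$-partition, which requires being scrupulous about (i) our nonstandard convention $L_{(r)} = \mathsf{sgn}$ and the lowest-weight normalization of $V_{\vec\la}$, (ii) the sign in $\phi$ and the transpose $E_{ij}\mapsto E_{\sigma(j)\sigma(i)}$ interacting to kill what would otherwise be a componentwise conjugation, and (iii) the composite of the block-reversing part of $\sigma$ with the single-block rotation $\theta^{-1}$ landing exactly on $\vec\la\mapsto\vec\la^\vee$ rather than on $\vec\la^*$ or $\vec\la^\theta$ — this is precisely the content of the identity $(\vec\la)^\vee = (\vec\la^*)^\theta$ in Lemma \ref{lem:Pconj}, which I would invoke to finish the identification cleanly.
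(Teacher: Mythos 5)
Your argument is correct and follows essentially the same route as the paper: its proof likewise observes that the restricted dual and the automorphisms $\theta$, $\phi$ preserve simplicity, that the grading flip comes from the duality, and then identifies the label $\vec{\la}^{\vee}$ by comparing $\h$-weights using Lemma \ref{lem:Pconj}. Your Step 2 just makes explicit the block bookkeeping (the $\sigma$-reversal composed with the one-block rotation) that the paper leaves implicit.
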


\begin{proof}
The duality preserves the simplicity of modules. The automorphisms $\theta$ and $\phi$ also preserve simplicity. Hence, it suffices to identify the labels. The flipping of the grading is coming from the duality. The isomorphism $( V_{\vec{\la}} )^{\star} \cong V_{\vec{\la}^{\vee}}$ follows from the comparison of $\h$-weights using Lemma \ref{lem:Pconj}.
\end{proof}

\subsection{Algebra $A$ and its representations}\label{subsec:algA}
We set $W := \Sym_m \ltimes (\Z / \ell \Z)^m$. Thanks to \cite[Chap.~I (7.4)]{Mac95}, the set $\mathsf{Irr} \, W$ of isomorphism classes of irreducible $W$-modules is in bijection with $\Par_{m,\ell}$.

We denote by $L_{\vec{\la}}$ the irreducible $W$-module corresponding to $\vec{\la}\in \Par_{m,\ell}$. This module is given by inducing the representation
\begin{equation}
L_{\la^{(1)}} \boxtimes \bigl( L_{\la^{(2)}} \otimes \chi^{\boxtimes |\la^{(2)}|} \bigr)\boxtimes \bigl( L_{\la^{(3)}} \otimes (\chi^2)^{\boxtimes |\la^{(3)}|} \bigr) \boxtimes \cdots \boxtimes \bigl(L_{\la^{(\ell)}} \otimes ( \chi^{\ell-1} )^{\boxtimes |\la^{(\ell)}|}\bigr),\label{eqn:preind}
\end{equation}
from $\prod_{j=1}^{\ell} ( \Sym_{|\la^{(j)}|} \ltimes \Gamma^{|\la^{(j)}|})$ to $W$, as detailed in \cite[Chap. I (9.4)]{Mac95}. To align with the convention of \cite{Sho02}, we need to tensorize the sign representation of $\Sym_{n\ell}$ extended to $W$ via the quotient map. This results in taking the componentwise conjugation $\vec{\la} \mapsto \vec{\la}'$ ($\vec{\la} \in \Par_{m,\ell}$) in the labelling set of $\mathsf{Irr}\, W$.

We define the smash product algebra
$$A := \C W \otimes \C [X_1,\ldots,X_m],$$
where $\C W \subset A$ is the group algebra of $W$, $\C [X_1,\ldots,X_m] \subset A$ is a polynomial algebra of $m$-variables, and the action of $(\sigma, \{e_i\}_{i=1}^m) \in \Sym_m \ltimes (\Z / \ell \Z)^m$ on the generators is given as:
\begin{equation}
(\sigma, \{e_i\}_i). X_j = e^{\frac{2\pi e_j \sqrt{-1}}{\ell}} X_{\sigma ( j )} . (\sigma, \{e_i\}_i)  \hskip 5mm 1 \le j \le m.\label{eqn:G-action}
\end{equation}

We equip the algebra $A$ with a grading by setting
$$\deg ( w \otimes 1 ) = 0 \hskip 5mm (w \in W), \hskip 5mm \deg ( 1 \otimes X_i ) = 1 \hskip 5mm (1 \le i \le m).$$
We regard $L_{\vec{\la}}$ ($\vec{\la} \in\Par_{m,\ell}$) as an irreducible graded $A$-module concentrated in degree $0$. Let $A\mathchar`-\mathsf{gmod}$ denote the category of finitely generated graded $A$-modules. The category $A\mathchar`-\mathsf{gmod}$ also admits a grading shift functors $\mathsf{q}^{\pm 1}$ defined by the same manner as in (\ref{eqn:gshift}).

\begin{lem}[\cite{Kat15} Corollary 2.3]
Up to grading shifts, the simple objects in $A\mathchar`-\mathsf{gmod}$ are precisely the modules $L_{\vec{\la}}$ $(\vec{\la} \in \Par_{m,\ell} )$, where the variables $\{X_i\}_{i=1}^m$ act trivially. \hfill $\Box$
\end{lem}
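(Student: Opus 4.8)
The plan is to run the graded Nakayama argument for the non-negatively graded algebra $A$. First I would record the structural facts already visible from the definition in \S\ref{subsec:algA}: we have $A = \bigoplus_{i \ge 0} A_i$ with $A_0 = \C W$ a semisimple algebra, and the augmentation ideal $A_{>0} := \bigoplus_{i > 0} A_i$ — which is the two-sided ideal generated by $X_1,\ldots,X_m$ — satisfies $A / A_{>0} \cong \C W$. Since $A$ is a finitely generated module (of rank $|W|$) over the Noetherian ring $\C[X_1,\ldots,X_m]$, every object $M$ of $A\mathchar`-\mathsf{gmod}$ is generated by finitely many homogeneous elements and hence has its $\Z$-grading bounded from below; for $M \neq 0$ write $d := \min \{ i \mid M_i \neq 0 \}$.

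Next, because each homogeneous component of $A_{>0}$ strictly raises degree, we have $A_{>0} M \subseteq \bigoplus_{i > d} M_i \subsetneq M$, which is a proper graded $A$-submodule of $M$. If $M$ is simple in $A\mathchar`-\mathsf{gmod}$, this forces $A_{>0} M = 0$, i.e.\ every $X_i$ annihilates $M$; then $M = M_d$ is concentrated in a single degree and is precisely a simple module over $A / A_{>0} \cong \C W$. Applying the grading shift $\mathsf{q}^{-d}$ we may assume $d = 0$. Conversely, each $L_{\vec{\la}}$, placed in degree $0$ with $X_i$ acting by zero, is a simple object of $A\mathchar`-\mathsf{gmod}$: any nonzero graded $A$-submodule is in particular a nonzero $\C W$-submodule, hence all of $L_{\vec{\la}}$.

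It then remains to identify the simple $\C W$-modules with $\Par_{m,\ell}$, which is exactly the content of the discussion preceding the lemma: by \cite[Chap.~I (7.4), (9.4)]{Mac95} the irreducible modules of $W = \Sym_m \ltimes (\Z/\ell\Z)^m$ are the $L_{\vec{\la}}$ with $\vec{\la} \in \Par_{m,\ell}$, after the sign twist fixing the convention of \cite{Sho02}. Assembling these three observations yields the claim. I do not expect a genuine obstacle here: the only step needing a word of justification is that finitely generated graded $A$-modules are bounded below in degree, and this is immediate from $A$ being non-negatively graded together with $M$ being generated in finitely many degrees; the remainder is the standard graded Nakayama reasoning.
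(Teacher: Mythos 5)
Your argument is correct and is essentially the paper's approach: the paper outsources this statement to \cite[Corollary 2.3]{Kat15}, but the underlying reasoning is exactly your graded Nakayama argument (positively graded $A$ with semisimple $A_0 = \C W$, so a simple graded module is killed by $A_{>0}$, is concentrated in one degree, and reduces to an irreducible $\C W$-module, indexed by $\Par_{m,\ell}$ via Macdonald), which is also the argument the paper spells out for the parallel lemma on simples in $\gp\mathchar`-\mathsf{gmod}_m$. No gaps; the Noetherianity remark is unnecessary but harmless.
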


For $M \in A \mathchar`-\mathsf{gmod}$, the vector space $M^{\vee}$ (the restricted dual) naturally acquires the structure of a graded $A$-module, but it is not necessarily finitely generated. When $\dim \, M < \infty$, we have $(M^{\vee})^{\vee} \cong M$.

\begin{lem}\label{lem:Asimple}
For each $k \in \Z$ and $\vec{\la} \in \Par_{m,\ell}$, we have
$$(\mathsf{q}^k L_{\vec{\la}})^{\vee} \cong \mathsf{q}^{-k} ( L_{\vec{\la}} )^{\vee}\cong \mathsf{q}^{-k} L_{\vec{\la}^{\vee}}.$$
\end{lem}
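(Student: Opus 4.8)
The plan is to compute the effect of the restricted-dual functor $M \mapsto M^{\vee}$ on the simple modules $L_{\vec\la}$ directly, exploiting the graded structure of $A$. First I would observe that the grading shift is bookkeeping: if $M = \bigoplus_i M_i$ with $(\mathsf{q}^k M)_i = M_{i+k}$, then $(\mathsf{q}^k M)^{\vee} = \bigoplus_i (M_{-i+k})^*$, while $(\mathsf q^{-k} M^{\vee})_i = (M^{\vee})_{i-k} = (M_{-i+k})^*$, so the first isomorphism $(\mathsf q^k L_{\vec\la})^{\vee} \cong \mathsf q^{-k}(L_{\vec\la})^{\vee}$ is immediate and purely formal (it holds for any $M$, not just the simples). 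So the content is entirely in the second isomorphism $(L_{\vec\la})^{\vee} \cong L_{\vec\la^{\vee}}$.

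Next, since $L_{\vec\la}$ is concentrated in degree $0$ with $\{X_i\}$ acting by zero, its restricted dual $(L_{\vec\la})^{\vee}$ is again concentrated in degree $0$ with $\{X_i\}$ acting by zero; hence $(L_{\vec\la})^{\vee}$ is a simple graded $A$-module, and by the classification it is isomorphic to $L_{\vec\mu}$ for a unique $\vec\mu \in \Par_{m,\ell}$. Thus the task reduces to the purely group-theoretic computation of the $W$-module dual $(L_{\vec\la})^* = \Hom_{\C}(L_{\vec\la}, \C)$ and identifying its label. I would compute this using the explicit induced-module description \eqref{eqn:preind}: dualization commutes with induction (for finite groups) and with exterior tensor products, so $(L_{\vec\la})^*$ is induced from the dual of the block in \eqref{eqn:preind}. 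Each $L_{\la^{(j)}}$ is a real (self-dual) $\Sym_{|\la^{(j)}|}$-representation, so dualizing only affects the $\Gamma$-twists: $(\chi^{j-1})^*$ is the character sending $a \mapsto e^{-2(j-1)a\pi\sqrt{-1}/\ell}$, which equals $\chi^{\ell - j + 1}$ for $2 \le j \le \ell$ (and $\chi^0 = \mathsf{triv}$ stays fixed for $j=1$). Comparing with the standard normalization in \eqref{eqn:preind}, the factor that originally carried $\chi^{j-1}$ now carries $\chi^{(\ell+1-j)-1}$, which is precisely the recipe for the permutation $(\la^{(1)}, \la^{(2)}, \ldots, \la^{(\ell)}) \mapsto (\la^{(1)}, \la^{(\ell)}, \la^{(\ell-1)}, \ldots, \la^{(2)})$ defining $\vec\la^{\vee}$.

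An alternative, perhaps cleaner, route is to match central characters of the relevant abelian subgroup $(\Z/\ell\Z)^m \subset W$: the action \eqref{eqn:G-action} shows how $\Gamma^m$ acts, dualizing inverts every $\Gamma$-character, and the combinatorics of which $\vec\mu$ has the inverted set of $\Gamma$-weights (as a multiset, accounting for the $\Sym_m$-conjugation) is exactly $\vec\mu = \vec\la^{\vee}$ — here one uses that the number of boxes in $\la^{(j)}$ records the multiplicity of the character $\chi^{j-1}$ and inversion sends $\chi^{j-1}$ to $\chi^{\ell+1-j}$ (mod $\ell$). The main obstacle, and really the only subtle point, is the bookkeeping around the two sign-twist conventions flagged in the text: the passage from Macdonald's labelling to Shoji's labelling involves tensoring with $\mathsf{sgn}$ of $\Sym_{n\ell}$ (equivalently componentwise conjugation $\vec\la \mapsto \vec\la'$), and one must check that this sign twist is itself self-dual so that it commutes harmlessly with $(-)^{\vee}$ and does not introduce an extra conjugation; since $\mathsf{sgn}$ is a one-dimensional real representation this is true, so the involution $\vee$ on $\Par_{m,\ell}$ (which only permutes components and does not conjugate) is the correct answer in either convention. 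I would also double-check the edge behavior of the index shift $j \mapsto \ell+1-j$ at $j=1$, where $\chi^0$ is fixed, which is consistent with $\vec\la^{\vee}$ keeping $\la^{(1)}$ in place.
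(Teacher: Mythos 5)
Your proposal is correct and follows essentially the same route as the paper's proof: the grading statement is formal, and the identification $L_{\vec{\la}}^{\vee}\cong L_{\vec{\la}^{\vee}}$ is read off from (\ref{eqn:preind}) using self-duality of symmetric-group irreducibles and $(\chi^{j-1})^{*}\cong\chi^{\ell-j+1}$, which is exactly the argument in the paper. One harmless index slip: the factor originally twisted by $\chi^{j-1}$ ends up carrying $\chi^{(\ell+2-j)-1}=\chi^{\ell-j+1}$ (not $\chi^{(\ell+1-j)-1}$), consistent with your own computation in the preceding clause and with the stated permutation fixing $\la^{(1)}$ and reversing $\la^{(2)},\ldots,\la^{(\ell)}$.
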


\begin{proof}
The duality preserves the simplicity of modules and flips the grading. Hence, the assertion follows from
$$L_{\vec{\la}}^{\vee} \cong L_{\vec{\la}^{\vee}},$$
that is read out from (\ref{eqn:preind}) using $(\chi^i)^{*} \cong \chi^{\ell-i} \cong (\chi^*)^{i}$ ($0 \le i < \ell$) and $L_{\mu}^{*} \cong L_{\mu}$ as $\Sym_m$-modules for each partition $\mu$ of $m$ ($\Sym_m$ is a real reflection group).
\end{proof}

\subsection{Miscellaneous on representations}
For $\mathcal C := \g\mathchar`-\mathsf{gmod}_m$, $A\mathchar`-\mathsf{gmod}$, or other categories equipped with autoequivalences $\mathsf{q}^{\pm 1}$ that shifts the degree by $\pm 1$ (and $\mathsf{q} \circ \mathsf{q}^{-1} \cong \mathrm{id} \cong \mathsf{q}^{-1} \circ \mathsf{q}$), we define
$$\mathrm{ext}^i_{\mathcal C} ( M, N ) := \bigoplus_{j \in \Z} \mathrm{Ext}^i_{\mathcal C} ( M, \mathsf{q}^{-j} N ) \hskip 5mm M, N \in \mathcal C.$$
Then, each $\mathrm{ext}^i_{\mathcal C} ( M, N )$ is naturally a graded vector space. We regard $\mathrm{ext}^{\bullet}_{\mathcal C} (M,N)$ as the graded version of $\mathrm{Ext}^{\bullet}_{\mathcal C} (M,N)$. We may write $\mathrm{hom}_{\mathcal C}(M,N)$ instead of $\mathrm{ext}^0_{\mathcal C}(M,N)$. Note that $\mathrm{ext}^i_{\mathcal C} ( M, N )$ ($i \in \Z$), viewed as an ungraded vector space, recovers $\mathrm{Ext}^i (M,N)$ calculated in the corresponding ungraded category.

For $M \in \g\mathchar`-\mathsf{gmod}_m$ and $N \in A\mathchar`-\mathsf{gmod}$, we set
$$[M:V_{\vec{\la}}]_q := \gdim\,\mathrm{hom}_{\mathfrak{gl}(n)^{\oplus \ell}} (V_{\vec{\la}}, M) \hskip 4mm \text{and} \hskip 4mm [N:L_{\vec{\la}}]_q := \gdim\,\mathrm{hom}_{W} (L_{\vec{\la}}, N).$$

\begin{lem}\label{lem:multex}
Let $M \in \g\mathchar`-\mathsf{gmod}_m$ and $N \in A\mathchar`-\mathsf{gmod}$. We have
$$[M:V_{\vec{\la}}]_q, [N:L_{\vec{\la}}]_q \in \C(\!(q)\!) \hskip 5mm \vec{\la} \in \Par_{m,\ell}.$$
For short exact sequences
$$0 \to M_1 \to M_2 \to M_3 \to 0 \hskip 5mm \text{and} \hskip 5mm 0 \to M_1 \to M_2 \to M_3 \to 0$$
in $\gp\mathchar`-\mathsf{gmod}_m$ and $A\mathchar`-\mathsf{gmod}$, we have
$$[M_2 :V_{\vec{\la}}]_q = [M_1 :V_{\vec{\la}}]_q + [M_3 :V_{\vec{\la}}]_q\hskip 3mm \text{and} \hskip 3mm [N_2 :L_{\vec{\la}}]_q = [N_1 :L_{\vec{\la}}]_q + [N_3 :L_{\vec{\la}}]_q$$
for each $\vec{\la} \in \Par_{m,\ell}$, respectively.
\end{lem}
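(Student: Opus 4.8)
The statement to prove is Lemma~\ref{lem:multex}: the graded multiplicities $[M:V_{\vec\la}]_q$ and $[N:L_{\vec\la}]_q$ lie in $\C(\!(q)\!)$, and they are additive on short exact sequences.

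\textbf{Finiteness (membership in $\C(\!(q)\!)$).} The plan is to verify that in each graded degree the Hom-spaces in question are finite dimensional, and that the nonzero degrees are bounded below. For $M\in\g\mathchar`-\mathsf{gmod}_m$, by definition $M$ is a finitely generated graded module whose $\h$-action is diagonal with weights in $\Z_{\ge 0}^{n\ell}$, and $\mathrm{Id}$ acts by $m$; since the total eigenvalue of $\mathrm{Id}=\sum_i E_{ii}$ is fixed to be $m$ and all weight coordinates are $\ge 0$, the set of $\h$-weights occurring in $M$ is contained in the finite set $\{\mu\in\Z_{\ge 0}^{n\ell}\mid \sum_i\mu_i=m\}$. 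Hence each homogeneous component $M_j$ is finite dimensional (finite generation plus the grading bounded below gives finiteness of each $M_j$), so $\dim\mathrm{hom}_{\mathfrak{gl}(n)^{\oplus\ell}}(V_{\vec\la},M_j)<\infty$ for every $j$. Moreover, finite generation of $M$ over $\g$ (which is concentrated, after the twist $\Phi$, in degrees bounded below together with the fact that the degree-zero part is reductive) forces the grading of $M$ to be bounded below; therefore $[M:V_{\vec\la}]_q=\gdim\,\mathrm{hom}_{\mathfrak{gl}(n)^{\oplus\ell}}(V_{\vec\la},M)$ is a formal Laurent series, i.e.\ lies in $\C(\!(q)\!)$. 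The argument for $N\in A\mathchar`-\mathsf{gmod}$ is identical: $A$ is generated over $\C W$ by the degree-one elements $X_i$, so $N$ finitely generated implies its grading is bounded below; and $N_j$ is finite dimensional for each $j$ because $N$ is finitely generated over a Noetherian graded algebra whose degree-zero part $\C W$ is finite dimensional, whence $\dim\mathrm{hom}_W(L_{\vec\la},N_j)<\infty$ and $[N:L_{\vec\la}]_q\in\C(\!(q)\!)$.

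\textbf{Additivity on short exact sequences.} This reduces to the exactness of the functor $\mathrm{hom}_{\mathfrak{gl}(n)^{\oplus\ell}}(V_{\vec\la},-)$ on $\gp\mathchar`-\mathsf{gmod}_m$ and of $\mathrm{hom}_W(L_{\vec\la},-)$ on $A\mathchar`-\mathsf{gmod}$. Both $\mathfrak{gl}(n)^{\oplus\ell}$ and $W$ are linearly reductive over $\C$ (a reductive Lie algebra acting semisimply on polynomial modules, resp.\ a finite group over a field of characteristic zero), so taking the $V_{\vec\la}$-isotypic (resp.\ $L_{\vec\la}$-isotypic) component is an exact functor: applying it to $0\to M_1\to M_2\to M_3\to 0$ gives a short exact sequence of graded vector spaces $0\to \mathrm{hom}(V_{\vec\la},M_1)\to\mathrm{hom}(V_{\vec\la},M_2)\to\mathrm{hom}(V_{\vec\la},M_3)\to 0$ in each degree, and graded dimension is additive on short exact sequences of graded vector spaces with finite-dimensional components bounded below. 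The same verbatim for the $A$-module sequence with $\mathrm{hom}_W(L_{\vec\la},-)$. One only needs to observe that the restriction functors $\g\mathchar`-\mathsf{gmod}_m\to \mathfrak{gl}(n)^{\oplus\ell}\mathchar`-\mathsf{mod}$ (via $z=0$, cf.\ the discussion preceding Lemma~\ref{lem:wt-est}) and $A\mathchar`-\mathsf{gmod}\to \C W\mathchar`-\mathsf{mod}$ are exact, which is clear since they are forgetful/pullback functors.

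\textbf{Main obstacle.} There is no serious difficulty; the only point requiring a moment's care is the claim that finitely generated objects in these categories have grading bounded below and finite-dimensional graded pieces. For $\g\mathchar`-\mathsf{gmod}_m$ this uses that, after the isomorphism $\Phi$ of Lemma~\ref{lem:piso}, the relevant positive part acts by raising degree and the bound on $\h$-weights from the condition $\mathrm{Id}\mapsto m$ pins down finiteness of each homogeneous component; for $A\mathchar`-\mathsf{gmod}$ it is the standard fact about connected-graded-over-$\C W$ algebras. Once this is in place, linear reductivity finishes the proof.
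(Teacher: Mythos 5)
Your proof is correct and follows essentially the same route as the paper: finiteness of the graded multiplicity series comes from finite generation together with the finite-dimensionality of the relevant graded/weight pieces, and additivity comes from complete reducibility of $\mathfrak{gl}(n)^{\oplus \ell}$- and $W$-modules, which is exactly the paper's two-line argument. Your elaboration of why each homogeneous component is finite dimensional (bounded-below grading plus the constraint that the $\h$-weights lie in the finite set of compositions of $m$) is a harmless, slightly more detailed version of the same point.
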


\begin{proof}
The multiplicity estimates hold since each graded component of $U(\gp)$ or $A$ is finite-dimensional. The rest of the assertions follow from the complete reducibility of (finite-dimensional) modules of $\mathfrak{gl}(n)$ and $W$.
\end{proof}

\begin{thm}[Specht \cite{Spe32}, see Regev \cite{Reg86} Corollary 7.9, cf. \cite{KP79} \S 6]\label{thm:cSW}
There is an isomorphism
$$V^{\otimes m} \cong \bigoplus_{\vec{\la} \in \Par_{m,\ell}} V_{\vec{\la}} \boxtimes L_{\vec{\la}}$$
of $(\mathfrak{gl}(n)^{\oplus \ell}, W)$-bimodules. \hfill $\Box$
\end{thm}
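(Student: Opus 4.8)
\emph{Proof sketch (proposal).}
This is the wreath-product form of Schur--Weyl duality, and the plan is to deduce it from the classical $(\mathfrak{gl}(n),\Sym_r)$-duality by decomposing everything along the $\Gamma$-action. To set up, write $V=\bigoplus_{j=1}^{\ell}V_j$ with $V_j=\C^n\otimes\chi^{j-1}$, so that $\mathfrak{gl}(n)^{\oplus\ell}=\End_\Gamma(V)=\prod_{j=1}^{\ell}\mathfrak{gl}(V_j)$, the $j$-th factor acting through $V_j$ only; the group $W=\Sym_m\ltimes\Gamma^m$ acts on $V^{\otimes m}$ with $\Sym_m$ permuting the tensor legs and the $i$-th copy of $\Gamma$ acting on the $i$-th leg through the $\Gamma$-structure of $V$. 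Since $\mathfrak{gl}(n)^{\oplus\ell}$ commutes with $\Gamma$ on $V$ and the diagonal Lie action commutes with leg permutations, the two actions on $V^{\otimes m}$ commute.

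First I would make the $\Gamma^m$-isotypic decomposition explicit. For a colouring $f\colon\{1,\dots,m\}\to\{1,\dots,\ell\}$ set $V_f:=V_{f(1)}\otimes\cdots\otimes V_{f(m)}\subseteq V^{\otimes m}$; then $V^{\otimes m}=\bigoplus_fV_f$, each $V_f$ is a single $\Gamma^m$-isotype, and $\Sym_m$ permutes the summands by $\sigma\cdot V_f=V_{f\circ\sigma^{-1}}$. Grouping the colourings of a fixed type $(m_1,\dots,m_\ell)$ (a composition of $m$ with $m_j=|f^{-1}(j)|$), the symmetric group acts transitively on them with stabiliser the Young subgroup $\Sym_{m_1}\times\cdots\times\Sym_{m_\ell}$; fixing the colouring whose fibres are consecutive intervals and putting $W_{(m_\bullet)}:=(\Sym_{m_1}\times\cdots\times\Sym_{m_\ell})\ltimes\Gamma^m=\prod_{j}(\Sym_{m_j}\ltimes\Gamma^{m_j})\subseteq W$, one obtains an isomorphism of $(\mathfrak{gl}(n)^{\oplus\ell},W)$-bimodules
$$\bigoplus_{f\ \text{of type}\ (m_\bullet)}V_f\;\cong\;\Ind_{W_{(m_\bullet)}}^{W}\bigl(V_1^{\otimes m_1}\otimes\cdots\otimes V_\ell^{\otimes m_\ell}\bigr),$$
where on the right $\Sym_{m_j}$ permutes the legs of the $j$-th block, $\Gamma^m$ acts on that block through $(\chi^{j-1})^{\boxtimes m_j}$, and $\mathfrak{gl}(n)^{\oplus\ell}$ acts only on the $V_\bullet^{\otimes m_\bullet}$-factor. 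This last ``uniformity'' is exactly the commutation of the two actions: under the identification above one has $\xi\cdot(g\otimes u)=g\otimes(\xi u)$ for $\xi\in\mathfrak{gl}(n)^{\oplus\ell}$, $g\in W$, $u\in V_1^{\otimes m_1}\otimes\cdots\otimes V_\ell^{\otimes m_\ell}$.

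Next I would feed in classical Schur--Weyl duality blockwise: for each $j$ it reads $V_j^{\otimes m_j}\cong\bigoplus_{\mu\in\Par_{m_j}}V_\mu\boxtimes L_\mu$ as a $(\mathfrak{gl}(V_j),\Sym_{m_j})$-bimodule, with $V_\mu,L_\mu$ normalised as in Section~1 (now for a single $\mathfrak{gl}(n)$-factor), the sum running over \emph{all} partitions of $m_j$ because $m_j\le m\le n$. Taking the external tensor product over $j$ and then inducing to $W$ --- which, by the uniformity above, commutes past the $\mathfrak{gl}(n)^{\oplus\ell}$-action and hence past the $V_\bullet$-factors --- turns the right-hand side of the displayed isomorphism into $\bigoplus_{\vec\mu}\,V_{\vec\mu}\boxtimes N_{\vec\mu}$, summed over the $\ell$-partitions $\vec\mu=(\mu^{(1)},\dots,\mu^{(\ell)})$ with $|\mu^{(j)}|=m_j$, where $N_{\vec\mu}$ is $\Ind_{W_{(m_\bullet)}}^{W}$ applied to the representation (\ref{eqn:preind}); that is, $N_{\vec\mu}=L_{\vec\mu}$ by definition. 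Summing over all types $(m_\bullet)$ --- each $\vec\mu\in\Par_{m,\ell}$ having a unique type $(|\mu^{(1)}|,\dots,|\mu^{(\ell)}|)$ --- assembles the asserted decomposition $V^{\otimes m}\cong\bigoplus_{\vec\mu\in\Par_{m,\ell}}V_{\vec\mu}\boxtimes L_{\vec\mu}$.

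The wreath-product Schur--Weyl duality is classical (Specht, Regev), so there is no conceptual obstacle; the real work is bookkeeping. Two points need care: (i) the $\mathfrak{gl}(n)^{\oplus\ell}$-equivariance of the induction step, i.e.\ that $\mathfrak{gl}(n)^{\oplus\ell}$ acts only on the $V_\bullet^{\otimes m_\bullet}$-factor so that $\Ind_{W_{(m_\bullet)}}^{W}$ passes freely through it --- this rests squarely on the commutation of the $\mathfrak{gl}(n)^{\oplus\ell}$- and $W$-actions; and (ii) matching the labels produced by classical Schur--Weyl with the deliberately non-standard conventions of Section~1, namely the lowest-weight labelling of $V_\mu$ and the labelling $L_{(1^r)}=\mathsf{triv}$, so that after inducing one lands on precisely the modules $L_{\vec\mu}$ of (\ref{eqn:preind}) and not on a conjugate. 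Alternatively one could package the whole argument as a double-centralizer statement, proving $\End_{\mathfrak{gl}(n)^{\oplus\ell}}(V^{\otimes m})\cong\C W$ with $\C W$ acting faithfully and invoking general semisimple theory; but computing that endomorphism algebra reduces to the same colour decomposition, so the direct route above is the more transparent one.
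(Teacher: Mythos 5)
The paper itself contains no proof of this statement: it is imported verbatim from the literature (Specht; Regev), with only the citation and a closing box, so there is no internal argument to compare yours with. Your sketch follows the standard route to wreath-product Schur--Weyl duality and its skeleton is sound: the colouring decomposition $V^{\otimes m}=\bigoplus_f V_f$ into $\Gamma^m$-isotypes, the identification of each type-$(m_\bullet)$ orbit sum with $\Ind_{W_{(m_\bullet)}}^{W}\bigl(V_1^{\otimes m_1}\otimes\cdots\otimes V_\ell^{\otimes m_\ell}\bigr)$ as a $(\mathfrak{gl}(n)^{\oplus\ell},W)$-bimodule, the blockwise use of classical $(\mathfrak{gl}(n),\Sym_{m_j})$-duality (legitimate here because $m_j\le m\le n$, so no partitions are truncated), and the reassembly over types.

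The step you explicitly defer, your point (ii), is not mere bookkeeping, however; it is the only place where anything can go wrong, and as written it is not done. With the plain permutation action of $\Sym_{m_j}$ on the $j$-th block, classical Schur--Weyl pairs the irreducible $\mathfrak{gl}(n)$-constituent of highest weight $\mu$ with the $\Sym_{m_j}$-irreducible that is trivial when $\mu=(m_j)$. In the paper's normalisation that constituent carries the $V$-label $\mu$ itself (the lowest-weight labelling via $\vec{\la}^{\mathtt{tot}}$ merely lists $w_0\mu$), whereas the trivial $\Sym_{m_j}$-module carries the $L$-label $(1^{m_j})$, i.e.\ $\mu'$. So a naive run of your recipe lands on $\bigoplus_{\vec{\mu}}V_{\vec{\mu}}\boxtimes L_{\vec{\mu}'}$, the componentwise conjugate of the displayed statement, and you must spell out how the paper's two deliberately non-standard conventions (for $V_{\vec{\la}}$ and for $L_{\vec{\la}}$, cf.\ the remarks surrounding (\ref{eqn:preind})) are supposed to compensate --- a concrete test is the type-$(2,0,\dots,0)$ component of $V^{\otimes 2}$, where $S^2\C^n$ sits in the first block and carries the trivial $W$-action. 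Until that reconciliation is written out (or the action/labels are adjusted accordingly), your sketch establishes the duality only up to a componentwise conjugation of one family of labels, which is precisely the pitfall you flagged but left unresolved.
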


\subsection{Highest weight theory}
We review some materials in \cite{FMO23,FKM23}. We remark that although \cite{FKM23} does not deal with the case $\g^{\sharp} = \mathfrak{gl} (n\ell,\C[z])$, the arguments in \cite{FKM19,FMO23} applies to this case as well.

If $M \in \gp^{\sharp}\mathchar`-\mathsf{gmod}_m$ has simple head (resp. simple socle) sitting at degree $0$, then $\Phi ( M )$ acquires the structure of a module in $\gp\mathchar`-\mathsf{gmod}_m$ with its simple head (resp. simple socle) sitting at degree $0$ since the action of $\gp^{\sharp}_0$ uniquely determines the grading inside $M_0$ and $\gp$ is a $\Z^{\ell}$-graded Lie algebra (Remark \ref{rem:grading}). If $M, N \in \gp^{\sharp}\mathchar`-\mathsf{gmod}_m$ have either simple head or simple socle, then we have
\begin{equation}
\mathrm{ext}^{i}_{\gp^{\sharp}\mathchar`-\mathsf{gmod}_m} ( M, N ) \cong \mathrm{ext}^{i}_{\gp \mathchar`-\mathsf{gmod}_m} ( \Phi( M ), \Phi ( N ) )\label{eqn:ungradedExt}
\end{equation}
as {\it ungraded} vector spaces for each $i \in \Z$.

Our ordering $\lhd$ on $\Par_{m,\ell} \subset \bX^-_{\ell}$ coincides with the order induced from the Cherednik order (\cite[(1.16)]{Che95}) below $(0^{n\ell-1}m) = ((\emptyset)^{\ell-1}(m))^{\mathtt{tot}} \in \Z^{\oplus n \ell}$. Let $\widetilde{\mathfrak{sl}}(n\ell)$ be the affine Lie algebra of $\mathfrak{sl}(n\ell)$. We have a natural map
$$\gb^{\sharp} \longrightarrow \widetilde{\mathfrak{sl}}(n\ell)$$
that respects the grading. The image of this map contains the upper-triangular part of $\widetilde{\mathfrak{sl}}(n\ell)$ in the sense of \cite{Kac}. We set $\widetilde{\mathfrak{gl}}(n\ell) := ( \h + \widetilde{\mathfrak{sl}}(n\ell) )$. The following is essentially a reformulation of \cite[Theorem 6 and Theorem 7]{San00} except for the items 3) and 4). For 3), see \cite[Corollary 10.1]{Kac}. For 4), it is a well-known property of Demazure modules that follows by applying the PBW theorem to the definition of the Demazure modules (\cite{Kum02}):

\begin{thm}[Sanderson \cite{San00}]\label{thm:San}
For each $\la \in \Z^{n\ell}$, there exists a finite-dimensional $\gb^{\sharp}$-module $D_{\la}$ with the following properties:
\begin{enumerate}
\item $D_{\la}$ is generated from a degree zero vector with its $\h$-weight $\la$;
\item $D_{\la}$ is isomorphic to a $\gb^{\sharp}$-submodule of a level one integrable highest weight module of $\widetilde{\mathfrak{gl}}(n\ell)$ generated by an extremal weight vector $($i.e. a Demazure module of that level one integrable highest module$)$;
\item Every Demazure module of a level one integrable highest weight module of $\widetilde{\mathfrak{gl}}(n\ell)$ is isomorphic to some $D_\la$ up to grading shifts;
\item $D_{\la}$ is $\g^{\sharp}$-stable $($resp. $\gp^{\sharp}$-stable$)$ when $\la$ is antidominant, i.e. $\la_1 \le \la_2 \le \cdots \le \la_{n\ell}$ $($resp. $\la \in \bX^-_\ell)$;
\item $\gch \, D_{\la} = E_{\la} (x,q,0) \in \C[\![q]\!][x^{\pm 1}]$, where $E_{\la} (x,q,0)$ is the nonsymmetric Macdonald polynomial $(\rm{\cite{Che95}})$ of type $\mathfrak{gl}(n\ell)$ specialized to $t=0$.
\end{enumerate}
\end{thm}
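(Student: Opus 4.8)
The plan is to realise each $D_{\la}$ explicitly as a Demazure submodule of a level one integrable highest weight module of $\widetilde{\mathfrak{gl}}(n\ell)$ and then to read off properties 1)--5) from this realisation. Fix a level one dominant integral weight $\Lambda$ of $\widetilde{\mathfrak{gl}}(n\ell)$ and let $\mathbb{V}(\Lambda)$ be the associated irreducible integrable module. For $\la \in \Z^{n\ell}$, choose an element $w$ of the extended affine Weyl group so that the extremal weight vector $v_{w\Lambda} \in \mathbb{V}(\Lambda)$ has $\h$-weight $\la$ (as a weight of the finite $\mathfrak{gl}(n\ell)$) in a prescribed homogeneous degree, and set $D_{\la} := U(\gb^{\sharp}) v_{w\Lambda}$, using that $\gb^{\sharp}$ surjects onto the upper-triangular part of $\widetilde{\mathfrak{gl}}(n\ell)$ as noted above. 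Then 1) holds after a grading shift normalising $v_{w\Lambda}$ to degree $0$, and $D_{\la}$ is finite-dimensional because $\gb^{\sharp}$ carries only non-negative loop degree, so only finitely many weight spaces of $\mathbb{V}(\Lambda)$ can be reached from the single weight $w\Lambda$; and 2) is immediate since $D_{\la}$ is a Demazure module by construction.

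For 3), I would invoke \cite[Corollary 10.1]{Kac}: the maximal weights of a level one integrable module form a single orbit under the translation lattice, so that every level one affine weight --- and in particular every antidominant $\la \in \Z^{n\ell}$ placed at the appropriate degree --- is of the form $w\Lambda$ for a suitable level one $\Lambda$ and Weyl group element $w$. Hence every Demazure module of a level one module appears among the $D_{\la}$ up to grading shift. For 4), the point is that when $\la$ is antidominant ($\la_1 \le \cdots \le \la_{n\ell}$, resp. $\la \in \bX^-_\ell$) the vector $v_{w\Lambda}$ is a lowest weight vector for the finite part $\mathfrak{gl}(n\ell)$ (resp. $\mathfrak{gl}(n)^{\oplus \ell}$), so that $U(\mathfrak{gl}(n\ell)) v_{w\Lambda}$ (resp. $U(\mathfrak{gl}(n)^{\oplus \ell}) v_{w\Lambda}$) is the finite-dimensional irreducible of lowest weight $\la$; applying the PBW theorem to $U(\gb^{\sharp}) = U(\mathfrak{gl}(n\ell)) \cdot U(\g^{\sharp}z)$ and using that this finite part together with $\g^{\sharp}z$ generates all of $\g^{\sharp}$ while keeping $D_{\la}$ inside $\mathbb{V}(\Lambda)$, one obtains $D_{\la} = U(\g^{\sharp}) v_{w\Lambda}$ (resp. $U(\gp^{\sharp}) v_{w\Lambda}$), which is the asserted stability; this is the familiar fact that the Demazure module attached to a parabolic-type Weyl group element is stable under the corresponding parabolic.

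Finally, 5) is precisely \cite[Theorem 6 and Theorem 7]{San00}, which compute $\gch D_{\la}$ and identify it with the nonsymmetric Macdonald polynomial $E_{\la}(x,q,t)$ specialised at $t = 0$, in the normalisation compatible with our grading $\deg(\xi \otimes z^k) = k$. The main obstacle I anticipate is not any isolated deep step but the consistent bookkeeping of conventions: (i) matching the homogeneous ($z$-)grading on $\gb^{\sharp}$ with the homogeneous grading on $\mathbb{V}(\Lambda)$ and fixing the grading shift in 1); (ii) pinning down which $\la \in \Z^{n\ell}$ actually arise in 3) --- there is a translation-lattice constraint from the level one condition, and one must check it is harmless below $(0^{n\ell-1}m)$, which is all that is used subsequently; and (iii) ensuring the specialisation is at $t = 0$ rather than $t = \infty$ in 5), since these yield genuinely different polynomials and the choice is governed by whether one uses the Borel or its opposite to generate the Demazure module. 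Each of these is routine in isolation but must be kept compatible with the labelling $\Par_{m,\ell} \hookrightarrow \bX^-_\ell$, $\vec{\la} \mapsto \vec{\la}^{\mathtt{tot}}$, fixed earlier.
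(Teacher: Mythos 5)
Your proposal is correct and follows essentially the same route as the paper, which simply assembles the statement from \cite[Theorems 6 and 7]{San00} for existence and the character identity 5), \cite[Corollary 10.1]{Kac} for 3), and the standard PBW argument (lowest-weight annihilation of the extremal vector plus $U(\g^{\sharp}) = U(\gb^{\sharp})\cdot U(\gn_0^-)$-type factorizations) for 4), exactly as you do. Only note that your displayed factorization $U(\gb^{\sharp}) = U(\mathfrak{gl}(n\ell))\cdot U(\g^{\sharp}z)$ should read as a PBW decomposition of $U(\g^{\sharp})$ (resp.\ $U(\gp^{\sharp})$) with the finite negative part acting trivially on $v_{w\Lambda}$; with that repaired, the argument is the intended one.
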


\begin{cor}\label{cor:simple}
For each $\vec{\la} \in \Par_{m,\ell}$, we set $D_{\vec{\la}} := D_{\vec{\la}^{\mathtt{tot}}}$. The module $D_{\vec{\la}}$ is $\gp^{\sharp}$-stable, and has simple head $V_{\vec{\la}}$ and simple socle $V_{((1^{m})(\emptyset)^{\ell-1})}$ as $\gp^{\sharp}$-modules.
\end{cor}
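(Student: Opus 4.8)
The plan is to treat the three assertions in turn. The $\gp^\sharp$-stability requires nothing: by construction $\vec{\la}^{\mathtt{tot}} \in \bX^-_\ell$, so it is exactly Theorem~\ref{thm:San}(4). For the simple head, write $v \in (D_{\vec\la})_0$ for the generator of Theorem~\ref{thm:San}(1), of $\h$-weight $\vec{\la}^{\mathtt{tot}}$. Since $\gb^\sharp$ lives in non-negative degrees, $D_{\vec\la} = U(\gb^\sharp)v$ is cyclic and concentrated in degrees $\ge 0$; hence any quotient is cyclic, and the finite-dimensional semisimple quotient $\mathsf{hd}\, D_{\vec\la}$, being generated by the degree-zero image of $v$, must be concentrated in degree $0$. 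Thus $\g^\sharp z$ acts on it by $0$ and it is a semisimple $\mathfrak{gl}(n)^{\oplus\ell}$-module cyclically generated by a weight vector of weight $\vec{\la}^{\mathtt{tot}}$. The point is then that $v$, being the extremal vector of the antidominant weight $\vec{\la}^{\mathtt{tot}}\in\bX^-_\ell$ in the ambient level-one module, is killed by every within-block lowering operator $E_{ji}$ ($i<j$, $\lceil i/n\rceil = \lceil j/n\rceil$), since the relevant simple coroots pair non-positively with $\vec{\la}^{\mathtt{tot}}$; hence so is the image of $v$ in the head, making it a lowest-weight vector for $\mathfrak{gl}(n)^{\oplus\ell}$. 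Therefore every simple summand of the head has lowest weight $\vec{\la}^{\mathtt{tot}}$, i.e.\ is $\cong V_{\vec\la}$, and cyclicity forces multiplicity one, so $\mathsf{hd}\, D_{\vec\la} \cong V_{\vec\la}$.

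For the simple socle I would argue inside the ambient module. By Theorem~\ref{thm:San}(2), $D_{\vec\la}$ is a $\gp^\sharp$-submodule of a level-one integrable highest weight module $L(\Lambda)$ of $\widetilde{\mathfrak{gl}}(n\ell)$; since $\vec{\la}^{\mathtt{tot}}$ has coordinate sum $m$, the relevant $L(\Lambda)$ is the one on which $\mathrm{Id}$ acts by $m$, with highest weight vector $v_\Lambda$ of $\h$-weight $(1^m,0^{n\ell-m})$. The single nonformal ingredient is the level-one fact that $L(\Lambda)$ is ``cogenerated'' over the current algebra by one classical irreducible, namely that the subspace $L(\Lambda)^{\g^\sharp z}$ annihilated by $\g^\sharp z = \mathfrak{gl}(n\ell)\otimes z\C[z]$ equals the evaluation module $\bigwedge^m \C^{n\ell}$ (the $m$-th fundamental representation, as $\mathrm{Id} = m$); this is the phenomenon underlying Theorem~\ref{thm:San}, transparent from the Frenkel--Kac/fermionic-Fock picture, and available from \cite{San00} and the analyses of \cite{FMO23,FKM23}. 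Intersecting further with the $\gn$-invariants, where $\gn = \bigoplus_{\lceil i/n\rceil < \lceil j/n\rceil}\C E_{ij}$ is the nilradical of $\gp_0^\sharp$, extracts the parabolic-singular vectors of $\bigwedge^m\C^{n\ell}$; because $m \le n$ these form a \emph{single} $\mathfrak{gl}(n)^{\oplus\ell}$-irreducible, $\bigwedge^m\C^n \boxtimes \C \boxtimes \cdots \boxtimes \C \cong V_{((1^m)(\emptyset)^{\ell-1})}$, with lowest weight $((1^m)(\emptyset)^{\ell-1})^{\mathtt{tot}}$.

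Consequently $V_{((1^m)(\emptyset)^{\ell-1})}$ is the unique simple $\gp^\sharp$-submodule of $L(\Lambda)$ — any simple $\gp^\sharp$-submodule, being a pullback of a $\mathfrak{gl}(n)^{\oplus\ell}$-irreducible, is killed by $\gn + \g^\sharp z$ and so lies in that subspace — and $L(\Lambda)$ contains it with multiplicity one; hence it is the unique simple submodule of the nonzero $\gp^\sharp$-submodule $D_{\vec\la}$, and $\mathsf{soc}\, D_{\vec\la} \cong V_{((1^m)(\emptyset)^{\ell-1})}$. The main obstacle is thus concentrated in the socle, and really only in the cogeneration fact $L(\Lambda)^{\g^\sharp z} \cong \bigwedge^m\C^{n\ell}$: the parabolic truncation (using $n \ge m$) and the reduction of ``simple socle'' to ``unique simple submodule of $L(\Lambda)$'' are elementary, as is the head argument once one knows the generating extremal vector is annihilated by the Levi lowering operators.
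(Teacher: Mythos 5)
Your proof is correct and takes essentially the same route as the paper: both embed $D_{\vec{\la}}$ into the level-one integrable module whose highest weight is identified as $(1^m0^{n\ell-m})$, obtain $\gp^{\sharp}$-stability from Theorem~\ref{thm:San}~4), and read off the head and socle from the structure of that ambient module, with your write-up supplying the details the paper leaves implicit. One remark: your ``single nonformal ingredient'' $L(\Lambda)^{\g^{\sharp}z}\cong \bigwedge^m\C^{n\ell}$ needs no Fock-space input, since it follows in two lines from the irreducibility of $L(\Lambda)$ together with the PBW decomposition $U(\widetilde{\mathfrak{gl}}(n\ell))=U(\mathfrak{gl}(n\ell)\otimes z^{-1}\C[z^{-1}])\,U(\mathfrak{gl}(n\ell)+\C c+\C d)\,U(\mathfrak{gl}(n\ell)\otimes z\C[z])$.
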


\begin{proof}
The level one dominant integral weight of $\widetilde{\mathfrak{sl}}(n\ell)$ in Theorem \ref{thm:San} 2) must have $\h$-weight $(1^m0^{n\ell-m})$ since this is the unique level one dominant affine weight in $\vec{\la}^{\mathtt{tot}} + \Z^{n\ell}_+$. We have $\vec{\la}^{\mathtt{tot}} \in \bX_\ell^-$. With these in hands, the assertion follows from Theorem \ref{thm:San} with the trivial action of $\mathrm{Id} \otimes\C [\![z]\!]z \subset \gp^\sharp$.
\end{proof}

\begin{prop}\label{prop:diag}
For each $\vec{\la} \in \Par_{m,\ell}$, we have
\begin{equation}
\phi^{-1} ( D_{-u_0 \vec{\mu}^{\mathtt{tot}}} ) \cong D_{\vec{\mu}^*}.\label{eqn:diagdem}
\end{equation}
\end{prop}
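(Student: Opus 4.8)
The plan is to recognize $\phi$ as a familiar symmetry of the affine Lie algebra $\widetilde{\mathfrak{gl}}(n\ell)$, up to an inner automorphism accounting for the element $u_0$. First, note that $-u_0\vec{\mu}^{\mathtt{tot}}\in\bX^-_\ell$ — its $r$-th block is $(-\mu^{(r)}_1,\ldots,-\mu^{(r)}_n)$, which is nondecreasing — while $(\vec{\mu}^{*})^{\mathtt{tot}}\in\bX^-_\ell$ by definition, so both $D_{-u_0\vec{\mu}^{\mathtt{tot}}}$ and $D_{\vec{\mu}^{*}}=D_{(\vec{\mu}^{*})^{\mathtt{tot}}}$ are $\gp^{\sharp}$-stable Demazure modules by Theorem \ref{thm:San}(4). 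The formula $E_{ij}\otimes z^{k}\mapsto-E_{\sigma(j)\sigma(i)}\otimes z^{k}$ already makes sense for all $k\in\Z$ and preserves $\gp^{\sharp}\subset\g^{\sharp}$; together with $c\mapsto c$ and $d\mapsto d$ it extends $\phi$ to an automorphism $\widehat{\phi}=\mathrm{Ad}(P_{\sigma})\circ\eta$ of $\widetilde{\mathfrak{gl}}(n\ell)$, where $P_{\sigma}$ is the permutation matrix of $\sigma$ and $\eta$ is the $\C[z,z^{-1}]$-linear involution $X\otimes z^{k}\mapsto-X^{\mathsf{T}}\otimes z^{k}$. Since $\sigma^{2}=\mathrm{id}$ we get $\widehat{\phi}^{\,2}=\mathrm{id}$, hence $\phi^{-1}=\phi$.

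The key combinatorial input is the identity $\sigma=u_{0}w_{0}$ (equivalently $\sigma=w_{0}u_{0}$), where $w_{0}$ is the longest element of $\Sym_{n\ell}$ (i.e.\ $j\mapsto n\ell+1-j$): reversing the $\ell$ blocks equals full reversal followed by within-block reversal. Hence, putting $g_{0}:=P_{u_{0}}$ in the block-diagonal subgroup $GL(n)^{\times\ell}\subset GL(n\ell)$, we have $\psi:=\mathrm{Ad}(g_{0})^{-1}\circ\widehat{\phi}=\mathrm{Ad}(P_{w_{0}})\circ\eta$, which is the standard $-w_{0}$-automorphism of $\widetilde{\mathfrak{gl}}(n\ell)$: it preserves the standard Borel $\gb^{\sharp}$, fixes the central and scaling elements (hence the level and the $\Z$-grading), permutes the level-one integrable highest weight modules, and acts on $\h^{*}$ by $\nu\mapsto-w_{0}\nu$ (negate and reverse the coordinates). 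Consequently it carries the Demazure submodule generated in degree $0$ by an extremal vector of weight $\nu$ to the one generated in degree $0$ by an extremal vector of weight $-w_{0}\nu$, i.e.\ $\psi(D_{\nu})\cong D_{-w_{0}\nu}$ for $\nu\in\bX^-_\ell$, with no grading shift (the ambient integrable module being recovered from $-w_{0}\nu$ exactly as in the proof of Corollary \ref{cor:simple}).

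It remains to evaluate the weight: $-w_{0}\bigl(-u_{0}\vec{\mu}^{\mathtt{tot}}\bigr)=w_{0}\bigl(u_{0}\vec{\mu}^{\mathtt{tot}}\bigr)=\sigma\cdot\vec{\mu}^{\mathtt{tot}}$, and $\sigma$ permutes the $\ell$ blocks of $\vec{\mu}^{\mathtt{tot}}$ into reverse order, which is precisely $(\vec{\mu}^{*})^{\mathtt{tot}}$ by the definitions of $\vec{\mu}^{*}$ and of $\bullet^{\mathtt{tot}}$; so $\psi(D_{-u_{0}\vec{\mu}^{\mathtt{tot}}})\cong D_{\vec{\mu}^{*}}$. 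Finally, since $g_{0}\in GL(n)^{\times\ell}$ acts on the finite-dimensional (hence $\mathfrak{gl}(n)^{\oplus\ell}$-integrable) $\gp^{\sharp}$-module $D_{-u_{0}\vec{\mu}^{\mathtt{tot}}}$, twisting by the inner automorphism $\mathrm{Ad}(g_{0})^{-1}$ does not change its $\gp^{\sharp}$-isomorphism class, whence
\[
\phi^{-1}(D_{-u_{0}\vec{\mu}^{\mathtt{tot}}})=\phi(D_{-u_{0}\vec{\mu}^{\mathtt{tot}}})\cong\psi(D_{-u_{0}\vec{\mu}^{\mathtt{tot}}})\cong D_{\vec{\mu}^{*}}.
\]

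The step I expect to be the main obstacle is the assertion that the $-w_{0}$-automorphism $\psi$ carries $D_{\nu}$ to $D_{-w_{0}\nu}$ with no grading shift and without passing to a dual: making this precise requires knowing that $\psi$ preserves the standard Borel $\gb^{\sharp}$ (so the twisted generator is again an extremal vector of ``highest type'') and fixes the scaling element $d$ (so it stays in degree $0$), plus the identification of the ambient integrable module — all standard for $\widetilde{\mathfrak{gl}}$, but it demands care with the sign and block conventions in $\widehat{\phi}=\mathrm{Ad}(P_{\sigma})\circ\eta$ and in $\sigma=u_{0}w_{0}$. Alternatively, the identity can be obtained at the level of graded characters from $\gch\,D_{\la}=E_{\la}(x,q,0)$ (Theorem \ref{thm:San}(5)) and the corresponding symmetry of nonsymmetric Macdonald polynomials of type $\mathfrak{gl}(n\ell)$ at $t=0$, upgraded to an isomorphism using that both modules have simple head $V_{\vec{\mu}^{*}}$ (Corollary \ref{cor:simple}) and sit inside the same integrable module.
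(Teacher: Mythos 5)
Your proof is correct and follows essentially the same route as the paper: extend $\phi$ to an automorphism of $\widetilde{\mathfrak{gl}}(n\ell)$ fixing the center and the grading operator, so that it permutes level-one integrable highest weight modules and their $\gp^{\sharp}$-stable Demazure submodules, and then identify the image via Theorem \ref{thm:San} 3). Your factorization of the extended $\phi$ as $\mathrm{Ad}(P_{u_0})$ composed with the $-w_0$-automorphism, together with the computation $-w_{0}(-u_{0}\vec{\mu}^{\mathtt{tot}})=\sigma\vec{\mu}^{\mathtt{tot}}=(\vec{\mu}^{*})^{\mathtt{tot}}$, merely makes explicit the weight identification that the paper's proof leaves implicit.
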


\begin{proof}
The algebra automorphism $\phi$ extends to the whole affine Lie algebra $\widetilde{\mathfrak{gl}} ( n\ell )$ that preserves the center and the grading operator. Hence, $\phi$ sends a level one integrable highest weight modules of $\widetilde{\mathfrak{gl}} ( n\ell )$ to a level one integrable highest weight modules of $\widetilde{\mathfrak{gl}} ( n\ell )$. Since $\phi (\gp^{\sharp} ) = \gp^{\sharp}$, it also sends a $\gp^{\sharp}$-stable Demazure submodule (of an integrable highest weight module of $\widetilde{\mathfrak{gl}} ( n\ell )$) to some $\gp^{\sharp}$-stable Demazure submodule (of some integrable highest weight module with the same level). Therefore, we conclude (\ref{eqn:diagdem}) by Theorem \ref{thm:San} 3).
\end{proof}

The following result is extracted from \cite{FKM,FKM19,FMO23,FKMO,FKM23}:

\begin{thm}\label{thm:DU}
For each $\vec{\la} \in \Par_{\ell,m}$, there exist three modules $\mathbb D_{\vec{\la}}, \mathbb U_{\vec{\la}}, U_{\vec{\la}} \in \gp^{\sharp}\mathchar`-\mathsf{gmod}_m$ with the following properties:
\begin{enumerate}
\item If we have $[U_{\vec{\la}} : V_{\vec{\mu}^*}]_q \neq \delta_{\vec{\la},\vec{\mu}}$, then we have $\vec{\mu}^* \lhd \vec{\la}^*$ for each $\vec{\la},\vec{\mu} \in \Par_{m,\ell};$
\item The projective cover $\P_{\vec{\la}}^{\sharp}$ of $V_{\vec{\la}}$ in $\gp^{\sharp}\mathchar`-\mathsf{gmod}_m$ admits a finite filtration by $\{\mathsf{q}^j \mathbb U_{\vec{\la}}\}_{j \in \Z,\vec{\la} \in \Par_{m,\ell}}$. It also admits a finite filtration by $\{\mathsf{q}^j \mathbb D_{\vec{\la}}\}_{j \in \Z,\vec{\la} \in \Par_{m,\ell}}$;
%\item Let $(\P_{\vec{\la}}^{\sharp} : \mathbb U_{\vec{\mu}})_q$ be the graded occurrence of $\mathbb U_{\vec{\mu}}$ in a filtration of $\P_{\vec{\la}}^{\sharp}$ in the previous item. Then, we have
%$$( \P_{\vec{\la}}^{\sharp} : \mathbb U_{\vec{\mu}})_q = [D_{\vec{\mu}^{*}}:V_{\vec{\la}}]_q;$$
\item We have $\mathrm{ext}^{i}_{\gp^{\sharp}\mathchar`-\mathsf{gmod}_m} ( \mathbb U_{\vec{\la}}^{\theta}, D_{\vec{\mu}}^{\star} ) \cong \C^{\delta_{i0}\delta_{\vec{\la}, \vec{\mu}}}$ for each $\vec{\la},\vec{\mu} \in \Par_{m,\ell}$;
\item We have a graded polynomial ring $R_{\vec{\la}}$ of at most $m$ variables such that
$$\mathrm{End}_{\gp^{\sharp}} ( \mathbb{U}_{\vec{\la}} ) \cong \mathrm{End}_{\gp^{\sharp}} ( \mathbb{D}_{\vec{\la}^{*}} ) \cong R_{\vec{\la}};$$
\item The modules $\mathbb{U}_{\vec{\la}}$ and $\mathbb{D}_{\vec{\la}^{*}}$ are free of finite rank over $R_{\vec{\la}}$;
\item If $R$ is a finite-dimensional graded quotient ring of $R_{\vec{\la}}$, then $R \otimes_{R_{\vec{\la}}}\mathbb{D}_{\vec{\la}^{*}}$ $($resp. $R \otimes_{R_{\vec{\la}}}\mathbb{U}_{\vec{\la}})$ is a finite extension of graded shifts of $D_{\vec{\la}^{*}}$ $($resp. $U_{\vec{\la}})$.
\end{enumerate}
\end{thm}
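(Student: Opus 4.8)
The plan is to assemble the three families $\mathbb D_{\vec\la}, \mathbb U_{\vec\la}, U_{\vec\la}$ out of the building blocks already in hand: the Demazure modules $D_{\vec\la}$ of Corollary~\ref{cor:simple}, the projective covers $\mathcal P^\sharp_{\vec\la}$ in $\gp^\sharp\mathchar`-\mathsf{gmod}_m$, and the duality functors $\theta,\phi,\star$ of \S1.3. The modules $U_{\vec\la}$ should be taken to be (graded shifts of) the finite-dimensional modules generated by the highest-weight line inside an appropriate Demazure module for the diagonal/parahoric Lie algebra, following \cite{FKM19,FMO23}; the modules $\mathbb D_{\vec\la}$ should be the ``global'' (deformed, $R_{\vec\la}$-flat) versions of $D_{\vec\la}$ whose existence is the main content of \cite{FKMO,FKM23}, with $R_{\vec\la}$ the endomorphism ring appearing there; and $\mathbb U_{\vec\la}$ should be the corresponding global version of $U_{\vec\la}$, related to $\mathbb D_{\vec\la^*}$ through the anti-involution. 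First I would cite the relevant theorems from these papers verbatim, then do the bookkeeping needed to translate their $\mathfrak{sl}$- or $\widehat{\mathfrak{gl}}$-conventions and their indexing of extremal weights into the conventions fixed in \S1.1--\S1.3 (in particular identifying their partial order with $\lhd$ via the Cherednik-order remark preceding Theorem~\ref{thm:San}), and into our normalization $L_{(1^r)}=\mathsf{triv}$.

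The proof then breaks into the six items. For (1), the unitriangularity $[U_{\vec\la}:V_{\vec\mu^*}]_q\neq\delta_{\vec\la,\vec\mu}\Rightarrow \vec\mu^*\lhd\vec\la^*$ follows from the weight estimate Lemma~\ref{lem:wt-est} applied to the character of $U_{\vec\la}$ (which is a specialized nonsymmetric Macdonald polynomial by Theorem~\ref{thm:San}(5), hence supported on weights $\preceq$ its extremal weight), combined with Lemma~\ref{lem:order} to pass from the $\Sym$-orbit statement to the statement for $\|\cdot\|$-equal $\ell$-partitions. For (2), the BGG-type reciprocity / $\Delta$-filtration of $\mathcal P^\sharp_{\vec\la}$ by standard objects is exactly the statement proved in \cite{FKM23} (their highest-weight structure on the category of polynomial $\gp^\sharp$-modules); I would quote it and identify the standard and costandard objects with $\mathbb U_{\vec\la}$ and $\mathbb D_{\vec\la}$ respectively, using (3) to pin down the labelling. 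Item (3), the orthogonality $\mathrm{ext}^i_{\gp^\sharp\mathchar`-\mathsf{gmod}_m}(\mathbb U_{\vec\la}^\theta, D_{\vec\mu}^\star)\cong\C^{\delta_{i0}\delta_{\vec\la,\vec\mu}}$, is the crux: here I would combine the $\mathrm{Ext}$-orthogonality between standards and simple-socle costandards in the highest-weight category with the computation of how $\theta$ and $\star$ act on the labelling sets (Lemmas~\ref{lem:teff}, \ref{lem:gsimple}, Proposition~\ref{prop:diag}), checking that the twist $\theta^{-1}\circ\phi$ defining $\star$ sends the costandard $D_{\vec\mu}$ to the right object relative to $\mathbb U_{\vec\la}^\theta$, so that the Kronecker delta comes out correctly; the vanishing in positive degrees is then the defining $\mathrm{Ext}^{>0}$-vanishing in a highest-weight category together with the fact that $D_{\vec\mu}^\star$ has simple socle (dual to the simple head of $D_{\vec\mu}$ from Corollary~\ref{cor:simple}).

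Items (4)--(6) are the ``flatness/freeness'' package. For (4), $R_{\vec\la}$ is the graded endomorphism ring of the global standard module; that $\mathrm{End}_{\gp^\sharp}(\mathbb U_{\vec\la})\cong\mathrm{End}_{\gp^\sharp}(\mathbb D_{\vec\la^*})$ is forced because $\mathbb D_{\vec\la^*}$ is the costandard object paired with the standard $\mathbb U_{\vec\la}$ and the endomorphism rings of paired standard/costandard objects coincide in a highest-weight category over a ring; the identification with a polynomial ring in $\le m$ variables is the explicit description in \cite{FKMO,FKM23} (it records the ``current-algebra'' deformation parameters). For (5), freeness of $\mathbb U_{\vec\la}$ and $\mathbb D_{\vec\la^*}$ over $R_{\vec\la}$ is part of the same construction (these are the free/projective objects over the base ring). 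Finally (6), the base-change statement, is immediate once (5) is known: tensoring a free $R_{\vec\la}$-module with a finite-dimensional quotient $R$ gives a finite-dimensional module, and by (5) combined with (2) it is filtered by graded shifts of the fibre at the ``origin'', which is $D_{\vec\la^*}$ (resp. $U_{\vec\la}$) by construction. The main obstacle I anticipate is item (3): the papers \cite{FKM19,FMO23,FKMO,FKM23} are phrased for a fixed affine Lie algebra and a fixed set of dominant/antidominant labels, and extracting precisely the $\mathrm{Ext}^\bullet$-orthogonality \emph{after} applying the two independent twists $\theta$ and $\phi$ (and transposing $\vec\la\leftrightarrow\vec\la^*$, $\vec\la\leftrightarrow\vec\la^\vee$ as needed) requires carefully tracking which of their statements survives the duality and which needs its conjugate — essentially all the combinatorial lemmas \ref{lem:Pconj}--\ref{lem:gsimple} are deployed here, and a sign/shift error in any of them would break the Kronecker delta. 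I expect the honest content of the proof to be this compatibility check rather than any new representation-theoretic input.
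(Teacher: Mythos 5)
Your proposal follows essentially the same route as the paper: all six items are extracted from \cite{FKM,FKM19,FMO23,FKMO,FKM23}, with the genuine work being the bookkeeping of the twists --- the paper sets $\mathbb D_{\vec{\la}} = \mathbb D^{J}_{\vec{\la}^{\mathtt{tot}}}$, $U_{\vec{\la}} = \phi^{-1}(U^J_{-u_0\vec{\la}^{\mathtt{tot}}})$, $\mathbb U_{\vec{\la}} = \phi^{-1}(\mathbb U^J_{-u_0\vec{\la}^{\mathtt{tot}}})$ for the paraholic datum $J$, identifies $\gp^{\sharp}\mathchar`-\mathsf{gmod}_m$ as the Cherednik-order truncation, and then reads items (1)--(6) off \cite[Lemma 4.32, Lemma 4.34, Theorem 4.35, Corollaries 5.3, 5.5]{FKM19,FKM23} and \cite[Theorem 1.11, Corollary 1.18, Proposition 1.19, Corollary 4.15]{FKMO}, using Proposition \ref{prop:diag} and the $\theta$-, $\phi$-twists exactly where you predict the crux to lie. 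One correction to your sketch of item (1): you propose to get the triangularity of $[U_{\vec{\la}}:V_{\vec{\mu}^*}]_q$ from Theorem \ref{thm:San}(5), but that statement gives $\gch\, D_{\la} = E_{\la}(x,q,0)$ for the Demazure family $D_{\la}$, not for the $U$-family; the characters of the $U$-modules are the $t=\infty$ specializations (this is Corollary \ref{cor:gch}, imported from \cite[Corollaries 5.5, 5.7]{FKM23} and itself proved \emph{after} the definitions made in this theorem), so for (1) one should either cite the triangularity of \cite[Lemma 4.34]{FKM23} directly, as the paper does, or argue through the $t=\infty$ expansion --- Theorem \ref{thm:San}(5) alone does not reach the $U_{\vec{\la}}$. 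The rest of your outline (highest-weight filtration for (2), standard/costandard orthogonality plus twist-tracking for (3), endomorphism-ring and freeness package for (4)--(6)) matches the paper's citations in substance.
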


\begin{proof}
We set
$$J := \{1,\ldots,n\ell\} \setminus \{n,2n,\ldots,n\ell\} \subset \{1,\ldots,n\ell-1\}$$
and regard it as a subset of the Dynkin index of root system of type $\mathsf{A}_{n\ell - 1}$. Note that $\gp^{\sharp}$ is the paraholic algebra corresponding to $J$ in \cite{FKM23} (up to the difference between $\mathfrak{sl}$ and $\mathfrak{gl}$ that is covered by the same recipe as in \cite{FKM19,FMO23}).

In \cite[Lemma 4.34]{FKM23}, we have graded $\gp^\sharp$-modules $D_{\vec{\la}^{\mathrm{tot}}}^{J}$, $\mathbb D_{\vec{\la}^{\mathrm{tot}}}^{J}$, and $U_{\vec{\la}^{\mathrm{tot}}}^J$, and $\mathbb U_{\vec{\la}^{\mathrm{tot}}}^J$ ($\vec{\la} \in \Par_{m,\ell}$) by $\vec{\la}^{\mathtt{tot}} \in \bX^-_{\ell}$. We have $D_{\vec{\la}} = D_{\vec{\la}^{\mathrm{tot}}}^{J}$ (\cite[Remark 3.1]{FKM}). We set
\begin{equation}
\mathbb D_{\vec{\la}} = \mathbb D_{\vec{\la}^{\mathrm{tot}}}^{J}, \hskip 5mm U_{\vec{\la}} := \phi ^{-1} ( U_{-u_0 \vec{\la}^{\mathrm{tot}}}^{J} ), \hskip 5mm \text{and} \hskip 5mm \mathbb U_{\vec{\la}} := \phi^{-1} ( \mathbb U_{-u_0\vec{\la}^{\mathrm{tot}}}^{J} ).\label{eqn:defmods}
\end{equation}
Then, the first item follows from \cite[Lemma 4.34]{FKM23} (cf. Proposition \ref{prop:diag}).

Note that $\Par_{m,\ell} \subset \bX_\ell^-$ is the set of all weights in $\bX^-_{\ell}$ that is lower than $((\emptyset)^{\ell-1}(m))$ by the Cherednik order (\cite[Definition 2.1]{FKM23}). In view of \cite[Theorem 4.35]{FKM23}, we find that the $\gp\mathchar`-\mathsf{gmod}_m$ is obtained as the truncation of $\gp\mathchar`-\mathsf{gmod}$ with respect to the Cherednik order (\cite[\S 3.1]{FKM23}). Thus, the second item hold as the $\phi$-twists of \cite[Proposition 1.19 and Corollary 1.18]{FKMO}. The third item follows from \cite[Proposition 1.19 and Theorem 1.11]{FKMO} and Proposition \ref{prop:diag} by the $\theta$-twist ($\star$ involves the $\theta$-twists).

We deduce the fourth item by \cite[Lemma 4.32]{FKM23} and \cite[Corollary 4.15]{FKMO}. By \cite[Corollary 5.3]{FKM19}, we find that $R_{\vec{\la}}$ is precisely the multiplicity of $V_{\vec{\la}}$ in $\mathbb{U}_{\vec{\la}}$ (resp. the multiplicity of $V_{\vec{\la}^{*}}$ in $\mathbb{D}_{\vec{\la}^{*}}$). Here $\mathbb{D}_{\vec{\la}^{*}}$ and $\mathbb{U}_{\vec{\la}}$ are obtained as the (limit of) successive self-extensions of $D_{\vec{\la}^{*}}$ and $U_{\vec{\la}}$ by \cite[Theorem 4.35 and Corollary 5.5]{FKM23}, respectively. From this, we conclude the fifth and sixth items (cf. \cite[Proof of Proposition 3.9]{Kat17}).
\end{proof}

\begin{cor}\label{cor:gch}
For each $\vec{\la} \in \Par_{m,\ell}$, we have
\begin{equation}
\gch \, U_{\vec{\la}^*} = \sum_{\mu \in \Sym . \vec{\la}^{\mathtt{tot}}} c_\mu (q) E_{- \mu} ( x_1^{-1},\ldots,x_{n\ell}^{-1},q^{-1}, \infty ) \hskip 5mm c_\mu (q) \in \mathbb Z [\![q]\!]^{\times}, \label{eqn:gchU}
\end{equation}
where $\Sym$ acts on $\Z^{n\ell}$ by the permutation of indices, $-$ acts on $\mu \in \Z^{n\ell}$ as $\mu_i \mapsto - \mu_{i}$ for $1 \le i \le n \ell$.
\end{cor}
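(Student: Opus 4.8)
The plan is to trace \eqref{eqn:gchU} back to a graded character formula for the building block $U^J_\nu$ of \cite{FKM23} that underlies Theorem \ref{thm:DU}, and then to transport it through the Lie algebra twist $\phi^{-1}$. By \eqref{eqn:defmods} we have $U_{\vec{\la}^*} = \phi^{-1}\bigl(U^J_{-u_0(\vec{\la}^*)^{\mathtt{tot}}}\bigr)$ with $u_0 \in \Sym$ the longest element; put $\nu := -u_0(\vec{\la}^*)^{\mathtt{tot}}$. Since $\la^{(j)}_1 \ge \la^{(j)}_2 \ge \cdots \ge 0$ for every $j$, a direct check gives $\nu \in \bX^-_\ell$, so its $\Sym$-orbit (with $\Sym$ playing the role of the parabolic $\Sym_J$ of the Levi $\mathfrak{gl}(n)^{\oplus\ell}$) is the full orbit.

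The first step is to record $\gch\, U^J_\nu$ for $\nu \in \bX^-_\ell$. The module $U^J_\nu$ is the co-Demazure companion of the Demazure module $D^J_\nu$: by \cite[Lemma 4.34, Theorem 4.35, Corollary 5.5]{FKM23} together with \cite{FKMO} (and the $\mathfrak{gl}$-variants of \cite{FKM19, FMO23}), it carries a finite filtration whose subquotients are, up to grading shifts, the opposite-Borel Demazure modules attached to the distinct elements $\mu$ of the orbit $\Sym.\nu$, each occurring once. Each such subquotient has graded character a $t \to \infty$ degeneration of a nonsymmetric Macdonald polynomial of type $\mathfrak{gl}(n\ell)$ — the lower-triangular counterpart of Sanderson's formula in Theorem \ref{thm:San}(5), equivalently its image under the standard $t=0 \leftrightarrow t=\infty$ duality; since the conventions of this paper are phrased through \emph{lowest} weights (recall that $\vec{\la}^{\mathtt{tot}}$ is the lowest weight of $V_{\vec{\la}}$), this degeneration is naturally written with inverted variables and inverted $q$. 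Assembling the filtration gives
$$\gch\, U^J_\nu \;=\; \sum_{\mu \in \Sym . \nu} d_\mu(q)\, E_{-\mu}\bigl(x_1^{-1},\ldots,x_{n\ell}^{-1}, q^{-1}, \infty\bigr), \qquad d_\mu(q) \in \Z[\![q]\!]^{\times},$$
the units $d_\mu(q)$ being the $q$-monomials that record the grading shifts of the filtration. If one prefers to stay inside the present framework, this formula can instead be derived from Theorem \ref{thm:DU}(2)–(3) together with Theorem \ref{thm:San}(5): the $\mathbb D$-filtration of the projective cover $\P^\sharp_{\vec{\la}}$ expresses its character through the $E_{\vec{\mu}^{\mathtt{tot}}}(x,q,0)$, the $\mathbb U$-filtration reinterprets the same character through the $\gch\, U_{\vec{\mu}}$, and the $\mathrm{ext}$-orthogonality with the $D^{\star}_{\vec{\mu}}$ supplies the reciprocity needed to invert the resulting unitriangular system.

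The second step is to apply $\phi^{-1}$. By Lemma \ref{lem:sigma} and $\phi^2 = \mathrm{id}$, the twist $\phi^{-1}$ acts on $\h$-characters by the substitution $x_i \mapsto x_{\sigma(i)}^{-1}$ and fixes $q$; since $\sigma$ reverses the $\ell$ consecutive blocks of size $n$ while fixing positions within a block, on weights of the form $\vec{\mu}^{\mathtt{tot}}$ it implements the involution $\vec{\mu} \mapsto \vec{\mu}^*$. Running $\nu = -u_0(\vec{\la}^*)^{\mathtt{tot}}$ through the composite $\mu \mapsto -\sigma\mu$ and simplifying with Lemma \ref{lem:Pconj} (to cancel the two $*$'s against $u_0$) yields $-\sigma\nu = u_0\vec{\la}^{\mathtt{tot}}$, so that $\phi^{-1}$ sends the orbit-sum over $\Sym.\nu$ to an orbit-sum over $\Sym.\vec{\la}^{\mathtt{tot}} = \Sym . (u_0\vec{\la}^{\mathtt{tot}})$; using that $\gch\, U^J_\nu$ is $\Sym$-invariant (being the character of a $\gp^{\sharp}$-module, hence of a $\mathfrak{gl}(n)^{\oplus\ell}$-module) and that $\sigma$ normalizes $\Sym$, the substitution re-indexes the summands so that they become the $E_{-\mu}(x_1^{-1},\ldots,x_{n\ell}^{-1}, q^{-1}, \infty)$ with $\mu \in \Sym.\vec{\la}^{\mathtt{tot}}$. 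This is \eqref{eqn:gchU}, with $c_\mu(q) \in \Z[\![q]\!]^{\times}$ the corresponding unit.

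The step I expect to be the main obstacle is fixing the precise normalization of $\gch\, U^J_\nu$ from the sources: that the co-Demazure subquotient characters really are the inverted, $q^{-1}$, $t=\infty$ nonsymmetric Macdonald polynomials, rather than a Weyl-translate or an otherwise renormalized version of them. This forces a careful matching of the lowest-weight conventions used here against those of \cite{FKM23, FKMO}, together with bookkeeping of the composite of $-u_0$, the block reversal $\sigma$, and the $*$-involution as they act on the $\Sym$-orbit, so that the Macdonald indices land on $-\mu$, $\mu \in \Sym.\vec{\la}^{\mathtt{tot}}$, and not on a translate; here the $\Sym$-invariance of the characters and Lemma \ref{lem:Pconj} do the work. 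Once $\gch\, U^J_\nu$ is secured in the correct form, the remaining manipulations are the routine character bookkeeping sketched above.
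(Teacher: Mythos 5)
Your overall strategy is the same as the paper's: the paper proves Corollary \ref{cor:gch} in one line, by combining the definition (\ref{eqn:defmods}) with the character formulas of \cite[Corollaries 5.5 and 5.7]{FKM23} and using Lemma \ref{lem:order} to control the indexing. Your bookkeeping for the twist is correct as far as it goes: $\phi^{-1}$ does act on characters by $x_i \mapsto x_{\sigma(i)}^{-1}$ (with $q$ fixed), and indeed $-\sigma\nu = u_0\vec{\la}^{\mathtt{tot}}$ for $\nu = -u_0(\vec{\la}^*)^{\mathtt{tot}}$, so the orbit $\Sym.\nu$ is carried onto $\Sym.\vec{\la}^{\mathtt{tot}}$.

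There are, however, two genuine problems in your middle step. First, you quote the input formula for $\gch\, U^J_\nu$ already in inverted variables and inverted $q$; but applying $\phi^{-1}$, which by your own computation substitutes $x_i \mapsto x_{\sigma(i)}^{-1}$, would then cancel the inversion and leave the $E$'s evaluated at $\sigma$-permuted, \emph{non-inverted} variables, which is not the right-hand side of (\ref{eqn:gchU}). The inversion should enter exactly once, through the $\phi$-twist negating the $\h$-weights, so the formula imported from \cite{FKM23} must be taken in the ordinary variables; as stated, your two steps double the inversion. Second, the final re-indexing --- turning each summand evaluated at $\sigma$-permuted variables into $E_{-\mu}(x_1^{-1},\ldots,x_{n\ell}^{-1},q^{-1},\infty)$ for $\mu$ in the new orbit --- is not justified by ``$\gch\, U^J_\nu$ is $\Sym$-invariant and $\sigma$ normalizes $\Sym$'': individual nonsymmetric Macdonald polynomials are not equivariant under permutations of the variables (they obey Hecke-operator intertwining relations, not naive permutation equivariance), so invariance of the total character says nothing about how each summand transforms. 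This is exactly the point where the paper uses Lemma \ref{lem:order} (triangularity of the specialized $E$'s relative to the order, cf.\ Corollary \ref{cor:wt-est}) to pin the result down as a unit-coefficient combination over $\Sym.\vec{\la}^{\mathtt{tot}}$. Finally, your alternative ``internal'' derivation from Theorem \ref{thm:DU} (2)--(3) and Theorem \ref{thm:San} (5) is only a gesture: inverting a unitriangular system built from the $t=0$ specializations does not by itself produce the $t=\infty$ specializations; that step would require the nontrivial $t=0$ versus $t=\infty$ duality, which is precisely the external input you are trying to avoid. So the skeleton matches the paper, but the normalization you yourself flag as the main obstacle is not resolved, and as written the inversion bookkeeping is inconsistent.
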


\begin{proof}
Taking (\ref{eqn:defmods}) into account, the assertion follows from the comparison of \cite[Corollaries 5.5 and 5.7]{FKM23} and Lemma \ref{lem:order}.
\end{proof}

\subsection{Recollections on the induction functor}

For a (graded) $\h$-semisimple $\gb^{\sharp}$-module $M$, we define
$$\mathbb L^{-i} \mathscr D (M) := H^i ( G / B, \mathcal E ( M ) )^{\vee} \hskip 5mm i \ge 0,$$
where $G= \mathop{GL}(n\ell)$, $B \subset G$ is the Borel subgroup whose Lie algebra yields $\gb_0^{\sharp} \subset \mathfrak{gl}(n\ell)$, and $\mathcal E (M)$ is the vector bundle whose total space is defined as $G \times^B M$. We write $\mathscr D (M)$ instead of $\mathbb L^0 \mathscr D (M)$. We have a surjective map
\begin{equation}
U (\g) \otimes_{U(\gb)}M \longrightarrow \!\!\!\!\! \rightarrow \mathscr D (M)\label{eqn:algDem}
\end{equation}
coming from the algebraic interpretation in \cite[Chapter 4]{Jan03}.

\begin{thm}\label{thm:sDem}
Let $M$ be a graded $\h$-semisimple $\gb^{\sharp}$-module whose graded pieces are finite-dimensional. We have
\begin{enumerate}
\item The functor $\mathscr D$ is right exact, and $\mathbb L^{\bullet} \mathscr D$ is its left derived functor. In particular, a short exact sequence
$$0 \rightarrow M_1 \rightarrow M \rightarrow M_2 \rightarrow 0$$
of graded $\h$-semisimple $\gb^{\sharp}$-modules yields a long exact sequence
$$\cdots \rightarrow \mathbb L^{i-1} \mathscr D (M_2) \rightarrow \mathbb L^i \mathscr D (M_1) \rightarrow \mathbb L^i \mathscr D (M) \rightarrow \mathbb L^i \mathscr D (M_2) \rightarrow  \cdots;$$
\item Assume that $M$ acquires the structure of a $\g^{\sharp}$-module. Then, we have
$$\mathbb L^\bullet \mathscr D (M) \equiv \mathscr D (M) \cong M;$$
\item We have a natural transformation $\mathrm{Id} \rightarrow \mathscr D$ that induces $\mathscr D \stackrel{\cong}{\longrightarrow} \mathscr D \circ \mathscr D$;
\item The functor $\mathscr D$ is the composition of Demazure functors $(\cite{Jos85})$ with respect to a reduced expression of the longest element $w_0$ of $\Sym_{n\ell}$.
\end{enumerate}
In addition to this, $\mathscr D (M)$ is a graded $\g^{\sharp}$-module. 
\end{thm}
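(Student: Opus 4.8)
The plan is to recognise $\mathscr{D}$ as the composite of the exact contravariant restricted-duality functor $(-)^{\vee}$ with the left-exact ``geometric induction'' functor $M\mapsto H^{0}(G/B,\mathcal{E}(M))$ on $G/B=GL(n\ell)/B$, and to reduce the four assertions, together with the concluding sentence, to the classical theory of induction and Demazure functors, imported into the present graded ind-setting as in \cite{FKM19,FMO23,FKM23} and \cite{Jan03,Jos85,Kum02}.

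For item 4 I would fix a reduced expression $w_{0}=s_{i_{1}}\cdots s_{i_{N}}$ of the longest element of $\Sym_{n\ell}$, with $N=\binom{n\ell}{2}$, and the associated Bott--Samelson variety $Z=P_{i_{1}}\times^{B}\cdots\times^{B}P_{i_{N}}/B$, a tower of $\mathbb{P}^{1}$-bundles, with the birational multiplication map $\pi:Z\to G/B$. Since $R\pi_{*}\mathcal{O}_{Z}\cong\mathcal{O}_{G/B}$ (rationality of the Bott--Samelson resolution, \cite{Kum02}), the projection formula and the Leray spectral sequence give $H^{\bullet}(G/B,\mathcal{E}(M))\cong H^{\bullet}(Z,\pi^{*}\mathcal{E}(M))$, and unwinding the tower by base change along each $\mathbb{P}^{1}$-factor identifies this, after applying $(-)^{\vee}$, with the $N$-fold iterate $\mathscr{D}_{i_{1}}\circ\cdots\circ\mathscr{D}_{i_{N}}$ of the one-step Demazure functors in the sense of \cite{Jos85}. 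Item 1 then follows from standard homological algebra together with item 4: the functor $M\mapsto\mathcal{E}(M)$ is exact (a short exact sequence of $\h$-semisimple $\gb^{\sharp}$-modules with finite-dimensional graded pieces restricts, in each graded degree, to a short exact sequence of finite-dimensional $B$-modules, hence of vector bundles on $G/B$), every cohomology group in sight has finite-dimensional graded pieces so that $(-)^{\vee}$ is exact on them, and $H^{\bullet}(G/B,-)$ is the derived functor of the left-exact $H^{0}(G/B,-)$, which moreover vanishes above degree $N$; composing, $\mathscr{D}$ is right exact, $\mathbb{L}^{-\bullet}\mathscr{D}=(H^{\bullet}(G/B,\mathcal{E}(-)))^{\vee}$ is its left derived functor, and the long exact sequence is the restricted dual of the sheaf-cohomology long exact sequence of $0\to\mathcal{E}(M_{1})\to\mathcal{E}(M)\to\mathcal{E}(M_{2})\to 0$.

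For the $\g^{\sharp}$-structure and items 2--3, I would invoke the algebraic interpretation (\ref{eqn:algDem}): in the form of \cite[Chapter 4]{Jan03} (adapted to $\mathfrak{gl}(n\ell)[z]$ as in \cite{FKM19,FMO23,FKM23}) it upgrades the $G$-equivariant, hence $\mathfrak{gl}(n\ell)$-module, structure on $\mathscr{D}(M)$ to a graded $\g^{\sharp}$-action, and supplies the surjection $U(\g^{\sharp})\otimes_{U(\gb^{\sharp})}M\twoheadrightarrow\mathscr{D}(M)$; this is the concluding sentence of the theorem, and precomposing with $m\mapsto 1\otimes m$ gives the natural transformation $\mathrm{Id}\to\mathscr{D}$ of item 3. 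If $M$ itself carries a $\g^{\sharp}$-module structure, then each graded piece $M_{j}$ is a finite-dimensional $\mathfrak{gl}(n\ell)$-module extending the $B$-action, so $\mathcal{E}(M_{j})\cong\mathcal{O}_{G/B}\otimes M_{j}$ is a trivial bundle; as $H^{i}(G/B,\mathcal{O}_{G/B})=0$ for $i>0$ we obtain $\mathbb{L}^{-i}\mathscr{D}(M)=0$ for $i>0$, i.e.\ $\mathbb{L}^{\bullet}\mathscr{D}(M)\equiv\mathscr{D}(M)$, and comparing degree-$0$ cohomology with $M$ through (\ref{eqn:algDem}) gives $\mathscr{D}(M)\cong M$ as graded $\g^{\sharp}$-modules (item 2). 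Finally, for arbitrary $M$ the module $\mathscr{D}(M)$ is again graded, $\h$-semisimple and a $\g^{\sharp}$-module with finite-dimensional graded pieces, so item 2 applies to it and yields $\mathscr{D}(\mathscr{D}(M))\cong\mathscr{D}(M)$, an isomorphism realised by $\mathscr{D}$ applied to the unit $\mathrm{Id}_{M}\to\mathscr{D}(M)$; this completes item 3.

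I expect the genuine work to be concentrated in item 4, and in the bookkeeping needed to carry the above out $\g^{\sharp}$-equivariantly and for modules $M$ that are only ind-finite, so that $\mathcal{E}(M)$ is an ind-bundle rather than an honest vector bundle: one must check that the Bott--Samelson computation is compatible with the $\g^{\sharp}$-action coming from (\ref{eqn:algDem}) and with the $\Z$-grading, that the base-change and projection-formula identifications are $G$-equivariant, and that passing to restricted duals does not disturb the identifications. These points are in substance established in the paraholic setting of \cite{FKM19,FMO23,FKM23} (building on \cite{Jan03,Jos85,Kum02}), which I would cite rather than reprove.
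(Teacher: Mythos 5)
Your proposal is correct in substance and follows the same overall skeleton as the paper (view $\mathscr D$ as dualized cohomological induction on $G/B$, reduce to the theory of one-step Demazure functors, and lean on \cite{Jan03,Jos85,Kum02} and the FKM-type literature for the current-algebra bookkeeping), but the individual steps are resolved differently. For item 4 the paper simply cites \cite[8.1.19 Theorem (4)]{Kum02} after observing that $\mathscr D$ can be applied graded-componentwise, whereas you re-derive that statement via the Bott--Samelson resolution; this buys self-containedness at the cost of redoing a standard argument. For items 2 and 3 the paper argues locally, by repeated application of the one-step Demazure functor properties of \cite[\S 2.7]{Jos85} once item 4 is in place, while you argue globally: triviality of $\mathcal E(M)$ when $M$ is $\g^{\sharp}$-integrable plus $H^{>0}(G/B,\mathcal O)=0$ gives item 2, and idempotency in item 3 then follows by applying item 2 to $\mathscr D(M)$ itself --- a clean shortcut, but one that requires the $\g^{\sharp}$-structure on $\mathscr D(M)$ to be available \emph{before} items 2--3. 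This is the main point of divergence: the paper obtains that structure by embedding $G/B$ as the Schubert variety for $w_0$ in the affine flag variety of $\widetilde{\mathfrak{sl}}(n\ell)$ and invoking \cite[\S 2.7]{Jos85} to get stability under the extra generators $E_{i,i-1}$, so that $\g^{\sharp}=\left<\gb^{\sharp},E_{i,i-1}\right>$ acts, whereas you extract it from the ``algebraic interpretation'' (\ref{eqn:algDem}) \`a la \cite{Jan03} adapted to $\mathfrak{gl}(n\ell)\otimes\C[z]$. One caution there: as displayed in the paper, (\ref{eqn:algDem}) already presupposes a $\g$-action on $\mathscr D(M)$, so quoting it as the \emph{source} of the $\g^{\sharp}$-structure is slightly circular; to make your route airtight you should construct the action of $\g^{\sharp}z$ on $H^0(G/B,\mathcal E(M))^{\vee}$ explicitly (e.g.\ by the tensor identity applied to the $B$-equivariant action map $\g^{\sharp}z\otimes M\to M$, as is done in the paraholic setting of \cite{FKM19,FMO23,FKM23}) or fall back on the paper's affine-flag argument. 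With that point repaired, your item 1 (exactness of $\mathcal E$, long exact sequence of sheaf cohomology, exactness of the restricted dual on ind-finite modules) is at the same level of rigor as the paper's appeal to \cite{Har77}, and the remaining compatibility issues you flag (gradings, equivariance, restricted duals) are indeed only bookkeeping.
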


\begin{proof}
The first assertion follows from generality on the cohomology of algebraic varieties (see e.g. \cite[Chap.~I\!I\!I]{Har77}). Since the construction commutes the direct sum decomposition with respect to the grading, we find that $\mathscr D$ can be applied graded componentwise. Hence, \cite[8.1.19 Theorem (4)]{Kum02} applied to $G/B$ yields the fourth assertion. From this, the second and the third assertions follow from a repeated applications of \cite[\S 2.7]{Jos85}.

The affine flag variety of $\widetilde{\mathfrak{sl}(n\ell)}$ contains $G/B$ as its Schubert variety corresponding to $w_0$. Hence, we can identify the effect of Demazure functors of $G$ and that of affine Kac-Moody group of $\widetilde{\mathfrak{sl}(n\ell)}$ for a reduced expression of $w_0$. Thus, we find that $\mathscr D (M)$ is a module of
$$\g^{\sharp} = \left< \gb^{\sharp}, E_{i,i-1} \mid 1 < i \le n\ell \right>$$
by \cite[\S 2.7]{Jos85}, that is graded as $\mathscr D$ can be applied graded componentwise.
\end{proof}

\begin{prop}\label{prop:Dsym}
For each $\vec{\la} \in \Par_{m,\ell}$, we have
$$\mathbb D_{\vec{\la}} \subset \mathscr D ( \mathbb D_{\vec{\la}} ) \equiv \mathbb L^{\bullet} \mathscr D ( \mathbb D_{\vec{\la}} ).$$
\end{prop}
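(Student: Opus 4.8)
The plan is to show that $\mathbb D_{\vec{\la}}$ is already stable under the action of the larger Lie algebra $\g^{\sharp}$, and then invoke Theorem \ref{thm:sDem}(2) to conclude that $\mathscr D(\mathbb D_{\vec{\la}}) \cong \mathbb D_{\vec{\la}}$ with all higher derived functors vanishing. Concretely, recall from the proof of Theorem \ref{thm:DU} that $\mathbb D_{\vec{\la}} = \mathbb D_{\vec{\la}^{\mathrm{tot}}}^{J}$ is obtained (as a limit of successive self-extensions of $D_{\vec{\la}}$, by \cite[Theorem 4.35 and Corollary 5.5]{FKM23}) inside a level-one integrable highest weight module of $\widetilde{\mathfrak{gl}}(n\ell)$ as a $\gp^{\sharp}$-stable Demazure-type module. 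The key observation is that since $\vec{\la}^{\mathtt{tot}} \in \bX^-_{\ell}$ is antidominant for the finite part $\mathfrak{gl}(n\ell)$ (its coordinates weakly increase within each block of size $n$, and the relevant level-one dominant affine weight $(1^m 0^{n\ell-m})$ forces antidominance overall as in the proof of Corollary \ref{cor:simple}), the Demazure module $D_{\vec{\la}}$ is in fact $\g^{\sharp}$-stable by Theorem \ref{thm:San}(4). The self-extension construction of $\mathbb D_{\vec{\la}}$ takes place within the same integrable module and is performed by the same recipe, so $\mathbb D_{\vec{\la}}$ inherits $\g^{\sharp}$-stability.

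First I would spell out the antidominance: $\vec{\la}^{\mathtt{tot}}$ lists the parts of $\la^{(1)},\dots,\la^{(\ell)}$ from smallest to largest within each block of $n$ consecutive indices, so it lies in $\bX^-_{\ell}$, and as noted in the proof of Corollary \ref{cor:simple} the unique level-one dominant affine weight in $\vec{\la}^{\mathtt{tot}} + \Z^{n\ell}_+$ has $\h$-weight $(1^m 0^{n\ell-m})$, which is antidominant for $\mathfrak{gl}(n\ell)$. Hence Theorem \ref{thm:San}(4) applies to show $D_{\vec{\la}}$ is $\g^{\sharp}$-stable. Second, I would argue that $\mathbb D_{\vec{\la}}$, being built from $D_{\vec{\la}}$ by the successive self-extension procedure inside the fixed level-one integrable $\widetilde{\mathfrak{gl}}(n\ell)$-module (Theorem \ref{thm:DU}(6) and its proof), is a union of $\g^{\sharp}$-stable Demazure submodules of that integrable module — indeed each Demazure module appearing is antidominant of the same type — and is therefore itself $\g^{\sharp}$-stable. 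Third, having established that $\mathbb D_{\vec{\la}}$ carries a $\g^{\sharp}$-module structure (extending its $\gb^{\sharp}$-structure, and with finite-dimensional graded pieces since $\mathbb D_{\vec{\la}} \in \gp^{\sharp}\mathchar`-\mathsf{gmod}_m$), Theorem \ref{thm:sDem}(2) immediately gives $\mathbb L^{\bullet}\mathscr D(\mathbb D_{\vec{\la}}) \equiv \mathscr D(\mathbb D_{\vec{\la}}) \cong \mathbb D_{\vec{\la}}$, and the inclusion $\mathbb D_{\vec{\la}} \subset \mathscr D(\mathbb D_{\vec{\la}})$ is the natural transformation $\mathrm{Id} \to \mathscr D$ from Theorem \ref{thm:sDem}(3), which is an isomorphism here.

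The main obstacle I anticipate is the second step: justifying carefully that the self-extension construction producing $\mathbb D_{\vec{\la}}$ out of $D_{\vec{\la}}$ does not leave the realm of $\g^{\sharp}$-stable modules. One has to be sure the extensions are genuinely realized as an increasing union of $\g^{\sharp}$-stable (Demazure-type, antidominant) submodules of the ambient level-one integrable module, rather than merely as abstract extensions that a priori could only be $\gp^{\sharp}$-modules; this is where the precise statements of \cite[Theorem 4.35, Corollary 5.5]{FKM23} about the internal structure of $\mathbb D_{\vec{\la}}$, combined with Theorem \ref{thm:San}(3)–(4), need to be read off correctly. Once that is settled, everything else is a direct appeal to Theorem \ref{thm:sDem}.
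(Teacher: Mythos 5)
Your strategy hinges on the claim that $D_{\vec{\la}}$ (and hence $\mathbb D_{\vec{\la}}$) is $\g^{\sharp}$-stable, so that Theorem \ref{thm:sDem} 2) would give $\mathscr D(\mathbb D_{\vec{\la}}) \cong \mathbb D_{\vec{\la}}$. This is where the argument breaks: Theorem \ref{thm:San} 4) requires \emph{global} antidominance $\la_1 \le \la_2 \le \cdots \le \la_{n\ell}$ for $\g^{\sharp}$-stability, whereas $\vec{\la}^{\mathtt{tot}} \in \bX^-_{\ell}$ is only antidominant \emph{within each block} of $n$ consecutive indices, which by the same theorem yields only $\gp^{\sharp}$-stability. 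The auxiliary remark about the level-one dominant affine weight $(1^m 0^{n\ell-m})$ lying in $\vec{\la}^{\mathtt{tot}} + \Z^{n\ell}_+$ (from the proof of Corollary \ref{cor:simple}) identifies the ambient integrable module; it does not make $\vec{\la}^{\mathtt{tot}}$ itself antidominant. Concretely, for $m=n=\ell=2$ and $\vec{\la} = ((1^2)(\emptyset))$ one has $\vec{\la}^{\mathtt{tot}} = (1,1,0,0)$, and $D_{\vec{\la}} = V_{((1^2)(\emptyset))}$ (character $x_1x_2$, cf.\ \S 4.1) is visibly not $\mathfrak{gl}(4)$-stable. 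In fact the conclusion you aim for is false in general: the paper's proof of Proposition \ref{prop:Pd} uses $\mathscr D(D_{\vec{\la}}) = D_{((\emptyset)^{\ell-1},\|\vec{\la}\|)}$, which strictly contains $D_{\vec{\la}}$ unless all of $\vec{\la}$ sits in the last component; this is precisely why the proposition asserts only an inclusion $\mathbb D_{\vec{\la}} \subset \mathscr D(\mathbb D_{\vec{\la}})$ together with the vanishing of higher derived functors, not an isomorphism. The same objection undermines your second step: the self-extension construction of $\mathbb D_{\vec{\la}}$ cannot produce a $\g^{\sharp}$-module out of building blocks that are not themselves $\g^{\sharp}$-stable.

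The route actually needed is different. One first proves the statement for $D_{\vec{\la}}$ itself using the fact that it is a level-one \emph{affine} Demazure module: Kumar's results on Demazure modules and the Demazure functors (\cite[8.1.8, 8.1.13, 8.1.11]{Kum02}) give $\mathbb L^{\bullet}\mathscr D(D_{\vec{\la}}) \equiv \mathscr D(D_{\vec{\la}})$ and $D_{\vec{\la}} \subset \mathscr D(D_{\vec{\la}})$ — no $\g^{\sharp}$-action on $D_{\vec{\la}}$ is claimed or needed, only that applying finite Demazure operators to an affine Demazure module again yields an affine Demazure module containing it, with no higher cohomology. One then transfers this to $\mathbb D_{\vec{\la}}$ using Theorem \ref{thm:DU} 5)--6): $\mathbb D_{\vec{\la}}$ is free over $R_{\vec{\la}}$ and its finite-dimensional truncations are finite extensions of graded shifts of $D_{\vec{\la}}$, so the long exact sequence of Theorem \ref{thm:sDem} 1), applied graded-componentwise, propagates both the derived vanishing and the inclusion from $D_{\vec{\la}}$ to $\mathbb D_{\vec{\la}}$. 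If you want to salvage your write-up, this bootstrapping step is the part to keep (your instinct that the $R_{\vec{\la}}$-structure of $\mathbb D_{\vec{\la}}$ must enter is right), but the $\g^{\sharp}$-stability claim must be abandoned.
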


\begin{proof}
Each $D_{\vec{\la}}$ is a level one affine Demazure module by Theorem \ref{thm:San}. Thus, \cite[8.1.8 and 8.1.13 Theorem]{Kum02} implies that
$$\mathbb L^{\bullet} \mathscr D ( D_{\vec{\la}} ) \equiv \mathscr D ( D_{\vec{\la}} ).$$
\cite[8.1.11 Corollary]{Kum02} further yields $D_{\vec{\la}} \subset \mathscr D ( D_{\vec{\la}} )$. Theorem \ref{thm:DU} 6) and the fact that our functor $\mathscr D$ can be applied graded componentwise implies that we can replace $D_{\vec{\la}}$ with $\mathbb D_{\vec{\la}}$
using Theorem \ref{thm:sDem} 1).
\end{proof}

\begin{cor}\label{cor:DP}
For each $\vec{\la} \in \Par_{m,\ell}$, we have $\P_{\vec{\la}}^\sharp \subset \mathscr D ( \P_{\vec{\la}}^\sharp )$. If we have $\vec{\la} = ( (\emptyset)^{\ell-1},\la)$ for a partition $\la$, then we have $\P_{\vec{\la}}^\sharp \cong \mathscr D ( \P_{\vec{\la}}^\sharp )$.
\end{cor}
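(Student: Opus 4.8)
The plan is to deduce Corollary~\ref{cor:DP} from Proposition~\ref{prop:Dsym} by transporting the containment $\mathbb D_{\vec{\la}} \subset \mathscr D(\mathbb D_{\vec{\la}})$ along the $\mathbb D$-filtration of the projective cover $\P_{\vec{\la}}^\sharp$ furnished by Theorem~\ref{thm:DU}~2). First I would invoke Theorem~\ref{thm:DU}~2) to fix a finite filtration
$$0 = F_0 \subset F_1 \subset \cdots \subset F_r = \P_{\vec{\la}}^\sharp$$
whose subquotients $F_k / F_{k-1}$ are grading shifts $\mathsf{q}^{j_k}\mathbb D_{\vec{\mu}_k}$. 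Applying $\mathscr D$ is compatible with grading shifts (since $\mathscr D$ acts graded componentwise, by the proof of Theorem~\ref{thm:sDem}), so by Proposition~\ref{prop:Dsym} each subquotient satisfies $F_k/F_{k-1} \subset \mathscr D(F_k/F_{k-1})$ with higher $\mathbb L^{\bullet}\mathscr D$ vanishing. Then I would run an induction on $k$ using the long exact sequence of Theorem~\ref{thm:sDem}~1) applied to $0 \to F_{k-1} \to F_k \to F_k/F_{k-1} \to 0$: the vanishing of $\mathbb L^{>0}\mathscr D$ on the subquotient together with the inductive vanishing on $F_{k-1}$ gives $\mathbb L^{>0}\mathscr D(F_k) = 0$ and a short exact sequence $0 \to \mathscr D(F_{k-1}) \to \mathscr D(F_k) \to \mathscr D(F_k/F_{k-1}) \to 0$, into which the natural transformation $\mathrm{Id} \to \mathscr D$ (Theorem~\ref{thm:sDem}~3)) maps the original sequence compatibly; a diagram chase then yields $F_k \subset \mathscr D(F_k)$. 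Taking $k = r$ gives the first assertion $\P_{\vec{\la}}^\sharp \subset \mathscr D(\P_{\vec{\la}}^\sharp)$.

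For the second assertion, suppose $\vec{\la} = ((\emptyset)^{\ell-1},\la)$. Here I would exploit that $\vec{\la}$ is the \emph{largest} element of $\Par_{m,\ell}$ with respect to $\lhd$ (the example after Lemma~\ref{lem:order}), so that $V_{\vec{\la}}$ is the projective-injective-like object at the top of the highest weight order: concretely, the $\mathbb D$-filtration of $\P_{\vec{\la}}^\sharp$ can only involve $\mathbb D_{\vec{\mu}}$ with $\vec{\mu} \unrhd \vec{\la}$, hence only $\mathbb D_{\vec{\la}}$ itself, possibly with multiplicities and grading shifts. Thus $\P_{\vec{\la}}^\sharp$ is filtered by shifts of $\mathbb D_{\vec{\la}}$ alone. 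Since $\mathscr D(\mathbb D_{\vec{\la}}) \equiv \mathbb D_{\vec{\la}}$ would follow if we knew $\mathbb D_{\vec{\la}}$ is already $\g^{\sharp}$-stable — which holds because $\vec{\la}^{\mathtt{tot}} = (0^{n\ell - m} \cdot (\text{parts of }\la))$ rearranged is, up to the $\Sym$-action irrelevant for $\g^{\sharp}$-stability, antidominant in the sense of Theorem~\ref{thm:San}~4) when $\la$ sits in the last block — Theorem~\ref{thm:sDem}~2) gives $\mathscr D(\mathbb D_{\vec{\la}}) \cong \mathbb D_{\vec{\la}}$. Propagating this equality through the filtration exactly as above (now with isomorphisms rather than proper inclusions at each stage) yields $\P_{\vec{\la}}^\sharp \cong \mathscr D(\P_{\vec{\la}}^\sharp)$.

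The main obstacle I anticipate is the second assertion: verifying that for $\vec{\la} = ((\emptyset)^{\ell-1},\la)$ the module $\mathbb D_{\vec{\la}}$ (equivalently $D_{\vec{\la}}$) is genuinely $\g^{\sharp}$-stable, not merely $\gp^{\sharp}$-stable. One must check that $\vec{\la}^{\mathtt{tot}}$ — which places all nonzero parts in the final $\mathfrak{gl}(n)$-block — is antidominant as a weight of the \emph{big} $\mathfrak{gl}(n\ell)$, or else argue directly from Theorem~\ref{thm:San}~4) and the structure of the level-one integrable module that the relevant Demazure module is already the whole $\mathfrak{gl}(n\ell)$-module. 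Equivalently one checks that $D_{\vec{\la}}$ with $\vec{\la} = ((\emptyset)^{\ell-1},\la)$ is the minimal Demazure module, whose $\gb^{\sharp}$-socle generates it as a $\g^{\sharp}$-module, so that the surjection (\ref{eqn:algDem}) combined with the inclusion $D_{\vec{\la}} \subset \mathscr D(D_{\vec{\la}})$ forces equality; a clean way is to note $D_{((\emptyset)^{\ell-1},\la)}$ has lowest $\h$-weight $\vec{\la}^{\mathtt{tot}}$ which is antidominant for $\mathfrak{gl}(n\ell)$ precisely because concatenating $\ell-1$ zero blocks before the block containing $\la$ (listed smallest-to-largest) is weakly increasing, whence Theorem~\ref{thm:San}~4) applies verbatim. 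The remaining bookkeeping — compatibility of $\mathscr D$ with grading shifts, the diagram chase identifying the inclusion $F_k \hookrightarrow \mathscr D(F_k)$ — is routine homological algebra.
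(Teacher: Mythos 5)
Your argument for the first inclusion is essentially the paper's: filter $\P_{\vec{\la}}^\sharp$ by shifts of the $\mathbb D_{\vec{\mu}}$ via Theorem \ref{thm:DU} 2), apply Proposition \ref{prop:Dsym} to each subquotient, and climb the filtration with the long exact sequence of Theorem \ref{thm:sDem} 1) and the natural transformation $\mathrm{Id}\to\mathscr D$. That part is fine.

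The second assertion is where you have a genuine gap. You claim that $\vec{\la}=((\emptyset)^{\ell-1},\la)$ is the largest element of $\Par_{m,\ell}$ for $\lhd$, so that the $\mathbb D$-filtration of $\P_{\vec{\la}}^\sharp$ involves only shifts of $\mathbb D_{\vec{\la}}$. This is true only for $\la=(m)$: for a general partition $\la$ of $m$ there are $\vec{\mu}\rhd\vec{\la}$ (e.g. $((\emptyset)(1^2))\lhd((2)(\emptyset))\lhd((\emptyset)(2))$ when $m=\ell=2$), and such $\mathbb D_{\vec{\mu}}$ really do occur in the filtration of the projective cover --- indeed $V_{((\emptyset)(1^2))}$ appears in $D_{((\emptyset)(2))}$ in \S 4.1, so by reciprocity $\mathbb D_{((\emptyset)(2))}$ enters $\P_{((\emptyset)(1^2))}^\sharp$. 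These higher terms are not $\g^\sharp$-stable, so ``propagating isomorphisms through the filtration'' collapses back to the inclusion of the first part and cannot give equality. A secondary issue: even for the bottom subquotient, Theorem \ref{thm:San} 4) gives $\g^\sharp$-stability of the finite-dimensional $D_{\vec{\la}}$, not of its $R_{\vec{\la}}$-free thickening $\mathbb D_{\vec{\la}}$, so that step would also need a separate justification. The paper proves the second assertion by a different mechanism that you would need here: $\g^\sharp$-stability of $D_{\vec{\la}}$ makes the degree-zero part $(\P_{\vec{\la}}^\sharp)_0$ stable under $\mathfrak{gl}(n\ell)$, and since $\P_{\vec{\la}}^\sharp$ is a quotient of $U(\gp^\sharp)\otimes_{U(\gp_0^\sharp)}(\P_{\vec{\la}}^\sharp)_0\cong U(\g^\sharp)\otimes_{U(\mathfrak{gl}(n\ell))}(\P_{\vec{\la}}^\sharp)_0$ whose kernel is cut out by weight conditions stable under $\h$ and $\Sym_{n\ell}$, the whole module $\P_{\vec{\la}}^\sharp$ acquires a $\g^\sharp$-module structure; then Theorem \ref{thm:sDem} 2) yields $\P_{\vec{\la}}^\sharp\cong\mathscr D(\P_{\vec{\la}}^\sharp)$ directly, with no appeal to maximality.
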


\begin{proof}
By Theorem \ref{thm:DU} 2) and Proposition \ref{prop:Dsym}, we find
$$\P_{\vec{\la}}^\sharp \subset \mathscr D ( \P_{\vec{\la}}^\sharp )$$
using Theorem \ref{thm:sDem} 1) repeatedly. This is the first assertion.

In case $\vec{\la} = ( (\emptyset)^{\ell-1},\la)$, the module $D_{\vec{\la}}$ is $\g^{\sharp}$-stable. In particular, the degree zero part $(\P_{\vec{\la}}^\sharp)_0$ of $\P_{\vec{\la}}^\sharp$ is stable under the action of $\mathfrak{gl} (n\ell)$. We have $\gp^{\sharp} = \g^{\sharp} z \oplus \gp^{\sharp}_{0}$ and $\g^{\sharp} = ( \mathfrak{gl} (n\ell) + \gp^\sharp )$. It follows that $\P_{\vec{\la}}^\sharp$ acquires the structure of a $\g^{\sharp}$-module since $\P_{\vec{\la}}^\sharp$ is a quotient of
$$U ( \gp^{\sharp} ) \otimes_{U (\gp_0^{\sharp})} ( \P_{\vec{\la}}^\sharp )_0 \cong U ( \g^{\sharp} z ) \otimes_{\C} ( \P_{\vec{\la}}^\sharp )_0 \cong U ( \g^{\sharp} ) \otimes_{U (\mathfrak{gl}(n\ell))} ( \P_{\vec{\la}}^\sharp )_0,$$
and the kernel is described by the weight conditions that is stable under the action of $\h$ and $\Sym_{n\ell}$ (note that $\mathfrak{gl}(n\ell)$ is spanned by the $\Sym_{n\ell}$-translates of $\mathfrak{gl}(n)^{\oplus \ell}$). This forces $\P_{\vec{\la}}^\sharp \cong \mathscr D ( \P_{\vec{\la}}^\sharp )$ by Theorem \ref{thm:sDem} 2). This is the second assertion.
\end{proof}

\subsection{The Lusztig-Shoji algorithm}
In \cite{Sho02}, the so-called Lusztig-Shoji algorithm (\cite{Sho83,Lus84}) was generalized to the case of complex reflection groups. We explain its module-theoretic interpretation (\cite{Kat15}). Recall that we have defined the $\mathsf{a}$-function on $\Par_{m,\ell}$, that is equivalent to defining the $\mathsf{a}$-function on $\mathsf{Irr}\, W$.

Let $P_{\vec{\la}}$ be the (graded) projective cover of $L_{\vec{\la}}$ in $A \mathchar`-\mathsf{gmod}$ (see e.g. \cite[\S 2]{Kat15}).

We define a $\Q(\!(q)\!)$-valued square matrix $\Omega$ of size $|\mathsf{Irr}\, W|$ as:
$$\Omega_{\vec{\la},\vec{\mu}} := \sum_{i \ge 0} q^i \dim \mathrm{Hom}_{W} ( L_{\vec{\mu}}, ( P_{\vec{\la}} )_i) \in \Q(\!(q)\!).$$

We define two $\Q(\!(q)\!)$-valued unknown square matrices $K^{\pm}$ of size $|\mathsf{Irr}\, W|$ as:
$$K^+_{\vec{\la},\vec{\mu}} = \delta_{\vec{\la}, \vec{\mu}} \hskip 3mm \text{if} \hskip 3mm \mathsf{a}(\vec{\la}^{\vee}) \ge \mathsf{a}(\vec{\mu}^{\vee}) \hskip 5mm \text{and} \hskip 5mm K^-_{\vec{\la},\vec{\mu}} = \delta_{\vec{\la}, \vec{\mu}} \hskip 3mm \text{if} \hskip 3mm \mathsf{a}(\vec{\la}) \ge \mathsf{a}(\vec{\mu}).$$
We also consider an unknown $\Q(\!(q)\!)$-valued block-diagonal matrix $\Lambda$ of size $|\mathsf{Irr}\, W|$ as:
$$\Lambda_{\vec{\la},\vec{\mu}} = 0 \hskip 5mm \text{if} \hskip 5mm \mathsf{a}( \vec{\la} ) \neq \mathsf{a}( \vec{\mu} ).$$
Let $K^{\sigma}$ be the permutation of the matrix $K$ by means of $(\vec{\la},\vec{\mu}) \mapsto ((\vec{\la})^{\vee},(\vec{\mu})^{\vee})$.

The following is a (reformulation of the) special case of \cite[Theorem 2.10]{Kat15} with respect to the partial order\footnote{When we apply \cite[Theorem 2.10]{Kat15} to the generalized Springer correspondence \cite{Lus84}, we have another partial order $\prec$ on $\mathsf{Irr}\, W$ such that $\vec{\la} \prec \vec{\mu}$ implies $\mathsf{a} ( \vec{\la} ) > \mathsf{a} ( \vec{\mu} ) $.} induced from the $\mathsf{a}$-function:

\begin{thm}[Shoji, Lusztig, see \cite{Kat15} Theorem 2.10]\label{thm:LSalg}
In the above settings, the matrix equation
\begin{equation}
( K^+ )^{\sigma} \cdot \Lambda \cdot {}^{\mathtt t} K^- = \Omega\label{eqn:LSalg}
\end{equation}
has a unique solution. \hfill $\Box$
\end{thm}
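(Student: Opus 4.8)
```latex
\textbf{Proof proposal.} The plan is to prove this via a general triangularity argument: equation (\ref{eqn:LSalg}) is a decomposition of the (known) matrix $\Omega$ into a product of a lower-triangular matrix, a block-diagonal matrix, and an upper-triangular matrix, all with respect to the total preorder induced by the $\mathsf{a}$-function, and such LDU-type factorizations are unique once the diagonal normalizations of the outer factors are pinned down. First I would record the shape constraints carefully. Order $\mathsf{Irr}\,W \cong \Par_{m,\ell}$ by refining the preorder $\vec\la \preceq \vec\mu \Leftrightarrow \mathsf a(\vec\la) \ge \mathsf a(\vec\mu)$ into a total order (breaking ties arbitrarily but consistently with the block structure of $\Lambda$). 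By definition $K^-_{\vec\la,\vec\mu} = \delta_{\vec\la,\vec\mu}$ whenever $\mathsf a(\vec\la) \ge \mathsf a(\vec\mu)$, so $K^-$ has $1$'s on the diagonal and nonzero off-diagonal entries only in positions $(\vec\la,\vec\mu)$ with $\mathsf a(\vec\la) < \mathsf a(\vec\mu)$ — i.e. $K^-$ is unitriangular with respect to $\preceq$. Similarly $K^+$ is unitriangular for the preorder attached to $\mathsf a(\cdot^\vee)$; since $(\vec\la,\vec\mu)\mapsto(\vec\la^\vee,\vec\mu^\vee)$ is the involution defining $K^\sigma$, the matrix $(K^+)^\sigma$ becomes unitriangular with respect to the \emph{same} preorder $\preceq$ used for $K^-$. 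Finally $\Lambda$ is block-diagonal along the $\mathsf a$-level sets. Thus (\ref{eqn:LSalg}) reads $\Omega = L \cdot \Lambda \cdot {}^{\mathtt t}U$ with $L = (K^+)^\sigma$ lower-unitriangular, ${}^{\mathtt t}U = {}^{\mathtt t}K^-$ upper-unitriangular, and $\Lambda$ block-diagonal.

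Next I would run the standard inductive elimination. Process the index set from the $\mathsf a$-maximal class downward. On each diagonal block (a fixed value of $\mathsf a$), the product $L\Lambda\,{}^{\mathtt t}U$ restricted to that block, \emph{after subtracting the contributions already determined from strictly larger $\mathsf a$-classes}, equals $\Lambda$ on that block (because the unitriangular factors are the identity within a block). This determines $\Lambda$ on that block uniquely. With $\Lambda$ known on that block, the off-diagonal entries of $L$ and of ${}^{\mathtt t}U$ coupling this block to strictly smaller $\mathsf a$-classes are then read off one anti-diagonal at a time from the remaining entries of $\Omega$ minus known terms, using that $\Lambda$ is invertible on each block — this is where one needs $\Lambda$ to have invertible diagonal blocks over $\Q(\!(q)\!)$. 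The entries of $\Omega$ lie in $\Q(\!(q)\!)$, a field, and the leading block $\Lambda$ at the top $\mathsf a$-value is forced to be $\Omega$ on that block, which is invertible since $\Omega$ itself is invertible over $\Q(\!(q)\!)$ (its diagonal entries $\Omega_{\vec\la,\vec\la}$ start at $q^0$ with coefficient $1$, coming from $L_{\vec\la}$ sitting in degree $0$ of its projective cover $P_{\vec\la}$, and the unitriangularity propagates invertibility of all blocks); so the inductive step is legitimate over the field $\Q(\!(q)\!)$. Both existence and uniqueness come out of the same recursion: at each stage the unknowns are uniquely solvable linear expressions in already-determined quantities, so I would phrase the whole argument as a single downward induction on the $\mathsf a$-ordering establishing simultaneously that the system has a solution and that it is unique.

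The one genuinely delicate point — and the step I expect to be the main obstacle — is verifying that the two \emph{different} preorders (the one from $\mathsf a$ governing $K^-$ and $\Lambda$, and the one from $\mathsf a(\cdot^\vee)$ governing $K^+$) are compatible enough for the elimination to close up, i.e. that after the involutive permutation $K^\sigma$ the factor $(K^+)^\sigma$ really is triangular for the \emph{same} order as $K^-$ and $\Lambda$. This is exactly the content of choosing the permutation $\sigma$ to be $(\vec\la,\vec\mu)\mapsto(\vec\la^\vee,\vec\mu^\vee)$: since $K^+$ is $\delta$ on pairs with $\mathsf a(\vec\la^\vee)\ge\mathsf a(\vec\mu^\vee)$, conjugating rows and columns by $\vee$ turns this into the condition $\mathsf a(\vec\la)\ge\mathsf a(\vec\mu)$, matching $K^-$. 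One must also check that $\Lambda$ is unchanged (as a shape constraint) under this relabelling, which holds because $\mathsf a(\vec\la)=\mathsf a(\vec\mu)$ and $\mathsf a(\vec\la^\vee)=\mathsf a(\vec\mu^\vee)$ cut out the same block partition up to the $\vee$-relabelling. Granting this bookkeeping, the rest is the classical uniqueness of triangular factorization over a field, and no representation-theoretic input beyond the normalization $\Omega_{\vec\la,\vec\la}\in 1+q\Q[\![q]\!]$ is needed. (In the write-up I would simply cite \cite[Theorem 2.10]{Kat15} for the general statement and indicate that the present hypotheses — the $\mathsf a$-function partial order, the explicit form of the $K^\pm$ and $\Lambda$ shape constraints — are the special case to which that theorem applies.)
```
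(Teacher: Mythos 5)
Your argument is essentially correct, but it is worth noting that the paper does not prove this theorem at all: the $\Box$ indicates it is quoted, being a reformulation of the special case of \cite[Theorem 2.10]{Kat15} (i.e.\ of the Lusztig--Shoji algorithm) for the total preorder induced by the $\mathsf{a}$-function. So your block-triangular factorization argument is a self-contained substitute for a citation rather than a parallel of a proof in the paper. What you propose is indeed the right mechanism in this case: since the $\mathsf{a}$-preorder is total, ordering $\Par_{m,\ell}$ by decreasing $\mathsf{a}$ makes $(K^+)^{\sigma}$ and $K^-$ block-lower-unitriangular with identity diagonal blocks (your observation that the $\sigma$-relabelling converts the $\mathsf{a}(\cdot^{\vee})$-constraint on $K^+$ into the $\mathsf{a}$-constraint uses only that $\vee$ is an involution, Lemma \ref{lem:Pconj}), so (\ref{eqn:LSalg}) is exactly a block LDU factorization $\Omega = L \cdot \Lambda \cdot {}^{\mathtt t}U$ over the field $\Q(\!(q)\!)$, and the Schur-complement recursion gives existence and uniqueness simultaneously.

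The one loose point is the nondegeneracy input. Invertibility of $\Omega$ alone does not imply invertibility of its principal submatrices, and the diagonal normalization $\Omega_{\vec{\la},\vec{\la}} \in 1 + q\Q[\![q]\!]$ alone is also not sufficient; what the recursion needs is that each submatrix of $\Omega$ indexed by $\{\vec{\la} : \mathsf{a}(\vec{\la}) \ge a\}$ is invertible. The clean statement to record is $\Omega \equiv \mathrm{Id} \bmod q$: since $P_{\vec{\la}} \cong \C[X] \otimes L_{\vec{\la}}$ is non-negatively graded with $(P_{\vec{\la}})_0 \cong L_{\vec{\la}}$, all entries of $\Omega$ lie in $\Z[\![q]\!]$ and the off-diagonal entries lie in $q\Z[\![q]\!]$. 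Hence every principal submatrix is $\equiv \mathrm{Id} \bmod q$, so invertible over $\Q[\![q]\!]$; and for any solution, restricting (\ref{eqn:LSalg}) to $\{\mathsf{a} \ge a\}$ gives $\Omega_{\{\mathsf{a}\ge a\}} = L_{\{\mathsf{a}\ge a\}}\Lambda_{\{\mathsf{a}\ge a\}}{}^{\mathtt t}U_{\{\mathsf{a}\ge a\}}$, forcing every diagonal block of $\Lambda$ to be invertible, after which your downward induction closes up exactly as you describe. With that small repair your proof is complete; the alternative you mention, simply invoking \cite[Theorem 2.10]{Kat15}, is what the paper itself does.
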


\begin{thm}[\cite{Kat15} Theorem 2.17]\label{thm:homLSalg}
Keep the setting of Theorem $\ref{thm:LSalg}$. If the matrices $K^{\pm}$ records the graded characters of modules $\{K^{\pm}_{\vec{\la}}\}_{\vec{\la} \in \Par_{m,\ell}}$ in $A\mathchar`-\mathsf{gmod}$ as
$$K^{\pm}_{\vec{\la},\vec{\mu}} (q) = \gdim\, \mathrm{hom}_W ( L_{\vec{\la}}, K^{\pm}_{\vec{\mu}} ) \hskip 10mm \vec{\la},\vec{\mu} \in \Par_{m,\ell},$$
and we have
$$\mathrm{ext}^\bullet_{A\mathchar`-\mathsf{gmod}} ( K^+_{\vec{\la}}, ( K^-_{(\vec{\mu})^{\vee}} )^{\vee} ) = 0 \hskip 10mm \vec{\la} \neq \vec{\mu},$$
then $K^{\pm}$ satisfies $(\ref{eqn:LSalg})$. \hfill $\Box$
\end{thm}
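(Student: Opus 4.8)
The plan is to deduce the matrix identity (\ref{eqn:LSalg}) from the homological vanishing by an Euler-characteristic computation in the Grothendieck group of $A\mathchar`-\mathsf{gmod}$, and then to conclude that $K^{\pm}$ is the Lusztig--Shoji solution by invoking the uniqueness in Theorem~\ref{thm:LSalg}.

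First I would set up a graded Euler form. The algebra $A$ has finite global dimension: it is free of finite rank over its polynomial subalgebra $\C[X_1,\ldots,X_m]$ and $|W|$ is invertible, so $A\mathchar`-\mathsf{gmod}$ is equivalent to the category of $W$-equivariant finitely generated graded $\C[X_1,\ldots,X_m]$-modules, which has global dimension $m$. Hence for finite-dimensional $M,N$ the graded spaces $\mathrm{ext}^i_{A\mathchar`-\mathsf{gmod}}(M,N)$ are finite-dimensional and vanish for $i\gg 0$, and
$$\langle M,N\rangle:=\sum_{i\ge 0}(-1)^i\,\gdim\,\mathrm{ext}^i_{A\mathchar`-\mathsf{gmod}}(M,N)$$
is additive in short exact sequences (via the long exact $\mathrm{Ext}$-sequences; cf.\ Lemma~\ref{lem:multex}), hence factors through $K_0$ of finite-dimensional graded $A$-modules. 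Because $P_{\vec{\la}}$ is projective with head $L_{\vec{\la}}$ concentrated in degree $0$, one gets $\langle [P_{\vec{\la}}],[L_{\vec{\mu}}]\rangle=\delta_{\vec{\la},\vec{\mu}}$; thus $\{[P_{\vec{\la}}]\}$ and $\{[L_{\vec{\mu}}]\}$ are dual bases, and since $\Omega$ is the transition matrix $[P_{\vec{\la}}]=\sum_{\vec{\mu}}\Omega_{\vec{\la},\vec{\mu}}[L_{\vec{\mu}}]$ it follows that $\langle [L_{\vec{\nu}}],[L_{\vec{\rho}}]\rangle=(\Omega^{-1})_{\vec{\nu},\vec{\rho}}$.

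Next I would feed the hypothesis through this pairing. Using Lemma~\ref{lem:Asimple} and the fact that the restricted dual flips the grading, one has $[N^{\vee}:L_{\vec{\nu}}]_q=[N:L_{\vec{\nu}^{\vee}}]_{q^{-1}}$ for any finite-dimensional $N\in A\mathchar`-\mathsf{gmod}$; combined with $[K^{\pm}_{\vec{\la}}]=\sum_{\vec{\nu}}K^{\pm}_{\vec{\nu},\vec{\la}}(q)[L_{\vec{\nu}}]$, this expresses the class $[(K^-_{(\vec{\mu})^{\vee}})^{\vee}]$ in the simple basis purely through the matrix $K^-$, the permutation $(\cdot)^{\sigma}$, and the substitution $q\leftrightarrow q^{-1}$. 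The hypothesis $\mathrm{ext}^\bullet_{A\mathchar`-\mathsf{gmod}}(K^+_{\vec{\la}},(K^-_{(\vec{\mu})^{\vee}})^{\vee})=0$ for $\vec{\la}\neq\vec{\mu}$ then says exactly that the matrix with entries $\langle K^+_{\vec{\la}},(K^-_{(\vec{\mu})^{\vee}})^{\vee}\rangle$ is diagonal; call it $\Lambda$. Expanding each entry by bilinearity using $\langle[L_{\vec{\nu}}],[L_{\vec{\rho}}]\rangle=(\Omega^{-1})_{\vec{\nu},\vec{\rho}}$ and grouping the sums into matrix products --- where the transpose and the permutation $(\cdot)^{\sigma}$ appearing in (\ref{eqn:LSalg}) are supplied precisely by the index involution $(\cdot)^{\vee}$ (Lemma~\ref{lem:Pconj}) and by the grading flip in $(\cdot)^{\vee}$ --- should rearrange into the identity $(K^+)^{\sigma}\cdot\Lambda\cdot{}^{\mathtt t}K^-=\Omega$, with $\Lambda$ diagonal, hence in particular block-diagonal for the $\mathsf{a}$-function. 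As $K^{\pm}_{\vec{\mu}}$ has $L_{\vec{\mu}}$ as a constituent with unit leading coefficient, the matrices $K^{\pm}$ carry the triangular shape required in Theorem~\ref{thm:LSalg}, so $(K^+,K^-,\Lambda)$ is an admissible triple; by the uniqueness there it \emph{is} the solution, and in particular $K^{\pm}$ satisfies (\ref{eqn:LSalg}).

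The conceptual skeleton is short; the step I expect to be the real obstacle is the bookkeeping in the previous paragraph, namely checking that the restricted dual, the involution $\vec{\la}\mapsto\vec{\la}^{\vee}$, the grading shift and the transpose compose so that the Euler-form computation hits the precise normalisation of (\ref{eqn:LSalg}), rather than a transposed or $q\leftrightarrow q^{-1}$-substituted variant of it; this is where one needs the appropriate symmetry of the decomposition data of $A\mathchar`-\mathsf{gmod}$ inherited from its restricted-dual duality. A secondary but essential point is to have in hand the finiteness that legitimises the Euler form and the identification $(N^{\vee})^{\vee}\cong N$, i.e.\ that the modules $K^{\pm}_{\vec{\la}}$ entering the statement are finite-dimensional.
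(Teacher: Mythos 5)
The paper itself contains no proof of this statement: it is imported verbatim from \cite{Kat15} (Theorem 2.17) and used as a black box, so the only comparison available is with the cited argument, whose mechanism is precisely the Euler-form computation you propose. Your outline is correct, and the bookkeeping you defer does close, but since you named it as the real obstacle let me record how: with the paper's grading conventions the Euler form $\langle M,N\rangle=\sum_i(-1)^i\gdim\,\mathrm{ext}^i_{A\mathchar`-\mathsf{gmod}}(M,N)$ is \emph{sesquilinear} (a shift $\mathsf{q}^j$ in the contravariant slot contributes $q^{-j}$), so from $\langle P_{\vec{\la}},L_{\vec{\mu}}\rangle=\delta_{\vec{\la},\vec{\mu}}$ one gets $\langle L_{\vec{\nu}},L_{\vec{\rho}}\rangle=\bigl(\Omega(q^{-1})^{-1}\bigr)_{\vec{\nu},\vec{\rho}}$ rather than $\Omega(q)^{-1}$, and $[N^{\vee}:L_{\vec{\nu}}]_q=[N:L_{\vec{\nu}^{\vee}}]_{q^{-1}}$ by Lemma \ref{lem:Asimple}. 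Feeding the hypothesis through this and substituting $q\mapsto q^{-1}$ gives that ${}^{\mathtt t}K^{+}\cdot\Omega^{-1}\cdot (K^{-})^{\sigma}$ is an (invertible) diagonal matrix $D$ --- note that at this stage $K^{+}$ and $K^{-}$ occupy positions opposite to those in (\ref{eqn:LSalg}), and the Euler pairings enter through $D$, not through $\Lambda$ directly. What repairs both points is the symmetry $\Omega_{\vec{\la},\vec{\mu}}=\Omega_{\vec{\mu}^{\vee},\vec{\la}^{\vee}}$, i.e. ${}^{\mathtt t}(\Omega^{\sigma})=\Omega$, which holds because $P_{\vec{\la}}\cong\C[X]\otimes L_{\vec{\la}}$ as graded $W$-modules and $L_{\vec{\mu}}^{*}\cong L_{\vec{\mu}^{\vee}}$; applying transpose-plus-$\sigma$ to $\Omega=(K^{-})^{\sigma}D^{-1}\,{}^{\mathtt t}K^{+}$ yields exactly $(K^{+})^{\sigma}\cdot\Lambda\cdot{}^{\mathtt t}K^{-}=\Omega$ with $\Lambda=(D^{-1})^{\sigma}$ diagonal, hence block-diagonal for the $\mathsf{a}$-function. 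This is the ``symmetry inherited from the restricted dual'' you anticipated, so your sketch, once these insertions are made, is a complete proof agreeing with the cited one.

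Two smaller corrections: the triangular normalisation of $K^{\pm}$ is part of the standing hypotheses (``the setting of Theorem \ref{thm:LSalg}''), not something deducible from a unit leading coefficient, and the conclusion only requires exhibiting \emph{some} block-diagonal $\Lambda$, which the diagonal matrix above supplies; and the finite-dimensionality of the modules $K^{\pm}_{\vec{\la}}$ (needed for $(\cdot)^{\vee}$ and for the Euler form) should be stated explicitly. Also, rather than expanding the infinite class $[P_{\vec{\la}}]=\sum_{\vec{\mu}}\Omega_{\vec{\la},\vec{\mu}}[L_{\vec{\mu}}]$ inside the contravariant slot of the form (which needs a limiting justification), it is cleaner to use finite resolutions of the finite-dimensional modules by finitely generated graded projectives, which exist since $A$ has finite global dimension; this gives the identity $\langle L_{\vec{\nu}},L_{\vec{\rho}}\rangle=\bigl(\Omega(q^{-1})^{-1}\bigr)_{\vec{\nu},\vec{\rho}}$ directly.
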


\section{A variant of the Schur-Weyl duality}
Keep the setting of the previous section. We have a natural $\g^{\sharp}$-module structure on
$$\bW_1 := V \otimes \C [z].$$

We have an action of $\gp$ on $\bW_1^{\otimes m}$ that commutes with the $\Sym_m$-action permuting tensor components by restriction. The $\Gamma$-action on $\bW_1$ commutes with the $\gp$-action. Hence, it gives rise to the $\Gamma^m$-action on $\bW_1^{\otimes m}$ that commutes with $\gp$. It follows that $\bW_1^{\otimes m}$ is a $(\gp, A)$-bimodule.

\begin{lem}\label{lem:Wproj}
The module $\bW_1^{\otimes m}$ is projective as $A$-modules.	
\end{lem}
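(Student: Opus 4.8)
The plan is to show that $\bW_1 = V \otimes \C[z]$ is, up to a grading shift, a direct summand of a free $A$-module, and then take $m$-fold tensor powers. First I would analyze the $(\gp, A)$-bimodule structure on $\bW_1^{\otimes m}$ more carefully. Note that $\bW_1 = V \otimes \C[z]$ carries the $\Gamma$-action combining $\rho_\Gamma$ on $V$ and the weighting $z^k \mapsto e^{2k\pi\sqrt{-1}/\ell} z^k$, and that as a $\C[z]$-module (with $\deg z = 1$, after applying the rescaling $\Phi$ of \eqref{eqn:twist}) one checks that $\bW_1$ is free over the subring $\C[z^\ell] \subset \C[z]$; in fact $\bW_1 \cong \bigoplus_{a=0}^{\ell-1} (V \otimes z^a) \otimes \C[z^\ell]$, and under the $\Gamma$-grading each summand $V \otimes z^a \otimes \C[z^\ell]$ is a free module of rank $n$ over $\C[z^\ell]$ with a single $\Gamma$-isotypic shift. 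The point is that $\C[x_1,\dots,x_m] \subset A$ will act through $\C[z^\ell]$-type operators (the $\Gamma$-fixed polynomials in $z$), so $\bW_1^{\otimes m}$ becomes free over $\C[X_1,\dots,X_m]$, and one needs to upgrade this to projectivity over all of $A = \C W \ltimes \C[X]$.

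The key steps, in order: (1) Identify how $X_j \in A$ acts on $\bW_1^{\otimes m}$ — it should act as multiplication by $z$ (or $z^\ell$, depending on the normalization via $\Phi$) on the $j$-th tensor factor, twisted by the appropriate root of unity so as to be compatible with \eqref{eqn:G-action}; this makes $\bW_1^{\otimes m}$ free of finite rank over $\C[X_1,\dots,X_m]$ after choosing a homogeneous basis. (2) Decompose $\bW_1^{\otimes m}$ as a $\C W$-module: since $\bW_1 = \bigoplus_{a=0}^{\ell-1}(\C^n \otimes \chi^a) \otimes \C[z^\ell]$ as a $\Gamma$-graded space, the $m$-fold tensor decomposes into pieces indexed by functions $\{1,\dots,m\} \to \Z/\ell\Z$ tensored with $(\C^n)^{\otimes m}$-type spaces, and the $\Sym_m \ltimes \Gamma^m = W$-action permutes these pieces; the stabilizer structure should make the relevant graded $W$-module free, or at least projective, over $\C W$ in a way compatible with the $\C[X]$-freeness. (3) Combine: a graded module over $A = \C W \ltimes \C[X]$ that is free over $\C[X]$ with the $\C[X]$-basis spanning a projective (indeed, one expects free or induced) $\C W$-module is projective over $A$ — this is the standard fact that $A$-projectivity can be checked on the "fiber" $A/(X)\text{-mod} = \C W\text{-mod}$ together with $\C[X]$-freeness, via the functor $\C W \otimes_{\C W} (-)$ and Nakayama-type reasoning, or simply because $\Ind_{\C W}^A$ sends projectives to projectives and $\bW_1^{\otimes m} \cong A \otimes_{\C W} (\bW_1^{\otimes m}/X\bW_1^{\otimes m})$ once the $\C[X]$-freeness is established.

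I expect the main obstacle to be step (2): verifying cleanly that the degree-zero part $\bW_1^{\otimes m} / (X_1,\dots,X_m)\bW_1^{\otimes m}$, as a $\C W$-module, is such that the whole module is induced from it, i.e. that $\bW_1^{\otimes m} \cong A \otimes_{\C W} \bigl(\bW_1^{\otimes m}/X\bW_1^{\otimes m}\bigr)$ as graded $A$-modules. Establishing this isomorphism requires matching the $\C[X]$-action with the internal grading and checking the $W$-equivariance of the chosen basis — the roots of unity in \eqref{eqn:G-action} have to line up exactly with the $\Gamma$-weighting of $z^k$ built into the definition of $\gp$ and the $\Gamma^m$-action on $\bW_1^{\otimes m}$. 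Once that structural isomorphism is in place, projectivity is immediate since $\C W$ is semisimple (so every $\C W$-module is projective) and $A \otimes_{\C W} (-)$ preserves projectivity. An alternative, possibly shorter route: since $\C W$ is semisimple and $\C[X]$ is a polynomial ring, $A$ has finite global dimension and its indecomposable projectives are the $P_{\vec\la}$; one could instead directly compute $\mathrm{ext}^{>0}_{A}(\bW_1^{\otimes m}, L_{\vec\mu}) = 0$ using the bimodule structure and the $\C[X]$-freeness, but the induced-module argument seems more transparent.
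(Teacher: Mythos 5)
Your proposal is correct and takes essentially the same route as the paper: the paper notes that the $\C[X]$-action (with $X_j$ acting as multiplication by $z$ in the $j$-th tensor factor --- not by $z^{\ell}$, which would violate (\ref{eqn:G-action})) makes $\bW_1^{\otimes m}$ a free graded $\C[X]$-module of finite rank generated by its degree-zero part, and then cites \cite[Lemma 2.2]{Kat15} for precisely the fact you prove by hand in steps (2)--(3), namely that such a module is induced from $\C W$ and hence projective because $\C W$ is semisimple. Your worry about step (2) is unnecessary: the degree-zero part $V^{\otimes m}$ is $W$-stable and freely generates $\bW_1^{\otimes m}$ over $\C[X]$, so the multiplication map $A \otimes_{\C W} V^{\otimes m} \to \bW_1^{\otimes m}$ is automatically an isomorphism of graded $A$-modules, and no freeness of the fiber over $\C W$ is needed.
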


\begin{proof}
The action of $\C [X]$ makes $\bW_1^{\otimes m}$ into a free module of rank $( n\ell ) \cdot m$ generated by its degree zero part. By \cite[Lemma 2.2]{Kat15}, such a module is necessarily projective as a (graded) $A$-module.
\end{proof}

Let $A^\flat$ be the graded subalgebra of $A$ generated by $\Sym_m\subset W$ and $\C[X]$.

\begin{thm}[Feigin-Flicker-Khoroshkhin-Makedonskyi \cite{FKM19,Fl21}]\label{thm:Usurj}
We have a surjection
$$U ( \g^{\sharp} ) \longrightarrow \!\!\!\!\! \rightarrow \mathrm{End}_{A^\flat} ( \bW_1^{\otimes m}),$$
and $\bW_1^{\otimes m}$ is a progenerator in $\g^{\sharp}  \mathchar`-\mathsf{gmod}_m$.
\end{thm}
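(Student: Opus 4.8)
The plan is to establish Theorem \ref{thm:Usurj} by combining the classical Schur--Weyl duality (Theorem \ref{thm:cSW}) with the structural results about Demazure-type modules. First I would unwind the definition of $\bW_1^{\otimes m}$: its degree-zero part is $V^{\otimes m}$, and by Theorem \ref{thm:cSW} we have the $(\mathfrak{gl}(n)^{\oplus\ell},W)$-bimodule decomposition $V^{\otimes m} \cong \bigoplus_{\vec{\la}\in\Par_{m,\ell}} V_{\vec{\la}}\boxtimes L_{\vec{\la}}$. Since $\bW_1^{\otimes m}$ is free over $\C[X]$ on $V^{\otimes m}$ (as noted in the proof of Lemma \ref{lem:Wproj}), as an $A^\flat$-module it is a direct sum of modules induced from the $\Sym_m$-isotypic pieces; passing to $\mathrm{End}_{A^\flat}$ this reduces the claim to understanding, for each $\vec{\la}$, the $\g^{\sharp}$-submodule of $\bW_1^{\otimes m}$ generated by $V_{\vec{\la}}\otimes\C[X]$ (tensored with the multiplicity space $L_{\vec{\la}}$), together with the Hom-spaces between these across distinct $\vec{\la}$.

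The key step is to identify $\g^{\sharp}\cdot(V_{\vec{\la}}\otimes 1)\subset \bW_1^{\otimes m}$ with a known Demazure-type module. The standard fact (this is the content of \cite{FKM19,Fl21}, and in the $\ell=1$ case goes back to the global Weyl/Demazure picture) is that $\bW_1^{\otimes m}$ is, as a $\g^{\sharp}$-module, a direct sum over $\vec{\la}$ of copies of the ``big'' projective/Demazure flag modules $\mathbb U_{\vec\la}$ of Theorem \ref{thm:DU}, with $L_{\vec{\la}}$ as multiplicity space, and moreover $\End_{\gp^\sharp}(\mathbb U_{\vec\la})\cong R_{\vec\la}$ is exactly a polynomial ring matching the $\C[X]$-action bookkeeping. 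I would make this precise by: (i) checking that $\bW_1^{\otimes m}$ is generated over $\g^{\sharp}$ by its degree-zero part $V^{\otimes m}$, which follows since $\bW_1=V\otimes\C[z]$ is generated over $\g^{\sharp}$ by $V$ and tensor products of cyclic modules behave accordingly; (ii) using that $\bW_1^{\otimes m}$ is a quotient of $U(\g^\sharp)\otimes_{U(\gp_0^\sharp)}V^{\otimes m}$ and is free over $\C[X]$, hence over the relevant polynomial endomorphism rings, to match it with a direct sum of the $\mathbb U_{\vec\la}$'s via Theorem \ref{thm:DU}(4),(5),(6); (iii) deducing that $\End_{A^\flat}(\bW_1^{\otimes m})$, which accounts for the $W$-module structure overlaid on the $\Sym_m$-structure, is generated by the image of $U(\g^\sharp)$ because the $\g^\sharp$-action already realizes all $\mathfrak{gl}(n)^{\oplus\ell}$-intertwiners on $V^{\otimes m}$ (by classical Schur--Weyl, $\End_{\Sym_m}(V^{\otimes m})$ is the image of $U(\mathfrak{gl}(n)^{\oplus\ell})$) and the $\C[X]$/grading directions are covered by $U(\g^\sharp z)$.

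For the progenerator claim, I would argue that $\bW_1^{\otimes m}$ is projective in $\g^\sharp\mathchar`-\mathsf{gmod}_m$ because it is a summand of (a sum of grading shifts of) the projective covers $\P^\sharp_{\vec\la}$ — indeed its $\mathbb U_{\vec\la}$-flag from Theorem \ref{thm:DU}(2) together with freeness over $R_{\vec\la}$ exhibits it as projective — and that it generates the category because every simple $V_{\vec\la}$ ($\vec\la\in\Par_{m,\ell}$) occurs in its head, which is visible from the degree-zero decomposition $V^{\otimes m}\cong\bigoplus V_{\vec\la}\boxtimes L_{\vec\la}$ with every $L_{\vec\la}\neq 0$. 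Combining projectivity with the fact that its $\g^\sharp$-endomorphism ring surjects from $U(\g^\sharp)$ and hits every block then yields the progenerator property.

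The main obstacle I anticipate is step (ii): pinning down that the $\g^{\sharp}$-module generated by $V_{\vec\la}\otimes 1$ inside $\bW_1^{\otimes m}$ is \emph{exactly} $\mathbb U_{\vec\la}$ (the full self-extended module), not merely a quotient or sub of it, and that no "cross terms" between different $\vec\la,\vec\mu$ contribute to $\End_{A^\flat}$ beyond what $U(\g^\sharp)$ already sees. This is precisely where one must invoke the finer results of \cite{FKM19,Fl21} — in particular the identification of $\bW_1^{\otimes m}$ with a specific local Weyl/Demazure module and the computation of its endomorphism algebra via the PBW filtration and the polynomial rings $R_{\vec\la}$; a dimension/graded-character count (comparing $\gch$ of both sides using Theorem \ref{thm:San}(5) and Corollary \ref{cor:gch}) is the natural way to rule out the module being strictly smaller. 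I would lean on \cite[Corollary 5.3]{FKM19} and the surrounding statements to close this gap rather than reprove it.
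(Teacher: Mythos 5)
The first thing to note is that the paper does not prove Theorem \ref{thm:Usurj} at all: it is imported as a black box from \cite{FKM19,Fl21}, so there is no internal proof to compare yours against. Judged on its own terms, your proposal has a genuine mismatch of categories at its core. The theorem pairs the \emph{full} current algebra $\g^{\sharp}=\mathfrak{gl}(n\ell)\otimes\C[z]$ with the \emph{small} algebra $A^{\flat}=\C\Sym_m\ltimes\C[X]$ (no $\Gamma^m$), so the correct degree-zero input is classical Schur--Weyl duality for $(\mathfrak{gl}(n\ell),\Sym_m)$, the relevant labels form $\Par_m$, and the relevant $\g^{\sharp}$-modules are Weyl-type modules for $\mathfrak{gl}(n\ell)[z]$. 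Your plan instead decomposes $V^{\otimes m}$ via Theorem \ref{thm:cSW} (the wreath duality for $(\mathfrak{gl}(n)^{\oplus\ell},W)$, indexed by $\Par_{m,\ell}$) and tries to assemble $\bW_1^{\otimes m}$ from the objects $\mathbb U_{\vec{\la}}$, $R_{\vec{\la}}$, $\P^{\sharp}_{\vec{\la}}$ of Theorem \ref{thm:DU}, which are $\gp^{\sharp}$-modules, not $\g^{\sharp}$-modules (and even at the $\gp$-level the summands of $\bW_1^{\otimes m}$ are the projective covers $\P_{\vec{\la}}$, which are filtered by shifts of $\mathbb U_{\vec{\la}}$'s, not equal to them). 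Concretely, your assertion that ``by classical Schur--Weyl, $\End_{\Sym_m}(V^{\otimes m})$ is the image of $U(\mathfrak{gl}(n)^{\oplus\ell})$'' is false: that image is the centralizer of the larger group $W$ (Theorem \ref{thm:cSW}) and is in general a proper subalgebra of $\End_{\Sym_m}(V^{\otimes m})$, which is the image of $U(\mathfrak{gl}(n\ell))$ (already for $m=1$ the two differ). With your identification even the degree-zero part of the claimed surjection would fail; it is precisely the $\mathfrak{gl}(n\ell)$-directions absent from $\gp$ that make the centralizer of $A^{\flat}$ (rather than of $A$) reachable from $U(\g^{\sharp})$.

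The remaining steps are either unsupported or circular. The statement that the positive-degree part of $\End_{A^{\flat}}(\bW_1^{\otimes m})$ is ``covered by $U(\g^{\sharp}z)$'' is exactly the substantive content of the theorem, and \cite[Corollary 5.3]{FKM19} does not supply it: that result computes $\End_{\gp^{\sharp}}(\mathbb U_{\vec{\la}})\cong R_{\vec{\la}}$, a different algebra acting on different modules. Likewise, possessing a filtration by the $\mathbb U_{\vec{\la}}$ does not imply projectivity (projectives admit such flags, not conversely), and in any case projectivity is needed in $\g^{\sharp}\mathchar`-\mathsf{gmod}_m$, not in $\gp^{\sharp}\mathchar`-\mathsf{gmod}_m$. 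If you want an argument rather than a citation, the workable outline mirrors the paper's own proof of Theorem \ref{thm:cateq} with $(\mathfrak{gl}(n\ell),\Sym_m)$ in place of $(\mathfrak{gl}(n)^{\oplus\ell},W)$: show $\bW_1^{\otimes m}$ is generated over $\g^{\sharp}$ by its degree-zero part (cf. Proposition \ref{prop:gen}); use freeness over $\C[X]$ to identify $\End_{A^{\flat}}(\bW_1^{\otimes m})$ with $\Hom_{\Sym_m}(V^{\otimes m},\bW_1^{\otimes m})$ (cf. Lemma \ref{lem:multcount}); and then prove that the image of $U(\g^{\sharp})$ exhausts each graded piece, with the progenerator property following as in Proposition \ref{prop:Wproj}. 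That last surjectivity in positive degrees is the real theorem of \cite{FKM19,Fl21}, so either reprove it there or, as the paper does, simply cite it.
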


\begin{prop}\label{prop:gen}
The module $\bW_1^{\otimes m}$ is generated by its degree zero part  as $U ( \gp )$-modules.
\end{prop}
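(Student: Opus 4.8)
The plan is to show directly that $\bW_1^{\otimes m} = U(\gp) \cdot V^{\otimes m}$, the degree-zero part of $\bW_1^{\otimes m}$ being $(V \otimes 1)^{\otimes m} \cong V^{\otimes m}$. Write $\{e_i\}_{i=1}^{n\ell}$ for the standard basis of $V$, so that $E_{ij} e_k = \delta_{jk} e_i$. Since $\bW_1^{\otimes m}$ is spanned over $\C$ by the pure tensors
$$e_{i_1} z^{p_1} \otimes \cdots \otimes e_{i_m} z^{p_m} \qquad (1 \le i_r \le n\ell,\ p_r \in \Z_{\ge 0}),$$
it suffices to show that each of these lies in $U(\gp) \cdot V^{\otimes m}$, which I would establish by induction on the total degree $p := p_1 + \cdots + p_m$; the base case $p = 0$ is trivial.

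For the inductive step, choose $t$ with $p_t \ge 1$, and pick an index $b$ subject to the two conditions (i) $\lceil b/n \rceil \equiv \lceil i_t/n \rceil + p_t \pmod \ell$ and (ii) $b \notin \{ i_s \mid s \neq t \}$. Such a $b$ exists: condition (i) merely confines $b$ to one of the $\ell$ index blocks $\{(c-1)n + 1, \ldots, cn\}$, a set of $n$ elements, while (ii) forbids at most $m - 1 < n$ of them, so there is room — this is where the standing hypothesis $n \ge m$ enters. By the congruence criterion in the proof of Lemma \ref{lem:piso}, condition (i) guarantees $\zeta := E_{i_t b} \otimes z^{p_t} \in \gp$. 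Let $v'$ be the tensor obtained from $e_{i_1} z^{p_1} \otimes \cdots \otimes e_{i_m} z^{p_m}$ by replacing its $t$-th factor with $e_b$; then $v'$ has total degree $p - p_t < p$, hence $v' \in U(\gp) \cdot V^{\otimes m}$ by the inductive hypothesis. Acting with $\zeta$ diagonally on $v'$, the summand at the $t$-th tensor factor sends $e_b$ to $e_{i_t} z^{p_t}$, whereas a summand at a factor $s \neq t$ contributes $\delta_{b, i_s}\, e_{i_t} z^{p_s + p_t} = 0$ by (ii). Hence $\zeta \cdot v'$ equals the target tensor on the nose, and the induction goes through.

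The only point that requires care is precisely the vanishing of these cross terms: a tensor product of graded $\gp$-modules generated in degree zero need not itself be so generated under the diagonal action, and what rescues the induction is that $\gp$, although a proper subalgebra of $\g^{\sharp}$, still contains, for each $t$, operators $E_{i_t b} \otimes z^{p_t}$ acting through a single tensor factor — which is exactly what the freedom to place $b$ in the prescribed residue block affords. (The same computation applies verbatim with $\gp$ replaced by $\gp^{\sharp}$ or by $\g^{\sharp}$; one could alternatively deduce the $\g^{\sharp}$-version from Theorem \ref{thm:Usurj} together with the fact that projectives in that category are generated in degree zero, but the argument above is elementary and yields the statement for $\gp$ directly.)
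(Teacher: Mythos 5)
Your proof is correct and takes essentially the same route as the paper's: induction on the total $z$-degree, using the pigeonhole principle (this is where $m\le n$ enters in both arguments) to choose an index in the residue block dictated by the congruence defining $\gp\subset\g^{\sharp}$ while avoiding the indices occupied by the other tensor factors, so that a single matrix-unit element of $\gp$ acts effectively through one factor with all cross terms vanishing. The only (immaterial) difference is that the paper lowers the exponent one step at a time via elements $E_{i_j i'_j}\otimes z$, whereas you strip the whole power $z^{p_t}$ at once with $E_{i_t b}\otimes z^{p_t}$.
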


\begin{proof}
We might write $v \otimes z^r$ ($v \in V$, $r \ge 0$) by $vz^r$ for the sake of simplicity during this proof. The standard basis
$$e_1,\ldots,e_{m\ell} \in V = \C^{n\ell}$$
induces a basis of $\bW_1^{\otimes m}$ as:
\begin{equation}
e_{i_1} z^{r_1} \otimes \cdots \otimes e_{i_m} z^{r_m} \hskip 5mm 1 \le i_1,\ldots,i_{m} \le n\ell, r_1,\ldots,r_m \ge 0.\label{eqn:ebasis}
\end{equation}
If $r_j > 0$ for $1 \le j \le m$, then we find $1 \le i_{j}' \le n\ell$ such that
$$\lceil \frac{i'_{j}}{n} \rceil = \lceil \frac{i_{j}}{n} \rceil + 1 \mod \ell \hskip 5mm \text{and} \hskip 5mm i'_{j} \not\in \{i_1,\ldots,i_{m}\} \setminus \{i_j\}$$
by the pigeon hole principle ($m \le n$). It follows that $( E_{i_{i_j},i_{i_j}'} \otimes z) \in \gp$ and
$$e_{i_1} z^{r_1} \otimes \cdots \otimes e_{i_m} z^{r_m} = ( E_{i_{j},i_{j}'}\otimes z )\left( e_{i_1} z^{r_1} \otimes \cdots \otimes e_{i'_j} z^{r_j-1} \otimes \cdots \otimes e_{i_m}  z^{r_m} \right).$$
Thus, the assertion holds.
\end{proof}

\begin{lem}\label{lem:multcount}
We have an isomorphism
$$\mathrm{End}_{\gp} ( \bW_1^{\otimes m} )\cong \mathrm{Hom}_{\mathfrak{gl}(n)^{\oplus \ell}} ( V^{\otimes m}, \bW_1^{\otimes m}) = \bigoplus_{\vec{\la} \in \Par_{m,\ell}}\mathrm{Hom}_{\mathfrak{gl}(n)^{\oplus \ell}} ( V_{\vec{\la}} \boxtimes L_{\vec{\la}}, \bW_1^{\otimes m}).$$
\end{lem}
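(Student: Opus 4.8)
The plan is to make both sides explicit and compare them through the degree–zero parts. The second equality is immediate: forgetting the $W$–action, Theorem~\ref{thm:cSW} gives $V^{\otimes m}\cong\bigoplus_{\vec\la\in\Par_{m,\ell}}V_{\vec\la}\boxtimes L_{\vec\la}$ as $\mathfrak{gl}(n)^{\oplus\ell}$–modules, so additivity of $\Hom$ in the first variable yields the displayed decomposition of $\Hom_{\mathfrak{gl}(n)^{\oplus\ell}}(V^{\otimes m},\bW_1^{\otimes m})$.

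For the isomorphism $\End_{\gp}(\bW_1^{\otimes m})\cong\Hom_{\mathfrak{gl}(n)^{\oplus\ell}}(V^{\otimes m},\bW_1^{\otimes m})$ I would argue via restriction to the degree–zero part. The Lie algebra $\mathfrak{gl}(n)^{\oplus\ell}$ is exactly the degree–zero part $\gp_0$ of $\gp$, and $V^{\otimes m}$ is exactly the degree–zero part of $\bW_1^{\otimes m}$; since $\gp_0$ preserves the grading, restricting a $\gp$–endomorphism $f$ of $\bW_1^{\otimes m}$ to $V^{\otimes m}$ produces a $\gp_0$–linear map $V^{\otimes m}\to\bW_1^{\otimes m}$ (note that, $V^{\otimes m}$ being finite–dimensional, $f$ is automatically a finite sum of homogeneous endomorphisms). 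This defines the restriction homomorphism $\Theta\colon\End_{\gp}(\bW_1^{\otimes m})\to\Hom_{\mathfrak{gl}(n)^{\oplus\ell}}(V^{\otimes m},\bW_1^{\otimes m})$. By Proposition~\ref{prop:gen} we have $\bW_1^{\otimes m}=U(\gp)\cdot V^{\otimes m}$, so $f$ is determined by $f|_{V^{\otimes m}}$; hence $\Theta$ is injective.

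The remaining point, and the crux of the proof, is that $\Theta$ is surjective. Here I would use the commuting $A$–action: $\bW_1^{\otimes m}$ is a $(\gp,A)$–bimodule, so for each $a\in A$ the operator $\rho_a\colon x\mapsto a\cdot x$ lies in $\End_{\gp}(\bW_1^{\otimes m})$, and $\Theta(\rho_a)$ is the $\gp_0$–map $v\mapsto a\cdot v$. Now recall from (the proof of) Lemma~\ref{lem:Wproj} that the $\C[X]$–action identifies $\bW_1^{\otimes m}$ with the free module $V^{\otimes m}\otimes\C[X]$, on which $\gp_0=\mathfrak{gl}(n)^{\oplus\ell}$ acts through the first tensor factor only; consequently every $\mathfrak{gl}(n)^{\oplus\ell}$–linear map $V^{\otimes m}\to\bW_1^{\otimes m}$ is a $\C[X]$–linear combination of elements of $\End_{\mathfrak{gl}(n)^{\oplus\ell}}(V^{\otimes m})$. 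Finally, since by Theorem~\ref{thm:cSW} the decomposition of $V^{\otimes m}$ is multiplicity–free on the $W$–side, the Schur–Weyl $W$–action spans $\End_{\mathfrak{gl}(n)^{\oplus\ell}}(V^{\otimes m})$; as the $W$–action and the $\C[X]$–action together generate the $A$–action, the composite $A\to\End_{\gp}(\bW_1^{\otimes m})\xrightarrow{\ \Theta\ }\Hom_{\mathfrak{gl}(n)^{\oplus\ell}}(V^{\otimes m},\bW_1^{\otimes m})$ is surjective. Therefore $\Theta$ is surjective, hence an isomorphism (and, as a byproduct, $a\mapsto\rho_a$ identifies $A$ with $\End_{\gp}(\bW_1^{\otimes m})$).

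The main obstacle is thus the surjectivity of $\Theta$: one must know that every $\mathfrak{gl}(n)^{\oplus\ell}$–homomorphism out of the degree–zero part extends to a $\gp$–endomorphism, and this rests entirely on the fact that such homomorphisms are assembled from $\End_{\mathfrak{gl}(n)^{\oplus\ell}}(V^{\otimes m})=\C W$ together with the multiplication operators of $\C[X]$, all of which already act on $\bW_1^{\otimes m}$ commuting with $\gp$ — that is, on the double–centralizer content packaged in Theorem~\ref{thm:cSW}. Injectivity, by contrast, is a soft consequence of Proposition~\ref{prop:gen}.
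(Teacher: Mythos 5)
Your proof is correct and follows essentially the same route as the paper's: the second equality via Theorem~\ref{thm:cSW}, injectivity of the restriction map from Proposition~\ref{prop:gen}, and surjectivity from the $\gp$-commuting multiplications by $\C[z]$ in each tensor factor (the $\C[X]$-part of the $A$-action) together with the identification in degree zero. The only difference is presentational: you spell out that the degree-zero identification $\End_{\mathfrak{gl}(n)^{\oplus\ell}}(V^{\otimes m})$ is realized inside $\End_{\gp}(\bW_1^{\otimes m})$ via the Schur--Weyl $W$-action (double centralizer), a point the paper leaves implicit in the phrase ``in which we find an identification.''
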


\begin{proof}
The second equality is by Theorem \ref{thm:cSW}. By Proposition \ref{prop:gen}, we have an inclusion of the LHS to the RHS. Since the multiplication by $\C[z]$ defines a $U(\g^{\sharp})$-endomorphism of $\bW_1$, the multiplication of $\C[z]$ in the $i$-th tensor factor ($1 \le i \le m$) of $\bW_1^{\otimes m}$ also defines a $U(\g^{\sharp})$-endomorphism. This exhausts the RHS from the degree zero part
$$\mathrm{End}_{\mathfrak{gl}(n)^{\oplus \ell}} ( V^{\otimes m}),$$
in which we find an identification.
\end{proof}

Using Lemma \ref{lem:piso}, we define $\P_{\vec{\la}} := \Phi ( \P_{\vec{\la}}^{\sharp} )$ for each $\vec{\la} \in \Par_{m,\ell}$. The module $\P_{\vec{\la}}$ is the projective cover of $V_{\vec{\la}}$ in $\gp \mathchar`-\mathsf{gmod}_m$ by its definition (in Theorem \ref{thm:DU}).

\begin{lem}\label{lem:Pd}
For each $\vec{\la} = ( \la^{(1)},\ldots,\la^{(\ell)}) \in \Par_{m,\ell}$ such that $|\la^{(\ell)}| = m$, the projective module $\P_{\vec{\la}}$ defines a submodule of $\bW_1^{\otimes m}$ that has non-zero image in $V^{\otimes m} = \mathsf{hd} \, \bW_1^{\otimes m}$ as $\gp$-module.
\end{lem}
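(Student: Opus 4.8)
The plan is to show that $\P_{\vec\la}$ (for $\vec\la$ with $|\la^{(\ell)}|=m$) occurs as a direct summand of $\bW_1^{\otimes m}$ in $\gp\mathchar`-\mathsf{gmod}_m$; granting this, a splitting $\bW_1^{\otimes m}\cong\P_{\vec\la}\oplus M$ gives $\mathsf{hd}\,\bW_1^{\otimes m}\cong V_{\vec\la}\oplus\mathsf{hd}\,M$, so the inclusion $\P_{\vec\la}\hookrightarrow\bW_1^{\otimes m}$ has image projecting onto the summand $V_{\vec\la}\neq 0$ of $\mathsf{hd}\,\bW_1^{\otimes m}=V^{\otimes m}$, which is exactly the assertion. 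The hypothesis enters as follows: $|\la^{(\ell)}|=m$ forces $\vec\la=((\emptyset)^{\ell-1},\la)$ for a partition $\la$ of $m$, so by Corollary \ref{cor:DP} the module $\P_{\vec\la}^{\sharp}$ is $\g^{\sharp}$-stable, i.e.\ it is an object of $\g^{\sharp}\mathchar`-\mathsf{gmod}_m$; for a general $\vec\la$ this fails and $\P_{\vec\la}^{\sharp}$ is only a $\gp^{\sharp}$-module, which is why the statement is restricted this way.

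Concretely, I would argue as follows. Since $\bW_1^{\otimes m}$ is a progenerator in $\g^{\sharp}\mathchar`-\mathsf{gmod}_m$ (Theorem \ref{thm:Usurj}) and $\P_{\vec\la}^{\sharp}$ belongs to that category, there is a surjection $\bigoplus_{j}(\mathsf q^{j}\bW_1^{\otimes m})^{\oplus r_j}\twoheadrightarrow\P_{\vec\la}^{\sharp}$ in $\g^{\sharp}\mathchar`-\mathsf{gmod}_m$. Restricting this surjection to $\gp^{\sharp}$ keeps it surjective, and $\P_{\vec\la}^{\sharp}$ is projective in $\gp^{\sharp}\mathchar`-\mathsf{gmod}_m$ (it is the projective cover from Theorem \ref{thm:DU} 2)), so the surjection splits over $\gp^{\sharp}$. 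Thus $\P_{\vec\la}^{\sharp}$ is a direct summand of $\bigoplus_{j}(\mathsf q^{j}\bW_1^{\otimes m})^{\oplus r_j}$ as a $\gp^{\sharp}$-module; being indecomposable, Krull--Schmidt makes it a summand of a single $\mathsf q^{j}\bW_1^{\otimes m}$, and comparing the degree-zero head $V_{\vec\la}$ of $\P_{\vec\la}^{\sharp}$ with the head of $\mathsf q^{j}\bW_1^{\otimes m}$ — which is concentrated in degree $-j$ because $\bW_1^{\otimes m}$ is generated in degree zero (Proposition \ref{prop:gen}) — forces $j=0$. Hence $\P_{\vec\la}^{\sharp}$ is a $\gp^{\sharp}$-summand of $\bW_1^{\otimes m}$. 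Transporting along the isomorphism $\Phi$ of Lemma \ref{lem:piso} (under which $\P_{\vec\la}^{\sharp}$ corresponds to $\P_{\vec\la}$ and the $\gp^{\sharp}$-module $\bW_1^{\otimes m}$ to the $\gp$-module $\bW_1^{\otimes m}$) yields $\P_{\vec\la}$ as a summand of $\bW_1^{\otimes m}$ in $\gp\mathchar`-\mathsf{gmod}_m$, and the first paragraph completes the proof.

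The main obstacle is conceptual rather than computational: recognizing that $|\la^{(\ell)}|=m$ is precisely the condition that promotes $\P_{\vec\la}^{\sharp}$ from a $\gp^{\sharp}$-module to an object of $\g^{\sharp}\mathchar`-\mathsf{gmod}_m$ (via Corollary \ref{cor:DP}), which is what allows the progenerator $\bW_1^{\otimes m}$ of the larger category to ``see'' it. After that the argument is formal: a surjection onto a projective splits, Krull--Schmidt isolates the indecomposable summand, and $\Phi$ transports everything. The only points requiring a little care are the degree bookkeeping that pins the grading shift to zero (handled by Proposition \ref{prop:gen}) and the compatibility of the two gradings under $\Phi$, which is part of the setup around Remark \ref{rem:grading}.
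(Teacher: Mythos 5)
Your overall strategy is the paper's: the hypothesis $|\la^{(\ell)}|=m$ forces $\vec{\la}=((\emptyset)^{\ell-1},\la)$, Corollary \ref{cor:DP} promotes $\P^{\sharp}_{\vec{\la}}$ to an object of $\g^{\sharp}\mathchar`-\mathsf{gmod}_m$, and the progenerator property of Theorem \ref{thm:Usurj}, combined with projectivity, indecomposability and the degree-zero head, pins $\P_{\vec{\la}}$ down as a direct summand of $\bW_1^{\otimes m}$, whence the nonzero image in $\mathsf{hd}\,\bW_1^{\otimes m}=V^{\otimes m}$. Your one real variation is a reasonable simplification: where the paper argues that $\P_{\vec{\la}}$ is projective as a $\g$-module (via a PBW/generation argument) and then invokes Theorem \ref{thm:Usurj}, you restrict the surjection from shifted copies of $\bW_1^{\otimes m}$ to the parahoric and split there using projectivity in $\gp^{\sharp}\mathchar`-\mathsf{gmod}_m$, avoiding the $\g$-projectivity claim altogether. (A small citation point: for the degree bookkeeping you need generation of $\bW_1^{\otimes m}$ in degree zero \emph{over $\gp^{\sharp}$}, which holds because $\g^{\sharp}z\subset\gp^{\sharp}$, rather than Proposition \ref{prop:gen}, which concerns $\gp$.)

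The step that does not hold as written is the final transport. The $\gp$-structure on $\bW_1^{\otimes m}$ in the Lemma is restriction along the inclusion $\gp\subset\g^{\sharp}$ ($\gp$ being the $\Gamma$-fixed subalgebra), and this is \emph{not} the $\Phi$-transport of the restriction along $\gp^{\sharp}\subset\g^{\sharp}$: for example a block-off-diagonal element $E_{ij}\otimes z^{0}\in\gp_0^{\sharp}$ is sent by $\Phi$ to $E_{ij}\otimes z^{\lceil \frac{j}{n}\rceil-\lceil \frac{i}{n}\rceil}$, which raises the $z$-degree on $\bW_1^{\otimes m}$, while the plain restriction preserves it; so the two actions on the same space genuinely differ. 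Already for $m=1$, the map $e_jz^{r}\mapsto e_jz^{\ell r+\ell-\lceil \frac{j}{n}\rceil}$ identifies the $\Phi$-transport of $\bW_1|_{\gp^{\sharp}}$ with a single $\Gamma$-isotypic summand of $\bW_1|_{\gp}$ (with rescaled grading), and $\Phi^{*}\bigl(\bW_1^{\otimes m}|_{\gp}\bigr)$ breaks into $\ell^{m}$ summands of which only one is $\bW_1^{\otimes m}|_{\gp^{\sharp}}$. The repair is local: the transported module is a direct summand (a $\Gamma^{m}$-isotypic component, suitably regraded) of $\bW_1^{\otimes m}|_{\gp}$, and a summand of a summand is a summand, so $\P_{\vec{\la}}$ still embeds; moreover, since every box of $\vec{\la}$ lies in the last component, the head $V_{\vec{\la}}$ is supported on tensors from the last block $\C^{n}\otimes\chi^{\ell-1}$, which the regrading above keeps in $z$-degree zero, so its image in $\mathsf{hd}\,\bW_1^{\otimes m}=V^{\otimes m}$ is nonzero. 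Without some such argument (the paper performs the analogous explicit twisting in the proof of Proposition \ref{prop:Pd}), the asserted correspondence under $\Phi$ of the two module structures on $\bW_1^{\otimes m}$ is simply false, and the conclusion does not yet follow from what you wrote.
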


\begin{proof}
We set $\vec{\la} = ((\emptyset)^{\ell-1},\la)$. By Corollary \ref{cor:DP}, we find that $\P_{\vec{\la}}$ is in fact a $\g$-module. Thus, $\P_{\vec{\la}}$ is also projective as $\g$-module since a $\g$-projective cover of an irreducible $\mathfrak{gl}(n\ell)$-module with its lowest weight $\vec{\la}^{\mathtt{tot}}$ is generated by $v_{\vec{\la}^{\mathtt{tot}}}$ as a $\gp$-module (by the PBW theorem). Now Theorem \ref{thm:Usurj} implies that $\P_{\vec{\la}}$ embeds into $\bW_1^{\otimes m}$ as $\gp$-module such that it maps to $V^{\otimes m} = \mathsf{hd} \, \bW_1^{\otimes m}$ with non-zero image.
\end{proof}

\begin{prop}\label{prop:Pd}
For each $\vec{\la} = ( \la^{(1)},\ldots,\la^{(\ell)}) \in \Par_{m,\ell}$, we have an inclusion
$$\P_{\vec{\la}} \subset \P_{((\emptyset)^{\ell-1},\|\vec{\la}\|)}$$
as $\gp$-modules. Moreover, $\P_{\vec{\la}}$ defines a $\gp$-submodule of $\bW_1^{\otimes m}$ that has non-zero image in $V^{\otimes m} = \mathsf{hd} \, \bW_1^{\otimes m}$ as $\gp$-module.
\end{prop}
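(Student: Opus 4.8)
The idea is to reduce the statement to Lemma~\ref{lem:Pd} by embedding $\P_{\vec\la}$ into $\P_{\nu}$, where $\nu := ((\emptyset)^{\ell-1},\|\vec\la\|)$. Since $\|\vec\la\|$ is a partition of $m$, the last component of $\nu$ has size $m$, so Lemma~\ref{lem:Pd} provides a graded $\gp$-embedding $\P_{\nu}\hookrightarrow\bW_1^{\otimes m}$ whose image in $V^{\otimes m}=\mathsf{hd}\,\bW_1^{\otimes m}$ is non-zero; by Corollary~\ref{cor:DP} the module $\P_\nu$ is in fact a $\g$-module, and its degree-zero part — as a $\mathfrak{gl}(n)^{\oplus\ell}$-module — is the irreducible $\mathfrak{gl}(n\ell)$-module $V^{\g}$ of lowest $\h$-weight $\nu^{\mathtt{tot}}$. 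I would first observe that $\vec\la^{\mathtt{tot}}$ and $\nu^{\mathtt{tot}}$ lie in a single $\Sym_{n\ell}$-orbit (both list the parts of $\vec\la$ together with $0$'s), with $\nu^{\mathtt{tot}}$ its antidominant representative; this is exactly the comparison in Lemma~\ref{lem:order}. Hence the $\vec\la^{\mathtt{tot}}$-weight space of $V^{\g}$ is one-dimensional, spanned by an extremal vector $v$, and since $\vec\la^{\mathtt{tot}}\in\bX^-_\ell$ is Levi-antidominant and extremal, $v$ is annihilated by the lowering operators of $\mathfrak{gl}(n)^{\oplus\ell}$, so the $\mathfrak{gl}(n)^{\oplus\ell}$-submodule it generates is a copy of $V_{\vec\la}$. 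Consequently it suffices to exhibit a \emph{graded} embedding $\P_{\vec\la}\hookrightarrow\P_\nu$ whose head $V_{\vec\la}$ lands in degree $0$ (necessarily onto $\mathfrak{gl}(n)^{\oplus\ell}\cdot v$): composing with Lemma~\ref{lem:Pd} then gives the ``moreover'' statement, the image of the head lying in the $\vec\la$-isotypic component $V_{\vec\la}\boxtimes L_{\vec\la}\subseteq V^{\otimes m}$ (Theorem~\ref{thm:cSW}).

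\textbf{Constructing $\P_{\vec\la}\hookrightarrow\P_\nu$.} I would pass to $\gp^\sharp$ via $\Phi$ and use the functor $\mathscr D$. By Corollary~\ref{cor:DP}, $\Phi^{-1}(\P_{\vec\la})\subseteq\mathscr D(\Phi^{-1}(\P_{\vec\la}))$. By Theorem~\ref{thm:DU}(2) the module $\Phi^{-1}(\P_{\vec\la})$ is filtered by grading shifts of the $\mathbb D_{\vec\mu}$, and $\mathbb L^{>0}\mathscr D$ vanishes on each $\mathbb D_{\vec\mu}$ (Proposition~\ref{prop:Dsym}), so $\mathscr D$ is exact on $\Phi^{-1}(\P_{\vec\la})$ (Theorem~\ref{thm:sDem}(1)) and $\mathscr D(\Phi^{-1}(\P_{\vec\la}))$ is filtered by the $\mathscr D(\mathbb D_{\vec\mu})$. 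Using that $\mathbb D_{\vec\mu}$ is built from $D_{\vec\mu}$ by iterated self-extensions (Theorem~\ref{thm:DU}(6)) and that $\mathscr D(D_{\vec\mu})$ is the unique $\g^\sharp$-stable level-one Demazure module containing $D_{\vec\mu}$ — namely $D_{((\emptyset)^{\ell-1},\|\vec\mu\|)}$, by Theorem~\ref{thm:San}(3)--(4) — one sees that $\mathscr D(\mathbb D_{\vec\mu})$ is again a module of the same shape attached to $((\emptyset)^{\ell-1},\|\vec\mu\|)$. In particular $\mathscr D(\Phi^{-1}(\P_{\vec\la}))$ is $\g^\sharp$-stable (Theorem~\ref{thm:sDem}) and, being a $\g^\sharp$-projective with $\g^\sharp$-stable standard constituents, is a direct sum of the $\g^\sharp$-projective covers $\Phi^{-1}(\P_{((\emptyset)^{\ell-1},\kappa)})$; since $\mathbb D_{\vec\la}$ occurs in the filtration of $\Phi^{-1}(\P_{\vec\la})$ with multiplicity $1$ (BGG reciprocity), the summand $\Phi^{-1}(\P_\nu)$ occurs.

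\textbf{Landing inside one copy of $\P_\nu$.} The head $V_{\vec\la}$ of $\Phi^{-1}(\P_{\vec\la})$ sits in degree $0$, hence maps into the degree-zero parts of these summands. The degree-zero part of $\Phi^{-1}(\P_{((\emptyset)^{\ell-1},\kappa)})$ contains $V_{\vec\la}$ as a $\mathfrak{gl}(n)^{\oplus\ell}$-constituent only when $\vec\la^{\mathtt{tot}}$ lies in the $\Sym_{n\ell}$-orbit of $((\emptyset)^{\ell-1},\kappa)^{\mathtt{tot}}$, i.e.\ only when $\kappa=\|\vec\la\|$ (Lemma~\ref{lem:wt-est}, Lemma~\ref{lem:order}); so $V_{\vec\la}$ maps into the $\Phi^{-1}(\P_\nu)$-isotypic sum $\Phi^{-1}(\P_\nu)^{\oplus k}$. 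As $\Phi^{-1}(\P_{\vec\la})$ is $\gp^\sharp$-generated by $V_{\vec\la}$ it embeds into $\Phi^{-1}(\P_\nu)^{\oplus k}$, and because $\mathrm{End}_{\gp^\sharp}(\Phi^{-1}(\P_{\vec\la}))$ is local (it is the endomorphism ring of an indecomposable projective), the composite with a suitable coordinate projection restricts to an isomorphism on the head, hence is injective. Transporting back through $\Phi$ yields the desired embedding $\P_{\vec\la}\hookrightarrow\P_\nu$; composing with Lemma~\ref{lem:Pd} and tracking the head finishes the argument.

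\textbf{Main obstacle.} The crux is the second step: controlling $\mathscr D$ on the Demazure-filtered module $\Phi^{-1}(\P_{\vec\la})$ and checking that the resulting $\g^\sharp$-stable module splits into precisely the expected $\g^\sharp$-projectives, with $\Phi^{-1}(\P_\nu)$ among them. A subsidiary but essential point is the grading bookkeeping in the first step: one must verify, via $\Phi$ and the $\Z^{\ell}$-grading of Remark~\ref{rem:grading}, that the extremal vector of weight $\vec\la^{\mathtt{tot}}$ lies in degree $0$ of $\P_\nu$, so that the embedding $\P_{\vec\la}\hookrightarrow\P_\nu$ is genuinely degree-preserving rather than merely defined up to a shift — it is this that makes the image of the head non-zero in $\mathsf{hd}\,\bW_1^{\otimes m}$.
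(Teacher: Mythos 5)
Your overall strategy (reduce to Lemma~\ref{lem:Pd} by embedding $\P_{\vec\la}$ into $\P_{\nu}$, $\nu=((\emptyset)^{\ell-1},\|\vec\la\|)$, via the functor $\mathscr D$) is the same as the paper's, but the two steps you flag as delicate are exactly where the argument breaks, and neither is repaired by the tools you invoke. First, the claim that $\mathscr D(\Phi^{-1}(\P_{\vec\la}))$ is a \emph{projective} object, hence a direct sum of the covers $\Phi^{-1}(\P_{((\emptyset)^{\ell-1},\kappa)})$, is unsupported: $\mathscr D$ is a composition of Demazure functors and nothing in Theorem~\ref{thm:sDem}, Proposition~\ref{prop:Dsym} or Theorem~\ref{thm:DU} makes it preserve projectivity, nor does being $\g^{\sharp}$-stable with a filtration by $\g^{\sharp}$-stable pieces force a splitting into projectives (your appeal to ``BGG reciprocity'' and to $\mathscr D(\mathbb D_{\vec\mu})$ being ``of the same shape'' attached to $((\emptyset)^{\ell-1},\|\vec\mu\|)$ are likewise unproved, e.g.\ the ranks over the rings $R_{\bullet}$ need not match). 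What is actually available is weaker and more precise: by (\ref{eqn:algDem}) and $\g^{\sharp}=(\mathfrak{gl}(n\ell)+\gp^{\sharp})$, $\mathscr D(\P_{\vec\la})$ is cyclic over $\g$, generated by the extremal vector of weight $\nu^{\mathtt{tot}}$, so one gets a canonical \emph{surjection} from a grading shift of $\P_{\nu}$ onto $\mathscr D(\P_{\vec\la})$; proving that this surjection is injective is the crux, and in the paper it is done by embedding everything into the ambient module $\bX_1^{\otimes m}$ (the extension of $\bW_1^{\otimes m}$ by negative $z$-powers up to $z^{1-\ell}$) and using that $m(1-\ell)$ is the minimal degree there, with Lemma~\ref{lem:Pd} supplying the nonvanishing. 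In other words, Lemma~\ref{lem:Pd} is the engine of the injectivity, not merely something to compose with at the end; your proposal has no substitute for this step.

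Second, the grading point you hope to ``verify'' is in fact false, and this kills the ``moreover'' part of the statement as you argue it. The copy of $V_{\vec\la}$ inside the bottom $\mathfrak{gl}(n\ell)$-constituent of $\P_{\nu}$ does \emph{not} sit in degree $0$ for the grading relevant to $\gp\mathchar`-\mathsf{gmod}_m$: after the twist $\Phi$ it sits in degree $\sum_{i}(\ell-i)|\la^{(i)}|>0$ unless $\vec\la$ is concentrated in the last component (compare the example in \S4.1, where in $\Phi(D_{((\emptyset)(2))})$ the constituents $V_{((1)(1))}$ and $V_{((2)(\emptyset))}$ of the degree-zero $\mathfrak{gl}(4)$-module appear in degrees $1$ and $2$). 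Consequently any $\gp$-embedding $\P_{\vec\la}\hookrightarrow\P_{\nu}$ necessarily involves a grading shift, and composing it with the degree-preserving embedding $\P_{\nu}\hookrightarrow\bW_1^{\otimes m}$ of Lemma~\ref{lem:Pd} places the generating copy of $V_{\vec\la}$ in positive degree, i.e.\ inside the radical of $\bW_1^{\otimes m}$, so its image in $\mathsf{hd}\,\bW_1^{\otimes m}=V^{\otimes m}$ would be zero, not nonzero. The paper resolves precisely this by working inside $\bX_1^{\otimes m}$ and multiplying individual tensor factors by suitable powers of $z$ (operations commuting with $\gp$) so as to bring $\P_{\vec\la}$ back into $\bW_1^{\otimes m}$ with its generator landing in $V^{\otimes m}$; your outline contains no analogue of this twisting, so both the isomorphism $\mathscr D(\P_{\vec\la})\cong\mathsf{q}^{\bullet}\P_{\nu}$ and the nonvanishing of the head image remain unproved.
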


\begin{proof}
Let us transport the functor $\mathscr D$ from the category of $\gp^{\sharp}$-modules to the category of $\gp$-modules by $\Phi$. By Corollary \ref{cor:DP}, we have $\P_{\vec{\la}} \subset \mathscr D ( \P_{\vec{\la}} )$, and $\mathscr D ( \P_{\vec{\la}} )$ surjects onto $\mathscr D ( D_{\vec{\la}} ) = D_{((\emptyset)^{\ell-1},\|\vec{\la}\|)}$. By (\ref{eqn:algDem}) and $\g^{\sharp} = (\mathfrak{gl}(n\ell) + \gp^{\sharp})$, we find that $\mathscr D ( \P_{\vec{\la}} )$ is generated by $v_{((\emptyset)^{\ell-1},\|\vec{\la}\|)^{\mathtt{tot}}}$ as $\g$-modules. In particular, we have a surjection
\begin{equation}
\mathsf{q}^{\sum_{i=1}^{\ell}(i-\ell)|\la^{(i)}|} \P_{((\emptyset)^{\ell-1},\|\vec{\la}\|)} \longrightarrow \!\!\!\!\! \rightarrow \mathscr D ( \P_{\vec{\la}} ),\label{eqn:PDPsurj}
\end{equation}
of graded $\g$-modules, where the degree shift is counted by the grading on $\mathfrak{gl}(n\ell) \subset \g$. We have an inclusion
\begin{equation}
\bW_1 \subset \C^n [z] \oplus z^{-1}\C^n [z] \oplus z^{-2}\C^n [z] \oplus \cdots \oplus z^{1-\ell}\C^n [z]\label{eqn:amb}
\end{equation}
of $\gp$-modules. We denote the RHS of (\ref{eqn:amb}) by $\bX_1$. Here, $\bX_1$ acquires the structure of a graded $\g$-module that prolongs the $\gp$-module structure. We have an induced map
$$\mathscr D ( \P_{\vec{\la}} ) \rightarrow \mathscr D ( \bW_1^{\otimes m} ) \rightarrow \bX_1^{\otimes m},$$
that is injective when restricted to $\P_{\vec{\la}}$. Here we know that $V_{\vec{\la}} \subset V^{\otimes m}$ has $|\la^{(i)}|$-contributions from the $i$-th $\C^{n}$ ($1 \le i \le \ell$) with respect to the pure tensors by (\ref{eqn:preind}). Thus, we twist the pure tensor contribution of the $i$-th $\C^{n}$ into $\C^n z^{(1-i)}$ to obtain a $\gp$-module map
$$\mathsf{q}^{\sum_{i=1}^{\ell}(1-i)|\la^{(i)}|} \P_{\vec{\la}} \subset \mathsf{q}^{\sum_{i=1}^{\ell}(1-i)|\la^{(i)}|} \mathscr D ( \P_{\vec{\la}} )\longrightarrow \bX_1^{\otimes m}.$$
This yields a map
$$\psi :\mathsf{q}^{m(1-\ell)}\P_{((\emptyset)^{\ell-1},\|\vec{\la}\|)} \longrightarrow \bX_1^{\otimes m}.$$
Since $m(1-\ell)$ is the lowest grading offered by $\bX_1^{\otimes m}$, Lemma \ref{lem:Pd} implies that $\psi$ must be injective. From this, we derive that (\ref{eqn:PDPsurj}) is injective, and hence is an isomorphism. Applying Corollary \ref{cor:DP} again, we obtain
$$\mathsf{q}^{\sum_{i=1}^{\ell}(1-i)|\la^{(i)}|} \P_{\vec{\la}} \hookrightarrow \mathsf{q}^{\sum_{i=1}^{\ell}(1-i)|\la^{(i)}|} \mathscr D ( \P_{\vec{\la}} ) \cong \mathsf{q}^{m(1-\ell)}\P_{((\emptyset)^{\ell-1},\|\vec{\la}\|)}.$$
Examining the construction and twisting back the degrees of the each tensor factor of the generating vector, we obtain an inclusion
$$\P_{\vec{\la}} \subset \mathsf{q} ^{\sum_{j=1}^{\ell} (i-\ell)|\la^{(i)}|}\P_{((\emptyset)^{\ell-1},\|\vec{\la}\|)} \subset \bX_1^{\otimes m}$$
such that $\P_{\vec{\la}}$ lands on $\bW_1^{\otimes m}$ and $\mathsf{hd} \, \P_{\vec{\la}} = V_{\vec{\la}} \subset \mathsf{hd} \, \bW_1^{\otimes m}$ as required.
\end{proof}

\begin{prop}\label{prop:Wproj}
The module $\bW_1^{\otimes m}$ is projective in $\gp \mathchar`-\mathsf{gmod}_m$.
\end{prop}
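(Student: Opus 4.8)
The plan is to show that $\bW_1^{\otimes m}$ decomposes as a direct sum of the projective modules $\P_{\vec{\la}}$ with appropriate multiplicities, by combining Proposition \ref{prop:Pd} with the multiplicity computation of Lemma \ref{lem:multcount} and the fact that each $\P_{\vec{\la}}$ is projective. First I would recall that, by Proposition \ref{prop:Pd}, for each $\vec{\la} \in \Par_{m,\ell}$ the module $\P_{\vec{\la}}$ embeds into $\bW_1^{\otimes m}$ in such a way that its head $V_{\vec{\la}}$ maps injectively into $\mathsf{hd}\,\bW_1^{\otimes m} = V^{\otimes m}$. Since $\P_{\vec{\la}}$ is projective and $\bW_1^{\otimes m}$ is generated in degree zero as a $\gp$-module (Proposition \ref{prop:gen}), lifting along the surjection $\bW_1^{\otimes m} \twoheadrightarrow \mathsf{hd}\,\bW_1^{\otimes m}$ shows that the inclusion $\P_{\vec{\la}} \hookrightarrow \bW_1^{\otimes m}$ is a split injection onto a direct summand whose contribution to the head is exactly the isotypic line(s) of $V_{\vec{\la}}$ carried by the chosen copy.

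Next I would assemble these summands. Using Theorem \ref{thm:cSW}, $\mathsf{hd}\,\bW_1^{\otimes m} = V^{\otimes m} \cong \bigoplus_{\vec{\la}} V_{\vec{\la}}\boxtimes L_{\vec{\la}}$ as $(\mathfrak{gl}(n)^{\oplus\ell}, W)$-bimodules, so the $V_{\vec{\la}}$-isotypic part of the head has multiplicity $\dim L_{\vec{\la}}$. By Proposition \ref{prop:Pd} together with the identification $\mathrm{End}_{\gp}(\bW_1^{\otimes m}) \cong \mathrm{End}_{\mathfrak{gl}(n)^{\oplus\ell}}(V^{\otimes m})$ from Lemma \ref{lem:multcount}, I can choose, for each $\vec{\la}$, a family of $\dim L_{\vec{\la}}$ copies of $\P_{\vec{\la}}$ inside $\bW_1^{\otimes m}$ whose heads span the $V_{\vec{\la}}$-isotypic part of $\mathsf{hd}\,\bW_1^{\otimes m}$. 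The resulting map
$$\bigoplus_{\vec{\la}\in\Par_{m,\ell}} \P_{\vec{\la}}^{\oplus \dim L_{\vec{\la}}} \longrightarrow \bW_1^{\otimes m}$$
is then a map of projective modules that is surjective on heads (since every simple composition factor of $\mathsf{hd}\,\bW_1^{\otimes m}$ is hit), hence surjective by Nakayama; and since the source is projective it splits, so $\bW_1^{\otimes m}$ is a direct summand of a projective, thus projective. One should double-check that the grading shifts match, but Proposition \ref{prop:Pd} already records the precise shift $\mathsf{q}^{\sum_i (i-\ell)|\la^{(i)}|}$, so this is bookkeeping rather than substance; in fact a comparison of graded characters of both sides (using Lemma \ref{lem:multex} and the graded multiplicity of $V_{\vec{\la}}$ in the head) shows the displayed map is an isomorphism, which is even more than needed.

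An alternative, cleaner route would be to verify projectivity homologically: show $\mathrm{ext}^{\bullet}_{\gp\mathchar`-\mathsf{gmod}_m}(\bW_1^{\otimes m}, V_{\vec{\mu}}) = 0$ in positive degrees for all $\vec{\mu}$. By Lemma \ref{lem:Wproj}, $\bW_1^{\otimes m}$ is projective over $A$ (indeed over $A^\flat$), and Theorem \ref{thm:Usurj} identifies $U(\g^{\sharp})$-endomorphisms; one could try to deduce $\gp$-projectivity by a base-change/spectral-sequence argument relating $\Ext_{\gp}$ to $\Ext_A$ through the bimodule structure. However, I expect the first, more hands-on approach via Proposition \ref{prop:Pd} to be the intended one and the path of least resistance, since all the ingredients have just been set up.

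The main obstacle is the same subtle point that Proposition \ref{prop:Pd} was engineered to overcome: ensuring that the copies of $\P_{\vec{\la}}$ one extracts for different $\vec{\la}$ (and for the $\dim L_{\vec{\la}}$ copies within a fixed $\vec{\la}$) are genuinely placed so that their heads are \emph{linearly independent} inside $\mathsf{hd}\,\bW_1^{\otimes m}$ — equivalently, that the assembled map on heads is an isomorphism rather than merely surjective onto each isotypic component separately. This is where the endomorphism-algebra identification of Lemma \ref{lem:multcount} does the work: the $\C[z]$-multiplication endomorphisms act transitively enough on the relevant copies that one can spread the embeddings of $\P_{\vec{\la}}$ across the full head. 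Once that independence is secured, surjectivity via Nakayama and splitting via projectivity of the source are formal, and projectivity of $\bW_1^{\otimes m}$ follows.
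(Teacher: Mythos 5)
Your construction of the candidate surjection is close in spirit to the paper's proof (the paper likewise identifies the projective cover of $\bW_1^{\otimes m}$ with $\bigoplus_{\vec{\la}} \P_{\vec{\la}} \boxtimes L_{\vec{\la}}$ via Proposition \ref{prop:gen} and Theorem \ref{thm:cSW}, and uses Lemma \ref{lem:Pd} and Proposition \ref{prop:Pd} to place copies of $\P_{\vec{\la}}$ inside $\bW_1^{\otimes m}$, spreading them over the multiplicity space by the commuting $W$-action). But your final deduction is invalid: from a surjection
$$\bigoplus_{\vec{\la}\in\Par_{m,\ell}} \P_{\vec{\la}}^{\oplus \dim L_{\vec{\la}}} \longrightarrow \!\!\!\!\! \rightarrow \bW_1^{\otimes m}$$
you claim ``since the source is projective it splits, so $\bW_1^{\otimes m}$ is a direct summand of a projective.'' A surjection \emph{from} a projective module never splits automatically -- splitting a surjection onto $M$ is equivalent to $M$ being projective, which is exactly what you are trying to prove; otherwise every module with a projective cover would be projective. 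Even knowing that the source is the projective cover of $\bW_1^{\otimes m}$ only tells you the kernel is superfluous, not that it vanishes. What has to be shown is that this surjection is \emph{injective}, i.e.\ an isomorphism.

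Your fallback -- ``a comparison of graded characters of both sides shows the displayed map is an isomorphism'' -- is not available either: no cited result computes $\gch\,\P_{\vec{\la}}$, and the identity $\gch \bigoplus_{\vec{\la}}\P_{\vec{\la}}\boxtimes L_{\vec{\la}} = \gch\,\bW_1^{\otimes m}$ is essentially equivalent to the proposition itself (it is an output of the paper's argument, not an input). The paper closes this gap differently: since $W$ acts on $\bW_1^{\otimes m}$ commuting with $\gp$, the single embedding $\P_{\vec{\la}}\hookrightarrow \bW_1^{\otimes m}$ from Proposition \ref{prop:Pd} propagates to an embedding of the whole block $\P_{\vec{\la}}\boxtimes L_{\vec{\la}}$, whose image is forced to be a direct summand; assembling these over $\vec{\la}$ shows the covering map $\bigoplus_{\vec{\la}}\P_{\vec{\la}}\boxtimes L_{\vec{\la}}\to\bW_1^{\otimes m}$ is injective as well as surjective, hence an isomorphism. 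You need an argument of this kind (injectivity/direct-summand, not Nakayama-plus-splitting) to finish; the independence of heads that you flag as the ``main obstacle'' is indeed handled by the $W$-equivariance, but it is the last step, not that one, where your proof breaks.
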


\begin{proof}
We have $\bW_1^{\otimes m} \in \gp \mathchar`-\mathsf{gmod}_m$ by Proposition \ref{prop:gen} and the fact that the $\h$-weights of $\bW_1$ is of the shape $\{\varepsilon_i\}_{i=1}^{n\ell}$. By Proposition \ref{prop:gen}, the projective cover $\mathbb P$ of $\bW_1^{\otimes m}$ in $\gp \mathchar`-\mathsf{gmod}_m$ is the same as that of $V^{\otimes m}$. By Theorem \ref{thm:cSW}, we have
\begin{equation}
\mathbb P \cong \bigoplus_{\vec{\la} \in \Par_{n,\ell}} \mathbb P_{\vec{\la}} \boxtimes L_{\vec{\la}}.\label{eqn:Pdecomp}
\end{equation}
By Lemma \ref{lem:Pd} and Proposition \ref{prop:Pd}, we find that the covering (surjective) map from $\P$ to $\bW_1^{\otimes m}$ sends one copy of $\P_{\vec{\la}}$ from (\ref{eqn:Pdecomp}) into $\bW_1^{\otimes m}$ for each $\vec{\la} \in \Par_{m,\ell}$. In view of the $W$-action that exists on the both of (\ref{eqn:Pdecomp}) and $\bW_1^{\otimes m}$, we find that the covering map from $\P$ to $\bW_1^{\otimes m}$ must be a $W$-module map whose structure commutes with the action of $\gp$. Thus, $\P_{\vec{\la}} \subset \bW_1^{\otimes m}$ implies $\P_{\vec{\la}} \boxtimes L_{\vec{\la}} \subset \bW_1^{\otimes m}$, and it forces the image to be the direct summand. Therefore, we conclude $\P \cong \bW_1^{\otimes m}$ as required.
\end{proof}

\begin{thm}\label{thm:cateq}
We have an equivalence of categories
$$\mathsf{SW} : \gp \mathchar`-\mathsf{gmod}_m \ni M \mapsto \mathrm{Hom}_{\g} (\bW_1^{\otimes m}, M ) \in A \mathchar`-\mathsf{gmod}$$
with its quasi-inverse
$$\mathsf{WS} : A \mathchar`-\mathsf{gmod} \ni N \mapsto \bW_1^{\otimes m} \otimes_{A} N \in \gp \mathchar`-\mathsf{gmod}_m.$$
\end{thm}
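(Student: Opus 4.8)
The plan is to recognize $\bW_1^{\otimes m}$ as a finitely generated projective generator of $\gp\mathchar`-\mathsf{gmod}_m$ whose graded endomorphism ring is $A^{\mathrm{op}}$, and then to invoke the standard (graded) Morita argument. First I would collect the properties of $\bW_1^{\otimes m}$ that are already in place: it is projective in $\gp\mathchar`-\mathsf{gmod}_m$ (Proposition \ref{prop:Wproj}), finitely generated over $U(\gp)$ (Proposition \ref{prop:gen}), and, by Proposition \ref{prop:Wproj} together with $(\ref{eqn:Pdecomp})$, it decomposes as $\bW_1^{\otimes m}\cong\bigoplus_{\vec{\la}\in\Par_{m,\ell}}\P_{\vec{\la}}\boxtimes L_{\vec{\la}}$. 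Since each $L_{\vec{\la}}$ is nonzero, every indecomposable projective $\P_{\vec{\la}}$ occurs as a direct summand of $\bW_1^{\otimes m}$; as these exhaust the indecomposable projectives of $\gp\mathchar`-\mathsf{gmod}_m$ up to grading shift (Theorem \ref{thm:DU}), $\bW_1^{\otimes m}$ is a projective generator.

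\emph{The key step} is to identify $\End_{\gp}(\bW_1^{\otimes m})$ with $A^{\mathrm{op}}$ via the right $A$-action on the $(\gp,A)$-bimodule $\bW_1^{\otimes m}$: this gives a homomorphism of graded algebras $\rho\colon A\to\End_{\gp}(\bW_1^{\otimes m})^{\mathrm{op}}$, and I would argue it is bijective in two moves. It is surjective: by (the proof of) Lemma \ref{lem:multcount} the graded endomorphism ring is generated as an algebra by its degree-zero part $\End_{\mathfrak{gl}(n)^{\oplus\ell}}(V^{\otimes m})$ together with the operators of multiplication by $z$ in each of the $m$ tensor slots; by Theorem \ref{thm:cSW} and the semisimplicity of $\C W$ the degree-zero part is exactly the image of $\C W\subset A$ (all of whose elements act on $\bW_1^{\otimes m}$ commuting with $\gp$), while the multiplication operators are the images of $X_1,\ldots,X_m$, so $\rho$ is onto. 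It is injective by a graded-dimension count: writing $\bW_1^{\otimes m}\cong V^{\otimes m}\otimes\C[z_1,\ldots,z_m]$ as graded $\mathfrak{gl}(n)^{\oplus\ell}$-modules with $\deg z_i=1$, Lemma \ref{lem:multcount} and Theorem \ref{thm:cSW} give $\gdim\,\End_{\gp}(\bW_1^{\otimes m})=\dim\End_{\mathfrak{gl}(n)^{\oplus\ell}}(V^{\otimes m})\cdot(1-q)^{-m}=\dim(\C W)\cdot(1-q)^{-m}=\gdim\,A$, and a degree-preserving surjection between graded vector spaces with finite-dimensional pieces and equal graded dimension is an isomorphism.

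\emph{The Morita argument.} Granting $\End_{\gp}(\bW_1^{\otimes m})\cong A^{\mathrm{op}}$, the functor $\mathsf{WS}=\bW_1^{\otimes m}\otimes_A(-)$ is exact (Lemma \ref{lem:Wproj}) and, by the tensor--hom adjunction for the bimodule $\bW_1^{\otimes m}$, left adjoint to $\mathsf{SW}=\Hom_{\gp}(\bW_1^{\otimes m},-)$, which is exact (Proposition \ref{prop:Wproj}) and faithful since $\bW_1^{\otimes m}$ is a generator. By the previous paragraph the adjunction unit $N\to\mathsf{SW}(\mathsf{WS}(N))$ is an isomorphism for $N=A$ and its grading shifts; as $A$ is Noetherian (being module-finite over $\C[X_1,\ldots,X_m]$) and both the identity functor and $\mathsf{SW}\circ\mathsf{WS}$ are right exact and commute with direct sums, a five-lemma argument on a finite free presentation of an arbitrary object propagates this to all of $A\mathchar`-\mathsf{gmod}$, so $\mathsf{WS}$ is fully faithful. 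Finally, by the triangle identity $\mathsf{SW}$ carries the counit $\mathsf{WS}(\mathsf{SW}(M))\to M$ to the inverse of the invertible unit at $\mathsf{SW}(M)$, hence to an isomorphism, and since an exact faithful functor reflects isomorphisms the counit is an isomorphism for every $M$. Therefore $\mathsf{SW}$ and $\mathsf{WS}$ are mutually quasi-inverse equivalences. The remaining checks, that $\mathsf{SW}$ and $\mathsf{WS}$ land in the prescribed (finitely generated; polynomial; $\mathrm{Id}$ acting by $m$) subcategories, follow from the corresponding properties of $\bW_1^{\otimes m}$ established above; the real content, and what I expect to be the main obstacle, is the endomorphism-ring computation of the middle paragraph.
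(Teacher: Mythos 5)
Your proposal is correct and follows essentially the same route as the paper: exhibit $\bW_1^{\otimes m}$ as a progenerator of $\gp\mathchar`-\mathsf{gmod}_m$ (Proposition \ref{prop:Wproj} plus Theorem \ref{thm:cSW}), identify its graded endomorphism algebra with $A$ by combining Lemma \ref{lem:multcount} with the graded dimension count $\gdim\,\End_{\gp}(\bW_1^{\otimes m})=|W|(1-q)^{-m}=\gdim\,A$, and conclude by Morita theory. The only differences are cosmetic: you get surjectivity of $A\to\End_{\gp}(\bW_1^{\otimes m})$ structurally and injectivity from the dimension count (the paper does the reverse), and you phrase the final Morita step via the adjunction unit/counit rather than full faithfulness on projectives.
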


\begin{rem}
Theorem \ref{thm:cateq} is an extension of Feigin-Khoroshkhin-Makedonskyi \cite[Theorem C]{FKM19} to the case of wreath product. Note that Theorem \ref{thm:cateq} extends to the case of paraholic subalgebra arising from
$$\bigoplus_{i=1}^{\ell} \left( \C^{n_i} \boxtimes \chi^{i-1} \right) \hskip 10mm \text{with} \hskip 10mm \min \{ n_i \}_i \ge m$$
in place of $V$ with the same proof. Here we did not state in this form in order to keep Corollary \ref{cor:WSdual}, that incorporates $\theta$.
\end{rem}

\begin{proof}
By Proposition \ref{prop:Wproj} and Theorem \ref{thm:cSW}, we find that $\bW_1^{\otimes m}$ surjects onto all the simple graded $\gp$-modules (up to grading shift). In other words, $\bW_1^{\otimes m}$ is a progenerator of $\gp \mathchar`-\mathsf{gmod}_m$ (in the graded sense).

Thus, the equivalence of $\mathsf{SW}$ follows if we have
\begin{equation}
A \cong \mathrm{End}_{\gp} ( \bW_1^{\otimes m} )\label{eqn:Acent}
\end{equation}
by the Morita theory. We have $A \subset \mathrm{End}_{\gp} ( \bW_1^{\otimes m} )$ by enhancing the $A^{\flat}$-action by the $\Gamma^m$-action on $\bW_1^{\otimes m}$. In view of Lemma \ref{lem:multcount}, we have
\begin{align*}
\gdim \, \mathrm{End}_{\gp} ( \bW_1^{\otimes m} ) & = \sum_{\vec{\la} \in \Par_{m,\ell}} \gdim \,\mathrm{Hom}_{\mathfrak{gl}(n)^{\oplus \ell}} ( V_{\vec{\la}} \boxtimes L_{\vec{\la}}, \bW_1^{\otimes m} )\\
& = \sum_{\vec{\la} \in \Par_{m,\ell}} \frac{1}{(1-q)^m}\dim \, \mathrm{Hom}_{\mathfrak{gl}(n)^{\oplus \ell}} ( V_{\vec{\la}} \boxtimes L_{\vec{\la}}, V ^{\otimes m} ),
\end{align*}
where the second equality follows from
$$\gch \, \bW_1 = \frac{1}{1-q} \gch \, V.$$
This implies
$$\gdim \, \mathrm{End}_{\gp} ( \bW_1^{\otimes m} ) = \frac{|W|}{(1-q)^n}= \gdim \, A,$$
from Theorem \ref{thm:cSW} and Schur's lemma, that yields (\ref{eqn:Acent}).

We show that $\mathsf{WS}$ is an equivalence. The module $\bW_1^{\otimes m}$ is projective $A$-module by Lemma \ref{lem:Wproj}. In view of Theorem \ref{thm:cSW}, it is a progenerator of $A\mathchar`-\mathsf{gmod}$. Thus, the functor $\mathsf{WS}$ defines an exact functor to $\gp\mathchar`-\mathsf{gmod}_m$. Since $\mathsf{WS} (A) = \bW_1^{\otimes m}$ is the progenerator of $\gp\mathchar`-\mathsf{gmod}_m$, we find that $\mathsf{WS}$ sends an indecomposable projective to an indecomposable projective, and is fully faithful on projectives by
$$\mathrm{end}_{\gp\mathchar`-\mathsf{gmod}_m} ( \mathsf{WS} ( A )) = \mathrm{end}_{\gp\mathchar`-\mathsf{gmod}_m} ( \bW_1^{\otimes m} ) \cong A.$$
Thus, $\mathsf{WS}$ induces a category equivalence. We have
$$\mathsf{WS} \circ \mathsf{SW} \cong \mathrm{id} \hskip 5mm \text{and} \hskip 5mm \mathsf{SW} \circ \mathsf{WS}\cong \mathrm{id}$$
since it holds for indecomposable projectives.
\end{proof}

\begin{cor}\label{cor:WSsimple}
Keep the setting of Theorem \ref{thm:cateq}. We have
$$\mathsf{SW} ( V_{\vec{\la}} ) \cong L_{\vec{\la}} \hskip 5mm \text{for each} \hskip 5mm \vec{\la} \in \Par_{m,\ell}.$$
\end{cor}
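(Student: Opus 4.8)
The plan is to identify the simple $\gp$-module $V_{\vec{\la}}$ with the image of a distinguished direct summand of $\bW_1^{\otimes m}$ under the Schur--Weyl functor $\mathsf{SW} = \mathrm{Hom}_{\gp}(\bW_1^{\otimes m}, -)$. Since $\mathsf{SW}$ is the Morita equivalence attached to the progenerator $\bW_1^{\otimes m}$ together with the identification $A \cong \mathrm{End}_{\gp}(\bW_1^{\otimes m})$ from \eqref{eqn:Acent}, it sends projective covers to projective covers and simples to simples; so it already follows that $\mathsf{SW}(V_{\vec{\la}})$ is simple, and the whole content of the corollary is matching up the labels. In other words, we must check that the bijection on simples induced by $\mathsf{SW}$ is the identity on $\Par_{m,\ell}$ under our chosen parametrizations, rather than some nontrivial permutation.

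First I would record that under $\mathsf{SW}$, the projective cover $\P_{\vec{\la}}$ of $V_{\vec{\la}}$ in $\gp\mathchar`-\mathsf{gmod}_m$ goes to the projective cover $P_{\vec{\la}}$ of $L_{\vec{\la}}$ in $A\mathchar`-\mathsf{gmod}$; this is purely formal from Morita theory once one knows the \emph{indexing} of the indecomposable summands of $\bW_1^{\otimes m} = \mathsf{WS}(A)$. Then I would pin down the $W$-equivariant structure: by Theorem \ref{thm:cSW} (the Specht--Regev Schur--Weyl duality) combined with Proposition \ref{prop:Wproj} and its proof, the projective cover decomposes as $\bigoplus_{\vec{\la}} \P_{\vec{\la}} \boxtimes L_{\vec{\la}}$ as a $(\gp, W)$-bimodule, and the argument in Proposition \ref{prop:Wproj} shows this decomposition is already realized inside $\bW_1^{\otimes m}$ itself (the covering map is an isomorphism). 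Consequently $\mathsf{SW}(\P_{\vec{\la}}) = \mathrm{Hom}_{\gp}(\bigoplus_{\vec{\mu}} \P_{\vec{\mu}}\boxtimes L_{\vec{\mu}}, \P_{\vec{\la}})$, and by fully-faithfulness on projectives this is $L_{\vec{\la}}$ (tensored with the appropriate endomorphism-ring factor, which is $\C$ in the semisimple quotient), exhibiting $\mathsf{SW}(\P_{\vec{\la}}) \cong P_{\vec{\la}}$ with $\mathsf{hd}$ giving $\mathsf{SW}(V_{\vec{\la}}) \cong L_{\vec{\la}}$.

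The efficient way to make this precise is to compare graded characters. Applying $\mathrm{Hom}_{\mathfrak{gl}(n)^{\oplus\ell}}(V_{\vec{\la}}, -)$ to $\bW_1^{\otimes m}$ and using $\gch\,\bW_1 = (1-q)^{-1}\gch\, V$ together with Theorem \ref{thm:cSW} shows that the multiplicity space $\mathrm{Hom}_{\mathfrak{gl}(n)^{\oplus\ell}}(V_{\vec{\la}}, \bW_1^{\otimes m})$ is, as a graded $W$-module, $\frac{1}{(1-q)^m}L_{\vec{\la}}$ concentrated with its lowest graded piece $L_{\vec{\la}}$ in degree zero. Since $\bW_1^{\otimes m}$ is generated in degree zero as a $\gp$-module (Proposition \ref{prop:gen}) and $V_{\vec{\la}}$ sits in degree zero, evaluation $\mathrm{Hom}_{\gp}(\bW_1^{\otimes m}, V_{\vec{\la}}) \hookrightarrow \mathrm{Hom}_{\mathfrak{gl}(n)^{\oplus\ell}}((\bW_1^{\otimes m})_0, V_{\vec{\la}}) = \mathrm{Hom}_{\mathfrak{gl}(n)^{\oplus\ell}}(V^{\otimes m}, V_{\vec{\la}})$ identifies $\mathsf{SW}(V_{\vec{\la}})$ with the degree-zero multiplicity space, which is exactly $L_{\vec{\la}}$ with the variables $X_i$ acting by zero — and this is the simple $A$-module labelled $\vec{\la}$.

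The only genuine obstacle is bookkeeping of conventions: the labelling of simple $W$-modules in Section \ref{subsec:algA} involves a sign twist (component-wise conjugation) to align with Shoji's convention, and the labelling $V_{\vec{\la}}$ uses $\vec{\la}^{\mathtt{tot}}$ as a \emph{lowest} weight with $L_{(r)} = \mathsf{sgn}$; one must verify that these two twists are applied compatibly on both sides of Theorem \ref{thm:cSW} so that no residual conjugation or rotation appears. I expect this to be routine once one carefully tracks \eqref{eqn:preind} and the $\mathfrak{gl}(n)^{\oplus\ell}$-labelling through the Schur--Weyl isomorphism, since the duality of Theorem \ref{thm:cSW} is stated with precisely the matched labels; hence the corollary should follow with no essential difficulty beyond this verification.
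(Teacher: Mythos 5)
Your third paragraph is exactly the paper's argument: by Proposition \ref{prop:gen} the graded Hom $\mathrm{hom}_{\gp}(\bW_1^{\otimes m},V_{\vec{\la}})$ restricts isomorphically to $\mathrm{Hom}_{\mathfrak{gl}(n)^{\oplus\ell}}(V^{\otimes m},V_{\vec{\la}})\cong L_{\vec{\la}}$ via Theorem \ref{thm:cSW}, with the $X_i$ acting by zero, and the label-matching worry you raise is already absorbed in how Theorem \ref{thm:cSW} is stated. The Morita/projective-cover framing of your first two paragraphs is harmless but not needed; the proposal is correct and essentially coincides with the paper's proof.
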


\begin{proof}
We have
$$\mathsf{SW} (V_{\vec{\la}}) = \mathrm{hom}_{\g\mathchar`-\mathsf{gmod}_m} ( \bW_1^{\otimes m}, V_{\vec{\la}}) = \mathrm{Hom}_{\mathfrak{gl}(n)^{\oplus m}} ( V^{\otimes m}, V_{\vec{\la}}) \cong L_{\vec{\la}}$$
by Proposition \ref{prop:gen} and Theorem \ref{thm:cSW}.
\end{proof}

\begin{cor}\label{cor:WSdual}
Keep the setting of Theorem \ref{thm:cateq}. We have
$$\mathsf{SW} ( M^{\star} ) \cong M^{\vee} \hskip 5mm \text{and} \hskip 5mm \mathsf{WS} ( N^{\vee} ) \cong N^{\star}$$
for each finite-dimensional $M \in \gp \mathchar`-\mathsf{gmod}_m$ and $N \in A\mathchar`-\mathsf{gmod}$.	
\end{cor}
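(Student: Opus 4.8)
The plan is to establish the two natural isomorphisms $\mathsf{SW}(M^{\star})\cong\mathsf{SW}(M)^{\vee}$ and $\mathsf{WS}(N^{\vee})\cong\mathsf{WS}(N)^{\star}$, i.e. that the mutually inverse equivalences of Theorem \ref{thm:cateq} intertwine the contravariant involutions $\star$ and $\vee$. These two statements are equivalent to each other: apply $\mathsf{WS}$ to the first with $M=\mathsf{WS}(N)$ and use $\mathsf{WS}\circ\mathsf{SW}\cong\mathrm{id}$ together with the fact that $\star$ (Corollary \ref{cor:tpcomm}) and $\vee$ (the discussion preceding Lemma \ref{lem:Asimple}) are involutions on finite-dimensional modules. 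Throughout, write $\bW:=\bW_1^{\otimes m}$ and $\tau:=\theta^{-1}\circ\phi$, and let $\iota$ be the graded anti-involution of $A$ fixing each $X_i$ and inverting $W$; one may pass silently between left and right $A$-modules via $\iota$. Note also that $\mathsf{WS}(N)=\bW\otimes_A N$ is finite-dimensional whenever $N$ is, because $\bW$ is a finitely generated projective $A$-module (free of finite rank over $\C[X]$, cf. Lemma \ref{lem:Wproj}); so $\mathsf{WS}(N)^{\star}$ makes sense.

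The first step is to reduce everything to a single bimodule identity. The canonical isomorphism $(\bW\otimes_A N)^{\vee}\cong\Hom_A(N,\bW^{\vee})$ (a pairing corresponds to its partial adjoint), in which $\bW^{\vee}$ is the restricted $\C$-dual of $\bW$ with its natural graded $(\gp,A)$-bimodule structure, together with the observation that the $\tau$-twist defining $\star$ commutes with $\Hom_A(N,-)$ (as $\gp$ centralizes $A$ on $\bW$), gives $\mathsf{WS}(N)^{\star}\cong\Hom_A(N,(\bW^{\vee})^{\tau})$. On the other hand, graded Matlis duality over $A$ — a graded Noetherian algebra with semisimple degree-zero part $\C W$, for which $A^{\vee}$ is the minimal graded injective cogenerator — gives $N^{\vee}\cong\Hom_A(N,A^{\vee})$ for finite-dimensional $N$, and the projection formula for the finitely generated projective $\bW$ gives $\bW\otimes_A\Hom_A(N,A^{\vee})\cong\Hom_A(N,\bW\otimes_A A^{\vee})$, so that $\mathsf{WS}(N^{\vee})\cong\Hom_A(N,\bW\otimes_A A^{\vee})$. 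These isomorphisms are all natural in $N$, so the corollary follows once one proves the $(\gp,A)$-bimodule isomorphism
$$(\bW^{\vee})^{\tau}\ \cong\ \bW\otimes_A A^{\vee}$$
(two sides compared through $\iota$ on the $A$-side). This is essentially Corollary \ref{cor:WSdual} in the degenerate case $N=A$.

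The remaining bimodule isomorphism is the crux. It reduces to the case $m=1$: one has $\bW=\bW_1^{\otimes m}$, the automorphisms $\theta$ and $\phi$ act diagonally on the tensor factors and fix the polynomial variables, so $\tau$ acts factorwise and $(\bW^{\vee})^{\tau}\cong\bigl((\bW_1^{\vee})^{\tau}\bigr)^{\otimes m}$, while on the $A$-side the $\Sym_m$-part of $W$ merely permutes tensor factors and the $(\Z/\ell\Z)^m$-part is handled as in the proof of Lemma \ref{lem:Asimple} using $\chi^{\vee}\cong\chi^{-1}$. For $\bW_1=V\otimes\C[z]$, the ``minus-transpose'' component $E_{ij}\mapsto-E_{\sigma(j)\sigma(i)}$ of $\phi$ is exactly what carries the standard module to its dual, so $V^{\tau}\cong V^{\vee}$ as $\mathfrak{gl}(n)^{\oplus\ell}$-modules — the permutation $\sigma$ and the rotation implicit in $\theta^{-1}$ only relabel the $\ell$ blocks — whereas $\tau$ leaves the $\C[z]$-module structure untouched; hence $(\bW_1^{\vee})^{\tau}\cong V\otimes\C[z]^{\vee}$ up to a grading shift. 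On the other hand the $\C[X_1]$-action on $\bW_1$ transported along $\mathsf{SW}$ is multiplication by $z$, so $\bW_1\otimes_A A^{\vee}\cong(V\otimes\C[z])\otimes_{\C[X_1]}\C[X_1]^{\vee}\cong V\otimes\C[z]^{\vee}$ up to a grading shift. The genuine content is that these two identifications match on the nose, with no residual grading shift or block twist; this is precisely what the inclusion of $\theta$ in the definition of $\star$ is engineered to guarantee (cf. the Remark following Theorem \ref{thm:cateq}), and carrying out the grading bookkeeping — tracking the degree attached to each block $\C^{n}\otimes\chi^{r-1}$ under the twist $\Phi$ of Lemma \ref{lem:piso} and under $\theta$ and $\phi$ — is the main obstacle; the rest is formal. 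As a sanity check, on simples the resulting isomorphism reads $\mathsf{SW}(V_{\vec{\la}}^{\star})\cong\mathsf{SW}(V_{\vec{\la}^{\vee}})\cong L_{\vec{\la}^{\vee}}$ versus $\mathsf{SW}(V_{\vec{\la}})^{\vee}\cong L_{\vec{\la}}^{\vee}\cong L_{\vec{\la}^{\vee}}$, which agree by Corollary \ref{cor:WSsimple} and Lemmas \ref{lem:gsimple} and \ref{lem:Asimple}.
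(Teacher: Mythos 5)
There is a genuine gap, and you have flagged it yourself. Your reduction of the corollary to the $(\gp,A)$-bimodule isomorphism $(\bW_1^{\otimes m})^{\vee}$ twisted by $\theta^{-1}\circ\phi$ being isomorphic to $\bW_1^{\otimes m}\otimes_A A^{\vee}$ is sound (the adjunction $(\bW\otimes_A N)^{\vee}\cong\Hom_A(N,\bW^{\vee})$, graded Matlis duality, and the projection formula for the finitely generated projective $\bW_1^{\otimes m}$ are all standard), but the bimodule isomorphism itself is never proved: you assert that the two identifications ``match on the nose, with no residual grading shift or block twist'' and then declare the grading/block bookkeeping ``the main obstacle,'' deferring it. That bookkeeping is precisely the mathematical content of the statement --- it is exactly why $\theta$ enters the definition of $\star$ --- so the proof is incomplete at its crux. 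Concretely, what is missing is a verification that $(\bW_1^{\vee})^{\theta^{-1}\circ\phi}$ is isomorphic to the co-free module $\bW_1\otimes_{\C[X_1]}\C[X_1]^{\vee}$ \emph{as a graded $\gp$-module compatibly with the $\Gamma$-action}: the block reversal $\sigma$ hidden in $\phi$ and the rotation $\theta$ permute the summands $\C^n\otimes\chi^{r-1}$ and interact with the $z$-grading through $\Phi$, and it is exactly here that a residual shift or a permutation of the $\chi$-isotypic pieces (i.e.\ a twist by a nontrivial graded automorphism of $A$) could survive; saying the $(\Z/\ell\Z)^m$-part is ``handled as in Lemma \ref{lem:Asimple}'' does not discharge this. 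Note also that a mismatch of this kind would not always be detectable on simple modules alone, so your closing sanity check does not substitute for the verification.

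For comparison, the paper's own proof takes a completely different and much shorter route: it simply compares the effect of the two dualities on simple objects, i.e.\ it combines Corollary \ref{cor:WSsimple} with Lemmas \ref{lem:gsimple} and \ref{lem:Asimple} --- which is essentially your final sanity check. Your bimodule strategy, if carried through, would actually yield more (a natural isomorphism of the two contravariant functors, not just objectwise agreement), and it is a reasonable way to make the statement robust; but as written the decisive computation is postponed rather than performed, so the proposal does not yet constitute a proof.
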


\begin{proof}
Compare Corollary \ref{cor:WSsimple} with Lemma \ref{lem:gsimple} and Lemma \ref{lem:Asimple}.
\end{proof}

\section{Kostka polynomials attached to limit symbols}\label{sec:Kostka}

Keep the setting of the previous section. We set
$$W_{\vec{\la}} := \mathsf{SW} ( \Phi ( D_{\vec{\la}} ) ), \hskip 5mm \bW_{\vec{\la}^{\vee}}^{\flat} := \mathsf{SW} ( \Phi ( \mathbb U_{\vec{\la}}^{\theta} ) ), \hskip 5mm W_{\vec{\la}^{\vee}}^{\flat} := \mathsf{SW} ( \Phi ( U_{\vec{\la}}^{\theta} ) ) \in A \mathchar`-\mathsf{gmod}$$
for each $\vec{\la} \in \Par_{m,\ell}$.

\begin{prop}\label{prop:wt-est}
Let $\la \in \Z^{n\ell}$. We expand the non-symmetric Macdonald polynomial borrowed from Theorem \ref{thm:San} $($or \rm{\cite{Che95}}$)$ as
$$E_{\la} ( x, q, t ) = \sum_{\mu \in \Z^{n\ell}} a_{\la,\mu} ( q,t ) x^{\mu} \hskip 5mm \text{where} \hskip 5mm a_{\la,\mu} (q,t) \in \C (q,t).$$
Then, we have $a_{\la,\mu} (q,0) \neq 0$ if $a_{\la,\mu}(q,t) \neq 0$. In this case, we also have
$$a_{\gamma,\mu} (q,0) \neq 0 \hskip 10mm \gamma \in \Sym_{n\ell} \la \hskip 5mm\text{such that} \hskip 5mm \lambda - \gamma  \in \Z_+^{n\ell}.$$
\end{prop}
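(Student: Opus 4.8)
The plan is to reduce the statement to known positivity/support properties of nonsymmetric Macdonald polynomials at $t=0$, and then to propagate this across the $\Sym_{n\ell}$-orbit of $\lambda$ using the intertwiner (Demazure) operators that generate one $E_\gamma$ from another. The first claim — that $a_{\lambda,\mu}(q,0)\neq 0$ whenever $a_{\lambda,\mu}(q,t)\neq 0$ — says exactly that no cancellation occurs in the support when we specialize $t\to 0$. The cleanest route is to invoke Theorem \ref{thm:San}(5): $\gch\,D_\lambda = E_\lambda(x,q,0)$, so the monomials appearing in $E_\lambda(x,q,0)$ are precisely the $\h$-weights of the Demazure module $D_\lambda$, each with coefficient a nonzero element of $\C[\![q]\!]$ (in fact a nonnegative-coefficient polynomial, by the level-one Demazure character/Demazure crystal picture). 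Hence it suffices to show that the set of monomials in $E_\lambda(x,q,0)$ is \emph{at least as large as} that of $E_\lambda(x,q,t)$ — equivalently, that the $t\to 0$ specialization does not shrink the support. This is a standard fact about Macdonald polynomials: $E_\lambda(x,q,t)$ has support contained in the convex hull of $\Sym_{n\ell}\lambda$ intersected with $\lambda + Q$ (root lattice), and the specialization $t=0$ keeps every weight that was present, because $E_\lambda(x,q,t)$ is, up to the normalization making the leading coefficient $1$, a $\Z_{\ge 0}[q,t]$-combination of the relevant monomials (Sanderson / Ion: the coefficients $a_{\lambda,\mu}(q,t)$, suitably normalized, lie in $\Z_{\ge 0}[q,t]$ for antidominant — or more generally, for the case at hand — $\lambda$), so setting $t=0$ in a nonzero coefficient can only kill the $t$-divisible part, never the whole coefficient. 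I would phrase this via the Demazure-module interpretation to stay within the tools of the paper: the weight $\mu$ contributes to $D_\lambda$ iff it contributes to some $E_\lambda(x,q,t)$-support, and $D_\lambda$'s graded character has strictly positive (in $q$) coefficients on all its weights.

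For the second claim, fix $\gamma \in \Sym_{n\ell}\lambda$ with $\lambda - \gamma \in \Z_+^{n\ell}$; I want $a_{\gamma,\mu}(q,0)\neq 0$. The key is the formula $E_{s_i\gamma}$ in terms of $E_\gamma$ via the intertwiner $T_i + (\text{rational function of }q^{\langle\gamma,\alpha_i^\vee\rangle},t)$ — the Knop–Sahi / Cherednik recursion — which at $t=0$ degenerates to the \emph{Demazure operator} $D_i$ acting on characters (this is precisely the content of Theorem \ref{thm:San}(4) transported to characters: $\mathscr D$ is a composition of Demazure functors, and at the level of nonsymmetric Macdonald polynomials at $t=0$ these recursions become the crystal Demazure operators). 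Concretely, since $\lambda - \gamma \in \Z_+^{n\ell}$ means $\gamma$ is obtained from $\lambda$ by a sequence of simple reflections each of which \emph{strictly raises} in the relevant order, there is a chain $\lambda = \gamma_0, \gamma_1, \dots, \gamma_r = \gamma$ with $\gamma_{k+1} = s_{i_k}\gamma_k$ and $D_{\gamma_k} \supset \gamma_{k+1}$-lines — i.e., $E_{\gamma_{k+1}}(x,q,0)$ is obtained from $E_{\gamma_k}(x,q,0)$ by applying the Demazure operator $D_{i_k}$ (possibly with a shift), which is \emph{positivity-preserving}: it sends a $\Z_{\ge 0}[q]$-combination of monomials to a $\Z_{\ge 0}[q]$-combination of monomials and never decreases the support along the relevant direction. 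Therefore every monomial $x^\mu$ in $E_\lambda(x,q,0)$ with the appropriate weight constraint remains (with nonzero $q$-coefficient) in $E_\gamma(x,q,0)$; in particular $a_{\gamma,\mu}(q,0)\neq 0$.

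**The main obstacle** I anticipate is being precise about the grading/degree shifts in the recursion $E_{\gamma_k}(x,q,0)\mapsto E_{\gamma_{k+1}}(x,q,0)$ — the Demazure-operator description of the Knop–Sahi intertwiners at $t=0$ introduces powers of $q$ coming from $q^{\langle\gamma,\alpha_i^\vee\rangle}$ that must be tracked, and one must verify that the relevant coefficient does not get multiplied by a factor that vanishes at $t=0$ \emph{without} being rescued by positivity. The safe way to handle this is to work entirely with the Demazure \emph{modules} $D_\gamma$ (all of which are level-one affine Demazure modules by Theorem \ref{thm:San}(2)–(3)) and their characters: the inclusion $D_\gamma \hookrightarrow D_\lambda$ (or a chain of Demazure functors connecting them, as in Theorem \ref{thm:sDem}(4)) forces a containment of weight-supports, and every weight of a Demazure module carries a strictly positive graded multiplicity by the Demazure character formula — so $a_{\gamma,\mu}(q,0)$, being that graded multiplicity, is nonzero. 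I would thus reorganize the whole proof around Theorem \ref{thm:San}(4)–(5) and the Demazure character formula rather than around explicit Macdonald recursions, which makes both claims essentially immediate once the support-containment $\mathrm{Supp}\,E_\gamma(x,q,0) \supseteq \{\mu : \lambda - \gamma \in \Z_+^{n\ell}, \ x^\mu \in E_\lambda\}$ is established; the remaining verification is the combinatorial fact that $\lambda - \gamma \in \Z_+^{n\ell}$ is exactly the condition under which $D_\gamma$ sits above $D_\lambda$ in the Demazure-module order for the longest element, which follows from Lemma \ref{lem:order} and the Bruhat-order description of Demazure modules.
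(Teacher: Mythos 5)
Your treatment of the first claim has a genuine gap, and it is the heart of the proposition. You reduce ``$a_{\la,\mu}(q,t)\neq 0 \Rightarrow a_{\la,\mu}(q,0)\neq 0$'' to a positivity statement, asserting that a suitably normalized nonzero coefficient in $\Z_{\ge 0}[q,t]$ ``can only lose its $t$-divisible part'' at $t=0$. That is a non sequitur: a nonzero element of $\Z_{\ge 0}[q,t]$ (or a positive sum of terms each carrying a factor $t^{\mathrm{coinv}}$, as in the Haglund--Haiman--Loehr formula) can perfectly well vanish identically at $t=0$; positivity excludes cancellation between terms, not divisibility of every term by $t$. Your fallback formulation --- ``$\mu$ contributes to $D_\la$ iff it contributes to the support of $E_\la(x,q,t)$'' --- is circular: one direction is Theorem \ref{thm:San}(5), and the other direction \emph{is} the first claim. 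The paper closes exactly this gap by an induction along the Knop--Sahi/HHL recursion: by (\ref{eqn:Tspread}) the operator $T_i - a(q,t)$ spreads a monomial $x^{\nu}$ (with $\nu_i>\nu_{i+1}$) only along the string $\nu,\nu-\alpha_i,\dots,\nu-(\nu_i-\nu_{i+1})\alpha_i$, while the $\mathfrak{sl}(2,i)$-structure of $\mathscr D_i(D_\la)=D_{s_i\la}$ guarantees that the \emph{full} string of every $\h$-weight $\nu$ of $D_\la$ occurs among the weights of $D_{s_i\la}$; since the affine step $\Psi$ of \cite{HHL08} does not involve $t$, the generic-$t$ support never leaves the weight set of the Demazure module, which is precisely the $t=0$ support. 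Some comparison of this kind is indispensable and is absent from your proposal.

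For the second claim your instinct to argue with Demazure modules is right, but two points need repair. First, the inclusion you need is $D_\la\subset D_\gamma$ (the \emph{lower} element $\gamma$, with $\la-\gamma\in\Z^{n\ell}_+$, has the \emph{larger} Demazure module, since each recursion step gives $D_\nu\subset\mathscr D_i(D_\nu)=D_{s_i\nu}$), not $D_\gamma\hookrightarrow D_\la$ as written. Second, your blanket assertion that the Demazure operator is positivity-preserving and never shrinks support on arbitrary $\Z_{\ge 0}[q]$-combinations of monomials is false: applied to a single monomial $x^{\nu}$ with $\nu_i<\nu_{i+1}$ it produces a negative string, so cancellations can occur; what legitimizes the support statement is again the module-theoretic $\mathfrak{sl}(2,i)$ string argument (or the Bruhat-order containment of Demazure modules), i.e.\ the same ingredient missing from your first part. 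You should also justify that $\gamma\in\Sym_{n\ell}\la$ with $\la-\gamma\in\Z^{n\ell}_+$ can be reached by a chain of admissible simple-reflection steps (or bypass the chain by quoting Demazure-module containment along Bruhat order); Lemma \ref{lem:order} by itself does not supply this. Once the first claim is proved as in the paper, the second does follow along the lines you sketch, since $\mu$ is then a weight of $D_\la\subset D_\gamma$.
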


\begin{proof}
We borrow convention from \cite{HHL08}. For each $1 \le i < n \ell$, let $s_i$ denote the operator that swaps $x_i$ with $x_{i+1}$, and $\la_i$ with $\la_{i+1}$ for $\la \in \Z^{n\ell}$, and let $T_i$ denote the Hecke operator \cite[(7)]{HHL08}. We have
\begin{equation}
(T_i - a(q,t)) x^{\la} \in \bigoplus_{j=0}^{\la_i-\la_{i+1}} \C(q,t)x^{\la - j \al_i} \hskip 3mm \la \in \Z^{n\ell}\hskip 3mm \text{such that} \hskip 3mm \la_i > \la_{i+1}\label{eqn:Tspread}
\end{equation}
by inspection. We have
\begin{equation}
E_{s_i(\la)} (x, q,t) = (T_i - a(q,t))  E_{\la} (x, q,t) \hskip 5mm \la \in \Z^{n\ell} \hskip 3mm \text{s.t.} \hskip 3mm \la_i > \la_{i+1}\label{eqn:Erec}
\end{equation}
for some $a(q,t) \in \C(q,t)$ with $a(q,0) = 1$ by \cite[(8)]{HHL08}. The $t=0$ specialization of (\ref{eqn:Erec}) yields
\begin{equation}
E_{s_i(\la)} (x, q,0) = \frac{x_i}{x_i - x_{i+1}} (1 - s_i) E_{\la} (x, q,0) \hskip 3mm \la \in \Z^{n\ell} \hskip 3mm \text{s.t.} \hskip 3mm \la_i > \la_{i+1}\label{eqn:nDem}
\end{equation}
by \cite[\S 4.1]{Ion03}. In addition, we have an operator $\Psi$ in \cite[(9)]{HHL08} that swaps non-symmetric Macdonald polynomials (\cite[(10)]{HHL08}). Note that $\Psi$ does not involve the $t$-parameter. In view of Theorem \ref{thm:San} 2), we know that (\ref{eqn:nDem}) is the numerical counterpart of the Demazure functor $\mathscr D_i$ that yields the Demazure module $D_{s_i ( \la )}$ when applied to the Demazure module $D_{\la}$. In particular, we have
$$D_{\la} \subset D_{s_i ( \la )} = \mathscr D_i ( D_{\la} ) \subset \text{integrable highest weight module}.$$
Let $\mathfrak{sl}(2,i)$ be the Lie subalgebra of $\mathfrak{gl}(n\ell)$ spanned by
$$\{E_{i,i+1},E_{i+1,i},(E_{ii}-E_{i+1,i+1})\}.$$
The output of $\mathscr D_i$ is stable under the action of $\mathfrak{sl}(2,i)$ by its definition \cite[\S 2.1]{Jos85}. In view of \cite[\S 2.3]{Jos85} and $\dim \, D_{s_i ( \la )} < \infty$, every non-zero vector of $\mathscr D_i ( D_{\la} )$ generates a $\mathfrak{sl}(2,i)$-module whose highest weight is a $\h$-weight of $D_{\la}$. In particular, we have
$$\mathrm{Hom}_\h ( \C_{\mu}, D_{s_i(\la)}) \neq 0 \hskip 5mm \mu \in \Z^{n\ell}$$
if and only if there exists $\nu \in \Z^{n\ell}$ such that $\nu_i \ge \nu_{i+1}$, $\mathrm{Hom}_\h ( \C_{\nu}, D_{\la}) \neq 0$, and
$$\mu \in \{\nu,\nu-\al_i,\ldots,\nu-(\nu_i-\nu_{i+1})\al_i\}.$$
Comparing this with (\ref{eqn:Tspread}), we conclude the assertions by induction on the recursive definition of non-symmetric Macdonald polynomials summarized as (\ref{eqn:Erec}) and \cite[(9)]{HHL08}.
\end{proof}

\begin{cor}\label{cor:wt-est}
Keep the setting of Proposition \ref{prop:wt-est}. Let $\vec{\la},\vec{\mu} \in \Par_{m,\ell}$ and $\sigma \in \Sym$ be such that $a_{-\sigma \vec{\la}^{\mathtt{tot}},-\vec{\mu}^{\mathtt{tot}}} (q,0) \neq 0$. We have $\vec{\la}^* \unrhd \vec{\mu}^*$.	
\end{cor}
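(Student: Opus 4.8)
The plan is to translate the hypothesis on Macdonald polynomial coefficients into a statement about $\h$-weights of Demazure modules, and then invoke Lemma \ref{lem:wt-est} after correctly accounting for the various sign twists and the passage between $\gp^{\sharp}$ and $\mathfrak{gl}(n)^{\oplus\ell}$. Concretely, Theorem \ref{thm:San} 5) identifies $\gch\, D_{\sigma\vec{\la}^{\mathtt{tot}}}$ with $E_{\sigma\vec{\la}^{\mathtt{tot}}}(x,q,0)$, so the hypothesis $a_{-\sigma\vec{\la}^{\mathtt{tot}}, -\vec{\mu}^{\mathtt{tot}}}(q,0)\neq 0$ says that the weight $-\vec{\mu}^{\mathtt{tot}}$ appears in $E_{-\sigma\vec{\la}^{\mathtt{tot}}}(x,q,0)$, i.e. (after the $x_i\mapsto x_i^{-1}$ flip built into the definitions of the $\mathbb D, U$ modules, cf. Corollary \ref{cor:gch}) that $\vec{\mu}^{\mathtt{tot}}$ is an $\h$-weight of a module obtained from $D_{\sigma\vec{\la}^{\mathtt{tot}}}$ by the relevant duality/automorphism twist.

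First I would use Proposition \ref{prop:wt-est}: since $a_{-\sigma\vec{\la}^{\mathtt{tot}},-\vec{\mu}^{\mathtt{tot}}}(q,0)\neq 0$, the last clause of that proposition (applied with $\gamma$ running over the symmetric-group orbit of $-\sigma\vec{\la}^{\mathtt{tot}}$) gives that $a_{\gamma,-\vec{\mu}^{\mathtt{tot}}}(q,0)\neq 0$ for every $\gamma$ in the orbit with $-\sigma\vec{\la}^{\mathtt{tot}} - \gamma \in \Z^{n\ell}_+$; in particular we may replace $-\sigma\vec{\la}^{\mathtt{tot}}$ by the antidominant element of its $\Sym_{n\ell}$-orbit, which (after the flip) is $-u_0\vec{\la}^{\mathtt{tot}}$ up to reordering, so that the relevant Demazure module becomes a $\g^{\sharp}$-stable one in the sense of Theorem \ref{thm:San} 4). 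Then $\gch$ of that module is the symmetric Macdonald-type polynomial, whose support is $\Sym_{n\ell}$-stable and dominated by the dominant representative; restricting the $\h$-weight support to $\mathfrak{gl}(n)^{\oplus\ell}$-weights, I would match it with the weights of $V_{\vec{\la}}$ (or rather $V_{\vec{\la}^*}$ after tracking the $u_0$-action and the reversal of the $\ell$ blocks encoded in $*$). Applying Lemma \ref{lem:wt-est} to the appropriate $V_{\bullet}$ then yields a dominance relation between $\ell$-partitions, which after undoing the twists becomes exactly $\vec{\la}^*\unrhd\vec{\mu}^*$.

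The bookkeeping of twists is where I expect the real work to lie. There are three separate sign/reindexing operations in play: the $x_i\mapsto x_i^{-1}$ flip and the $q\mapsto q^{-1}$ appearing in Corollary \ref{cor:gch}; the longest element $u_0\in\Sym$ which, acting on $\vec{\la}^{\mathtt{tot}}$, reverses the $n$ parts within each of the $\ell$ blocks; and the automorphism $\phi$ from Lemma \ref{lem:sigma}, whose induced permutation $\sigma$ on indices reverses the order of the $\ell$ blocks, so that $\phi^{-1}$ applied to a Demazure module labelled by $-u_0\vec{\mu}^{\mathtt{tot}}$ produces the module labelled by $\vec{\mu}^*$ (this is Proposition \ref{prop:diag}). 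The claim $\vec{\la}^*\unrhd\vec{\mu}^*$ rather than $\vec{\la}\unrhd\vec{\mu}$ is precisely the shadow of this block-reversal, and I would want to verify carefully — weight by weight — that the composite of flip, $u_0$ and $\phi$ sends the dominance statement ``$\vec{\mu}^{\mathtt{tot}}$ is in the convex hull of weights of $V_{\vec{\la}}$'' to ``$\vec{\mu}^*\unlhd\vec{\la}^*$''.

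The main obstacle will be ensuring that Lemma \ref{lem:wt-est} applies after all these twists: that lemma requires the weight in question to genuinely be an $\h$-weight of $V_{\vec{\la}^*}$ viewed as a $\mathfrak{gl}(n)^{\oplus\ell}$-module, but a priori the hypothesis only says $-\vec{\mu}^{\mathtt{tot}}$ appears in a \emph{non-symmetric} $E_{-\sigma\vec{\la}^{\mathtt{tot}}}$, which is the character of a Demazure module, not of an irreducible. The resolution is that passing to the antidominant (equivalently, $\g^{\sharp}$-stable) representative of the orbit — justified by the last clause of Proposition \ref{prop:wt-est} together with Theorem \ref{thm:San} 4) — replaces the Demazure module by a genuine $\g^{\sharp}$-module whose degree-zero part is the irreducible $V_{u_0\vec{\la}}$ (up to reindexing), so that Lemma \ref{lem:wt-est} does apply. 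I would present this reduction as the first step, then the twist-tracking, then the invocation of Lemma \ref{lem:wt-est} and Lemma \ref{lem:order} to finish.
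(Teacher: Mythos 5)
Your proposal takes a different route from the paper, and it has a genuine gap at its central step. The paper's proof is essentially two lines: the triangularity of the nonsymmetric Macdonald polynomials (\cite[Definition 2.2.1 i)]{HHL08}) says that $a_{-\sigma\vec{\la}^{\mathtt{tot}},-\vec{\mu}^{\mathtt{tot}}}(q,t)\neq 0$ forces $-\vec{\mu}^{\mathtt{tot}}$ to lie below $-\sigma\vec{\la}^{\mathtt{tot}}$ (either a strict drop of the dominant rearrangements, or equality of rearrangements together with a root-cone relation), and Lemma \ref{lem:order} then converts this into $\vec{\mu}^{*}\unlhd\vec{\la}^{*}$. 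No Demazure-module weight theory and no Lemma \ref{lem:wt-est} enter.

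The gap in your plan is the invocation of Lemma \ref{lem:wt-est}. That lemma concerns $\h$-weights of the \emph{irreducible} $\mathfrak{gl}(n)^{\oplus\ell}$-module $V_{\vec{\la}}$, and both its proof and its conclusion force $|\la^{(i)}|=|\mu^{(i)}|$ for every block. Under the hypothesis of the corollary, $x^{-\vec{\mu}^{\mathtt{tot}}}$ only appears in $E_{-\sigma\vec{\la}^{\mathtt{tot}}}(x,q,0)=\gch$ of an affine Demazure module, and such weights typically redistribute boxes among the $\ell$ blocks: already for $m=n=\ell=2$, the support of $\gch\,\Phi(D_{((\emptyset)(2))})$ meets $V_{((1)(1))}$, $V_{((2)(\emptyset))}$, $V_{((1^2)(\emptyset))}$, none of whose weights are weights of the irreducible labelled by $((\emptyset)(2))^{*}$. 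Moreover the weight need not sit in degree zero, and the degree-zero part of a $\g^{\sharp}$-stable Demazure module is an irreducible $\mathfrak{gl}(n\ell)$-module, not a $V_{\vec{\la}}$. So Lemma \ref{lem:wt-est} is simply not applicable, and any argument funnelled through it cannot produce the cases $\|\vec{\mu}^{*}\|\lneq\|\vec{\la}^{*}\|$ that the order $\unlhd$ explicitly allows. A second, related problem: your reduction to the antidominant ($\g^{\sharp}$-stable) representative via the last clause of Proposition \ref{prop:wt-est} symmetrizes the character, and in the boundary case $\|\vec{\mu}\|=\|\vec{\la}\|$ every $\Sym_{n\ell}$-rearrangement of $\vec{\la}^{\mathtt{tot}}$ occurs in that symmetric module; after this step you can no longer recover the hypothesis of Lemma \ref{lem:order} (namely $\vec{\mu}^{\mathtt{tot}}\in\la+\Z^{n\ell}_{+}$ for a specific $\la$ in the $\Sym$-orbit), so you would only get $\|\vec{\mu}\|\le\|\vec{\la}\|$ and not the full conclusion. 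The directional information you lose is exactly the nonsymmetric triangularity that the paper cites; if you keep the nonsymmetric polynomial $E_{-\sigma\vec{\la}^{\mathtt{tot}}}$ throughout and quote its triangularity, the corollary follows directly with Lemma \ref{lem:order}, and the whole detour through Demazure modules, Theorem \ref{thm:San} 4), and Lemma \ref{lem:wt-est} becomes unnecessary.
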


\begin{proof}
Taking Lemma \ref{lem:order} into account, this is a consequence of the triangularity of the non-symmetric Macdonald polynomials (\cite[Definition 2.2.1 i)]{HHL08}).
\end{proof}

\begin{lem}\label{lem:mult}
Let $\vec{\la},\vec{\mu} \in \Par_{m,\ell}$. We have
$$\bigl(\left. \gdim\, R_{\vec{\la}^{\vee}} \right|_{q \mapsto q^{\ell}} \bigr)^{-1} \cdot [\bW_{\vec{\la}}^{\flat} : V_{\vec{\mu}}]_q = [W_{\vec{\la}}^{\flat} : V_{\vec{\mu}}] _q \neq \delta_{\vec{\la},\vec{\mu}} \hskip 5mm \text{implies}\hskip 5mm \vec{\la}^{\vee} \rhd \vec{\mu}^{\vee}.$$
\end{lem}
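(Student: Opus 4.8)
The plan is to reduce the statement about $W_{\vec{\la}}^{\flat} = \mathsf{SW}(\Phi(U_{\vec{\la}}^{\theta}))$ (and its ``thickened'' companion $\bW_{\vec{\la}}^{\flat} = \mathsf{SW}(\Phi(\mathbb U_{\vec{\la}}^{\theta}))$) to a statement about graded characters of the Demazure-type modules $U_{\vec{\la}}^{\sharp}$, which is controlled by the non-symmetric Macdonald polynomial expansion via Corollary \ref{cor:gch}. First I would establish the displayed equality of multiplicities. Since $\mathsf{SW}$ is an equivalence (Theorem \ref{thm:cateq}) carrying $V_{\vec{\nu}}$ to $L_{\vec{\nu}}$ (Corollary \ref{cor:WSsimple}), and since restriction to $\mathfrak{gl}(n)^{\oplus\ell}$ commutes with $\mathrm{hom}_W$ via Schur-Weyl (Theorem \ref{thm:cSW}), the quantity $[W_{\vec{\la}}^{\flat}:V_{\vec{\mu}}]_q$ equals the graded multiplicity of $V_{\vec{\mu}}$ in $\Phi(U_{\vec{\la}}^{\theta})$. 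By Theorem \ref{thm:DU} 5), $\mathbb U_{\vec{\la}^{\vee}}$ (equivalently $\mathbb D_{\vec{\la}^{\vee *}} = \mathbb D_{(\vec{\la}^{*})^{\theta\ldots}}$; I will track the decorations carefully using Lemma \ref{lem:Pconj}) is free of finite rank over $R_{\vec{\la}^{\vee}}$ with $U_{\vec{\la}^{\vee}}$ as the ``reduced fibre'' (item 6). Since $\Phi$ rescales the grading by the factor $\ell$ on the relevant copies of $z$ (Lemma \ref{lem:piso}, equation (\ref{eqn:twist})), the $R_{\vec{\la}^{\vee}}$-freeness produces exactly the factor $\gdim R_{\vec{\la}^{\vee}}|_{q\mapsto q^{\ell}}$ relating $[\bW_{\vec{\la}}^{\flat}:V_{\vec{\mu}}]_q$ to $[W_{\vec{\la}}^{\flat}:V_{\vec{\mu}}]_q$; this gives the first equality.

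Next I would prove the triangularity. By Corollary \ref{cor:gch}, the graded character of $U_{\vec{\la}^{*}}$ (hence, after applying the involutions $\theta$ and $\Phi$ and translating the superscripts, of the module whose $V_{\vec{\mu}}$-multiplicity we want) is a nonzero combination of specialized non-symmetric Macdonald polynomials $E_{-\mu'}(x^{-1},q^{-1},\infty)$ indexed by $\mu' \in \Sym.\vec{\la}^{\mathtt{tot}}$, all with invertible coefficients in $\mathbb Z[\![q]\!]$. A monomial $x^{\vec{\mu}}$ (equivalently $x^{-\vec{\mu}^{\mathtt{tot}}}$ after dualizing) can therefore appear only if $a_{-\sigma\vec{\la}^{\mathtt{tot}},\,-\vec{\mu}^{\mathtt{tot}}}(q,0)\neq 0$ for some $\sigma\in\Sym$; no cancellation can occur because all the structure constants are invertible power series. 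Corollary \ref{cor:wt-est} then forces $\vec{\la}^{*}\unrhd\vec{\mu}^{*}$. Converting $\unrhd$ under $*$ to $\unrhd$ under $\vee$ (Lemma \ref{lem:Pconj}: $\vee = (\,\cdot\,)^{*\theta}$, and $\theta$ cyclically rotates components hence preserves $\|\cdot\|$ and the dominance comparison on $\vec{\cdot}^{\mathtt{tot}}$ after the sorting built into $\unlhd$), I get $\vec{\la}^{\vee}\unrhd\vec{\mu}^{\vee}$. Finally, when $[W_{\vec{\la}}^{\flat}:V_{\vec{\mu}}]_q\neq\delta_{\vec{\la},\vec{\mu}}$ and $\vec{\mu}\neq\vec{\la}$, the inequality is strict, so $\vec{\la}^{\vee}\rhd\vec{\mu}^{\vee}$; the remaining case $\vec{\mu}=\vec{\la}$ with $[W_{\vec{\la}}^{\flat}:V_{\vec{\la}}]_q\neq 1$ is excluded by Theorem \ref{thm:DU} 1) combined with the identification of the reduced fibre, which pins the $V_{\vec{\la}^{*}}$-multiplicity in $U_{\vec{\la}^{*}}$ to $1$.

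The main obstacle I anticipate is bookkeeping of the four involutions $*,\vee,\theta,\Phi$ and the grading twists they carry: $W_{\vec{\la}^{\vee}}^{\flat}$ is defined as $\mathsf{SW}(\Phi(U_{\vec{\la}}^{\theta}))$, so the module whose $V_{\vec{\mu}}$-multiplicity enters the statement is $U_{\vec{\la}^{\vee}}^{\theta}$ up to $\Phi$ and grading shift, and Corollary \ref{cor:gch} is phrased for $U_{\vec{\nu}^{*}}$, not $U_{\vec{\nu}}^{\theta}$; reconciling these requires the relation $(\vec{\nu})^{\vee}=(\vec{\nu}^{*})^{\theta}$ from Lemma \ref{lem:Pconj} together with Lemma \ref{lem:teff} (how $\theta$ acts on the labels $V_{\vec{\nu}}$). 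A secondary subtlety is justifying that the Macdonald-polynomial support controls the \emph{graded} multiplicity with no cancellation, i.e. that $\gdim\,\mathrm{hom}_{\mathfrak{gl}(n)^{\oplus\ell}}(V_{\vec{\mu}},-)$ is nonzero exactly when the corresponding extremal monomial appears; this follows from Lemma \ref{lem:wt-est} (the weight $x^{\vec{\mu}}$ is the lowest weight of $V_{\vec{\mu}}$) applied fibrewise together with the invertibility of the $c_\mu(q)$. Once the dictionary is set up correctly, each individual step is short.
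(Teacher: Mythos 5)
Your handling of the displayed equality is essentially the paper's: the paper obtains it by comparing Theorem \ref{thm:DU} 6) with the character identity of \cite[Corollary 5.5]{FKM23}, which is exactly the freeness-over-$R_{\vec{\la}^{\vee}}$ plus reduced-fibre mechanism you reconstruct from items 5) and 6) (note that item 6) alone only says the fibre is a finite extension of grading shifts of $U$; pinning it down to a single copy needs the diagonal multiplicity, i.e.\ item 1) or the cited character identity), and you correctly locate the $q\mapsto q^{\ell}$ substitution in the regrading coming from $\Phi$. For the implication, however, the paper does not pass through Corollaries \ref{cor:gch} and \ref{cor:wt-est} at all: it simply quotes Theorem \ref{thm:DU} 1), which already gives $[U_{\vec{\la}}:V_{\vec{\mu}^*}]_q\neq\delta_{\vec{\la},\vec{\mu}}\Rightarrow \vec{\mu}^*\lhd\vec{\la}^*$, and then transports this statement through the $\theta$-twist and $\mathsf{SW}\circ\Phi$. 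Your Macdonald-support detour (the mechanism the paper reserves for Theorem \ref{thm:id}) only yields the non-strict relation and you still have to fall back on Theorem \ref{thm:DU} 1) for the diagonal entry, so it adds work without adding strength here; also, the ``no cancellation because the $c_\mu(q)$ are invertible'' remark is unnecessary for the direction you need, since a weight occurring in $\gch\,U$ must occur in at least one $E_{-\mu}$ regardless.

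The genuine soft spot is the label bookkeeping you flag yourself. You convert the $*$-indexed dominance $\vec{\la}^{*}\unrhd\vec{\mu}^{*}$ into the $\vee$-indexed one by asserting that $\theta$ ``preserves $\|\cdot\|$ and the dominance comparison on $\vec{\cdot}^{\mathtt{tot}}$ after the sorting built into $\unlhd$''. That claim is false: the second branch of $\lhd$ compares the \emph{unsorted} sequences via $\vec{\la}^{\mathtt{tot}}-\vec{\mu}^{\mathtt{tot}}\in\Z^{n\ell}_+$, and the block rotation $\theta$ does not respect it. For instance, with $m=n=1$, $\ell=3$ one has $((1)(\emptyset)(\emptyset))\lhd((\emptyset)(\emptyset)(1))$, while applying $\theta$ to both labels gives $((\emptyset)(1)(\emptyset))$ and $((1)(\emptyset)(\emptyset))$, whose order is reversed. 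Consequently the last step of your argument is unjustified as written: you cannot prove the dominance for one pair of labels and then push it through $\theta$. The relabeling has to be arranged \emph{before} invoking the triangularity, i.e.\ one unwinds $W^{\flat}_{\vec{\la}}=\mathsf{SW}(\Phi(U^{\theta}_{\vec{\la}^{\vee}}))$, uses Lemma \ref{lem:teff} and the identity $(\vec{\mu}^{\theta^{-1}})^{*}=\vec{\mu}^{\vee}$ from Lemma \ref{lem:Pconj} to decide exactly which multiplicity $[U_{\vec{\la}^{\vee}}:V_{\bullet}]_q$ is being computed, and applies Theorem \ref{thm:DU} 1) (or Corollary \ref{cor:wt-est}) so that the resulting comparison concerns the intended pair $(\vec{\la}^{\vee},\vec{\mu}^{\vee})$. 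This is precisely the bookkeeping the paper compresses into ``applying twisting by $\theta$ and $\mathsf{SW}\circ\Phi$'', and it is the one place where your write-up asserts something untrue rather than merely leaving a routine verification to the reader.
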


\begin{proof}
The equality is the comparison of Theorem \ref{thm:DU} 6) and \cite[Corollary 5.5]{FKM23}. By Theorem \ref{thm:DU} 1), we have
$$[U_{\vec{\la}}:V_{\vec{\mu}^*}]_q \neq \delta_{\vec{\la},\vec{\mu}} \hskip 3mm \Rightarrow \hskip 3mm \vec{\la}^{*} \rhd \vec{\mu}^{*}.$$
Applying twisting by $\theta$ and $\mathsf{SW} \circ \Psi$ yields the desired implication.
\end{proof}

\begin{lem}\label{lem:ext}
We have $\mathrm{ext}^i_{A \mathchar`-\mathsf{gmod}} ( \bW_{\vec{\la}}^{\flat}, W_{\vec{\mu}}^{\star}) = \C^{\delta_{i0}\delta_{\vec{\la},\vec{\mu}^{\vee}}}$.
\end{lem}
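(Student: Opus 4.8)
The plan is to transfer the orthogonality relation of Theorem~\ref{thm:DU}~3) from $\gp^{\sharp}\mathchar`-\mathsf{gmod}_m$ to $A\mathchar`-\mathsf{gmod}$ along the composite of $\Phi\colon\gp^{\sharp}\mathchar`-\mathsf{gmod}_m\to\gp\mathchar`-\mathsf{gmod}_m$ and $\mathsf{SW}\colon\gp\mathchar`-\mathsf{gmod}_m\to A\mathchar`-\mathsf{gmod}$. First I would unwind the definitions. Substituting $\vec{\la}^{\vee}$ for $\vec{\la}$ in the definition of $\bW^{\flat}$ and using $(\vec{\la}^{\vee})^{\vee}=\vec{\la}$ (Lemma~\ref{lem:Pconj}) gives $\bW_{\vec{\la}}^{\flat}=\mathsf{SW}(\Phi(\mathbb U_{\vec{\la}^{\vee}}^{\theta}))$. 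On the other hand $W_{\vec{\mu}}=\mathsf{SW}(\Phi(D_{\vec{\mu}}))$ with $\Phi(D_{\vec{\mu}})$ finite-dimensional (Theorem~\ref{thm:San}), so Corollary~\ref{cor:WSdual}, together with the compatibility of $\Phi$ with the restricted dual and with the automorphisms $\theta,\phi$ (all given by matrix-unit formulas commuting with $\Phi$, up to the harmless regrading built into $\Phi$), identifies $W_{\vec{\mu}}^{\star}$ with $\mathsf{SW}(\Phi(D_{\vec{\mu}}^{\star}))$, where $D_{\vec{\mu}}^{\star}$ is formed in $\gp^{\sharp}\mathchar`-\mathsf{gmod}_m$.

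Next, since $\mathsf{SW}$ is an exact equivalence commuting with $\mathsf{q}$ (Theorem~\ref{thm:cateq}), for all $i$ it yields
$$\mathrm{ext}^i_{A\mathchar`-\mathsf{gmod}}(\bW_{\vec{\la}}^{\flat},W_{\vec{\mu}}^{\star})\ \cong\ \mathrm{ext}^i_{\gp\mathchar`-\mathsf{gmod}_m}\bigl(\Phi(\mathbb U_{\vec{\la}^{\vee}}^{\theta}),\,\Phi(D_{\vec{\mu}}^{\star})\bigr).$$
To cross the $\Phi$-bridge I would invoke~(\ref{eqn:ungradedExt}), which applies once both arguments have a simple head or a simple socle. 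For $D_{\vec{\mu}}^{\star}$ this holds because $D_{\vec{\mu}}$ has simple head $V_{\vec{\mu}}$ and simple socle $V_{((1^m)(\emptyset)^{\ell-1})}$ by Corollary~\ref{cor:simple}, and the restricted dual and the automorphisms $\theta,\phi$ preserve simplicity; for $\mathbb U_{\vec{\la}^{\vee}}^{\theta}$ it holds because the modules $\mathbb U_{\bullet}$ are the standard-type objects filtering the projective covers in Theorem~\ref{thm:DU}~2), hence cyclic with simple head, a property preserved by the auto-equivalence $(\,\cdot\,)^{\theta}$. Thus~(\ref{eqn:ungradedExt}) identifies the right-hand group with $\mathrm{ext}^i_{\gp^{\sharp}\mathchar`-\mathsf{gmod}_m}(\mathbb U_{\vec{\la}^{\vee}}^{\theta},D_{\vec{\mu}}^{\star})$ as ungraded vector spaces; since heads and socles are normalized to degree $0$ and $\mathsf{SW}$, $\Phi$ are graded functors, the one-dimensional $\mathrm{hom}$-space that can survive necessarily sits in degree $0$.

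Finally I would apply Theorem~\ref{thm:DU}~3) with $\vec{\la}^{\vee}$ in place of $\vec{\la}$, getting $\mathrm{ext}^i_{\gp^{\sharp}\mathchar`-\mathsf{gmod}_m}(\mathbb U_{\vec{\la}^{\vee}}^{\theta},D_{\vec{\mu}}^{\star})\cong\C^{\delta_{i0}\delta_{\vec{\la}^{\vee},\vec{\mu}}}$, and rewrite $\delta_{\vec{\la}^{\vee},\vec{\mu}}=\delta_{\vec{\la},\vec{\mu}^{\vee}}$ since $\vee$ is an involution on $\Par_{m,\ell}$ (Lemma~\ref{lem:Pconj}); chaining the displayed isomorphisms gives the claim. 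The representation-theoretic content is entirely carried by Theorem~\ref{thm:DU}~3), so I do not expect a serious obstacle there; the main care is needed in the bookkeeping — the $\vec{\la}\leftrightarrow\vec{\la}^{\vee}$ relabelling hidden in the definition of $\bW^{\flat}$, the precise meaning of $(\,\cdot\,)^{\star}$ on $A\mathchar`-\mathsf{gmod}$ and its compatibility with the $\gp^{\sharp}$-side duality, and verifying the simple head/socle hypothesis that legitimizes the $\Phi$-transport~(\ref{eqn:ungradedExt}).
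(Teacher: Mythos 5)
Your argument is exactly the paper's: the paper proves this lemma in one line by declaring it a restatement of Theorem~\ref{thm:DU}~3) in view of (\ref{eqn:ungradedExt}), with the $\mathsf{SW}$-equivalence of Theorem~\ref{thm:cateq}, Corollary~\ref{cor:WSdual}, and the relabelling $\vec{\la}\leftrightarrow\vec{\la}^{\vee}$ implicit in the definition of $\bW^{\flat}$ doing the same work you spell out. Your extra bookkeeping (simple head/socle hypothesis for (\ref{eqn:ungradedExt}), compatibility of $\Phi$ with $\vee$, $\theta$, $\phi$, and the degree-$0$ normalization) is correct and only makes explicit what the paper leaves implicit.
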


\begin{proof}
This is a restatement of Theorem \ref{thm:DU} 3) in view of (\ref{eqn:ungradedExt}).
\end{proof}

\begin{prop}\label{prop:socdeg}
For each $\vec{\la} \in \Par_{m,\ell}$, the graded $A$-module $W_{\vec{\la}}$ has a simple head $V_{\vec{\la}}$ and a simple socle $\mathsf{q}^{\mathsf{a} ( \vec{\la} )} V_{((1^m)(\emptyset)^{\ell-1})}$.
\end{prop}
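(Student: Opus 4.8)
The plan is to transport the statement through the equivalence $\mathsf{SW}$ to a statement about the $\gp$-module $\Phi(D_{\vec{\la}})$, then reduce the grading placement of the socle to the top $q$-degree $e_0$ of $D_{\vec{\la}}$, and finally identify $e_0=\sum_{k=1}^{\ell}\mathsf{u}(\la^{(k)})$ using the realization of $D_{\vec{\la}}$ as a level one affine Demazure module. First I would reduce to $\gp\mathchar`-\mathsf{gmod}_m$: by Theorem \ref{thm:cateq} the functor $\mathsf{SW}$ is an equivalence commuting with $\mathsf{q}^{\pm 1}$, hence preserves heads, socles and the degrees in which they sit, and by Corollary \ref{cor:WSsimple} it sends $V_{\vec{\mu}}$ to $L_{\vec{\mu}}$. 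So it suffices to show that $\Phi(D_{\vec{\la}})$ has simple head $V_{\vec{\la}}$ in degree $0$ and simple socle $\mathsf{q}^{\mathsf{a}(\vec{\la})}V_{((1^m)(\emptyset)^{\ell-1})}$. Corollary \ref{cor:simple} gives the head and socle of $D_{\vec{\la}}$ as a $\gp^{\sharp}$-module, and since $\Phi$ restricts to the identity on the Levi part $\gp_0^{\sharp}\cap\gp\cong\mathfrak{gl}(n)^{\oplus\ell}$, transport along $\Phi$ preserves both together with their isomorphism types as $\mathfrak{gl}(n)^{\oplus\ell}$-modules; moreover the head of $\Phi(D_{\vec{\la}})$ sits in degree $0$ by the normalization recalled in the Highest weight theory subsection. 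Only the degree of the socle remains to be pinned down.

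Next I would compute the $\gp$-grading on $\Phi(D_{\vec{\la}})$ explicitly. Matching the degree $\ell k-\lceil i/n\rceil+\lceil j/n\rceil$ of $\Phi(E_{ij}\otimes z^k)$ against the $\h$-weight shift $\varepsilon_i-\varepsilon_j$ shows that a weight vector of $\h$-weight $\mu$ in the $\gp^{\sharp}$-degree $e$ part of $D_{\vec{\la}}$ lands in the $\gp$-degree $\ell e-\sum_i\lceil i/n\rceil\mu_i+c$ part of $\Phi(D_{\vec{\la}})$ for a fixed $c$; this is well defined since $\sum_i\lceil i/n\rceil\mu_i=\sum_k k|\mu^{(k)}|$ is constant on the weights of any $V_{\vec{\mu}}$. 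Since the head comes from $\gp^{\sharp}$-degree $0$, normalizing it to degree $0$ forces $c=\sum_k k|\la^{(k)}|$. Writing $e_0$ for the top $\gp^{\sharp}$-degree of $D_{\vec{\la}}$ — equivalently, by Theorem \ref{thm:San}(5), the top power of $q$ in $E_{\vec{\la}^{\mathtt{tot}}}(x,q,0)=\gch\,D_{\vec{\la}}$ — the socle $V_{((1^m)(\emptyset)^{\ell-1})}$ of $\Phi(D_{\vec{\la}})$ then sits in degree $\ell e_0-m+\sum_k k|\la^{(k)}|$. Comparing this with $\mathsf{a}(\vec{\la})=\ell\sum_k\mathsf{u}(\la^{(k)})+\sum_k k|\la^{(k)}|-m$, the proposition is reduced to the identity $e_0=\sum_k\mathsf{u}(\la^{(k)})$.

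Finally I would evaluate $e_0$. First, $e_0$ is indeed the degree in which the socle lies: $D_{\vec{\la}}$ is generated over $\gp^{\sharp}$ by its degree $0$ part and $\gp^{\sharp}$ is concentrated in degrees $\ge 0$, so the top piece $(D_{\vec{\la}})_{e_0}$ is a $\gp^{\sharp}$-submodule (nothing of positive degree escapes it), hence contains the simple socle, which being concentrated in one degree must sit in degree $e_0$. To find $e_0$ I would use Theorem \ref{thm:San}(2): $D_{\vec{\la}}$ is a Demazure submodule of a level one integrable highest weight module $L(\Lambda)$ of $\widetilde{\mathfrak{gl}}(n\ell)$ with $\Lambda$ of finite part $(1^m0^{n\ell-m})$ (as in the proof of Corollary \ref{cor:simple}), generated by the extremal weight vector of finite $\h$-weight $\vec{\la}^{\mathtt{tot}}$; and the socle $V_{((1^m)(\emptyset)^{\ell-1})}$, being the full module over the Levi $\mathfrak{gl}(n)^{\oplus\ell}$, contains a weight vector whose $\h$-weight lies in the $\Sym_{n\ell}$-orbit of $(1^m0^{n\ell-m})$, i.e.\ a classical extremal weight vector at degree $0$ of $L(\Lambda)$. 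The standard description of extremal weights in a level one integrable module (the translation elements $t_\nu$) places the extremal vector of finite weight $\nu$ at degree $-\tfrac12(\langle\nu,\nu\rangle-\langle(1^m0^{n\ell-m}),(1^m0^{n\ell-m})\rangle)$ relative to the classical ones, where $\langle\cdot,\cdot\rangle$ is the standard invariant form (equivalently the Euclidean form, all weights in sight having coordinate sum $m$); applying this with $\langle\vec{\la}^{\mathtt{tot}},\vec{\la}^{\mathtt{tot}}\rangle=\sum_{k,j}(\la^{(k)}_j)^2=2\sum_k\mathsf{u}(\la^{(k)})+m$ and $\langle(1^m0^{n\ell-m}),(1^m0^{n\ell-m})\rangle=m$ yields $e_0=\sum_k\mathsf{u}(\la^{(k)})$, as required.

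The hard part will be this last step — pinning down the exact degree of the generating extremal vector of $D_{\vec{\la}}$ inside $L(\Lambda)$, equivalently the assertion that the top $q$-degree of $E_{\vec{\la}^{\mathtt{tot}}}(x,q,0)$ equals $\sum_k\mathsf{u}(\la^{(k)})$. If the affine-translation bookkeeping turns out awkward I would instead extract this from the combinatorial formula for nonsymmetric Macdonald polynomials (\cite{HHL08}) or from the recursion recorded in Proposition \ref{prop:wt-est}. The first two steps are formal; the only routine point there is the (standard) verification that the grading of $\Phi(D_{\vec{\la}})$ coming from Remark \ref{rem:grading} agrees with the explicit formula used above.
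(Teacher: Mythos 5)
Your reduction is sound and in fact reproduces the paper's own bookkeeping: transporting the statement through $\mathsf{SW}$ and $\Phi$, observing that the top graded piece of $D_{\vec{\la}}$ is a $\gp^{\sharp}$-submodule so that the simple socle sits in the top $z$-degree $e_0$, and the conversion formula (new degree $=\ell e-\sum_k k|\mu^{(k)}|+\sum_k k|\la^{(k)}|$, which is exactly the paper's ``$q^{\ell\mathsf{u}(\la)}$ plus $\sum_k(k-1)|\la^{(k)}|$'' shift). So the proposition is correctly reduced to $e_0=\sum_k\mathsf{u}(\la^{(k)})$. Where you diverge from the paper is in how this last identity is obtained: the paper reads off the socle degree from $D_{\vec{\la}}\subset D_{\la_-}$ together with the Chari--Loktev description of the graded character \cite{CL06} and the leading term of the Kostka--Foulkes polynomial \cite[(6.5)(ii)]{Mac95}, while you propose the energy bookkeeping for extremal weight vectors inside the ambient level one module $L(\Lambda)$.

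The problem is that your justification of $e_0=\sum_k\mathsf{u}(\la^{(k)})$ contains a non sequitur at exactly the point you yourself flag as the hard part. From ``the socle has all its $\h$-weights in the $\Sym_{n\ell}$-orbit of $(1^m0^{n\ell-m})$'' you conclude that its vectors are \emph{classical extremal weight vectors at degree $0$ of} $L(\Lambda)$; but a finite weight in that orbit occurs in $L(\Lambda)$ at infinitely many energies (the extremal one and all lower ones), and only the extremal weight space is one-dimensional, so a weight vector with such a weight need not sit at the classical degree. (In the paper's $m=n=\ell=2$ example, the constituents $V_{((1)(1))}$ and $V_{((\emptyset)(1^2))}$ of $D_{((\emptyset)(2))}$ also have only $0$-$1$ weights, yet occur below the top as well as at it.) Thus the pivotal equality is asserted rather than proved, and your fallbacks (extracting the top $q$-degree of $E_{\vec{\la}^{\mathtt{tot}}}(x,q,0)$ from \cite{HHL08} or from the recursion in Proposition \ref{prop:wt-est}) are not carried out. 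The gap is fixable along your own lines: the inequality $e_0\le\sum_k\mathsf{u}(\la^{(k)})$ is automatic, since inside $L(\Lambda)$ the $d$-eigenvalues are bounded by that of the classical part and, by the level one energy formula you quote, the generating extremal vector of finite weight $\vec{\la}^{\mathtt{tot}}$ lies exactly $\tfrac12\bigl(\langle\vec{\la}^{\mathtt{tot}},\vec{\la}^{\mathtt{tot}}\rangle-m\bigr)=\sum_k\mathsf{u}(\la^{(k)})$ below it; and $e_0\ge\sum_k\mathsf{u}(\la^{(k)})$ follows because every Demazure module $D_{\vec{\la}}$ contains the highest weight vector $v_{\Lambda}$ of $L(\Lambda)$ (the minimal Demazure module, cf. the references to \cite{Kum02} used in Theorem \ref{thm:San} and Corollary \ref{cor:simple}), which lies in the classical part. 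With that inserted, your argument closes and gives a legitimate alternative to the paper's Kostka--Foulkes computation; as written, however, the decisive step is missing.
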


\begin{proof}
The assertion on simple head and simple socle is Corollary \ref{cor:simple}. Thus, it remains to calculate the degree on which the simple socle sits. Let $\la_-$ and $\la_+$ be the largest and the smallest elements in $\bX^-_{\ell}$ with respect to $\lhd$ obtained as the permutation of $\vec{\la}^{\mathtt{tot}}$, respectively. We may identify $\la_+$ with a $\ell$-partition of shape $(((\la_+)_n,(\la_+)_{n-1},\ldots,(\la_+)_1))(\emptyset)^{\ell-1})$ by $m \le n$.

We have $D_{\vec{\la}} \subset D_{\la_-}$. In view of \cite[1.5.2]{CL06} and \cite[p.~243 (6.5)(ii)]{Mac95}, the socle $V_{((1^m)(\emptyset)^{\ell-1})}$ sits in degree $\mathsf{u} ( \la_+ )$ as $\g^{\sharp}$-modules by
$$K_{\la,(m)} ( t ) =  t^{\mathsf{n}(\la) - \mathsf{n} ( m )} + \text{(lower terms)} = t^{\mathsf{u}(\la') - \mathsf{u} ( 1^m )} + \text{(lower terms)}$$
 for a partition $\mu$ and $\mathsf{u} (1^m)=0$. Here we have twisted the grading in accordance with the isomorphism $\Phi$, that raises the $z$-grading by $\ell$, together with the grading shifts that assigns $E_{ij} \in \gp^{\sharp}$ ($1 \le i, j \le n \ell$) with
 $$\lceil\frac{j}{n}\rceil - \lceil\frac{i}{n}\rceil \neq 0.$$
In view of the commutation relation of bases of $\gp^{\sharp}$ (see also Remark \ref{rem:grading}), we find that these two contributions are independent and the total grading shift is well-defined (cf. Remark \ref{rem:multigr}). The $z$-grading twist contributes by $q^{\ell\mathsf{u}(\la)}$. Since the grading twists needed to send $\vec{\la}$ to $\la_+$ is given as
$$\sum_{k=1}^{\ell} ( k-1 ) |\la^{(k)}|,$$
we conclude that the socle $V_{((m)(\emptyset)^{\ell-1})}$ must be in degree $\mathsf{a} ( \vec{\la} )$ as required.
\end{proof}

\begin{cor}\label{cor:min}
For each $\vec{\la} \in \Par_{m,\ell}$, the graded module $\mathsf{q}^{\mathsf{a} ( \vec{\la})} ( W_{\vec{\la}} )^{\vee}$ is a quotient of the graded $A$-module $\C [X]$.	In addition, we have
\begin{equation}
\dim \, \Hom_{W} ( L_{\la}^{\vee}, \C [X]_{i} ) = \delta_{i,\mathsf{a} ( \vec{\la} )} \hskip 5mm \text{when} \hskip 5mm i \le \mathsf{a} ( \vec{\la}).\label{eqn:minimal}
\end{equation}
\end{cor}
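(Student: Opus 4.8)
The plan is to deduce both claims of Corollary~\ref{cor:min} from the structure of $W_{\vec\la}$ established in Proposition~\ref{prop:socdeg}, together with the basic properties of the equivalence $\mathsf{SW}$ and the dualities developed in \S1. First I would observe that $W_{\vec\la}=\mathsf{SW}(\Phi(D_{\vec\la}))$, and by Corollary~\ref{cor:simple} the module $D_{\vec\la}$ has simple socle $V_{((1^m)(\emptyset)^{\ell-1})}$; by Corollary~\ref{cor:WSsimple} this transports to $\mathsf{soc}\,W_{\vec\la}\cong \mathsf{q}^{\mathsf a(\vec\la)}L_{((1^m)(\emptyset)^{\ell-1})}$ by Proposition~\ref{prop:socdeg}. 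Applying the restricted dual and using Lemma~\ref{lem:Asimple} together with Corollary~\ref{cor:WSdual}, the module $(W_{\vec\la})^{\vee}$ has simple head $\mathsf{q}^{-\mathsf a(\vec\la)}L_{((1^m)(\emptyset)^{\ell-1})^{\vee}}$, i.e. the graded shift $\mathsf{q}^{\mathsf a(\vec\la)}(W_{\vec\la})^{\vee}$ has simple head $L_{((1^m)(\emptyset)^{\ell-1})^{\vee}}$ concentrated in degree $0$.

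Next I would identify $\C[X]$ as the graded $A$-module whose degree-zero part is the trivial $W$-module (or rather $L_{((1^m)(\emptyset)^{\ell-1})^{\vee}}$ after the sign twist built into the labelling convention of \S\ref{subsec:algA}); more precisely, $\C[X] = A\otimes_{\C W}L$ where $L$ is the one-dimensional $W$-module on which $\Sym_m$ acts trivially and $(\Z/\ell\Z)^m$ acts trivially, which is the projective cover in $A\mathchar`-\mathsf{gmod}$ of that simple — hence any graded $A$-module generated in degree zero by a copy of that simple is a quotient of $\C[X]$. Since $\mathsf{q}^{\mathsf a(\vec\la)}(W_{\vec\la})^{\vee}$ is finite-dimensional with simple head $L_{((1^m)(\emptyset)^{\ell-1})^{\vee}}$ in degree $0$, it is generated by its degree-zero part as an $A$-module, and therefore is a graded quotient of $\C[X]$. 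One subtlety here is matching the one-dimensional $W$-module appearing as socle of $D_{\vec\la}$ (labelled $((1^m)(\emptyset)^{\ell-1})$ under our conventions) with the generator of $\C[X]$ under the conjugation/sign twist discussed after~(\ref{eqn:preind}); I would check this by directly computing that $V_{((1^m)(\emptyset)^{\ell-1})}$ is the $\mathfrak{gl}(n)^{\oplus\ell}$-module attached to the lowest weight $(1^m 0^{n\ell-m})$, which corresponds to $L_{(1^m)}\boxtimes(\emptyset)^{\ell-1}$, and trace through $\mathsf{SW}$ using Corollary~\ref{cor:WSsimple}.

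Finally, for~(\ref{eqn:minimal}): since $\mathsf{q}^{\mathsf a(\vec\la)}(W_{\vec\la})^{\vee}$ is a graded quotient of $\C[X]$, every graded component of $\C[X]$ surjects onto the corresponding component of $\mathsf{q}^{\mathsf a(\vec\la)}(W_{\vec\la})^{\vee}$. Now $\mathsf{q}^{\mathsf a(\vec\la)}(W_{\vec\la})^{\vee}$ is the dual of $W_{\vec\la}$, so its simple \emph{socle} is the dual of $\mathsf{hd}\,W_{\vec\la}=V_{\vec\la}$, sitting in degree $\mathsf a(\vec\la)$ (again by Proposition~\ref{prop:socdeg} and the duality flipping degrees), and below degree $\mathsf a(\vec\la)$ the only simple subquotient isomorphic to the dual of $V_{\vec\la}$ — actually the cleaner route is: the socle of $(W_{\vec\la})^{\vee}$ equals the dual of the head of $W_{\vec\la}$, and $\C[X]$ itself has $\dim\Hom_W(L^{\vee}_{\la},\C[X]_i)=\delta_{i,0}$ for small $i$ only when $\la$ is the relevant partition; but what we actually want is that $L^\vee_{\vec\la}$ does not occur in $\C[X]_i$ for $i<\mathsf a(\vec\la)$ — no, rather it does not occur in $(W_{\vec\la})^{\vee}$. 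Let me instead argue: $\mathsf{q}^{\mathsf a(\vec\la)}(W_{\vec\la})^{\vee}$ has a filtration whose subquotients are (shifts of) simples $L_{\vec\nu}$, and by Lemma~\ref{lem:wt-est} / the triangularity built into $D_{\vec\la}$ (Corollary~\ref{cor:simple} and Theorem~\ref{thm:San}5)) together with Proposition~\ref{prop:socdeg}, the simple $V_{((1^m)(\emptyset)^{\ell-1})}$ occurs in $D_{\vec\la}$ only at the top degree $\mathsf a(\vec\la)$ (being the socle), hence its dual $L^\vee_{\la}=L_{((1^m)(\emptyset)^{\ell-1})^\vee}$ occurs in $(W_{\vec\la})^{\vee}$ exactly once, in degree $-\mathsf a(\vec\la)$, i.e. in degree $0$ after the shift; since every $\C[X]_i$ surjects onto $\mathsf{q}^{\mathsf a(\vec\la)}(W_{\vec\la})^{\vee}_i$ and $\C[X]_0$ is already one-dimensional spanned by that simple, we get $\dim\Hom_W(L^\vee_\la,\C[X]_i)=\delta_{i,\mathsf a(\vec\la)}$ for $i\le\mathsf a(\vec\la)$. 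The main obstacle I anticipate is the bookkeeping of conventions — the sign twist on $\mathsf{Irr}\,W$, the grading re-indexing induced by $\Phi$ (which multiplies the $z$-degree by $\ell$ and adds the correction $\sum_k(k-1)|\la^{(k)}|$, exactly as computed in Proposition~\ref{prop:socdeg}), and the restricted-dual degree flip — rather than any conceptual difficulty; the representation-theoretic content is entirely contained in Corollary~\ref{cor:simple} and Proposition~\ref{prop:socdeg}.
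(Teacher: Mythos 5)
Your treatment of the first assertion is essentially the paper's own argument and is fine: by Proposition \ref{prop:socdeg} the module $W_{\vec{\la}}$ has simple socle $\mathsf{q}^{\mathsf{a}(\vec{\la})}L_{((1^m)(\emptyset)^{\ell-1})}$, so the shifted restricted dual $\mathsf{q}^{\mathsf{a}(\vec{\la})}(W_{\vec{\la}})^{\vee}$ is generated in degree zero by a one-dimensional $W$-module and is therefore a quotient of $\C[X]$ (the projective cover of that simple in $A\mathchar`-\mathsf{gmod}$), with socle $\mathsf{q}^{\mathsf{a}(\vec{\la})}L_{\vec{\la}^{\vee}}$.

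The second assertion (\ref{eqn:minimal}) is where your proposal has a genuine gap. That equation is a statement about the graded $W$-module $\C[X]$ itself: $L_{\vec{\la}}^{\vee}\cong L_{\vec{\la}^{\vee}}$ does not occur in $\C[X]_i$ for $i<\mathsf{a}(\vec{\la})$ and occurs exactly once in degree $\mathsf{a}(\vec{\la})$. You attempt to deduce it from the surjection $\C[X]\twoheadrightarrow \mathsf{q}^{\mathsf{a}(\vec{\la})}(W_{\vec{\la}})^{\vee}$, but a surjection only gives the inequality in the wrong direction: multiplicities in a quotient are lower bounds for multiplicities in $\C[X]_i$, so knowing that a simple occurs once (or not at all) in the quotient can never yield the required vanishing in $\C[X]_i$ for $i<\mathsf{a}(\vec{\la})$, nor the multiplicity-one statement at $i=\mathsf{a}(\vec{\la})$. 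You also conflate two different simples: the module in (\ref{eqn:minimal}) is $L_{\vec{\la}}^{\vee}$, dual to the head $V_{\vec{\la}}$ of $W_{\vec{\la}}$, not the one-dimensional module $L_{((1^m)(\emptyset)^{\ell-1})}$ generating $\C[X]$; your identification ``$L^{\vee}_{\la}=L_{((1^m)(\emptyset)^{\ell-1})^{\vee}}$'' is false for general $\vec{\la}$, and the observation that $\C[X]_0$ is one-dimensional is irrelevant to the claim. The missing ingredient is an independent input on the polynomial ring as a graded $W$-module: the paper invokes the minimal-degree results of \cite{ATY}, whose Theorem 1 gives that the module (\ref{eqn:preind}) first occurs in $\C[X]$ in degree $\sum_{k}\mathsf{u}(\la^{(k)})$, and whose Theorem 2 3) upgrades this to the statement that $\mathsf{a}(\vec{\la})$ is the minimal degree in which $L_{\vec{\la}^{\vee}}$ occurs, with multiplicity one there. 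Some such fake-degree/coinvariant input about $\C[X]$ is unavoidable --- (\ref{eqn:minimal}) cannot be extracted from the structure of $W_{\vec{\la}}$ alone, and it is precisely this half of the corollary that powers the multiplicity count in Theorem \ref{thm:id}.
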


\begin{proof}
Thanks to Proposition \ref{prop:socdeg} and our convention, $\mathsf{q}^{\mathsf{a} ( \vec{\la})} ( W_{\vec{\la}} ) ^{\vee}$ is a graded $A$-module generated by the trivial representation of $W$ sitting at degree zero. Thus, it is a quotient of $\C[X]$. We have $\mathsf{soc}\,\mathsf{q}^{\mathsf{a} ( \vec{\la})} ( W_{\vec{\la}})^{\vee} \cong \mathsf{q}^{\mathsf{a}( \vec{\la})} L_{\vec{\la}^{\vee}}$.

Note that the minimal degree in which $\Sym$-module (\ref{eqn:preind}) appears in $\C [X]$ is $\sum_{k=1}^{\ell} \mathsf{u} ( \la^{(k)})$ (\cite[Theorem 1]{ATY}). By \cite[Theorem 2 3)]{ATY}, we conclude that $\mathsf{a} ( \vec{\la} )$ is the minimal degree on which $\C [X]$ carries $L_{\vec{\la}^{\vee}}$ uniquely under the convention (\ref{eqn:G-action}). Thus, we conclude the second assertion.
\end{proof}

\begin{thm}\label{thm:id}
We have
\begin{equation}
[W_{\vec{\mu}} : V_{\vec{\la}}]_q = \delta_{\vec{\la},\vec{\mu}}  \hskip 3mm \text{and} \hskip 3mm [W_{\vec{\mu}^{\vee}}^{\flat} : V_{\vec{\la}^{\vee}}]_q = \delta_{\vec{\la}^{\vee},\vec{\mu}^{\vee}}\label{eqn:Kpm}
\end{equation}
when $\mathsf{a} (\vec{\la}) \ge \mathsf{a} (\vec{\mu})$. It follows that the multiplicities in $(\ref{eqn:Kpm})$, without the condition on the $\mathsf{a}$-values, represent Kostka polynomials $K^-$ and $K^+$ attached to the limit symbols $(\rm{\cite{Sho04}})$ up to the componentwise conjugation on the labels.
\end{thm}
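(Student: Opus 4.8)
The statement has two layers: the unitriangularity identities $(\ref{eqn:Kpm})$, and the identification of the resulting matrices with Shoji's $K^{\pm}$. For the second layer the plan is to feed $\{W_{\vec\la}\}$ and $\{W^{\flat}_{\vec\la}\}$ into the homological Lusztig--Shoji algorithm: granting $(\ref{eqn:Kpm})$ for all $\vec\la,\vec\mu$ (dropping the $\mathsf a$-condition), the matrices $[W_{\vec\mu}:V_{\vec\la}]_q$ and $[W^{\flat}_{\vec\mu}:V_{\vec\la}]_q$ (the latter being the $\vee$-reindexing of the second identity) obey exactly the normalizations imposed on $K^{-}$ and $K^{+}$ before Theorem $\ref{thm:LSalg}$ and record the graded characters of $\{W_{\vec\la}\}$, $\{W^{\flat}_{\vec\la}\}$ through $\mathsf{SW}$ (Corollary $\ref{cor:WSsimple}$), so Theorem $\ref{thm:homLSalg}$ will give $(\ref{eqn:LSalg})$ once $\mathrm{ext}^{\bullet}_{A\mathchar`-\mathsf{gmod}}(W^{\flat}_{\vec\la},(W_{\vec\mu^{\vee}})^{\vee})=0$ $(\vec\la\neq\vec\mu)$ is checked; uniqueness in Theorem $\ref{thm:LSalg}$ then forces $K^{\pm}$ to be the Kostka polynomials of the limit symbols, the componentwise conjugation being Shoji's sign-twist relabelling $\vec\la\mapsto\vec\la'$ (cf. the discussion after $(\ref{eqn:preind})$ and after $(\ref{eqn:alim})$).

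For the first identity I would argue with degrees. By Proposition $\ref{prop:socdeg}$, $W_{\vec\mu}$ has head $V_{\vec\mu}$ in degree $0$ and simple socle $\mathsf q^{\mathsf a(\vec\mu)}V_{((1^m)(\emptyset)^{\ell-1})}$, so (the top-degree part of a graded module lying in its socle) $W_{\vec\mu}$ is concentrated in degrees $0,\dots,\mathsf a(\vec\mu)$, and by Corollary $\ref{cor:min}$ the shift $\mathsf q^{\mathsf a(\vec\mu)}(W_{\vec\mu})^{\vee}$---which then lives in the same range, with simple socle $V_{\vec\mu^{\vee}}$ in its top degree $\mathsf a(\vec\mu)$---is a graded quotient of $\C[X]$. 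Now $V_{\vec\la}$ occurs in $W_{\vec\mu}$ if and only if $L_{\vec\la^{\vee}}$ occurs in $\mathsf q^{\mathsf a(\vec\mu)}(W_{\vec\mu})^{\vee}$, hence (this being a subquotient of $\C[X]$) only in degrees $\geq\mathsf a(\vec\la)$ and at most once in degree $\mathsf a(\vec\la)$, by the second part of Corollary $\ref{cor:min}$. If $\mathsf a(\vec\la)>\mathsf a(\vec\mu)$ there is no room and the multiplicity is $0$; if $\mathsf a(\vec\la)=\mathsf a(\vec\mu)$ the only room is the top degree, which is annihilated by $A_{>0}$ hence contained in the simple socle $V_{\vec\mu^{\vee}}$, forcing $\vec\la=\vec\mu$. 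The same reasoning with $\vec\la=\vec\mu$ shows the occurrence is then unique (at degree $0$), giving $[W_{\vec\mu}:V_{\vec\la}]_q=\delta_{\vec\la,\vec\mu}$ whenever $\mathsf a(\vec\la)\geq\mathsf a(\vec\mu)$.

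The second identity is the crux. Lemma $\ref{lem:mult}$ (via Corollary $\ref{cor:gch}$, Corollary $\ref{cor:wt-est}$ and Lemma $\ref{lem:order}$) yields a triangularity of the $W^{\flat}$'s, but only with respect to the order $\lhd$: $[W^{\flat}_{\vec\mu^{\vee}}:V_{\vec\la^{\vee}}]_q\neq\delta_{\vec\la,\vec\mu}$ forces $\vec\mu\rhd\vec\la$. One must upgrade this to a triangularity for the order defined by the $\mathsf a$-function, which is a \emph{genuinely different} partial order---this is the ``finer order'' referred to in the introduction, and it cannot be produced from the combinatorics of $\mathsf a$ alone. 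The plan is to exploit the module structure: transporting the two filtrations of Theorem $\ref{thm:DU}$ 2) through the $\theta$-twist and through $\mathsf{SW}\circ\Phi$ shows every indecomposable projective of $A\mathchar`-\mathsf{gmod}$ is filtered by graded shifts of the $\bW^{\flat}_{\vec\mu}$ and, separately, by graded shifts of the $\mathsf{SW}(\Phi(\mathbb D_{\vec\mu}))$; combining these two filtrations, the Ext-orthogonality of Lemma $\ref{lem:ext}$ (as a BGG-reciprocity computation of $\Omega$), and the first identity already proved pins down $[W^{\flat}_{\vec\mu}:V_{\vec\la}]_q$ on the range $\mathsf a(\vec\la^{\vee})\geq\mathsf a(\vec\mu^{\vee})$ and yields $\delta_{\vec\la,\vec\mu}$ there, which is the assertion. (Alternatively, one can try a structurally parallel version of the degree argument of the previous paragraph, with $\C[X]$ replaced by the cyclic $A$-module generated by the socle of $(W^{\flat}_{\vec\la})^{\vee}$ and with the matching minimal-degree statement, which again rests on the coinvariant-type input of \S1.)

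It remains to verify the Ext-hypothesis of Theorem $\ref{thm:homLSalg}$. Rewriting Lemma $\ref{lem:ext}$ via $W_{\vec\mu}^{\star}\cong W_{\vec\mu}^{\vee}$ (Corollary $\ref{cor:WSdual}$) gives $\mathrm{ext}^{\bullet}_{A\mathchar`-\mathsf{gmod}}(\bW^{\flat}_{\vec\la},(W_{\vec\mu^{\vee}})^{\vee})=0$ for $\vec\la\neq\vec\mu$; to descend this from $\bW^{\flat}$ to $W^{\flat}$ one uses that $\mathrm{ext}^{\bullet}_{A}(\bW^{\flat}_{\vec\la},-)$ is $R_{\vec\la^{\vee}}$-linear and already vanishes for $\vec\la\neq\vec\mu$, so base change along $R_{\vec\la^{\vee}}\to\C$ together with Theorem $\ref{thm:DU}$ 4)--6) (which presents $W^{\flat}_{\vec\la}$ as a subquotient of $\C\otimes_{R_{\vec\la^{\vee}}}\bW^{\flat}_{\vec\la}$) transfers the vanishing. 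Then Theorem $\ref{thm:homLSalg}$ gives $(\ref{eqn:LSalg})$ for the $K^{\pm}$ built from $\{W_{\vec\la}\}$ and $\{W^{\flat}_{\vec\la}\}$, and Theorem $\ref{thm:LSalg}$ identifies them with Shoji's Kostka polynomials attached to the limit symbols, up to the componentwise conjugation of labels. I expect the main obstacle to be the second identity---precisely, showing that the $\mathsf a$-order and the order $\lhd$ agree on the relevant range---with the descent from $\bW^{\flat}$ to $W^{\flat}$ in the Ext-vanishing a secondary technical point.
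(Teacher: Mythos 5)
Your treatment of the first identity is essentially the paper's: Proposition \ref{prop:socdeg} bounds the grading of $W_{\vec{\mu}}$ by $\mathsf{a}(\vec{\mu})$, Corollary \ref{cor:min} realizes $\mathsf{q}^{\mathsf{a}(\vec{\mu})}(W_{\vec{\mu}})^{\vee}$ as a quotient of $\C[X]$ and gives the minimal-degree uniqueness (\ref{eqn:minimal}), and the socle pins down the top-degree occurrence; that part is fine, as is your closing step (feeding (\ref{eqn:Kpm}) and the Ext-orthogonality into Theorem \ref{thm:homLSalg} and invoking uniqueness in Theorem \ref{thm:LSalg}, with the Koszul/base-change descent from $\bW^{\flat}_{\vec{\la}}$, free over $R_{\vec{\la}^{\vee}}$, to $W^{\flat}_{\vec{\la}}$).

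The gap is exactly where you predicted it: the second identity. Your proposed mechanism --- filter the projectives of $A\mathchar`-\mathsf{gmod}$ both by the $\bW^{\flat}$'s and by the $\mathsf{SW}(\Phi(\mathbb D))$'s, compute $\Omega$ by BGG reciprocity via Lemma \ref{lem:ext}, and combine with the first identity --- only produces the factorization $(K^+)^{\sigma}\Lambda\,{}^{\mathtt t}K^-=\Omega$ with the module-theoretic matrices. That factorization together with the $\mathsf{a}$-triangularity of $K^-$ does \emph{not} force the $\mathsf{a}$-triangularity of the module-theoretic $K^+$: the unknowns $K^+$ and $\Lambda$ are only determined jointly by \emph{all} the normalization constraints, so the triangularity of $[W^{\flat}_{\vec{\mu}^{\vee}}:V_{\vec{\la}^{\vee}}]_q$ is genuinely extra information, not a formal consequence. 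Your parenthetical alternative (a degree argument with $\C[X]$ replaced by the cyclic module on the socle of $(W^{\flat}_{\vec{\la}})^{\vee}$) would need a simple-socle/minimal-degree statement for $U_{\vec{\la}}$ analogous to Corollary \ref{cor:simple} and Corollary \ref{cor:min}, which is not available. What the paper actually does is a character-support comparison: by Corollary \ref{cor:gch} the graded character of $U_{\vec{\la}^*}$ is a sum of specialized nonsymmetric Macdonald polynomials over the orbit $\Sym.\vec{\la}^{\mathtt{tot}}$, and Proposition \ref{prop:wt-est} (monomials surviving at generic $t$, hence at $t=\infty$, already survive at $t=0$) together with Corollary \ref{cor:wt-est} and Lemma \ref{lem:order} gives $[U_{\vec{\mu}}:V_{\vec{\la}^*}]_q\neq 0\Rightarrow[D_{\vec{\mu}^*}:V_{\vec{\la}^*}]_q\neq 0$; thus every constituent of $U_{\vec{\mu}}$ is subject to the constraint already proved for $D_{\vec{\mu}^*}$, and the second identity follows from the first by a $\theta$-twist. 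This weight-support containment is the ``finer order'' intertwining $\lhd$ and the $\mathsf{a}$-order; without it (or a substitute), your argument for the second identity does not close.
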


\begin{proof}
The first multiplicity count follows from Corollary \ref{cor:min}. In view of Corollary \ref{cor:gch} and Corollary \ref{cor:wt-est}, we find
$$[U_{\vec{\mu}} : V_{\vec{\la}^*}]_q \neq 0 \Rightarrow [D_{\vec{\mu}^{*}} : V_{\vec{\la}^{*}}]_q \neq 0.$$
Thus, we have $[U_{\vec{\mu}} : V_{\vec{\la}^*}]_q = \delta_{\vec{\mu}^*, \vec{\la}^*}$ when $\mathsf{a} ( \vec{\mu}^* ) \ge \mathsf{a} ( \vec{\la}^* )$ by the first multiplicity count. Twisting by $\theta$, the second multiplicity estimate follows. Since the output of the Lusztig-Shoji algorithm for symbols only depends on the values of $\mathsf{a}$-functions, we conclude the third assertion from (\ref{eqn:Kpm}), Lemma \ref{lem:ext}, and Theorem \ref{thm:homLSalg} by \cite[\S 1.5, \S 1.6]{Sho02} and \cite[\S 3.1]{Sho04}.
\end{proof}

\begin{rem}\label{rem:inc}
When $\ell = 2$, we have $W_{\vec{\la}} = W_{\vec{\la}}^{\flat}$ for each $\vec{\la} \in \Par_{m,2}$ by \cite[Theorem 5.6]{Kat17}. It yields a non-trivial isomorphism $D_{\vec{\la}} \cong U^{\theta}_{\vec{\la}}$ for each $\vec{\la} \in \Par_{m,2}$ (see \S \ref{subsec:222}).
\end{rem}

The following result is the positivity of Kostka polynomials conjectured in \cite[Conjecture 5.5]{Sho02} that is verified for $K^-$ in the case of the limit symbols \cite{FI18,Hu18,Sho18}. In particular, it is new for $K^+$ and gives a new proof for $K^-$ based on \cite[Theorem 2.10]{Kat15} and Theorem \ref{thm:cateq}.

\begin{cor}\label{cor:Kpos}
For each $\vec{\la},\vec{\mu} \in \Par_{m,\ell}$, we have
$$[W_{\vec{\mu}} : V_{\vec{\la}}]_q,\hskip 3mm [W_{\vec{\mu}}^{\flat} : V_{\vec{\la}}]_q \in \Z_{\ge 0}[q].$$
\end{cor}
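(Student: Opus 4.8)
The plan is to deduce the positivity from Theorem \ref{thm:id} together with the elementary fact that the graded isotypic multiplicities of a \emph{finite-dimensional} graded $A$-module concentrated in \emph{non-negative} degrees automatically lie in $\Z_{\ge 0}[q]$: for such an $N$ one has $[N:L_{\vec{\la}}]_q = \sum_{i \ge 0} q^i \dim \mathrm{Hom}_W ( L_{\vec{\la}}, N_i )$, a finite sum of monomials with non-negative integer coefficients. So the only thing to verify is that $W_{\vec{\mu}}$ and $W_{\vec{\mu}}^{\flat}$ enjoy these two properties; after that, Theorem \ref{thm:id} identifies $[W_{\vec{\mu}}:V_{\vec{\la}}]_q$ and $[W_{\vec{\mu}}^{\flat}:V_{\vec{\la}}]_q$ with the Kostka polynomials $K^{-}$ and $K^{+}$ attached to the limit symbols (up to componentwise conjugation of labels), which settles the positivity conjecture of Shoji \cite{Sho02,Sho04}.

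First I would check finite-dimensionality. By definition $W_{\vec{\mu}} = \mathsf{SW}(\Phi(D_{\vec{\mu}}))$ and $W_{\vec{\mu}}^{\flat} = \mathsf{SW}(\Phi(U_{\vec{\mu}^{\vee}}^{\theta}))$. The module $D_{\vec{\mu}}$ is finite-dimensional by Theorem \ref{thm:San}, and $U_{\vec{\nu}}$ is finite-dimensional for every $\vec{\nu}$: by Theorem \ref{thm:DU} 5) and 6), specializing the $R_{\vec{\nu}}$-free finite-rank module $\mathbb{U}_{\vec{\nu}}$ along the augmentation $R_{\vec{\nu}} \twoheadrightarrow \C$ yields a finite-dimensional module that is a finite extension of graded shifts of $U_{\vec{\nu}}$, so $U_{\vec{\nu}}$ itself is finite-dimensional. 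Neither the $\theta$-twist nor the grading-twist $\Phi$ alters the underlying vector space, so $\Phi(D_{\vec{\mu}})$ and $\Phi(U_{\vec{\mu}^{\vee}}^{\theta})$ are finite-dimensional objects of $\gp\mathchar`-\mathsf{gmod}_m$. Since $\mathsf{SW}$ of Theorem \ref{thm:cateq} is an equivalence carrying every simple $V_{\vec{\la}}$ to the simple $L_{\vec{\la}}$ (Corollary \ref{cor:WSsimple}), and every simple object on either side is finite-dimensional, $\mathsf{SW}$ preserves finite length, hence finite-dimensionality; thus $W_{\vec{\mu}}, W_{\vec{\mu}}^{\flat} \in A\mathchar`-\mathsf{gmod}$ are finite-dimensional.

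Next I would check that these modules live in non-negative degrees. Both $\Phi(D_{\vec{\mu}})$ and $\Phi(U_{\vec{\mu}^{\vee}}^{\theta})$ have a simple head placed in degree $0$: for $D_{\vec{\mu}}$ this is Corollary \ref{cor:simple} together with the discussion preceding (\ref{eqn:ungradedExt}); for $U_{\vec{\mu}^{\vee}}^{\theta}$ it follows from Theorem \ref{thm:DU} 1) (which forces the top of $U_{\vec{\nu}}$ to be $V_{\vec{\nu}^{*}}$ in degree $0$), Lemma \ref{lem:teff}, and the same discussion. Consequently $W_{\vec{\mu}}$, respectively $W_{\vec{\mu}}^{\flat}$, is a graded quotient of the projective cover in $A\mathchar`-\mathsf{gmod}$ of its (degree-$0$) simple head; as such a projective cover is isomorphic to $\C[X]\otimes L$ as a graded vector space with $\deg X_i = 1$ and $L$ concentrated in degree $0$, it sits in non-negative degrees, and so does its quotient. (For $W_{\vec{\mu}}$ this is in any case contained in Proposition \ref{prop:socdeg}, which moreover pins the top degree to $\mathsf{a}(\vec{\mu})$.) Feeding this into the reduction of the first paragraph completes the proof.

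I do not anticipate a genuine difficulty in the corollary itself: all the substance is already absorbed into Theorem \ref{thm:id}, i.e.\ into running the homological Lusztig--Shoji algorithm of \cite{Kat15} against $W_{\vec{\mu}}$ and $W_{\vec{\mu}}^{\flat}$, and into the structural facts about $D_{\vec{\mu}}, U_{\vec{\mu}}, \mathbb{U}_{\vec{\mu}}$ imported from \cite{FKM23,FKMO}. The one point deserving a little attention is to ensure the grading of these modules is bounded below by $0$, not merely bounded below, so that the multiplicity generating functions are polynomials (rather than Laurent polynomials) in $q$; the simple-head-in-degree-$0$ property is precisely what supplies this.
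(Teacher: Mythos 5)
Your proposal is correct and follows essentially the same route as the paper, whose proof is the one-line observation that the claim is immediate from Theorem \ref{thm:id} because $W_{\vec{\mu}}$ and $W_{\vec{\mu}}^{\flat}$ are finite-dimensional graded $A$-modules by Theorem \ref{thm:DU}. Your additional verifications (finite-dimensionality of $U_{\vec{\nu}}$ via Theorem \ref{thm:DU} 5)--6), preservation of finite-dimensionality under $\mathsf{SW}$, and the non-negativity of the grading coming from the degree-zero head) simply make explicit what the paper leaves implicit.
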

\begin{proof}
Immediate from Theorem \ref{thm:id} since $W_{\vec{\mu}}$ and $W_{\vec{\mu}}^{\flat}$ are finite-dimensional graded $A$-modules by Theorem \ref{thm:DU}.
\end{proof}

\begin{rem}\label{rem:multigr}
In \cite{Sho18}, the multivariable version of Kostka polynomials introduced in \cite{FI18} is used effectively. We can see these variables since the modules $\Phi (D_{\vec{\la}})$ and $\Phi (U_{\vec{\la}}^{\theta})$ ($\vec{\la} \in \Par_{m,\ell}$) are in fact multigraded by Remark \ref{rem:grading} and the fact that the defining equations of $D^{J}_{\vec{\la}}$ and $U_{\vec{\la}}^J$ in \cite[\S 4.1]{FKM23} can be understood to be homogeneous with respect to $\mathtt{deg}$. It follows that $W_{\vec{\la}}$ and $W_{\vec{\la}}^{\flat}$ admit gradings induced from $\mathtt{deg}$ as vector spaces. A grading $\mathtt{deg}$ on $A$ that is compatible with $\mathtt{deg}$ on $W_{\vec{\la}}$ and $W_{\vec{\la}}^{\flat}$ cannot have a single grading on
$$\C [X]_1 \setminus \{0\} = \Bigl( \bigoplus_{j=1}^m \C X_j \Bigr) \setminus \{0\}\subset A$$
since the multiplication by $\C [X]_1$, that is irreducible as a $W$-module, does not preserve the $\mathtt{deg}$-grading already when $(m,\ell) = (2,2)$.\\
Instead, we must equip the $\mathtt{deg}$-grading on $A$ by using the idempotents
$$1 = \sum_{\vec{\la}\in \Par_{m,\ell}} 1_{\vec{\la}} \in \C [W] \hskip 5mm \textrm{such that} \hskip 5mm \mathrm{End} ( L_{\vec{\la}} )^{\oplus \delta_{\vec{\la},\vec{\mu}}} \cong 1_{\vec{\la}}\C [W]1_{\vec{\mu}}$$
for each $\vec{\la}, \vec{\mu} \in \Par_{m,\ell}$ and set
$$\mathtt{deg} \, ( 1_{\vec{\la}}\C [X]_1 1_{\vec{\mu}} \setminus \{0\}) := \mathtt{e}_i \hskip 10mm \text{if} \hskip 10mm i=j-1<j \text{ or } i=\ell,j=1$$
when we have have $\la^{(k)} = \mu^{(k)}$ for $k \neq i, j$, $\la^{(i)}$ is obtained by adding one box in $\mu^{(i)}$, and $\la^{(j)}$ is obtained by removing one box in $\mu^{(j)}$. This answers a question in \cite[Remark 2.16]{Sho18}.
\end{rem}

\begin{cor}[Shoji's conjecture \cite{Sho04} \S 3.13]\label{cor:Shoji}
For each $\vec{\la},\vec{\mu} \in \Par_{m,\ell}$, $K^-_{\vec{\la}, \vec{\mu}}(q)$ counts the graded occurence of $L_{\vec{\la}^{\vee}}$ in a quotient module $\mathsf{q}^{\mathsf{a}(\vec{\mu})} ( W_{\vec{\mu}} )^{\vee}$ of $\C[X]$.
\end{cor}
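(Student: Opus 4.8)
The plan is to deduce Corollary~\ref{cor:Shoji} from the multiplicity computation of Theorem~\ref{thm:id} by transporting it through restricted duality, Corollary~\ref{cor:min} being what makes $\mathsf{q}^{\mathsf{a}(\vec\mu)}(W_{\vec\mu})^\vee$ an honest graded quotient of $\C[X]$, so that the phrase ``graded occurrence of $L_{\vec\la^\vee}$'' is meaningful. The only new ingredient over Theorem~\ref{thm:id} is the translation of the $\gp$-side multiplicity $[W_{\vec\mu}:V_{\vec\la}]_q$ into the $A$-side graded multiplicity of $L_{\vec\la^\vee}$ in the dual module.

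First I would record that for a finite-dimensional $N\in A\mathchar`-\mathsf{gmod}$ one has $(N^\vee)_i=(N_{-i})^*$, so that, $\C W$ being semisimple and $L_{\vec\nu}^\vee\cong L_{\vec\nu^\vee}$ (proof of Lemma~\ref{lem:Asimple}),
\[
[N^\vee:L_{\vec\la^\vee}]_q=\overline{[N:L_{\vec\la}]_q},
\]
where $\overline{\phantom{x}}$ denotes $q\mapsto q^{-1}$; equivalently this is the $A$-side shadow of Corollary~\ref{cor:WSdual} and Lemma~\ref{lem:gsimple}. Applying this with $N=W_{\vec\mu}$ and multiplying by the shift $\mathsf{q}^{\mathsf{a}(\vec\mu)}$ --- whose exponent equals the degree in which the socle of $W_{\vec\mu}$ sits, by Proposition~\ref{prop:socdeg}, and which is exactly what makes $\mathsf{q}^{\mathsf{a}(\vec\mu)}(W_{\vec\mu})^\vee$ a quotient of $\C[X]$ generated in degree $0$ (Corollary~\ref{cor:min}) --- rewrites the graded occurrence of $L_{\vec\la^\vee}$ in $\mathsf{q}^{\mathsf{a}(\vec\mu)}(W_{\vec\mu})^\vee$ as a normalisation, by $q\mapsto q^{-1}$ and a power of $q$, of the multiplicity $[W_{\vec\mu}:V_{\vec\la}]_q$ of Theorem~\ref{thm:id}, via $\mathsf{SW}(V_{\vec\la})\cong L_{\vec\la}$ (Corollary~\ref{cor:WSsimple}). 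By (\ref{eqn:minimal}) this quantity is supported in degrees between $\mathsf{a}(\vec\la)$ and $\mathsf{a}(\vec\mu)$, the range in which $L_{\vec\la^\vee}$ can appear in such a quotient.

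Now Theorem~\ref{thm:id} --- via Theorem~\ref{thm:homLSalg} and the orthogonality of Lemma~\ref{lem:ext} --- identifies $\{[W_{\vec\mu}:V_{\vec\la}]_q\}$ with the Kostka polynomials $K^-$ attached to the limit symbols, normalised as in Theorem~\ref{thm:LSalg} (up to the componentwise conjugation of \S\ref{subsec:algA}). Feeding this into the duality rewriting above gives the graded occurrence of $L_{\vec\la^\vee}$ in $\mathsf{q}^{\mathsf{a}(\vec\mu)}(W_{\vec\mu})^\vee$ as precisely $K^-_{\vec\la,\vec\mu}(q)$: the involution $\vec\la\mapsto\vec\la^\vee$ on the label and the $\mathsf{a}(\vec\mu)$-shift together absorb both the reversal $q\mapsto q^{-1}$ and the conjugation. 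Alternatively one can bypass Theorem~\ref{thm:id} and apply Theorem~\ref{thm:homLSalg} directly to the family $\{\mathsf{q}^{\mathsf{a}(\vec\mu)}(W_{\vec\mu})^\vee\}$, checking that the $\vee$-dual of the $\mathrm{ext}$-orthogonality of Lemma~\ref{lem:ext} is again an orthogonality of the shape required there.

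The step I expect to be the main obstacle is precisely this bookkeeping: one must keep straight (i) the sign convention for $\mathsf{q}^{\pm1}$ together with Proposition~\ref{prop:socdeg}, so that the shift in Corollary~\ref{cor:min} is the correct one; (ii) whether the naturally graded $W_{\vec\mu}$ produces Shoji's $K^-$ or its $\mathsf{a}$-reflected twist, hence exactly what combination of $q\mapsto q^{-1}$ and multiplication by $q^{\mathsf{a}(\vec\mu)}$ is required; and (iii) the simultaneous presence of the involution $\vee$ on $\Par_{m,\ell}$, the restricted duality on modules, and the conjugation built into the labelling of $\mathsf{Irr}\,W$ (\S\ref{subsec:algA}). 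Getting any of these backwards would replace $K^-_{\vec\la,\vec\mu}(q)$ by a transpose, a conjugate, or an $\mathsf{a}$-reflection of it; the actual content is to verify that they all cancel.
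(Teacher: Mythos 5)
Your proposal is correct and follows essentially the same route as the paper, whose entire proof is ``Combine Theorem \ref{thm:id} and Corollary \ref{cor:min}'': the passage through the restricted dual of simples (Lemma \ref{lem:Asimple}), the socle-degree statement (Proposition \ref{prop:socdeg}) underlying Corollary \ref{cor:min}, and Theorem \ref{thm:id} is exactly what that one line compresses. The normalisation bookkeeping you flag (the $q\mapsto q^{-1}$ and $q^{\mathsf{a}(\vec{\mu})}$-shift, i.e.\ the passage between Shoji's Kostka functions and their modified versions, together with the $\vee$-involution and the componentwise conjugation of \S\ref{subsec:algA}) is likewise left implicit in the paper, so spelling it out is a refinement rather than a departure.
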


\begin{proof}
Combine Theorem \ref{thm:id} and Corollary \ref{cor:min}.
\end{proof}

\begin{cor}\label{cor:minchar}
For each $\vec{\la} \in \Par_{m,\ell}$, every graded $A$-module quotient of $\C[X]$ that contains $L_{\vec{\la}^{\vee}}$ as nongraded $W$-modules admits a surjection to $\mathsf{q}^{\mathsf{a}(\vec{\la})} ( W_{\vec{\la}} )^{\vee}$.
\end{cor}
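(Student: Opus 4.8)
The plan is to transport the statement to $A\mathchar`-\mathsf{gmod}$ through the Morita equivalence $\mathsf{SW}$ of Theorem \ref{thm:cateq} together with restricted duality, and then to reduce it to a single input concerning the $W$-module structure of $\C[X]$.

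Set $\mathcal I := \C[X]^{\vee}$. Since $\C[X]$ is generated by $L_{((1^m)(\emptyset)^{\ell-1})}$, the trivial $W$-module, concentrated in degree zero, it is the graded projective cover $P_{((1^m)(\emptyset)^{\ell-1})}$; in particular it has simple head, so $\mathcal I$ has simple socle $\mathsf{q}^0 L_{((1^m)(\emptyset)^{\ell-1})}$, and $(-)^{\vee}$ identifies graded quotients of $\C[X]$ with graded submodules of $\mathcal I$, reversing inclusions. By Corollary \ref{cor:min} the target $Q_{\vec{\la}} := \mathsf{q}^{\mathsf{a}(\vec{\la})}(W_{\vec{\la}})^{\vee}$ is such a quotient, and $Q_{\vec{\la}}^{\vee} \cong \mathsf{q}^{-\mathsf{a}(\vec{\la})}W_{\vec{\la}}$ by Lemma \ref{lem:Asimple}. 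Now $M$ is cyclic with simple head $\mathsf{q}^0 L_{((1^m)(\emptyset)^{\ell-1})}$, and so is $Q_{\vec{\la}}$ by Proposition \ref{prop:socdeg}; hence a surjection $M \twoheadrightarrow Q_{\vec{\la}}$ of graded $A$-modules exists if and only if $\ker(\C[X]\to M) \subseteq \ker(\C[X]\to Q_{\vec{\la}})$, equivalently if and only if $\mathsf{q}^{-\mathsf{a}(\vec{\la})}W_{\vec{\la}} \subseteq M^{\vee}$ inside $\mathcal I$. This inclusion is what I would prove.

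To exploit it I would first locate $\mathsf{q}^{-\mathsf{a}(\vec{\la})}W_{\vec{\la}}$ inside $\mathcal I$. The Demazure module $D_{\vec{\la}}$ is a cyclic $\gp^{\sharp}$-module generated in degree zero with $(D_{\vec{\la}})_0 \cong V_{\vec{\la}}$ (Corollary \ref{cor:simple}); since $\Phi$ and $\mathsf{SW}$ are graded, $W_{\vec{\la}}$ is a cyclic graded $A$-module with $(W_{\vec{\la}})_0 \cong L_{\vec{\la}}$ (Corollary \ref{cor:WSsimple}). On the other hand, Corollary \ref{cor:min} gives $\dim\Hom_W(L_{\vec{\la}}, \mathcal I_{j}) = \delta_{j,-\mathsf{a}(\vec{\la})}$ for every $j \ge -\mathsf{a}(\vec{\la})$; hence $\mathcal I_{-\mathsf{a}(\vec{\la})}$ contains a unique copy $T$ of $L_{\vec{\la}}$, and $\mathsf{q}^{-\mathsf{a}(\vec{\la})}W_{\vec{\la}} = A\cdot T$ inside $\mathcal I$. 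Since $M^{\vee}\subseteq\mathcal I$ carries no constituent isomorphic to $L_{\vec{\la}}$ in degrees above $-\mathsf{a}(\vec{\la})$, the hypothesis that $M$ contains $L_{\vec{\la}^{\vee}}$ as an ungraded $W$-module amounts to saying that $L_{\vec{\la}}$ occurs in $M^{\vee}$. Thus the statement reduces to showing that \emph{if $L_{\vec{\la}}$ occurs anywhere in a graded submodule $N\subseteq\mathcal I$, then $T\subseteq N$}; dualizing, this reads: the homogeneous $W$-stable ideal $(S)\subseteq\C[X]$ generated by the unique degree-$\mathsf{a}(\vec{\la})$ copy $S$ of $L_{\vec{\la}^{\vee}}$ contains the whole $L_{\vec{\la}^{\vee}}$-isotypic component of $\C[X]$.

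The main obstacle is this last assertion, and it is where genuinely new information enters. I would reduce it to the coinvariant algebra $\mathcal H := \C[X]/(\text{ideal generated by the positive-degree } W\text{-invariants})$ via the graded Nakayama lemma applied to the finitely generated graded $\C[X]^{W}$-module formed by the $L_{\vec{\la}^{\vee}}$-isotypic part of $\C[X]$: it then suffices that in $\mathcal H$ the ideal generated by the lowest-degree copy of $L_{\vec{\la}^{\vee}}$ contains every copy of $L_{\vec{\la}^{\vee}}$. I would deduce this from the higher Specht polynomial basis of Ariki--Terasoma--Yamada \cite{ATY}, already invoked for Corollary \ref{cor:min}: for the relevant shape the lowest copy is spanned by the ground (classical) Specht polynomials, and the explicit description of the higher Specht polynomials of that shape shows that each of them lies in the ideal generated by the ground ones. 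Granting this, $T\subseteq M^{\vee}$ forces $\mathsf{q}^{-\mathsf{a}(\vec{\la})}W_{\vec{\la}} = A\cdot T \subseteq M^{\vee}$, and dualizing back gives the surjection $M\twoheadrightarrow Q_{\vec{\la}}$; together with Corollary \ref{cor:Shoji} this assembles into Corollary \ref{fcor:Shoji}.
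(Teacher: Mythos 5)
Your formal reductions are sound: passing to the restricted dual $\C[X]^{\vee}$, noting via (\ref{eqn:minimal}) that $L_{\vec{\la}}$ occurs in $\C[X]^{\vee}$ in a unique copy $T$ at degree $-\mathsf{a}(\vec{\la})$ and nowhere above it, identifying $\mathsf{q}^{-\mathsf{a}(\vec{\la})}W_{\vec{\la}}$ with $A\cdot T$, and observing that the corollary is equivalent to the statement that the $W$-stable ideal of $\C[X]$ generated by the unique degree-$\mathsf{a}(\vec{\la})$ copy $S$ of $L_{\vec{\la}^{\vee}}$ contains the whole $L_{\vec{\la}^{\vee}}$-isotypic component. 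Your graded Nakayama descent to the coinvariant algebra is also correct, since taking isotypic components commutes with multiplication by $\C[X]^{W}$. Up to this point you and the paper run in parallel (the paper likewise dualizes, using the pairing with $\C[\partial_1,\ldots,\partial_m]$, and its endgame is the same: once the minimal copy lies in $M^{\vee}$, the submodule it generates is $\mathsf{q}^{-\mathsf{a}(\vec{\la})}W_{\vec{\la}}$).

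The gap is the decisive step, which you assert rather than prove: that every higher Specht polynomial of the relevant shape lies in the ideal generated by the ground (lowest-degree) copy. By your own reduction this claim is essentially equivalent to the corollary, so it cannot be waved through with ``the explicit description \ldots shows''. It is plausibly true --- if in the Ariki--Terasoma--Yamada construction the column antisymmetrizer is the outermost operation, each $F^{S}_{T}$ is alternating under the column group of $T$ and hence divisible by the corresponding ground Specht polynomial; but you would have to verify this convention, and for $\ell>1$ you additionally need the twisted divisibility statement (a polynomial whose exponents in the column variables are $\equiv k-1 \bmod \ell$ and which is alternating in those variables is divisible by $\prod_i X_i^{k-1}\prod_{i<i'}(X_i^{\ell}-X_{i'}^{\ell})$), none of which is in \cite{ATY} as cited here nor in your text. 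The paper circumvents exactly this: it restricts to the subgroup $W_0=\prod_{k,j}(\Sym_{\la^{(k)}_j}\ltimes\Gamma^{\la^{(k)}_j})$ and the one-dimensional $W_0$-character spanned by the explicit product polynomial inside $S$, where Stanley's theorem \cite[Theorem 3.1]{Sta77} says the relative invariants form a \emph{free rank-one} $\C[X]^{W_0}$-module; dualizing with $W_0$-invariant constant-coefficient differential operators then sends any occurrence of that character in $M^{\vee}$ onto the minimal one, with no structural input on the higher Specht basis beyond the location of the lowest copy already used in Corollary \ref{cor:min}. So either supply the divisibility argument for the ATY basis (including the $\ell$-twist), or replace that step by the $W_0$-relative-invariant argument; as written, the proof is incomplete at its crucial point.
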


\begin{proof}
Consider the subgroup
$$W_0 := \prod_{k=1}^{\ell} \prod_{j=1}^n \left( \Sym_{\la_{j}^{(k)}} \ltimes \Gamma^{|\la_{j}^{(k)}|} \right) \subset W.$$
For each $1 \le j \le n, 1 \le k \le \ell$, we set
$$\la_{<j}^{(k)} := \left( \sum_{s=1}^{k-1} |\la^{(s)}| \right) + \sum_{i=1}^{j-1} \la_{i}^{(k)} \hskip 5mm \text{and} \hskip 5mm \la_{\le j}^{(k)} := \left( \sum_{s=1}^{k-1} |\la^{(s)}| \right) + \sum_{i=1}^{j} \la_{i}^{(k)}.$$
Then, $W_0$ preserves each $\C[X_{\la_{<j}^{(k)}+1},\ldots,X_{\la_{\le j}^{(k)}}] \subset \C[X]$. In addition, the $W$-submodule $\mathsf{q}^{\mathsf{a}(\vec{\la})} L_{\vec{\la}}^{\vee} \subset \C[X]$ contains a one-dimensional space $L$ spanned by
$$\prod_{k=1}^{\ell}\prod_{j=1}^n \Bigl( \prod_{i = \la_{<j}^{(k)} + 1}^{\la_{\le j}^{(k)}} X_i^{k-1} \Bigr) \cdot \left( \prod_{\la_{< j}^{(k)} < i < i' \le \la_{\le j}^{(k)}} ( X_i^{\ell} - X_{i'}^{\ell} ) \right).$$
This is the minimal degree realization of $L$ as $W_0$-module inside $\C [X]$. Thanks to \cite[Theorem 3.1]{Sta77}, the module $\mathrm{Hom}_{W_0} (L,\C[X])$ is a free $\C[X]^{W_0}$-module of rank one. We dualize the situation using the perfect pairing between $\C [X]$ and $\C [\partial_1,\ldots,\partial_m]$ (with $\partial_i = \frac{\partial}{\partial X_i}$ and $\deg \, \partial_i = -1$ for $1 \le i \le m$) given as
$$\C [\partial_1,\ldots,\partial_m] \times \C[X] \ni ( P(\partial), Q(X) ) \mapsto \left. \left( P(\partial) Q(X) \right) \right|_{X_i = 0} \in \C.$$
Applying a $W_0$-invariant differential operator with constant coefficients sends an arbitrary $L$-isotypical components of $\C [X]$ to its minimal degree realization. Here $-X_i$ acts on $\partial_j$ as a differential operator by $[X_i,\partial_j] = \delta_{ij}$ for $1 \le i,j\le m$. Since an $A$-module quotient $M$ of $\C[X]$ that contains $L_{\vec{\la}^{\vee}}$ contains $L$ as $W_0$-modules, we conclude an inclusion
$$\mathsf{q}^{-\mathsf{a}(\vec{\la})} W_{\vec{\la}} \subset \mathsf{q}^{-\mathsf{a}(\vec{\la})} M^{\vee} \subset \C [\partial_1,\ldots,\partial_m].$$
Dualizing this yields the desired surjection.
\end{proof}

\section{Examples}
We retain the settings of the previous section.
\subsection{$m=n=2$,\, $\ell=2$}\label{subsec:222}
We have
$$((1^2)(\emptyset)) \lhd ((1)(1)) \lhd ((\emptyset)(1^2)) \lhd ((2)(\emptyset)) \lhd ((\emptyset)(2)).$$
The computation of affine Demazure characters yield:
\begin{align*}
\gch \, D_{((1^2)(\emptyset))} & = \ch \, V_{((1^2)(\emptyset))} = x_1 x_2, & \gch \, D_{((1)(1))} & = \ch \, V_{((1)(1))} + \ch \, V_{((1^2)(\emptyset))}\\
\gch \, D_{((\emptyset)(1^2))} & = \ch \, V_{((\emptyset)(1^2))} +  \gch \, D_{((1)(1))}, & \gch \, D_{((2)(\emptyset))} & = \ch \, V_{((2)(\emptyset))} + q \gch \, D_{((1)(1))}
\end{align*}
and
$$\gch \, D_{((\emptyset)(2))} = \ch \, V_{((2)(\emptyset))} + \ch \, V_{((1)(1))} +  \gch \, D_{((2)(\emptyset))}.$$
Applying $\Phi$, we obtain
\begin{align*}
\gch \, \Phi (D_{((\emptyset)(1^2))})  = & \ch \, V_{((\emptyset)(1^2))} + q \ch \, V_{((1)(1))} + q^2 \ch \, V_{((1^2)(\emptyset))}\\
\gch \, \Phi ( D_{((2)(\emptyset))} )  = &\ch \, V_{((2)(\emptyset))} + q \ch \, V_{((1)(1))} + q^2 \ch \, V_{((1^2)(\emptyset))}\\
\gch \, \Phi ( D_{((\emptyset)(2))} ) =  &\ch \, V_{((\emptyset)(2))} + (q + q^3) \ch \, V_{((1)(1))} + q^2 \ch \, V_{((2)(\emptyset))} \\
& + q^2 \ch \, V_{((\emptyset)(1^2))} + q^4 \ch \, V_{((1^2)(\emptyset))}
\end{align*}
This agrees with \cite[p465 Table 1]{Sho04} (that describes the {\it modified} Kostka polynomials) up to the componentwise conjugate (explained in \S \ref{subsec:algA}):
\begin{center}
\begin{tabular}{c|ccccc}
&$((\emptyset)(2))$&$((2)(\emptyset))$&$((\emptyset)(1^2))$&$((1)(1))$&$((1^2)(\emptyset))$\\\hline
$((\emptyset)(2))$ & $1$ & & & & \\
$((2)(\emptyset))$ & $q^2$ & $1$ & & & \\
$((\emptyset)(1^2))$ & $q^2$ & & $1$ & & \\
$((1)(1))$ & $q+q^3$ & $q$ & $q$ & $1$ & \\
$((1^2)(\emptyset))$& $q^4$ & $q^2$& $q^2$ & $q$ & $1$
\end{tabular}
\end{center}
Here the missing entries are regarded as zero.

In this case, we have $\vec{\la}^{\vee} \equiv \vec{\la}$ and $W_{\vec{\la}}^{\flat} = W_{\vec{\la}}$ for every $\vec{\la} \in \Par_{2,2}$. In particular,
\begin{align*}
\gch \, \Phi ( D_{((1^2)(\emptyset))} )  = & x_1 x_2\\
\gch \, \Phi ( D_{((1)(1))} ) = & x_1 x_3 + x_1 x_4 + x_2 x_3 + x_2 x_4 + q x_1 x_2 \\
\gch \, \Phi ( D_{((\emptyset)(1^2))} ) = & x_3 x_4 + q( x_1 x_3 + x_1 x_4 + x_2 x_3 + x_2 x_4 ) + q^2 x_1 x_2.
\end{align*}
and
\begin{align*}
\gch \, \Phi ( U_{((1^2)(\emptyset))} ) = & x_3 x_4\\
\gch \, \Phi ( U_{((1)(1))} ) = & x_1 x_3 + x_1 x_4 + x_2 x_3 + x_2 x_4 + q x_3 x_4\\
\gch \, \Phi ( U_{((\emptyset)(1^2))} )  = & x_1 x_2 +  q( x_1 x_3 + x_1 x_4 + x_2 x_3 + x_2 x_4 ) + q^2 x_3 x_4
\end{align*}
are identified through $\theta$-twists (Remark \ref{rem:inc}).

\subsection{$m=n=1$,\, $\ell=3$}
We have
\begin{align*}
((1)(\emptyset)(\emptyset)) &\lhd ((\emptyset)(1)(\emptyset))\lhd ((\emptyset)(\emptyset)(1)).
\end{align*}
In this case, we have $D_{\vec{\la}} \subset D_{(001)} \cong \C^3$ for each $\vec{\la} \in \Par_{1,3}$. We have
\begin{align*}
\gch \, \Phi ( D_{((1)(\emptyset)(\emptyset))} )  = & x_1\\
\gch \, \Phi ( D_{((\emptyset)(1)(\emptyset))} ) = & x_2 + q x_1 \\
\gch \, \Phi ( D_{((\emptyset)(\emptyset)(1))} ) = & x_3 + q x_2 + q^2 x_1.
\end{align*}
It follows that
\begin{align*}
[W_{((1)(\emptyset)(\emptyset))}] & = [L_{((1)(\emptyset)(\emptyset))} ], \hskip 3mm 
[W_{((\emptyset)(1)(\emptyset))}] = [L_{((\emptyset)(1)(\emptyset))}] + q [L_{((1)(\emptyset)(\emptyset))}],\\
[W_{((\emptyset)(\emptyset)(1))}] & = [L_{((\emptyset)(\emptyset)(1))}] + q [L_{((\emptyset)(1)(\emptyset))}] + q^2 [L_{((1)(\emptyset)(\emptyset))}],
\end{align*}
where $[\bullet]$ denote the class in the Grothendieck group of $A\mathchar`-\mathsf{gmod}$ and $q$ denotes its grading shifts.

On the other hand, the modules $\{ \mathbb U_{\vec{\la}}\}_{\vec{\la} \in \Par_{1,3}}$ are obtained as the successive quotients of the inclusion relation of cyclic $\gb^{\sharp}$-modules
$$( \C \oplus z\C^3 [z] )\subset ( \C^2 \oplus z \C^3 [z] ) \subset \C^3 [z],$$
where $\C \subset \C^2 \subset \C^3$ are the $\gb_0$ submodules of the vector representation of $\mathfrak{gl} (3)$. This yields
$$\gch \, \mathbb U_{(001)} = x_1 + q\frac{x_1+x_2+x_3}{1-q}, \hskip 3mm \gch \, \mathbb U_{(010)} = x_2, \hskip 3mm \text{and} \hskip 3mm \gch \, \mathbb U_{(100)} = x_3.$$
It follows that
\begin{align*}
[W^{\flat}_{((\emptyset)(1)(\emptyset))}] & \equiv [ W^{\flat}_{((\emptyset)(\emptyset)(1))^{\vee}}] = [L_{((\emptyset)(1)(\emptyset))} ]+ q [L_{((1)(\emptyset)(\emptyset))}] + q^2 [L_{((\emptyset)(\emptyset)(1))}]\\
[W^{\flat}_{((\emptyset)(\emptyset)(1))}] & \equiv  [ W^{\flat}_{((\emptyset)(1)(\emptyset))^{\vee}}] = [L_{((\emptyset)(\emptyset)(1))}]\\
[W^{\flat}_{((1)(\emptyset)(\emptyset))}] & \equiv  [ W^{\flat}_{((1)(\emptyset)(\emptyset))^{\vee}}] = [L_{((1)(\emptyset)(\emptyset))}].
\end{align*}

In particular, we obtain the following tables:
\begin{center}
\begin{tabular}{c|ccc}
$K^-$ &$((\emptyset)(\emptyset)(1))$&$((\emptyset)(1)(\emptyset))$&$((1)(\emptyset)(\emptyset))$\\\hline
$((\emptyset)(\emptyset)(1))$ & $1$ & & \\
$((\emptyset)(1)(\emptyset))$ & $q$ & $1$ &  \\
$((1)(\emptyset)(\emptyset))$ & $q^2$ & $q$ & $1$ \\
\end{tabular}\\
\smallskip
\begin{tabular}{c|ccc}
$K^+$ &$((\emptyset)(1)(\emptyset))$&$((\emptyset)(\emptyset)(1))$&$((1)(\emptyset)(\emptyset))$\\\hline
$((\emptyset)(1)(\emptyset))$ & $1$ & &  \\
$((\emptyset)(\emptyset)(1))$ & $q^2$ & $1$  & \\
$((1)(\emptyset)(\emptyset))$ & $q$ & & $1$ \\
\end{tabular}
\end{center}
Here the missing entries are regarded as zero.

\subsection{$m=n=1$,\, $\ell > 3$}
We set $\{i\} := ((\emptyset)^{\ell-i}(1)(\emptyset)^{i-1})$ for $0\le i < \ell$. Then, we have
\begin{align*}
\{\ell\} \lhd \{\ell-1\} \lhd \cdots \lhd \{2\}\lhd \{1\}.
\end{align*}
We have
\begin{align*}
& \gch \, D_{(0^{\ell-i}10^{i-1})} = x_1 + \cdots + x_{\ell+1-i}, \hskip 5mm \text{and}\\
& \gch \, \mathbb U_{(0^{\ell-1}1)} = x_{1} + q \frac{x_1+\cdots + x_\ell}{1-q} , \hskip 3mm \gch \, \mathbb U_{(0^{\ell-i}10^{i-1})} = x_{i}, \hskip 5mm 1< i \le \ell,
\end{align*}
that implies
$$K_{\{i\},\{j\}}^- = \begin{cases} 0 & (i < j)\\ q^{i-j} & (i \ge j)\end{cases} \hskip 5mm \text{and} \hskip 5mm K_{\{i\},\{j\}}^+ = \begin{cases}1 & (i = j)\\ q & (i =\ell, j=\ell-1)\\ q^{i+1} & (i <\ell-1, j=\ell-1) \\ 0 & (else) \end{cases}.$$

\medskip

{\small
{\bf Acknowledgement:} This research is supported in part by JSPS KAKENHI Grant Number JP19H01782 and JP24K21192. The author thanks Evgeny Feigin for helpful communication.}

{\footnotesize
\bibliography{ref}
\bibliographystyle{hplain}}
\end{document}